\newcommand{\la}{\langle}
\newcommand{\ra}{\rangle}
\newcommand{\lv}{\lVert}
\newcommand{\rv}{\rVert}
\theoremstyle{plain}
\newtheorem{theorem}{Theorem}[section]
\newtheorem*{theorem*}{Theorem}
\newtheorem{lemma}[theorem]{Lemma}
\newtheorem{proposition}[theorem]{Proposition}
\newtheorem{corollary}[theorem]{Corollary}
\newtheorem{remark}[theorem]{Remark}
\newtheorem*{mt*}{Main Theorem}
\newcommand\C{{\mathbb C}}
\renewcommand\phi{{\varphi}}
\newcommand\N{{\mathbb N}}
\newcommand\R{{\mathbb R}}
\newcommand\Z{{\mathbb Z}}
\renewcommand\H{{\mathcal H}}
\newcommand{\del}{\partial}
\newcommand{\delbar}{{\overline{\del}}}
\newcommand{\cinf}{\mathcal{C}^\infty}
\DeclareMathOperator{\supp}{supp}
\let\inf\undefined
\DeclareMathOperator*{\inf}{inf\vphantom{p}}
\DeclareMathOperator{\vol}{Vol}
\DeclareMathOperator{\lcm}{lcm}
\DeclareMathOperator{\im}{im\,}
\DeclareMathOperator{\D}{\mathcal{D}}
\DeclareMathOperator{\rank}{rank}
\DeclareMathOperator{\Hom}{Hom}
\DeclareMathOperator{\lV}{\lVert}
\DeclareMathOperator{\rV}{\rVert}
\DeclareMathOperator{\gr}{Gr}
\let\phi\varphi
\let\c\overline
\title{The $L^2$ Aeppli-Bott-Chern Hilbert complex}
\author{Tom Holt}
\address{Department of Mathematical Sciences\\
University of Bath\\
Claverton Down\\
Bath\\
BA2 7AY\\
United Kingdom}
\email{th2276@bath.ac.uk}
\author{Riccardo Piovani}
\address{Dipartimento di Scienze Matematiche, Fisiche e Informatiche\\
Unit\`{a} di Matematica e Informatica\\
Universit\`{a} degli Studi di Parma\\
Parco Area delle Scienze 53/A \\
43124 Parma, Italy}
\email{riccardo.piovani@unipr.it}
\keywords{$L^2$ Hodge theory, complex manifolds, complete K\"ahler metrics, elliptic operators, spectral gap, Galois coverings}
\thanks{\newline 
The second author is partially supported by GNSAGA of INdAM}
\subjclass[2020]{53C55, 32Q15}
\begin{document}

\begin{abstract}
We analyse the $L^2$ Hilbert complexes naturally associated to a non-compact complex manifold, namely the ones which originate from the Dolbeault and the Aeppli-Bott-Chern complexes. In particular we define the $L^2$ Aeppli-Bott-Chern Hilbert complex and examine its main properties on general Hermitian manifolds, on complete K\"ahler manifolds and on Galois coverings of compact complex manifolds. The main results are achieved through the study of self-adjoint extensions of various differential operators whose kernels, on compact Hermitian manifolds, are isomorphic to either Aeppli or Bott-Chern cohomology.
\end{abstract}
\maketitle
\tableofcontents%\setcounter{section}{-1}
\section{Introduction}
A \emph{Hilbert complex} consists of a complex of mutually orthogonal Hilbert spaces $\H_i$ along with linear operators $D_i:\H_i\to\H_{i+1}$ which are \emph{densely defined} ($D_i$ is defined on a domain $\D(D_i)$ which is a dense subspace of $\H_i$) and \emph{closed} (the graph of $D_i$ is closed in $\H_i\times\H_{i+1}$) of the form
\[
0\longrightarrow\H_0\overset{D_0}{\longrightarrow}\H_1\longrightarrow\dots\longrightarrow\H_{n-1}\overset{D_{n-1}}{\longrightarrow}\H_n\longrightarrow 0
\]
such that $\im D_i\subseteq \ker D_{i+1}$. The notion of \emph{Hilbert complexes} was systematically analysed by Br\"uning and Lesch in \cite{BL}. In this paper we are interested in studying the Hilbert complexes naturally associated to a non-compact complex manifold without boundary. 

Recall that on a complex manifold $M$ the exterior derivative on $(p,q)$-forms splits as
\[
d=\del+\delbar: A^{p,q}\longrightarrow A^{p+1,q}\oplus A^{p,q+1},
\]
where
\[
\del:A^{p,q}\longrightarrow A^{p+1,q},\ \ \ \delbar:A^{p,q}\longrightarrow  A^{p,q+1},
\]
thus the relation $d^2=0$ implies $\del^2=\delbar^2=\del\delbar+\delbar\del=0$. Therefore the following cohomologies
\[
H^{\bullet,\bullet}_\delbar=\frac{\ker\delbar}{\im\delbar},\ \ \ H^{\bullet,\bullet}_{BC}=\frac{\ker\del\cap\ker\delbar}{\im\del\delbar},\ \ \ H^{\bullet,\bullet}_{A}=\frac{\ker\del\delbar}{\im\del+\im\delbar},
\]
called respectively \emph{Dolbeault}, \emph{Bott-Chern} and \emph{Aeppli cohomology spaces} are well-defined.
The elliptic complexes to which these cohomologies are associated are respectively the \emph{Dolbeault complex}
\[
\dots\longrightarrow A^{p,q-1}\overset{\delbar}{\longrightarrow} A^{p,q}\overset{\delbar}{\longrightarrow} A^{p,q+1}{\longrightarrow}\dots
\]
and the \emph{Aeppli-Bott-Chern complex}, or \emph{ABC complex} for short,
\[
\dots\longrightarrow A^{p-1,q-2}\oplus A^{p-2,q-1}\overset{\delbar\oplus\del}{\longrightarrow} A^{p-1,q-1}\overset{\del\delbar}{\longrightarrow} A^{p,q}\overset{\del+\delbar}{\longrightarrow} A^{p+1,q}\oplus A^{p,q+1}{\longrightarrow}\dots
\]
where $\delbar\oplus\del$ operates on $A^{p-1,q-2}\oplus A^{p-2,q-1}$ as $\delbar$ on $A^{p-1,q-2}$ plus $\del$ on $A^{p-2,q-1}$. In particular the \emph{Aeppli complex} is given by the differentials $\delbar\oplus\del$ and $\del\delbar$, while the \emph{Bott-Chern complex} is given by the differentials $\del\delbar$ and $\del+\delbar$. The ABC complex first appeared in \cite{Bi}.

If we fix a Hermitian metric $g$ on the complex manifold $M$ with complex dimension $n$, then $\del$ and $\delbar$ have $L^2$ formal adjoint operators
\[
\del^*:A^{p,q}\longrightarrow A^{p-1,q},\ \ \ \delbar^*:A^{p,q}\longrightarrow  A^{p,q-1},
\]
defined by $\del^*:=-*\delbar*$ and $\delbar^*:=-*\del*$, where $*:A^{p,q}\to A^{n-q,n-p}$ is the $\C$-linear Hodge operator with respect to $g$.
The Laplacian which is naturally associated to the Dolbeault complex is the \emph{Dolbeault Laplacian}
\[
\Delta_\delbar=\delbar\delbar^*+\delbar^*\delbar,
\]
which is elliptic and formally self-adjoint, while to the Bott-Chern complex (for the Aeppli complex the situation is similar) there are multiple naturally associated Laplacians. The first is obtained as the standard Laplacian which is associated to any complex, namely
\begin{equation*}\label{eq non ell lapl bc}
\Delta_{BC}=\del\delbar\delbar^*\del^*+\del^*\del+\delbar^*\delbar,
\end{equation*}
which is formally self-adjoint but not elliptic \cite{S}. Kodaira and Spencer, in the proof of the stability of the K\"ahler condition under small deformations of the complex structure \cite{KS}, introduced another differential operator
\begin{equation*}\label{eq ell lapl bc ks}
\tilde\Delta_{BC}=
\del\delbar\delbar^*\del^*+
\delbar^*\del^*\del\delbar+\del^*\delbar\delbar^*\del+\delbar^*\del\del^*\delbar
+\del^*\del+\delbar^*\delbar,
\end{equation*}
usually referred as the \emph{Bott-Chern Laplacian},
which is elliptic and formally self-adjoint and, when the manifold is compact, has the same kernel as $\Delta_{BC}$. We will be also interested in a third Laplacian which is associated to the Bott-Chern complex, that is
\begin{equation*}\label{eq ell lapl bc v}
\square_{BC}=\del\delbar\delbar^*\del^*+(\del^*\del+\delbar^*\delbar)^2,
\end{equation*}
which is the elliptic and formally self-adjoint operator naturally associated to the Bott-Chern complex viewed as an elliptic complex \cite{V}. When the manifold is compact $\square_{BC}$ has the same kernel of $\Delta_{BC}$ and $\tilde\Delta_{BC}$.

In order to obtain a Hilbert complex starting from a geometric complex like the ones just mentioned, one first restricts every differential $P$ of the complex to an operator $P_0$ defined on the space of smooth compactly supported forms, and then extends the restricted differential $P_0$ to a densely defined and closed operator on the space of $L^2$ forms with respect to a chosen metric: the two most important closed extensions are called \emph{strong} and \emph{weak}, denoted respectively by $P_s$ and $P_w$, which correspond respectively to the minimal and the maximal closed extensions defined in a distributional sense of the operator $P_0$. The $L^2$ Hodge theory related to the Hilbert complex originated from the Dolbeault complex, which from now on will be called \emph{$L^2$ Dolbeault Hilbert complex}, has been studied even before \cite{BL}, in parallel with the development of the $L^2$ Hodge theory associated to the de Rham complex: we refer to \cite{AV,H65,FK,Che,De82,DF,G} as a partial list of milestones on this subject.

On the other hand, the study of a Hilbert complex arising from the ABC complex is absent in the literature.
The aim of this paper is to define an \emph{$L^2$ Aeppli-Bott-Chern Hilbert complex}, \emph{$L^2$ ABC Hilbert complex} for short, and to establish its fundamental properties.
In this setting the main difficulties arise from the fact that in the ABC complex the differential $\del\delbar$ is of second order, while in the classical Hodge-de Rham or Dolbeault complexes all the differentials are of first order. For example, it is well known that on a complete Riemannian or Hermitian manifold, $P_s=P_w$ and $P^*_s=P^*_w$ (where $P^*$ is the formal adjoint of $P$) for many interesting first order differential operators like $P=d,\del,\delbar$ \cite[Proposition 5]{AV}. Furthermore, the integer powers of $P+P^*$ (including the Dolbeault Laplacian $\Delta_{\delbar}$ and the Hodge Laplacian) are essentially self-adjoint \cite[Section 3.B]{Che}, \emph{i.e.}, they have a unique self-adjoint extension.

The main results of this paper are contained in Sections \ref{sec hilb complex bc a}, \ref{sec complete kahler}, \ref{sec coverings}. In Section \ref{sec hilb complex bc a} we define the $L^2$ ABC Hilbert complex
\begin{equation*}
\begin{tikzcd}
L^2\Lambda^{p-1,q-2}\oplus L^2\Lambda^{p-2,q-1}\arrow[d,"(\delbar \oplus \del)_a"]\\
L^2\Lambda^{p-1,q-1}\arrow[d,"\del\delbar_b"]\\
L^2\Lambda^{p,q}\arrow[d,"(\del+\delbar)_c"]\\
L^2\Lambda^{p+1,q}\oplus L^2\Lambda^{p,q+1}
\end{tikzcd}
\end{equation*}
where $a,b,c\in\{s,w\}$ with $a\le b\le c$ denote either strong or weak extensions with the order relation $s\le w$. The associated $L^2$ Aeppli and Bott-Chern cohomologies are
\[
L^2H^{p-1,q-1}_{A,ab}:=\frac{L^2\Lambda^{p-1,q-1}\cap\ker\del\delbar_b}{L^2\Lambda^{p-1,q-1}\cap\im(\delbar \oplus \del)_a},\ \ \ 
L^2H^{p,q}_{BC,bc}:=\frac{L^2\Lambda^{p,q}\cap\ker (\del+\delbar)_c}{L^2\Lambda^{p,q}\cap\im\del\delbar_b},
\]
while the reduced $L^2$ Aeppli and Bott-Chern cohomologies are defined as
\[
L^2\bar{H}^{p-1,q-1}_{A,ab}:=\frac{L^2\Lambda^{p-1,q-1}\cap\ker\del\delbar_b}{L^2\Lambda^{p-1,q-1}\cap\c{\im(\delbar \oplus \del)_a}},\ \ \ 
L^2\bar{H}^{p,q}_{BC,bc}:=\frac{L^2\Lambda^{p,q}\cap\ker (\del+\delbar)_c}{L^2\Lambda^{p,q}\cap\c{\im\del\delbar_b}}.
\]

We now list the main results of the paper (we state them just for the Bott-Chern case since the Aeppli case is analogous):
\begin{theorem}[Section \ref{sec hilb complex bc a}]\label{thm sec 7}
Given a Hermitian manifold $(M,g)$, we set the space of $L^2$ Bott-Chern harmonic forms to be $L^2\H^{p,q}_{BC,bc}:=\ker (\del+\delbar)_c\cap\ker \delbar^*\del^*_{b'}$, where $b'=s$ if $b=w$ and $b'=w$ if $b=s$.
\begin{itemize}
\item The space of $L^2$ $(p,q)$-forms has an orthogonal decomposition
$$
L^2\Lambda^{p,q}=L^2\H^{p,q}_{BC,bc}\oplus\c{\im\del\delbar_b}\oplus\c{\im(\del^*\oplus\delbar^*)_{c'}}.
$$
\item The reduced $L^2$ Bott-Chern cohomology is isomorphic to the space of $L^2$ Bott-Chern harmonic forms, \textit{i.e.}, $L^2\bar{H}^{p,q}_{BC,bc}\simeq L^2\H^{p,q}_{BC,bc}$.
%\[L^2\bar{H}^{p,q}_{BC,bc}\simeq L^2\H^{p,q}_{BC,bc}.\]
\item The $L^2$ Bott-Chern cohomology $L^2H^{p,q}_{BC,bc}$ can be computed via a natural smooth subcomplex.
\item If $\square_{BC}$ is essentially self adjoint, then $\del\delbar_s=\del\delbar_w$.
\item There exists a diagram of maps between spaces of 
reduced $L^2$ Bott-Chern, Dolbeault, de Rham, $\del$ and Aeppli cohomologies.
\end{itemize}
\end{theorem}
The first three points are an application of the general theory of \cite{BL} to the specific case of the $L^2$ ABC complex, the fourth point is a generalisation of \cite[Lemma 3.8]{BL}, while the last point generalises the well known diagram of maps between usual cohomology spaces \cite{DGMS}.

In Section \ref{sec complete kahler}, we focus on complete K\"ahler manifolds. Recall that a Hermitian metric is called K\"ahler when its fundamental form is closed, and that on a compact complex manifold endowed with a K\"ahler metric the kernels of the Bott-Chern, Aeppli and Dolbeault Laplacians coincide. By Hodge theory, this implies that Bott-Chern, Aeppli and Dolbeault cohomology spaces are isomorphic and the $\del\delbar$-Lemma holds. We prove the following
\begin{theorem}[Section \ref{sec complete kahler}]\label{thm sec 8}
Let $(M,g)$ be a complete K\"ahler manifold.
\begin{itemize}
\item The space of $L^2$ Bott-Chern harmonic forms coincides with the space of $L^2$ Aeppli, Dolbeault, Hodge harmonic forms.
\item All the maps in the diagram of Theorem \ref{thm sec 7} are isomorphisms.
\item There exists an $L^2$ reduced $\del\delbar$-Lemma: if $\alpha$ is an $L^2$ $k$-form lying in $\ker\del_w\cap\ker\delbar_w$, then
\begin{align*}
\alpha\in\c{\im\del\delbar_s}&\iff\alpha\in\c{\im d_s} \iff\alpha\in\c{\im\del_s}\\
&\iff\alpha\in\c{\im\delbar_s}\iff \alpha\in\c{\im\delbar_s} +\c{\im\del_s}.
\end{align*}
\item If the unique self-adjoint extension of $\Delta_\delbar$ has a spectral gap, then the reduced and unreduced $L^2$ Bott-Chern cohomologies coincide and  $\del\delbar_s=\del\delbar_w$.
\end{itemize}
\end{theorem}
The first three points are generalisations of similar well-known results in the compact K\"ahler setting, while the last point is inspired by \cite[Theorem 1.4.A]{G}.
The results are all obtained through the study of suitable self adjoint extensions of the Bott-Chern Laplacian $\tilde{\Delta}_{BC}$.

Finally, in Section \ref{sec coverings}, we study the special setting of a Galois covering of a compact complex manifold $\pi:\widetilde M\to M\simeq \widetilde M/\Gamma$. By \cite[Proposition 3.1]{Ati}, any lift to the covering $\widetilde M$ of an elliptic and formally self-adjoint operator on the compact manifold $M$ is essentially self-adjoint, therefore by the fourth point of Theorem \ref{thm sec 7} we obtain $\del\delbar_s=\del\delbar_w$ on $\widetilde M$. This ultimately allows us to define \emph{$L^2$ Bott-Chern numbers} $h^{p,q}_{BC,\Gamma}(M)$ and \emph{$L^2$ Aeppli numbers} $h^{p,q}_{A,\Gamma}(M)$ of the Galois covering (independently from the choice of the metric on the compact manifold), generalising the usual Bott-Chern and Aeppli numbers of a compact complex manifold. \textit{E.g.}, $h^{p,q}_{BC,\Gamma}(M)$ is defined as the Von Neumann dimension of 
$L^2\H^{p,q}_{BC,sw}$ on $\widetilde M$.
Denoting by $h^{p,q}_{\delbar,\Gamma}(M)$ and $h^{p,q}_{\del,\Gamma}(M)$ the $L^2$ Hodge numbers, we prove the following inequality.
\begin{theorem}
Given a Galois covering of a compact complex manifold $\pi:\widetilde M\to M\simeq \widetilde M/\Gamma$, it holds that
\[
h^{p,q}_{\del,\Gamma}(M)+h^{p,q}_{\delbar,\Gamma}(M)\le h^{p,q}_{A,\Gamma}(M)+h^{p,q}_{BC,\Gamma}(M).
\]
\end{theorem}
This is a generalisation of the same inequality for the usual Hodge, Bott-Chern and Aeppli numbers on a compact complex manifold, established by Angella and Tomassini in \cite[Theorem A]{AT}.

We remark that all three Laplacians $\Delta_{BC}$, $\tilde\Delta_{BC}$ and $\square_{BC}$ enter in play in different arguments in our treatment. The operator $\Delta_{BC}$ is needed to apply the general theory of \cite{BL} to our case; on the other hand $\tilde\Delta_{BC}$ is useful on K\"ahler manifolds to prove that $L^2$ Bott-Chern harmonic forms coincide with $L^2$ Dolbeault harmonic forms in Theorem \ref{thm sec 8}; finally $\square_{BC}$ is fundamental when dealing with Galois coverings of compact complex manifolds: it is elliptic and formally self-adjoint (allowing us to apply \cite[Proposition 3.1]{Ati}) and it is naturally defined from the ABC complex (allowing us to prove the fourth point of Theorem \ref{thm sec 7}).

We refer the reader to the following recent papers on $L^2$ Hodge and cohomology theory on non-compact (almost) Hermitian manifolds \cite{B2,B3,HT,R,PT3,HuTa,MPa,MPb,P}.
Furthermore, we mention \cite{PT1,PT2}, where Tomassini and the second author prove a characterisation of $L^2$ smooth forms which are in the kernel of the Bott-Chern Laplacian on special families of Stein manifolds and of complete Hermitian manifolds; \cite{TWZ}, where Tan, Wang and Zhou study $L^2$ Bott-Chern and Aeppli symplectic cohomologies and harmonic forms on complete non-compact almost K\"ahler manifolds with bounded geometry and introduce the $L^2$ $dd^{\Lambda}$ Lemma; \cite{BDET}, where Bei, Diverio, Eyssidieux and Trapani generalise the notion of K\"ahler hyperbolicity introduced by Gromov in \cite{G}, proving a spectral gap result for the Dolbeault Laplacian under suitable modification.

The paper is structured in the following way.
In Section \ref{sec diff op} we define differential operators, Lebesgue spaces and elliptic complexes on manifolds.
In Section \ref{sec complex manifold} we describe the elliptic complexes naturally defined on a complex manifold and their associated Laplacians.
In Section \ref{sec hilb} we give a brief survey of the theory of unbounded linear operators on Hilbert spaces, including a statement of the Spectral Theorem and a study of the spectral gap condition for a positive self-adjoint operator.
In Section \ref{sec extensions} we define the minimal and maximal closed extensions of differential operators, collecting the main properties which will be used in the following sections.
In Section \ref{sec hilb complex dolbeault} we recall the fundamental definitions concerning the $L^2$ Dolbeault Hilbert complex.
Finally, in Section \ref{sec hilb complex bc a}, \ref{sec complete kahler}, \ref{sec coverings}  we prove the main results of the paper. We conclude in Section \ref{sec questions} with some open questions and final remarks.

In this manuscript, our aim was to include a good amount of the necessary preliminaries, with the intention of making it readable for mathematicians who are not familiar with $L^2$ Hodge theory and spectral theory.

\medskip\medskip
\noindent{\em Acknowledgments.} 
We would like to thank Lorenzo Sillari, Jonas Stelzig, Adriano Tomassini and Weiyi Zhang for interesting comments and conversations. Furthermore, we are particularly grateful to Francesco Bei for his time spent answering our many questions and providing useful suggestions which have greatly improved this paper.

\section{Differential operators and elliptic complexes}\label{sec diff op}
In this section we follow mainly \cite[Chapter VI, Section 1]{De} and \cite[Section I]{V}.
Let $M$ be a differentiable manifold of dimension $m$, and let $E_1,E_2$ be $\C$-vector bundles over $M$, with $\rank E_i=r_i$, $i=1,2$. Denote by $\Gamma(M,E_i)$ the spaces of smooth sections $M\to E_i$.
A $\C$-linear \emph{differential operator} of order $l$ from $E_1$ to $E_2$ is a $\C$-linear operator $P:\Gamma(M,E_1)\to \Gamma(M,E_2)$ locally given by
\begin{equation*}
Pu(x)=\sum_{\lv\alpha\rv\le l}a_\alpha(x)D^\alpha u(x)\ \ \ \forall u\in\Gamma(M,E_1),
\end{equation*}
where $\alpha=(\alpha_1,\dots,\alpha_m)$ is a multi-index with norm $\lv\alpha\rv=\alpha_1+\dots+\alpha_m$,
the functions 
\begin{equation*}
a_\alpha(x)=(a_{\alpha ij}(x))_{1\le i\le r_2,1\le j\le r_1}
\end{equation*}
are $r_2\times r_1$ matrices with smooth coefficients,
$E_{1|\Omega}\simeq\Omega\times\C^{r_1}$, $E_{2|\Omega}\simeq\Omega\times\C^{r_2}$ are trivialized locally on some open chart $\Omega\subset M$ equipped with local coordinates $x^1,\dots,x^{m}$, 
\begin{equation*}
D^\alpha=(\del/\del x^1)^{\alpha_1}\dots(\del/\del x^{m})^{\alpha_{m}},
\end{equation*}
and $u=(u_j)_{1\le j\le r_1}$, $D^\alpha u=(D^\alpha u_j)_{1\le j\le r_1}$ are viewed as column vectors. Moreover, we require that $a_\alpha\nequiv0$ for some open chart $\Omega \subset M$ and some choice of multi-index $\alpha$ with $\lv\alpha\rv = l$.

Let $P:\Gamma(M,E_1)\to \Gamma(M,E_2)$ be a $\C$-linear differential operator of order $l$ from $E_1$ to $E_2$. The \emph{principal symbol}, or simply the \emph{symbol}, of $P$ is the map
\begin{equation*}
\sigma_P:T^*M\to \Hom(E_1,E_2)\ \ \ (x,\xi)\mapsto\sum_{\lv\alpha\rv= l}a_\alpha(x)\xi^\alpha.
\end{equation*}
We say that $P$ is \emph{elliptic} if $\sigma_P(x,\xi)\in\Hom((E_1)_x,(E_2)_x)$ is an isomorphism for all $x\in M$ and $0\ne\xi\in T_x^*M$. Note that, if $u\in\Gamma(M,E_1)$ and $f\in\cinf(M)$, with $f(x)=0$ then
\begin{equation*}\label{eq-sym}
P(f^lu)(x)=l!\sigma_P(x,df(x))(u(x)).
\end{equation*}
Therefore, we observe that $P$ is elliptic if and only if for all $x\in M$, $u\in\Gamma(M,E_1)$ and $f\in\cinf(M)$ such that $u(x)\ne0$, $f(x)=0$ and $df(x)\ne0$ we have
\begin{equation*}
P(f^lu)(x)\ne0.
\end{equation*}

Let $(M,g)$ be an oriented Riemannian manifold of dimension $m$. The metric $g$ induces the standard Riemannian volume form, given locally by
\begin{equation*}
\vol(x)=|\det g_{ij}(x)|^{\frac12}dx^1\wedge\dots\wedge dx^m,
\end{equation*}
where $g(x)=\sum g_{ij}(x)dx^i\otimes dx^j$ for local coordinates $x^1,\dots,x^m$. Likewise, we also obtain the standard Riemannian measure from $g$. 

Let $E$ be a $\C$-vector bundle over $M$, and take a Hermitian metric $h$ over $E$, \textit{i.e.}, a smooth section of Hermitian inner products on the fibers. The couple $(E,h)$ will be called a \emph{Hermitian vector bundle}. The separable Banach space $L^p E, p\ge 1$, is then given by (equivalence classes of almost everywhere equal) possibly non-continuous global sections $u$ of $E$, with measurable coefficients and finite $L^p$ norm. 
\begin{equation*}
\lV u\rV_{L^p}:=\left(\int_M| u(x)|^p\vol(x)\right)^{\frac1{p}}<+\infty,
\end{equation*}
where $|\cdot|=h(\cdot,\cdot)^\frac12$. The space $L^pE$ can be seen as the completion, with respect to the $L^p$ norm, of $\Gamma_0(M,E)$, the space of smooth sections with compact support.  
We denote by $L^p_{loc}E$ the space of global sections $u$ of $E$ such that $u \chi_K \in L^pE$ for every compact $K \subseteq M$, where $\chi_K = 1$ on $K$ and $\chi_K = 0$ otherwise.  
%whose $L^p$ norm is finite on every compact subset $K$ of $M$.
For $p=2$, we denote the corresponding $L^2$ inner product by 
\begin{equation*}
\la u,v\ra:=\int_M h(u(x),v(x))\vol(x).
\end{equation*}
The space $L^2E$ together with $\la\cdot,\cdot\ra$ is a separable Hilbert space. Denote by $\lv\cdot\rv$ the $L^2$ norm $\lv\cdot\rv_{L^2}$.

Let $(E_1,h_1),(E_2,h_2)$ be Hermitian vector bundles, and let $P:\Gamma(M,E_1)\to \Gamma(M,E_2)$ be a differential operator. The $L^2$ \emph{formal adjoint}, or simply the \emph{formal adjoint}, 
\begin{equation*}
P^*:\Gamma(M,E_2)\to \Gamma(M,E_1)
\end{equation*}
of $P$ is defined such that it satisfies the property 
\begin{equation*}
\la Pu,v\ra=\la u,P^*v\ra
\end{equation*}
for any pair of smooth sections $u\in\Gamma(M,E_1)$ and $v\in\Gamma(M,E_2)$ with $\supp u\cap\supp v$ compactly contained in $M$.
We remark that the $L^2$ formal adjoint $P^*$ is a differential operator, it always exists and it is unique (see, \textit{e.g.}, \cite[Chapter VI, Definition 1.5]{De}). It follows that $P^{**}=P$ and, if $Q$ is another differential operator, $(QP)^*=P^*Q^*$. An operator $P:\Gamma(M,E_1)\to \Gamma(M,E_1)$ is called \emph{formally self-adjoint} if $P=P^*$.

As a generalisation of the notion of an elliptic differential operator, we recall the definition of an elliptic complex. 
We start by considering a complex of differential operators. Take $E_j$ to be a sequence of $\C$-vector bundles over $M$ and take $D_j$ to be a sequence of $\C$-linear differential operators of order $k_j$
\begin{equation}\label{eq standard complex}
\dots\longrightarrow\Gamma(M,E_j)\overset{D_j}{\longrightarrow}\Gamma(M,E_{j+1})\overset{D_{j+1}}{\longrightarrow}\Gamma(M,E_{j+2}){\longrightarrow}\dots
\end{equation}
such that $D_j\circ D_{j+1}=0$ for all $j$. Given Hermitian metrics $h_j$ on $E_j$, this complex of differential operators has naturally associated, formally self-adjoint operators
\begin{equation}\label{eq defin non-ell lapl}
\Delta_j:=D_j^*D_j+D_{j-1}D_{j-1}^*:\Gamma(M,E_j)\longrightarrow\Gamma(M,E_{j+1})
\end{equation}
of order $2\max(k_j,k_{j-1})$ which are called Laplacians. 

We can view the principal symbol of the operator $D_j$ as a map
\begin{equation*}
\sigma_{D_j}:\pi^*E_j\to \pi^*E_{j+1},
\end{equation*}
where $\pi:T^*M\setminus\{\text{zero section}\}\to M$ and $\pi^*E_j$ denotes the pullback bundle of $E_j$.
 In this way, we say that \eqref{eq standard complex} is an \emph{elliptic complex} if the induced sequence of symbols
\[
\dots\longrightarrow\pi^*E_j\overset{\sigma_{D_j}}{\longrightarrow}\pi^*E_{j+1}\overset{\sigma_{D_{j+1}}}{\longrightarrow}\pi^*E_{j+2}{\longrightarrow}\dots
\]
is exact, \textit{i.e.}, if $\im\sigma_{D_j}=\ker\sigma_{D_{j+1}}$ for all $j$.  
Note that in general, even for elliptic complexes, $\Delta_j$ is not an elliptic operator.

There is, however, a different formally self-adjoint operator naturally associated to the elliptic complex, which is always elliptic (see \cite[Section I]{V}).
Focusing just on two consecutive maps in the complex
\[
\Gamma(M,E_1)\overset{D_1}{\longrightarrow}\Gamma(M,E_{2})\overset{D_{2}}{\longrightarrow}\Gamma(M,E_{3}),
\]
For $j=1,2$ we set
\[
l_j:=\frac{\lcm(k_1,k_2)}{k_j},\ \ \ r:=k_1l_1=k_2l_2=\lcm(k_1,k_2),
\]
and define the operator
\begin{equation}\label{eq defin ell lapl}
\square:=(D_1D_1^*)^{l_1}+(D_2^*D_2)^{l_2}
\end{equation}
of order $2r$, which is clearly formally self-adjoint.
To see that $\square$ is elliptic, one can argue similarly to \cite[Lemma 9.4.2]{Pa}.

Finally, we state the following result about elliptic regularity, whose proof follows, \textit{e.g.}, from \cite[Lemma 1.1.17]{L}.
Let $(E_1,h_1),(E_2,h_2)$ be Hermitian vector bundles, and let $P:\Gamma(M,E_1)\to \Gamma(M,E_2)$ be a differential operator.
We say that the section $u$ is a \emph{weak solution} of $Pu=v$ if $u\in L^1_{loc}E_1$, $v\in L^1_{loc}E_2$ and 
\begin{equation*}
\la u,P^*w\ra=\la v,w\ra\ \ \ \forall w\in\Gamma_0(M,E_2).
\end{equation*}
\begin{theorem}[Elliptic regularity]\label{thm ell-reg}
Given an oriented Riemannian manifold $(M,g)$, let $(E_1,h_1)$, $(E_2,h_2)$ be a pair of Hermitian vector bundles over $M$ and let $P:\Gamma(M,E_1)\to \Gamma(M,E_2)$ be an elliptic differential operator. If $u\in L^1_{loc}E_1$, $u$ is a weak solution of $Pu=v$ and $v$ is smooth, then $u$ must be smooth.
\end{theorem}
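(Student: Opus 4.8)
The statement is local and diffeomorphism/bundle-chart invariant, so the plan is first to reduce it to a computation in a single chart. Since smoothness of $u$ can be checked near each point, I would fix $x_0\in M$, choose an open chart $\Omega\ni x_0$ over which both bundles trivialize as $E_{1|\Omega}\simeq\Omega\times\C^{r_1}$ and $E_{2|\Omega}\simeq\Omega\times\C^{r_2}$, and read $P$ in these trivializations as a matrix of scalar differential operators of order $\le l$ acting on $\C^{r_1}$-valued functions. Ellipticity of $P$ then says precisely that the symbol matrix $\sigma_P(x,\xi)\in\Hom(\C^{r_1},\C^{r_2})$ is invertible for every $x\in\Omega$ and every $0\ne\xi$ (in particular $r_1=r_2$). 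Moreover, the weak-solution hypothesis $\la u,P^*w\ra=\la v,w\ra$ for all $w\in\Gamma_0(M,E_2)$ is exactly the statement that $Pu=v$ in the sense of distributions on $\Omega$.

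Next I would record that $u$ is, locally, a distribution lying in some Sobolev space of finite (possibly negative) order. Since $u\in L^1_{loc}E_1$, after multiplying by a cut-off $\chi\in\cinf_0(\Omega)$ with $\chi\equiv 1$ near $x_0$, the section $\chi u$ is a compactly supported distribution; pairing against test sections and using the Sobolev embedding $H^s\hookrightarrow L^\infty$ for $s>m/2$ shows $\chi u\in H^{-s}$ for some $s>m/2$. Thus $u\in H^{-s}_{loc}$ near $x_0$ for some finite $s$, which is the input required by the elliptic machinery.

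The heart of the argument is the parametrix construction. Because $P$ is elliptic on $\Omega$, I would invert its matrix symbol for large $\xi$ to build a properly supported pseudodifferential operator $Q$ of order $-l$ satisfying $QP=\mathrm{Id}+R$ with $R$ a smoothing operator (order $-\infty$). Applying $Q$ to the distributional identity $Pu=v$ gives $Qv=QPu=(\mathrm{Id}+R)u$, that is,
\[
u=Qv-Ru
\]
on a slightly smaller neighbourhood of $x_0$. Since $R$ is smoothing it sends the distribution $u$ to a smooth section, so $Ru\in\cinf$; and since $v$ is smooth and pseudodifferential operators preserve smoothness, $Qv\in\cinf$. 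Hence $u$ is smooth near $x_0$, and as $x_0$ was arbitrary we conclude $u\in\Gamma(M,E_1)$.

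The main obstacle, and the substantive content packaged in the cited \cite[Lemma 1.1.17]{L}, is precisely this parametrix construction (equivalently, the a priori elliptic estimate $\lv u\rv_{H^{s+l}(K)}\le C(\lv Pu\rv_{H^s}+\lv u\rv_{H^{-N}})$). This is where ellipticity is indispensable: invertibility of $\sigma_P(x,\xi)$ for $\xi\ne0$ is exactly what lets the leading symbol of $Q$ be taken as $\sigma_P^{-1}$, after which the lower-order terms of $Q$ are determined recursively so as to cancel all positive-order error terms, leaving only the smoothing remainder $R$. If one prefers to avoid the pseudodifferential calculus, the same conclusion follows by the difference-quotient method of Nirenberg, bootstrapping the Sobolev a priori estimate to gain $l$ derivatives at each step: starting from $u\in H^{-s}_{loc}$ and $Pu=v\in H^t_{loc}$ for all $t$, one obtains $u\in H^{-s+kl}_{loc}$ for every $k$, hence $u\in\bigcap_t H^t_{loc}$, and Sobolev embedding gives $u\in\cinf$.
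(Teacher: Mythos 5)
Your argument is correct and is the standard proof of interior elliptic regularity; note that the paper itself offers no proof of this theorem, deferring entirely to the cited \cite[Lemma 1.1.17]{L}, so there is nothing in the text to diverge from. The one point worth spelling out if you wrote this in full is the localisation step: applying the parametrix to $P(\chi u)=\chi v+[P,\chi]u$ produces a commutator term that is not smooth globally, and you need pseudolocality (or properly supported operators plus the fact that $[P,\chi]u$ is supported where $d\chi\ne0$) to conclude smoothness on the smaller neighbourhood where $\chi\equiv1$ --- your phrase ``on a slightly smaller neighbourhood'' shows you are aware of this, and the bootstrapping alternative you sketch handles it equally well.
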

%$If P_w u$ is smooth then $P_w u$ exists and by Les96, therefore  of $u$ exists 

\section{Elliptic complexes on complex manifolds}\label{sec complex manifold}
Good references for the content of this section are \cite{De} and \cite{S}.
Let $M$ be a complex manifold of complex dimension $n$. We will use $\Lambda^r M$, $\Lambda^{r}_{\mathbb C}M$ and $\Lambda^{p,q} M$ to denote the bundles of real-valued $r$-forms, complex-valued $r$-forms, and $(p,q)$-forms, respectively. When the manifold is obvious from context, we will omit it and simply write $\Lambda^r$, 
 $\Lambda^{r}_{\mathbb C}$ and $\Lambda^{p,q}$. The spaces of smooth sections of these bundles are then given by
$$A^r(M) := \Gamma(M,\Lambda^rM), \quad  A^r_{\mathbb C}(M) := \Gamma(M,\Lambda^r_{\mathbb{C}} M), \quad A^{p,q}(M) := \Gamma(M,\Lambda^{p,q}M), $$
or simply $A^r$, $A^r_{\mathbb C}$ and $A^{p,q}$.

The exterior derivative on $A^{p,q}$ splits into two bidegree components
\[
d=\del+\delbar,
\]
where \[
\del:A^{p,q}\rightarrow A^{p+1,q},\quad \delbar:A^{p,q}\rightarrow  A^{p,q+1},
\] 
therefore the relation $d^2=0$ immediately implies
\[
\del^2=0,\ \ \ \delbar^2=0,\ \ \ \del\delbar+\delbar\del=0.
\]
Thanks to these relations there are complexes of differential forms which are naturally associated to a complex manifold.

The first complex we are interested in is the Dolbeault complex, given by
\[
\dots\longrightarrow A^{p,q-1}\overset{\delbar}{\longrightarrow} A^{p,q}\overset{\delbar}{\longrightarrow} A^{p,q+1}{\longrightarrow}\dots
\]
for all $0\le p,q\le n$, along with 
the associated Dolbeault cohomology
\[
H^{p,q}_\delbar:=\frac{\ker\delbar\cap A^{p,q}}{\im\delbar\cap A^{p,q}}.
\]
There is an analogous complex for the map $\del$, with the associated cohomology $H^{p,q}_\del$.

The second complex we are interested in is the Aeppli-Bott-Chern complex, given by
\begin{equation}\label{eq bc complex}
\begin{tikzcd}
\dots\arrow[d]\\
 A^{p-1,q-2}\oplus A^{p-2,q-1}\arrow[d,"\delbar \oplus \del"]\\
 A^{p-1,q-1}\arrow[d,"\del\delbar"]\\
 A^{p,q}\arrow[d,"\del+\delbar"]\\
 A^{p+1,q}\oplus A^{p,q+1}\arrow[d]\\
\dots
\end{tikzcd}
\end{equation}
for all $0\le p,q\le n$. The complete definition of the ABC complex is given in Section \ref{sec questions} (\textit{cf}. \cite[Chapter VI, Section 12.1]{De}). The Bott-Chern and Aeppli cohomologies
\[
H^{p,q}_{BC}:=\frac{\ker \del\cap\ker\delbar\cap A^{p,q}}{\im\del\delbar\cap A^{p,q}},\ \ \ H^{p-1,q-1}_A:=\frac{\ker \del\delbar\cap A^{p-1,q-1}}{\im\delbar\oplus\del\cap A^{p-1,q-1}}
\]
can be obtained from this complex.
 
There are well-defined maps between these cohomology spaces, induced by the identity on the representatives of the cohomology classes. Specifically we have the following commutative diagram of maps
\begin{equation}\label{diagram maps cohomology}
\begin{tikzcd}
{}&{H^{\bullet,\bullet}_{BC}}\arrow[dl]\arrow[d]\arrow[dr] &{}\\
{H^{\bullet,\bullet}_\del}\arrow[dr]&{H^\bullet_{dR}}\arrow[d] &{H^{\bullet,\bullet}_\delbar}\arrow[dl]\\
{}&{H^{\bullet,\bullet}_A}&{}
\end{tikzcd}
\end{equation}
where $H^k_{dR}$ is the de Rham cohomology.
Moreover, by \cite[Lemma 5.15, Remark 5.16]{DGMS}, these maps are all isomorphisms if and only if one of the following equivalent conditions holds on $A^\bullet_\C$:
\begin{enumerate}[a)]
\item $\im\del\delbar=\ker\del\cap\ker\delbar\cap\im d$;
\item $\im\del\delbar=\ker\del\cap\im\delbar$;
\item $\im\del\delbar=\ker\del\cap\ker\delbar\cap(\im\del+\im\delbar)$;
\item $\ker\del\delbar=\im\del+\im\delbar+\ker d$;
\item $\ker\del\delbar=\im\delbar+\ker\del$;
\item $\ker\del\delbar=\im\del+\im\delbar+(\ker\del\cap\ker\delbar)$.
\end{enumerate}
Note that when condition a) is satisfied it is often said that the $\del\delbar$-Lemma holds.

Using linear algebra one can show that the Dolbeault and the Aeppli-Bott-Chern complexes are both elliptic, see \cite[Section II]{V} or \cite[Lemma 2]{Ste}. Therefore, once we fix a Hermitian metric for differential forms, there are naturally associated elliptic and formally self-adjoint operators as defined in \eqref{eq defin ell lapl}.

Let $(M,g)$ be a Hermitian manifold of complex dimension $n$, that is a complex manifold $M$ endowed with a Hermitian metric $g$. Recall that a Hermitian metric on a complex manifold is a Riemannian metric $g$ for which the complex structure $J$ is an isometry, \textit{i.e.}, $g(J\cdot,J\cdot)=g(\cdot,\cdot)$. We will generally denote by $\omega$ the fundamental $(1,1)$-form associated to the metric $g$, which is defined by $\omega(\cdot,\cdot):=g(J\cdot,\cdot)$.  We will also typically denote by $h$ the Hermitian extension of $g$ on the complexified tangent bundle $T^\C M=TM\otimes\C=T^{1,0}M\oplus T^{0,1}M$, and by the same symbol $g$ the $\C$-bilinear symmetric extension of $g$ on $T^\C M$. Consequently, we have $h(u,v)=g(u,\bar{v})$ for all $u,v\in \Gamma(M,T^{1,0}M)$. Note that the standard Riemannian volume form can be computed by $\vol=\frac{\omega^n}{n!}$.

%We will consider only manifolds without boundary.

The Hermitian metric $g$ extends to a Hermitian metric $h$ on $\Lambda^{p,q}M$, defined pointwise as follows. At any point $x \in M$, we choose a basis $\{v_1,\dots,v_n\}$ of $T_x^{1,0}M$, such that $h_x(v_i,{v}_j)=\delta_{ij}$. Take the dual basis $\{\alpha_1,\dots,\alpha_n\}$ of $\Lambda_x^{1,0}$ and define $h_x$ on $\Lambda_x^{1,0}$ in such a way that $h_x(\alpha_i,{\alpha}_j)=\delta_{ij}$. We then extend the Hermitian metric to $\Lambda_x^{p,q}$ by setting
\begin{equation*}\label{eq-metric-forms}
h_x(\alpha^{i_1}\wedge\dots\wedge\c\alpha^{j_q},\alpha^{k_1}\wedge\dots\wedge\c\alpha^{h_q})=\delta_{i_1 k_1}\dots\delta_{j_q h_q}.
\end{equation*}
Then, $L^2 \Lambda^{p,q}$ is the space of (equivalence classes of almost everywhere equal) measurable $(p,q)$-forms $\phi$, such that
\begin{equation*}
\lVert \phi\rVert:=\Big(\int_M h(\phi,\phi) \vol\Big)^\frac12<\infty.
\end{equation*}
Pairing $L^2 \Lambda^{p,q}$ with the Hermitian inner product
\begin{equation*}
\la\varphi,\psi\ra :=\int_M h(\varphi,\psi) \vol,
\end{equation*}
we get a Hilbert space. The space $L^2 \Lambda^{p,q}$ can be also seen as the completion of $A^{p,q}_0:= \Gamma_0(M,\Lambda^{p,q})$, the space of smooth $(p,q)$-forms with compact support, with respect to the norm $\lVert\cdot\rVert$.

The complex $\C$-linear Hodge operator $*: A^{p,q}\to  A^{n-q,n-p}$ associated with the metric is defined by the equation
\[
\alpha\wedge*\c\beta=h(\alpha,\beta)\vol
\]
for all $\alpha,\beta\in A^{p,q}$. We set also
\begin{equation*}
\delbar^*:=-*\del*\ \ \ \del^*:=-*\delbar *,\ \ \ d^*:=-*d*,
\end{equation*}
which are the $L^2$-formal adjoints respectively of $\del,\delbar,d$ by a direct application of the Stokes Theorem.

Note that the Laplacians defined in \eqref{eq defin non-ell lapl} and \eqref{eq defin ell lapl} coincide, and so there is a single elliptic and formally self-adjoint Laplacian associated to the Dolbeault complex,
\[
\Delta_{\delbar}:=\delbar\delbar^*+\delbar^*\delbar,
\]
known as the {\em Dolbeault Laplacian}. 
Similarly we define
\begin{equation*}
\Delta_{\del}:=\del\del^*+\del^*\del,\ \ \ \Delta_{d}:=dd^*+d^*d,
\end{equation*}
which are respectively called {\em $\del$-Laplacian} and the {\em Hodge Laplacian}.

For the Aeppli-Bott-Chern complex we obtain different Laplacians from \eqref{eq defin non-ell lapl} and \eqref{eq defin ell lapl}. Note that the formal adjoint of the map $\del+\delbar$ in \eqref{eq bc complex} is $\del^*\oplus\delbar^*$, while the formal adjoint of $\delbar\oplus\del$ in \eqref{eq bc complex} is $\del^*+\delbar^*$.
 The operators
\[
\Delta_{BC}:=\del\delbar\delbar^*\del^*+(\del^*\oplus\delbar^*)(\del+\delbar)=\del\delbar\delbar^*\del^*+\del^*\del+\delbar^*\delbar,
\]
\[
\Delta_A:=\delbar^*\del^*\del\delbar+(\delbar\oplus\del)(\del^*+\delbar^*)=\delbar^*\del^*\del\delbar+\del\del^*+\delbar\delbar^*,
\]
acting respectively on $A^{p,q}$ and $A^{p-1,q-1}$,
are the Laplacians given by \eqref{eq defin non-ell lapl}. Recall that they are formally self-adjoint but, as noted in \cite{S}, they are not elliptic.
Whereas, the operators
\[
\square_{BC}:=\del\delbar\delbar^*\del^*+((\del^*\oplus\delbar^*)(\del+\delbar))^2=\del\delbar\delbar^*\del^*+(\del^*\del+\delbar^*\delbar)^2,
\]
\[
\square_{A}:=\delbar^*\del^*\del\delbar+((\delbar\oplus\del)(\del^*+\delbar^*))^2=\delbar^*\del^*\del\delbar+(\del\del^*+\delbar\delbar^*)^2,
\]
acting respectively on $A^{p,q}$ and $A^{p-1,q-1}$, are the Laplacians given by \eqref{eq defin ell lapl} and are both formally self-adjoint and elliptic \cite[Section 2]{V}.

Moreover, Kodaira and Spencer \cite{KS} have introduced the following fourth order elliptic and formally self-adjoint differential operators usually referred to as the {\em Bott-Chern} and {\em Aeppli Laplacians}, which are defined respectively as
\begin{equation*}
\tilde\Delta_{BC} :=
\del\delbar\delbar^*\del^*+
\delbar^*\del^*\del\delbar+\del^*\delbar\delbar^*\del+\delbar^*\del\del^*\delbar
+\del^*\del+\delbar^*\delbar,
\end{equation*}
\begin{equation*}
\tilde\Delta_{A} :=
\del\delbar\delbar^*\del^*+
\delbar^*\del^*\del\delbar+
\del\delbar^*\delbar\del^*+\delbar\del^*\del\delbar^*+
\del\del^*+\delbar\delbar^*.
\end{equation*}

All these Laplacians are linked by the following duality relations
\begin{align}
 *\Delta_{A}&=\Delta_{BC}*,& *\Delta_{BC}&=\Delta_{A}*,\notag\\
 *\tilde\Delta_{A}&=\tilde\Delta_{BC}*,& *\tilde\Delta_{BC}&=\tilde\Delta_{A}*,\label{eq duality bc a}\\
 *\square_{A}&=\square_{BC}*,& *\square_{BC}&=\square_{A}*.\notag
\end{align}

On a compact complex manifold, it is straightforward to see that
\[
\ker \Delta_\delbar=\ker\delbar\cap\ker\delbar^*,\ \ \ \ker \Delta_\del=\ker\del\cap\ker\del^*,\ \ \  \ker \Delta_d=\ker d\cap\ker d^*,
\]
\[
\ker \Delta_{BC}=\ker \tilde\Delta_{BC}=\ker \square_{BC}=\ker\delbar^*\del^*\cap\ker \del\cap\ker\delbar,
\]
\[
\ker \Delta_{A}=\ker \tilde\Delta_{A}=\ker \square_{A}=\ker\del\delbar\cap\ker \del^*\cap\ker\delbar^*.
\]

Notice that the kernels  of the Bott-Chern Laplacians $\Delta_{BC}, \tilde{\Delta}_{BC}$ and $\square_{BC}$ all coincide, and likewise for the Aeppli Laplacian.
The respective spaces of harmonic forms will be denoted by
\[
\H^k_{d}:=\ker\Delta_d\cap A^k,\ \ \ \H^{p,q}_{\delta}:=\ker\Delta_\delta\cap A^{p,q},
\]
for $\delta\in\{\del,\delbar,BC,A\}$, as a consequence of Hodge theory they are finite dimensional and isomorphic to the respective cohomology spaces
\[
\H^k_d\simeq H^{k}_{dR},\ \ \ \H^{p,q}_\delta\simeq H^{p,q}_\delta.
\]
Their dimensions will be indicated by
\[
b^k:=\dim_\R H^{k}_{dR},\ \ \ h^{p,q}_{\delta}:=\dim_\C H^{p,q}_\delta.
\]

If $(M,g)$ is a K\"ahler manifold (\textit{i.e.}, $d \omega = 0$, where $\omega$ is the fundamental form), it is well known that the second order Laplacians coincide up to a factor,
\begin{equation}
    \label{eq laplacians 2order kahler}
\Delta_{d}=2\Delta_{\del}=2\Delta_{\delbar}.
\end{equation}
Furthermore, using the K\"ahler identities, see, \textit{e.g.}, \cite[Chapter VI, Theorem 6.4]{De}, we know that $\del$ and $\delbar^*$ anticommute, as do $\del^*$ and $\delbar$. This allows us to write $\tilde\Delta_{BC}$ and $\tilde\Delta_{A}$ in a more concise form,
\begin{gather}
\label{eq laplacian bc kahler}
\tilde\Delta_{BC}=\Delta_{\delbar}\Delta_{\delbar}+
\del^*\del+\delbar^*\delbar,\\
\label{eq laplacian a kahler}
\tilde\Delta_{A}= \Delta_{\delbar}\Delta_{\delbar}+
\del\del^*+\delbar\delbar^*.
\end{gather}
As a consequence, all the spaces of harmonic forms coincide
\[
\H^k_d\otimes\C\cap A^{p,q}=\H^{p,q}_{\del}=\H^{p,q}_{\delbar}=\H^{p,q}_{BC}=\H^{p,q}_{A}.
\]
In particular, this implies that the $\del\delbar$-Lemma holds, see \cite[Lemma 5.11, Remark 5.14]{DGMS}.

\section{Unbounded operators on Hilbert spaces}\label{sec hilb}
We recall some concepts from the theory of unbounded operators on Hilbert spaces. For a complete treatment we refer to \cite[Chapter 1]{F} or \cite[Chapter 8]{RS1}. %We will follow also \cite[Chapter VIII, Section 1]{De} and \cite[Section 2.2]{A}.
 If $\H_1,\H_2$ are Hilbert spaces, a \emph{linear operator} $P:\H_1\to\H_2$ is a linear function $P:\D(P)\to\H_2$, defined on a \emph{domain} $\D(P)\subseteq\H_1$.
The \emph{graph} of a linear operator $P:\H_1\to\H_2$ is the subspace of $\mathcal{H}_1 \times \mathcal{H}_2$ given by $\gr(P):=\{(x,Px)\in\H_1\times\H_2\,|\,x\in\D(P)\}$. A linear operator is \emph{closed} if its graph is a closed subspace. 
By the closed graph theorem, a closed linear operator which is defined everywhere on $\mathcal{H}_1$ is automatically bounded, therefore when dealing with an unbounded operator we must always keep track of its domain.

The \emph{kernel} of a linear operator $P:\H_1\to\H_2$ with domain $\D(P)$ is the subspace $\ker P:=\{x\in\D(P)\,|\,Px=0\}$, while its \emph{image} is $\im P:=P(\D(P))$. If $P$ is closed, then its kernel is a closed subspace.

An \emph{extension} of $P$ is a linear operator $P'$ such that $\D(P)\subseteq\D(P')$ and $Px=P'x$ for every $x\in\D(P)$, \textit{i.e.}, $\gr(P)\subseteq\gr(P')$; in this case we will write $P\subseteq P'$. A linear operator $P$ is \emph{closable} if it admits a closed extension. Every closable operator $P$ has a smallest closed extension, denoted by $\c{P}$, which is called its \emph{closure}, and is given by the intersection of the graphs of all closed extensions. Note that a linear subspace $G$ of $\H_1\times\H_2$ is a graph of a linear operator so long as $(0,y) \notin G$ for all non-zero $y\in \mathcal{H}_2$. If $P$ is closable then $\gr(\c{P})=\c{\gr(P)}$, \cite[p. 250]{RS1}.

If $\D(P)$ is dense in $\H_1$, then we say that the linear operator $P$ is \emph{densely defined}, and we can define the \emph{Hilbert adjoint}, or simply the \emph{adjoint}, of $P$, indicated by $P^t:\H_2\to\H_1$. Its domain is
\begin{equation*}
\D(P^t):=\{y\in\H_2\,|\,x\mapsto\la Px,y\ra_2\text{ is bounded on }\D(P)\},
\end{equation*}
where $\la\cdot,\cdot\ra_i$ denotes the Hermitian inner product of the Hilbert space $\H_i$.
The adjoint is then defined by the relation
\begin{equation*}
\la Px,y\ra_2=\la x,P^ty\ra_1\ \forall x\in\D(P)\  \forall y\in\D(P^t).
\end{equation*}
Indeed, if $y \in\D(P^t)$, then the map $x\mapsto\la Px,y\ra_2$ can be uniquely extended from a densely defined function on $\D(P)$ to a bounded linear function on all of $\mathcal{H}_1$. By the Riesz representation theorem, there exists a unique $z \in \H_1$ such that $\la Px,y\ra_2=\la x,z\ra_1$, for all $x \in \D(P)$. We then define $P^t y := z$.
%If $y\in\D(P^t)$, then the function $x\mapsto\la Px,y\ra_2$ for $x\in\D(P)$ can be uniquely extended, by density, to a linear and continuous function defined on $\H_1$ with the expression $x\mapsto\la x,z\ra_1$, for a certain $z\in\H_1$. Therefore the adjoint
% $P^ty:=z$ is defined by the relation
%\begin{equation*}
%\la Px,y\ra_2=\la x,P^ty\ra_1\ \forall x\in\D(P)\  \forall y\in\D(P^t).
%\end{equation*}

We can verify that $\gr(P^t)=(\gr(-P))^\perp$ in $\H_1\times\H_2$, thus this definition makes $P^t$ a closed linear operator. If $P$ is closed, this implies that every pair $(u,v)\in\H_1\times\H_2$ can be written as the sum of elements in $\gr(P^t)$ and $\gr(-P)$,
\[
(u,v)=(x,-Px)+(P^ty,y),\ \ \ x\in\D(P),\ y\in\D(P^t).
\]
If $u=0$, then we have
\[
x+P^ty=0,\ \ \ v=y-Px=y+PP^ty,\ \ \ \la v,y\ra_2=\lv y\rv_2^2+\lv P^ty\rv_1^2,
\]
where $\lv\cdot\rv_i:=\la\cdot,\cdot\ra_i^{\frac12}$. If $v\in\D(P^t)^\perp$, we get $\la v,y\ra_2=0$, thus $y=0$ and $v=0$. Therefore, $P^t$ is densely defined and so we can define $P^{tt}$. In fact, we can verify that $P^{tt}=P$. %Moreover,  we have the relation $\ker P^t=\im P^\perp$ and its dual $\ker P^\perp=\c{\im P^t}$.

The above discussion (see also \cite[Theorem VIII.1]{RS1}) implies the following result.
\begin{lemma}%[{\cite[Chapter VIII, section 1]{De}}]
\label{lem dem adjoint ker im}
Let $\H_1,\H_2$ be two complex Hilbert spaces and $P:\H_1\to\H_2$ be a linear densely defined operator. If $P$ is closed, then its adjoint $P^t$ is closed and densely defined, $P^{tt}=P$ and 
\[
\ker P^t=\im P^\perp\ \ \ \ \ \ker P^\perp=\c{\im P^t}.
 \]
Moreover $P$ is closable iff $\D(P^t)$ is dense, in which case $(\c{P})^t=P^t$ and $\c{P}=P^{tt}$.
\end{lemma}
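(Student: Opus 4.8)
The entire argument rests on the single structural identity, established in the discussion preceding the statement, that for a densely defined $P$ the graph of the adjoint is the orthogonal complement $\gr(P^t)=(\gr(-P))^\perp$, once $\gr(P^t)\subseteq\H_2\times\H_1$ is identified with $\{(P^ty,y):y\in\D(P^t)\}\subseteq\H_1\times\H_2$. First I would read off that $P^t$ is closed with no hypothesis on $P$ beyond dense definition: an orthogonal complement is always a closed subspace. Next, assuming $P$ closed, $\gr(-P)$ is closed and hence $\H_1\times\H_2=\gr(-P)\oplus\gr(P^t)$ is an orthogonal direct sum; feeding a pair $(0,v)$ into this decomposition reproduces exactly the computation $v=y+PP^ty$, $\la v,y\ra=\lv y\rv^2+\lv P^ty\rv^2$ displayed above, which forces $v=0$ whenever $v\perp\D(P^t)$. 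Thus $\D(P^t)$ is dense, $P^{tt}$ is defined, and re-applying the graph identity to $P^t$ together with the closedness of $\gr(P)$ yields $\gr(P^{tt})=\gr(P)$, i.e. $P^{tt}=P$.

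For the kernel--image relations I would argue directly. A vector $y$ lies in $\ker P^t$ iff $y\in\D(P^t)$ and $P^ty=0$; by the adjoint relation $\la Px,y\ra=\la x,P^ty\ra$ this is equivalent to $\la Px,y\ra=0$ for all $x\in\D(P)$, i.e. $y\in(\im P)^\perp$, giving $\ker P^t=(\im P)^\perp$. Applying this same identity to the closed, densely defined operator $P^t$ gives $\ker P^{tt}=(\im P^t)^\perp$; substituting $P^{tt}=P$, taking orthogonal complements, and using that the double complement of a subspace is its closure yields $\ker P^\perp=\c{\im P^t}$.

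Finally, for closability: $P$ is closable exactly when $\c{\gr(P)}$ contains no $(0,v)$ with $v\neq0$. Since $(\gr(P^t))^\perp=\c{\gr(-P)}$, which is $\c{\gr(P)}$ up to the harmless sign flip on the second factor, a pair $(0,v)$ lies there iff $\la v,y\ra=0$ for every $y\in\D(P^t)$, i.e. iff $v\perp\D(P^t)$; hence $P$ is closable iff $\D(P^t)$ is dense. In that case $P^t$ is closed and densely defined, so the argument above applied to $P^t$ gives that $P^{tt}$ is closed with $\gr(P^{tt})=\c{\gr(P)}$, whence $\c P=P^{tt}$, and then $(\c P)^t=P^{ttt}=P^t$ using $(P^t)^{tt}=P^t$. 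I expect the only real friction to be notational: keeping track of the unitary identification between $\H_1\times\H_2$ and $\H_2\times\H_1$ with the accompanying sign flip on the second factor, and being careful that a double orthogonal complement returns the \emph{closure} of a subspace rather than the subspace itself.
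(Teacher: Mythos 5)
Your proposal is correct and follows essentially the same route as the paper, which does not give a separate proof but derives the lemma from the preceding discussion (the graph identity $\gr(P^t)=(\gr(-P))^\perp$ and the $(0,v)$-decomposition computation) together with a citation to Reed--Simon; you have simply written out that argument in full, including the kernel--image identities and the closability criterion, with the sign and double-complement bookkeeping handled correctly.
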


Given two densely defined linear operators $P,Q$, it holds that
\begin{equation*}\label{eq extension adjoint}
P\subseteq Q\ \ \ \iff\ \ \ Q^t\subseteq P^t,
\end{equation*}
\begin{equation*}\label{eq inclusion im ker adjoint}
\im P\subseteq \ker Q\ \ \ \iff\ \ \ \im Q^t\subseteq \ker P^t.
\end{equation*}
It follows that any Hilbert complex
\[
\H_1\overset{P}{\longrightarrow}\H_2\overset{Q}{\longrightarrow}\H_3,
\]
\textit{i.e.}, any sequence of closed and densely defined linear operators $P,Q$, such that $\im P\subseteq \ker Q$,
has an associated \emph{dual Hilbert complex}
\[
\H_1\overset{P^t}{\longleftarrow}\H_2\overset{Q^t}{\longleftarrow}\H_3.
\]
Another fundamental property of Hilbert complexes is the following orthogonal decomposition.

\begin{theorem}%[{\cite[Chapter VIII, Theorem (1.2)]{De}}]
\label{thm decomp hilb dem}
Let $P:\H_1\to\H_2$ and $Q:\H_2\to\H_3$ be closed and densely defined linear operators between Hilbert spaces. If $\im P\subseteq\ker Q$, then we have the orthogonal decompositions
\begin{align*}
\H_2&=\ker Q\cap\ker P^t\oplus\c{\im P}\oplus\c{\im Q^t},\\
\ker Q&=\ker Q\cap\ker P^t\oplus\c{\im P},\\
\ker P^t &= \ker Q \cap \ker P^t \oplus \c{\im Q^t}.
\end{align*}
\end{theorem}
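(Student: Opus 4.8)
The plan is to deduce all three splittings from the two elementary orthogonal decompositions furnished by Lemma \ref{lem dem adjoint ker im}, and then to cut them down by intersecting with $\ker Q$ and $\ker P^t$. First I would record that, since $P$ and $Q$ are closed and densely defined, Lemma \ref{lem dem adjoint ker im} gives $\ker P^t=(\im P)^\perp$ and $(\ker Q)^\perp=\c{\im Q^t}$. Taking orthogonal complements (and recalling that the double complement of a subspace equals its closure), these become the two orthogonal decompositions
\[
\H_2=\ker P^t\oplus\c{\im P},\qquad \H_2=\ker Q\oplus\c{\im Q^t}.
\]
These are the only substantive inputs.

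Next I would bring in the hypothesis. The equivalence $\im P\subseteq\ker Q\iff\im Q^t\subseteq\ker P^t$ displayed just before the statement converts $\im P\subseteq\ker Q$ into $\im Q^t\subseteq\ker P^t$. Since kernels of closed operators are closed, both inclusions pass to the closures, giving $\c{\im P}\subseteq\ker Q$ and $\c{\im Q^t}\subseteq\ker P^t$.

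The heart of the argument is then a one-line restriction principle: whenever $\H_2=A\oplus B$ is an orthogonal splitting and $B\subseteq V$ for a closed subspace $V\subseteq\H_2$, one has $V=(V\cap A)\oplus B$, since any $v\in V$ decomposes as $v=a+b$ with $a\in A$ and $b\in B\subseteq V$, forcing $a=v-b\in V\cap A$, and orthogonality is inherited from $A\perp B$. Applying this to $\H_2=\ker P^t\oplus\c{\im P}$ with $V=\ker Q$ (legitimate since $\c{\im P}\subseteq\ker Q$) yields the second asserted decomposition
\[
\ker Q=(\ker Q\cap\ker P^t)\oplus\c{\im P},
\]
and applying it to $\H_2=\ker Q\oplus\c{\im Q^t}$ with $V=\ker P^t$ (since $\c{\im Q^t}\subseteq\ker P^t$) yields the third,
\[
\ker P^t=(\ker Q\cap\ker P^t)\oplus\c{\im Q^t}.
\]
Substituting this last splitting into $\H_2=\ker P^t\oplus\c{\im P}$ produces the full three-term decomposition $\H_2=(\ker Q\cap\ker P^t)\oplus\c{\im P}\oplus\c{\im Q^t}$, with all three summands closed and mutually orthogonal.

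I do not expect a genuine obstacle here: the statement is essentially a bookkeeping consequence of Lemma \ref{lem dem adjoint ker im}. The only points demanding care are the passage from $\ker P^t=(\im P)^\perp$ to $(\ker P^t)^\perp=\c{\im P}$ (i.e. remembering to close the image, since $\im P$ need not be closed) and the observation that orthogonality of the three pieces is automatic once the restriction principle is in place, because $\c{\im P}\subseteq\ker Q=(\c{\im Q^t})^\perp$ gives $\c{\im P}\perp\c{\im Q^t}$, while the harmonic part $\ker Q\cap\ker P^t$ lies in both $(\im P)^\perp$ and $(\c{\im Q^t})^\perp$ by construction.
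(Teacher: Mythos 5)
Your proof is correct and follows essentially the same route as the paper: both start from the two orthogonal decompositions $\H_2=\ker P^t\oplus\c{\im P}$ and $\H_2=\ker Q\oplus\c{\im Q^t}$ supplied by Lemma \ref{lem dem adjoint ker im} and then restrict to the closed subspaces $\ker Q$ and $\ker P^t$, which contain $\c{\im P}$ and $\c{\im Q^t}$ respectively. The only cosmetic difference is that the paper obtains the restricted splitting by applying the lemma a second time to the Hilbert space $\ker Q$, whereas you make the restriction step explicit with an elementary decomposition argument.
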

\begin{proof}
By Lemma \ref{lem dem adjoint ker im}, the Hilbert space $\H_2$ has an orthogonal decomposition
\[
\H_2=\ker Q\oplus\c{\im Q^t},
\]
and analogously
\[
\H_2=\ker P^t\oplus\c{\im P}.
\]
Furthermore, since $\ker Q$ is a closed subspace of $\mathcal{H}_2$, it is itself a Hilbert space with $\c{\im P} \subseteq \ker Q$. Again by Lemma \ref{lem dem adjoint ker im}, we see that $\ker Q$ has an orthogonal decomposition
\[
\ker Q=\ker Q\cap\ker P^t\oplus\c{\im P}.
\]
The last decomposition follows similarly.
\end{proof}

The following basic lemma characterises the closure of the image of a closed and densely defined operator. We include a proof for the convenience of the reader.
\begin{lemma}\label{lemma im closed}
Let $P:\H_1\to\H_2$ be a closed and densely defined linear operator between Hilbert spaces. The following conditions are equivalent:
\begin{enumerate}[label=\upshape{\alph*)}]
\item $\im P$ is closed;
\item $\exists C>0 \text{ s.t. } \lv x\rv_1\le C\lv Px\rv_2$ for all $x\in\D(P)\cap \c{\im P^t}$;
\item $\im P^t$ is closed;
\item $\exists C>0 \text{ s.t. } \lv y\rv_2\le C\lv P^ty\rv_1$ for all $y\in\D(P^t)\cap \c{\im P}$.
\end{enumerate}
\end{lemma}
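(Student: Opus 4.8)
The plan is to prove the four conditions equivalent through a single cycle of implications, exploiting the symmetry between $P$ and $P^t$. By Lemma \ref{lem dem adjoint ker im} the adjoint $P^t$ is again closed and densely defined with $(P^t)^t=P$, and $\c{\im (P^t)^t}=\c{\im P}$; hence conditions c) and d) are precisely conditions a) and b) stated for $P^t$ in place of $P$. Consequently it is enough to prove a) $\iff$ b) and the single implication a) $\Rightarrow$ d) for an arbitrary closed densely defined operator: applying a) $\iff$ b) to $P^t$ yields c) $\iff$ d), and applying a) $\Rightarrow$ d) to $P^t$ yields c) $\Rightarrow$ b). Together these give the cycle a) $\Rightarrow$ d) $\Rightarrow$ c) $\Rightarrow$ b) $\Rightarrow$ a), which closes the argument.

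For a) $\Rightarrow$ b) I would first record the orthogonal decomposition $\D(P)=\ker P\oplus(\D(P)\cap\c{\im P^t})$, which follows from $(\ker P)^\perp=\c{\im P^t}$ (Lemma \ref{lem dem adjoint ker im}) together with $\ker P\subseteq\D(P)$. The restriction $\tilde P$ of $P$ to $\D(P)\cap\c{\im P^t}$ is then injective (its kernel lies in $\ker P\cap(\ker P)^\perp=\{0\}$) and has image $\im P$; moreover $\tilde P$ is closed, since its graph is the intersection of $\gr(P)$ with the closed subspace $\c{\im P^t}\times\H_2$. If $\im P$ is closed it is a Hilbert space, so $\tilde P^{-1}\colon\im P\to\c{\im P^t}$ is an everywhere-defined closed operator, hence bounded by the closed graph theorem; its operator norm is the constant $C$ sought in b). The converse b) $\Rightarrow$ a) is the routine Cauchy-sequence argument: given $Px_n\to y$ one may assume $x_n\in\D(P)\cap\c{\im P^t}$ after subtracting the $\ker P$-component, the estimate b) forces $(x_n)$ to be Cauchy with limit $x\in\c{\im P^t}$, and closedness of $P$ identifies $x$ with a preimage, $Px=y$.

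For a) $\Rightarrow$ d) I would use b) (already available) together with a Cauchy--Schwarz step. Let $y\in\D(P^t)\cap\c{\im P}$; since $\im P$ is closed, $y=Px$ for some $x\in\D(P)\cap\c{\im P^t}$, and then
\[
\lv y\rv_2^2=\la Px,y\ra_2=\la x,P^ty\ra_1\le\lv x\rv_1\,\lv P^ty\rv_1\le C\,\lv y\rv_2\,\lv P^ty\rv_1,
\]
whence $\lv y\rv_2\le C\lv P^ty\rv_1$, which is d). This completes the cycle described above.

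The main obstacle is the forward implication a) $\Rightarrow$ b): it is the only place where the open mapping / closed graph machinery enters, and some care is needed to verify that $\tilde P$ is genuinely closed and that its inverse is defined on all of the Hilbert space $\im P$, so that the closed graph theorem applies to $\tilde P^{-1}$. The remaining steps are either formal consequences of the symmetry $P\leftrightarrow P^t$ or standard completeness arguments, so I expect no further difficulty once the bounded-inverse estimate is secured.
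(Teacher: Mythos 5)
Your proof is correct and follows essentially the same route as the paper: the same orthogonal decomposition $\H_1=\ker P\oplus\c{\im P^t}$ plus the closed graph theorem for a) $\iff$ b), the symmetry $P\leftrightarrow P^t$ for c) $\iff$ d), and a Cauchy--Schwarz computation to bridge the two pairs. The only cosmetic difference is that the paper bridges via b) $\Rightarrow$ d) (using density of $\im P$ in $\c{\im P}$) rather than your a) $\Rightarrow$ d) (using closedness to solve $Px=y$ exactly); both work.
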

\begin{proof}
By Lemma \ref{lem dem adjoint ker im}, the Hilbert space $\H_1$ can be orthogonally decomposed as 
\begin{equation}\label{eq orth decomp P}
\H_1=\ker P\oplus \c{\im P^t},
\end{equation}
which implies that the operator $P_{|\D(P)\cap\c{\im P^t}}:\D(P)\cap\c{\im P^t}\to\im P$ is bijective, therefore its inverse is well-defined and one can check that it is also a closed operator. If a) holds, then by the closed graph theorem $(P_{|\D(P)\cap\c{\im P^t}})^{-1}$ is continuous, proving b). Conversely, if b) holds, then using \eqref{eq orth decomp P} we can show that any Cauchy sequence in $\im P$ converges in $\im P$, and thus we obtain a). Similarly c) is equivalent to d). To prove that b) implies d), note that by b)
\[
|\la y, Px\ra_2|=|\la P^ty, x\ra_1|\le C\lv P^ty\rv_1\lv Px\rv_2
\]
for all $y\in\D(P^t)$ and $x\in\D(P)\cap \c{\im P^t}$. Therefore
\[
|\la y, z\ra_2|\le C\lv P^ty\rv_1\lv z\rv_2
\]
for all $y\in\D(P^t)$ and $z\in\c{\im P}$, again using \eqref{eq orth decomp P}. Choosing $z=y$ then implies d). Since $P^{tt}=P$, a similar argument shows that d) implies b).
\end{proof}

Let $\H$ be a Hilbert space.
A densely defined linear operator $P:\H\to\H$ is called \emph{symmetric} if $\la Px,y\ra=\la x,Py\ra$ whenever $x,y\in\D(P)$, or equivalently if $P\subseteq P^t$. It is called \emph{self-adjoint} if it is symmetric and $\D(P)=\D(P^t)$, or equivalently if $P=P^t$ (with equality of domains). A symmetric operator $P$ is always closable since its adjoint is a closed extension, therefore $\c{P}=P^{tt}$ and so $P\subseteq P^{tt}\subseteq P^t$. A linear operator $P$ is  called \emph{essentially self-adjoint} if it has a unique self-adjoint extension. Equivalently, $P$ is essentially self-adjoint if $\c{P}$ is a self-adjoint operator, \cite[p. 256]{RS1}.  An operator is called \emph{positive} if $\la Px,x\ra\ge0$ whenever $x\in\D(P)$.

The following Theorems describe a method for building self-adjoint operators.

\begin{theorem}[{\cite[Theorem X.25]{RS2}} or {\cite[Theorem 2.3]{F}}]\label{thm composition with adjoint}
Let $P$ be a closed and densely defined linear operator on a Hilbert space $\H$. Then the operator $P^tP$ defined by $(P^tP)x=P^t(Px)$ on the domain
\[
\D(P^t P):=\{x\in\D(P)\,|\,Px\in\D(P^t)\}
\]
is positive and self-adjoint.
\end{theorem}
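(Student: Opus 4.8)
The plan is to follow von Neumann's classical argument, exploiting the orthogonal splitting of $\H\times\H$ into the graphs of $-P$ and $P^t$ that was recorded just before Lemma \ref{lem dem adjoint ker im}. The positivity is immediate: for $x\in\D(P^tP)$ one has $Px\in\D(P^t)$, so by the defining relation of the adjoint $\la P^tPx,x\ra=\la Px,Px\ra=\lv Px\rv^2\ge0$; the same computation gives $\la P^tPx,y\ra=\la Px,Py\ra=\la x,P^tPy\ra$ for $x,y\in\D(P^tP)$, so $P^tP$ is symmetric. The real content is therefore the self-adjointness, i.e.\ the equality $\D((P^tP)^t)=\D(P^tP)$, and I would extract this from the surjectivity of $I+P^tP$.

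First I would prove that $I+P^tP:\D(P^tP)\to\H$ is a bijection. Since $P$ is closed, every pair in $\H\times\H$ decomposes as $(u,v)=(x,-Px)+(P^ty,y)$ with $x\in\D(P)$, $y\in\D(P^t)$. Applying this to $(g,0)$ for arbitrary $g\in\H$ yields $g=x+P^ty$ together with $0=-Px+y$; the second equation forces $y=Px$, so $Px\in\D(P^t)$, hence $x\in\D(P^tP)$ and $g=x+P^tPx=(I+P^tP)x$, proving surjectivity. Injectivity is clear, since $(I+P^tP)x=0$ gives $\lv x\rv^2+\lv Px\rv^2=\la(I+P^tP)x,x\ra=0$. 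Next I would examine the inverse $T:=(I+P^tP)^{-1}$, now defined on all of $\H$. Writing $x=Tg$, the identity $\la g,x\ra=\lv x\rv^2+\lv Px\rv^2\ge\lv x\rv^2$ combined with Cauchy--Schwarz gives $\lv Tg\rv\le\lv g\rv$, so $T$ is bounded; for $g_1,g_2\in\H$ with $x_i=Tg_i$ both $\la Tg_1,g_2\ra$ and $\la g_1,Tg_2\ra$ reduce to $\la x_1,x_2\ra+\la Px_1,Px_2\ra$, so $T$ is symmetric, and being bounded and everywhere defined it is self-adjoint; finally $\la Tg,g\ra=\lv x\rv^2+\lv Px\rv^2\ge0$ shows $T\ge0$, and $T$ is injective.

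To conclude, since $T$ is bounded, injective and self-adjoint, its range $\D(P^tP)=\im T$ is dense, being $(\ker T)^\perp$ by Lemma \ref{lem dem adjoint ker im}; thus $P^tP$ is densely defined, hence genuinely symmetric. A symmetric operator whose image is all of $\H$ is automatically self-adjoint, and $I+P^tP$ is both symmetric and (by the first step) surjective, so it is self-adjoint; subtracting the bounded self-adjoint operator $I$ then shows $P^tP$ is self-adjoint. The main obstacle is precisely the passage from symmetry to self-adjointness: the equality of domains is not formal and hinges on the surjectivity of $I+P^tP$, which is exactly the point where the closedness of $P$, through the orthogonal decomposition of $\H\times\H$ into $\gr(-P)$ and $\gr(P^t)$, is indispensable.
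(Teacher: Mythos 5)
Your argument is correct and is precisely the classical von Neumann proof that the paper itself does not reproduce but instead cites from \cite[Theorem X.25]{RS2} and \cite[Theorem 2.3]{F}, where this same route (surjectivity of $I+P^tP$ via the orthogonal splitting of $\H\times\H$ into $\gr(-P)$ and $\gr(P^t)$, boundedness and self-adjointness of the inverse, density of the range, then self-adjointness from symmetry plus surjectivity) is exactly the argument given. No gaps; the one step worth flagging as essential, the passage from symmetry to equality of domains, is handled correctly by your surjectivity argument.
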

%In particular, if $P$ is self-adjoint, then we can define the positive and self-adjoint operator $P^2$.

\begin{theorem}[{\cite[Theorem 4.1]{F}}]\label{thm sum of self ajoint}
Let $P,Q$ be positive and self-adjoint operators on a Hilbert space $\H$. Assume that $\D(P)\cap \D(Q)$ is dense in $\H$. Then the operator $P+Q$ defined by $(P+Q)x=Px+Qx$ on the domain
\[
\D(P+Q):=\D(P)\cap \D(Q)
\]
is positive and self-adjoint.
\end{theorem}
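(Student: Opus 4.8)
The plan is to isolate the two routine conclusions from the genuine content. That $P+Q$, defined on $\D(P+Q)=\D(P)\cap\D(Q)$, is symmetric and positive is immediate: for $x,y\in\D(P)\cap\D(Q)$ the self-adjointness of $P$ and $Q$ gives
\[
\la(P+Q)x,y\ra=\la Px,y\ra+\la Qx,y\ra=\la x,Py\ra+\la x,Qy\ra=\la x,(P+Q)y\ra,
\]
while $\la(P+Q)x,x\ra=\la Px,x\ra+\la Qx,x\ra\ge0$. Since $\D(P)\cap\D(Q)$ is assumed dense, $P+Q$ is a densely defined positive symmetric operator, and in particular closable. All the work therefore lies in upgrading symmetry to \emph{self-adjointness}, i.e.\ in controlling $\D((P+Q)^t)$.

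The main tool I would use is the square-root construction combined with Theorem \ref{thm composition with adjoint}. By the Spectral Theorem each of $P,Q$ has a positive self-adjoint square root, so that $P=P^{1/2}P^{1/2}$ and $Q=Q^{1/2}Q^{1/2}$ with $(P^{1/2})^t=P^{1/2}$, $\D(P)\subseteq\D(P^{1/2})$, and symmetrically for $Q$. I would then introduce
\[
S:\H\longrightarrow\H\oplus\H,\qquad Sx:=(P^{1/2}x,\,Q^{1/2}x),\qquad \D(S):=\D(P^{1/2})\cap\D(Q^{1/2}),
\]
and first check that $S$ is densely defined (its domain contains the dense set $\D(P)\cap\D(Q)$) and closed (if $x_n\to x$ and $Sx_n\to(u,v)$, closedness of $P^{1/2}$ and $Q^{1/2}$ forces $x\in\D(S)$ with $Sx=(u,v)$). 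Theorem \ref{thm composition with adjoint}, applied to $S$ with target space $\H\oplus\H$, then shows that $S^tS$ is positive and self-adjoint, where $S^t(u,v)=P^{1/2}u+Q^{1/2}v$. A direct computation gives, for $x\in\D(P)\cap\D(Q)$, that $Sx\in\D(S^t)$ and
\[
S^tSx=P^{1/2}(P^{1/2}x)+Q^{1/2}(Q^{1/2}x)=Px+Qx,
\]
so that $S^tS$ is a self-adjoint \emph{extension} of $P+Q$.

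The decisive step, which I expect to be the main obstacle, is to show that this extension is not proper, that is $\D(S^tS)=\D(P)\cap\D(Q)$; equivalently, by the standard range criterion for positive symmetric operators (together with the identity $\ker(\cdot)^t=\im(\cdot)^\perp$ from Lemma \ref{lem dem adjoint ker im}), that $I+P+Q$ already maps $\D(P)\cap\D(Q)$ \emph{onto} $\H$. This is precisely where positivity must enter decisively: one must exclude vectors of the form domain $\D(P^{1/2})\cap\D(Q^{1/2})$ on which $Px$ and $Qx$ individually fail to exist while their sum remains controlled. The approach I would take is a resolvent argument: the candidate solution of $(I+P+Q)x=f$ is $x=(I+S^tS)^{-1}f$, which exists and is bounded because $S^tS\ge0$, and I would then try to feed $x$ back through the bounded inverses $(I+P)^{-1}$ and $(I+Q)^{-1}$ to recover $x\in\D(P)$ and $x\in\D(Q)$ separately. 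Verifying this regularity is the delicate heart of the argument and the only place where anything beyond the formal algebra of adjoints and the construction of Theorem \ref{thm composition with adjoint} is required.
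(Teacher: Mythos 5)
The paper does not actually prove this statement: it imports it verbatim from Faris \cite[Theorem 4.1]{F}, so there is no in-paper argument to set yours against. Judged on its own terms, your proposal correctly disposes of the routine parts (symmetry, positivity, dense definition, closability), and the operator $S^tS$ built from $Sx=(P^{1/2}x,Q^{1/2}x)$ is indeed, by Theorem \ref{thm composition with adjoint}, a positive self-adjoint extension of $P+Q$ (it is the form sum of $P$ and $Q$). But this construction by itself gains nothing: every densely defined positive symmetric operator admits a positive self-adjoint extension (the Friedrichs extension), so exhibiting one particular such extension says nothing about whether $P+Q$ on $\D(P)\cap\D(Q)$ is itself self-adjoint.

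The genuine content of the theorem is exactly the step you label decisive and then leave as an intention: the reverse inclusion $\D(S^tS)\subseteq\D(P)\cap\D(Q)$, equivalently the surjectivity of $I+P+Q$ from $\D(P)\cap\D(Q)$ onto $\H$. The resolvent scheme you sketch is circular as written: from $x=(I+S^tS)^{-1}f$ you control $\lVert x\rVert$, $\lVert P^{1/2}x\rVert$ and $\lVert Q^{1/2}x\rVert$, but to ``feed $x$ back through $(I+P)^{-1}$'' you would need to already possess some $g\in\H$ with $x=(I+P)^{-1}g$, i.e.\ to already know that $Px\in\H$. The underlying obstruction is that a bound on $\lVert(I+P+Q)x\rVert$ does not split into bounds on $\lVert Px\rVert$ and $\lVert Qx\rVert$ separately, because the cross term $\r\la Px,Qx\ra$ has no sign for general positive self-adjoint $P,Q$ (already for $2\times2$ positive semidefinite matrices it can be negative), and the same problem persists for the Yosida regularisations $Q_\epsilon=Q(I+\epsilon Q)^{-1}$ that one would naturally insert to make the scheme rigorous. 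Defeating this is the whole point of Faris's proof and requires an input (a monotone convergence argument for forms, or a genuine quantitative resolvent estimate) that is absent from your proposal. So nothing you assert is false, but the argument stops precisely where the theorem begins.
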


By Theorems \ref{thm composition with adjoint} and \ref{thm sum of self ajoint}, any Hilbert complex
\[
\H_1\overset{P}{\longrightarrow}\H_2\overset{Q}{\longrightarrow}\H_3
\]
has a naturally associated Laplacian, given by
\begin{equation}\label{Hilbert complex laplacian}
    \Delta:=PP^t+Q^tQ,
\end{equation}
which is positive and self-adjoint. Note that the Laplacian of a Hilbert complex coincides with the Laplacian of its dual Hilbert complex.
We will need later on the following observation: being $\Delta$ self adjoint, we can define $\Delta^2$ by Theorem \ref{thm composition with adjoint}, and its domain turns out to satisfy
\begin{equation}\label{eq domain square lapl}
\D(\Delta^2)=\D((PP^t)^2)\cap\D((Q^tQ)^2).
\end{equation}
Finally, it is easy to check that the kernel of the Laplacian is characterised by
\[
\ker\Delta=\ker Q\cap \ker P^t.
\]
More generally, we have the following result.
\begin{lemma}\label{lemma ker lapl}
Let $P_j:\H_1\to\H_2$ be closed and densely defined linear operators for $j=1,\dots,n$, $n\in\N$, and define $\square:=\sum_{j=1}^nP_j^tP_j$ with the domain given by Theorems \ref{thm composition with adjoint} and \ref{thm sum of self ajoint} (we ask that $\bigcap_{j=1}^n\D(P_j^tP_j)$ is dense in $\H_1$). Then
\[
\ker\square=\bigcap_{j=1}^n\ker P_j.
\]
\end{lemma}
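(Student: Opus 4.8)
The plan is to prove the two inclusions separately, the only real ingredient being the positivity of each summand $P_j^t P_j$ together with careful bookkeeping of the domains supplied by Theorems \ref{thm composition with adjoint} and \ref{thm sum of self ajoint}.

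For the inclusion $\bigcap_{j=1}^n \ker P_j \subseteq \ker \square$, I would begin with $x \in \bigcap_j \ker P_j$ and first verify that $x$ lies in the domain $\D(\square) = \bigcap_j \D(P_j^t P_j)$. Since $x \in \ker P_j$ we have $x \in \D(P_j)$ and $P_j x = 0$; because the zero vector belongs to every domain, $P_j x = 0 \in \D(P_j^t)$, so by the definition in Theorem \ref{thm composition with adjoint} we get $x \in \D(P_j^t P_j)$ for each $j$, hence $x \in \D(\square)$. Then $\square x = \sum_j P_j^t(P_j x) = \sum_j P_j^t(0) = 0$, so $x \in \ker\square$.

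For the reverse inclusion I would exploit positivity via the standard energy identity. Take $x \in \ker\square \subseteq \D(\square)$. For each $j$ the membership $x \in \D(P_j^t P_j)$ means precisely that $P_j x \in \D(P_j^t)$, so the defining relation of the adjoint applied to the pair $(x, P_j x)$ yields $\la P_j^t P_j x, x\ra = \la P_j x, P_j x\ra = \lv P_j x\rv^2$ (the inner product being real here, conjugate symmetry causes no issue). Summing over $j$ gives
\[
0 = \la \square x, x\ra = \sum_{j=1}^n \lv P_j x\rv^2.
\]
As every term is non-negative, each $\lv P_j x\rv$ vanishes, whence $P_j x = 0$ for all $j$ and $x \in \bigcap_j \ker P_j$.

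The computation is elementary once the domains are handled correctly, so the one point requiring care — rather than a genuine obstacle — is the domain bookkeeping: one must confirm that $x \in \D(\square)$ forces $P_j x \in \D(P_j^t)$ for each $j$ before invoking the adjoint relation, and note that the hypothesis that $\bigcap_j \D(P_j^t P_j)$ be dense is exactly what Theorems \ref{thm composition with adjoint} and \ref{thm sum of self ajoint} require in order to guarantee that $\square$ is a well-defined positive self-adjoint operator with the stated domain. No spectral theory is needed; the proof is purely the energy identity for a finite sum of operators of the form $P^t P$.
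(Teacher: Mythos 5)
Your proof is correct and complete; the paper itself states this lemma without proof (it is presented as the "more generally" version of the observation $\ker\Delta=\ker Q\cap\ker P^t$, which the authors call easy to check), and your energy-identity argument $\la \square x,x\ra=\sum_j\lv P_jx\rv^2$ together with the domain bookkeeping is exactly the standard argument they intend. Nothing is missing.
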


\subsection{Spectral gap of the Laplacian of a Hilbert complex}\,\\
We end this section by analysing some characterisations of the spectral gap condition of any positive self-adjoint operator and in particular of the Laplacian of a given Hilbert complex.

We say that $\lambda\in \mathbb{C}$ belongs to the \emph{spectrum} $\sigma(P)$ of an unbounded linear operator $P:\H\to\H$ if there exists no bounded linear operator $B:\H\to\H$ such that 
\begin{itemize}
    \item[1)] $B(P-\lambda I)x = x $ for all $x \in \mathcal{D}(P)$,
    \item[2)] $Bx\in \mathcal{D}(P)$ and $(P-\lambda I)Bx = x$ for all $x \in \mathcal{H}$;
\end{itemize}
in other words, if $P-\lambda I$ has no bounded inverse. By \cite[Theorem X.1]{RS2}, the spectrum of a self-adjoint operator is a subset of the real axis, and the spectrum of a positive self-adjoint operator is a subset of the non-negative real axis. Furthermore, we say that a positive self-adjoint operator $P$ has a \emph{spectral gap} if it has the property $\inf(\sigma (P) \backslash \{0\})=C>0$, or equivalently if $\sigma(P)\subseteq\{0\}\cup[C,+\infty)$ with $C>0$. 

We now state the Spectral Theorem for unbounded self-adjoint operators. We will introduce very briefly the notion of a direct integral.

Let $(X,\mu)$ be a $\sigma$-finite measure space and $\{\mathbf{H}_{\lambda}\}_{\lambda\in X}$ be a collection of separable Hilbert spaces and a \emph{measurability structure}, see \cite[Definition 7.18]{Ha} for more details. Denote by $\la\cdot,\cdot\ra_\lambda$ and $\lv\cdot\rv_\lambda$ the inner product and the norm on $\mathbf{H}_{\lambda}$.
A \emph{section} $s$ is a function $X\to\bigcup_{\lambda\in X}\mathbf{H}_{\lambda}$ satisfying $s(\lambda)\in\mathbf{H}_{\lambda}$ for all $\lambda\in X$.
The \emph{direct integral}
\[
\int_{X}^{\oplus}\mathbf{H}_{\lambda} d\mu(\lambda)
\]
is the Hilbert space of classes of a.e. equal \emph{measurable} sections $s$ with finite norm $\lv s\rv<+\infty$, where $\lv s\rv:=\la s,s\ra^{\frac12}$ and
\[
\la s_1,s_2\ra:=\int_X\la s_1(\lambda),s_2(\lambda)\ra_\lambda d\mu(\lambda).
\]

\begin{theorem}[Spectral Theorem]\label{thm spectral unbounded self-adjoint}
Let $P$ be a self-adjoint operator on a separable Hilbert space $\mathcal{H}$. Then there is a $\sigma$-finite measure $\mu$ on the spectrum $\sigma(P)$, along with a unitary map (a linear bijection preserving the inner products)
\[
U: \mathcal{H} \rightarrow \int_{\sigma(P)}^{\oplus}\mathbf{H}_{\lambda} d\mu(\lambda)
\]
such that
\[
U(\D(P)):=\left\{s\in\int_{\sigma(P)}^{\oplus}\mathbf{H}_{\lambda} d\mu(\lambda)\,:\,\int_{\sigma(P)}\lv\lambda s(\lambda)\rv_\lambda^2d\mu(\lambda)<+\infty  \right\}
\]
and, for all $s\in U(\D(P))$ and $\lambda\in\sigma(P)$,
\begin{equation}\label{eq spectral theorem}
(U P U^{-1}s)(\lambda) = \lambda s(\lambda).
\end{equation}
\end{theorem}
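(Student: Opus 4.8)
The plan is to reduce the unbounded self-adjoint case to a bounded (in fact unitary) one via the Cayley transform, prove the multiplication-operator form of the spectral theorem there, and then repackage the result as a direct integral. Since $P=P^t$, the operators $P\pm iI$ are bijections from $\D(P)$ onto $\H$ with bounded inverses: indeed $\lv(P\pm iI)x\rv^2=\lv Px\rv^2+\lv x\rv^2\ge\lv x\rv^2$ because the cross terms cancel by symmetry of $P$, and self-adjointness forces $\pm i\notin\sigma(P)$. Hence the Cayley transform
\[
V:=(P-iI)(P+iI)^{-1}
\]
is a well-defined unitary operator on $\H$ for which $1$ is not an eigenvalue, and $P$ is recovered from $V$ by $P=i(I+V)(I-V)^{-1}$ on $\D(P)=\im(I-V)$. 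It therefore suffices to establish the theorem for the bounded normal operator $V$ and transport the conclusion back along the homeomorphism $\sigma(V)\setminus\{1\}\cong\sigma(P)$ furnished by the inverse Cayley map $\zeta\mapsto i(1+\zeta)/(1-\zeta)$.

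For the bounded operator $V$ I would first build the Borel functional calculus $f\mapsto f(V)$: the continuous calculus comes from Stone--Weierstrass applied to polynomials in $V,V^t$, and the Riesz representation theorem extends it to bounded Borel functions, producing a projection-valued measure $E$ on $\sigma(V)$. Next comes a cyclic decomposition: by a Zorn's lemma argument one writes $\H=\bigoplus_{n}\H_n$ as an orthogonal sum of mutually $V$-invariant subspaces, each $\H_n$ cyclic with a unit cyclic vector $\psi_n$. On each $\H_n$ the scalar measure $\mu_n(\Omega):=\la E(\Omega)\psi_n,\psi_n\ra$ gives a unitary $\H_n\to L^2(\sigma(V),\mu_n)$ sending $f(V)\psi_n$ to $f$ and conjugating $V$ into multiplication by $\zeta$; pushing forward through the Cayley map turns this into multiplication by $\lambda$ on $L^2(\sigma(P),\tilde\mu_n)$, which is exactly \eqref{eq spectral theorem} on a single cyclic piece.

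It remains to glue the pieces $L^2(\sigma(P),\tilde\mu_n)$ into one direct integral. Here I would fix a finite reference measure $\mu$ equivalent to $\sum_n 2^{-n}\tilde\mu_n$ (hence $\sigma$-finite and mutually absolutely continuous with every $\tilde\mu_n$), compute the Radon--Nikodym densities $g_n:=d\tilde\mu_n/d\mu$, and let $\mathbf H_\lambda$ be a Hilbert space whose dimension equals the multiplicity $m(\lambda):=\#\{n:g_n(\lambda)>0\}$, chosen together with measurable orthonormal frames $e_n(\lambda)$ so that $\{\mathbf H_\lambda\}$ carries a measurability structure in the sense of \cite{Ha}. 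Mapping the $n$-th component $h\in L^2(\tilde\mu_n)$ to the section $\lambda\mapsto h(\lambda)g_n(\lambda)^{1/2}e_n(\lambda)$ then assembles a unitary $U\colon\H\to\int^\oplus_{\sigma(P)}\mathbf H_\lambda\,d\mu(\lambda)$, and since multiplication by $\lambda$ acts componentwise it intertwines $P$ with multiplication by $\lambda$; the description of $U(\D(P))$ follows because $x\in\D(P)$ iff $\lambda\,s(\lambda)$ is again square-integrable, the unbounded analogue of $\sum_n\int|\lambda|^2|h_n|^2\,d\tilde\mu_n<\infty$.

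The main obstacle is this last step: organising the countable family of measures $\tilde\mu_n$ into a single base measure together with a measurably varying multiplicity and an honest measurable field of Hilbert spaces. This multiplicity theory---choosing the $\mathbf H_\lambda$ and the frames $e_n(\lambda)$ so that all the required measurability and square-integrability conditions hold simultaneously---is the technical heart of the direct-integral formulation. By contrast, the Cayley reduction and the single-cyclic-subspace computation are essentially formal once the bounded Borel functional calculus is in hand, the only point of care being the domain bookkeeping $\D(P)=\im(I-V)$ when transporting unbounded data through the Cayley map.
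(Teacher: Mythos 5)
The paper does not prove this theorem at all: it is quoted as a black box with a pointer to \cite[Theorem 10.09, Section 10.4]{Ha}, so there is no in-paper argument to compare against. Your outline is the standard proof of the direct-integral form — Cayley transform to reduce to a unitary, continuous-then-Borel functional calculus yielding a projection-valued measure, cyclic decomposition into countably many pieces (this is where separability enters), and multiplicity theory to glue the scalar $L^2$ spaces into a measurable field $\{\mathbf H_\lambda\}$ — and it is correct as a sketch, with the genuinely technical point (choosing the reference measure, the multiplicity function $m(\lambda)$ and measurable frames $e_n(\lambda)$ coherently) honestly identified rather than glossed over. Two details worth making explicit in a full write-up: surjectivity of $P\pm iI$ should be derived from closedness of the range (via your norm identity) together with $\ker(P^t\mp iI)=0$, rather than invoked as ``$\pm i\notin\sigma(P)$'', since the reality of the spectrum is normally established by exactly this argument; and one should record that the spectral measure of $V$ assigns no mass to $\{1\}$ (as $1$ is not an eigenvalue), so that nothing is lost when the measures $\mu_n$ are transported from $\sigma(V)\setminus\{1\}$ to $\sigma(P)$ and the domain identification $\D(P)=\im(I-V)$ translates into the stated square-integrability condition on $\lambda s(\lambda)$.
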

In this case, we say that $P$ is isomorphic to the multiplication (operator) by $\lambda$, \textit{i.e.}, the identity function on $\sigma(P)$.
We refer to \cite[Theorem 10.09, Section 10.4]{Ha} for the proof.
The point of the Spectral Theorem is that any question about a single self-adjoint operator is a question about a function. For example, we can apply measurable functions to self-adjoint operators as follows.
If $\phi$ is a complex Borel measurable function on the reals, then $\phi(P)$ is defined as the operator which is isomorphic (by the same unitary operator) to multiplication by $\phi(\lambda)$. \textit{E.g.}, the integer powers $P^k$ are well-defined for $k\in\Z$ and this definition of $P^2$ coincides with the one given by applying Theorem \ref{thm composition with adjoint}.

With the help of the Spectral Theorem, we can prove the following characterisation of the spectral gap property.
\begin{lemma}\label{lemma spectral gap equiv}
Let $P$ be a positive self-adjoint operator on a separable Hilbert space $\H$. Then the following conditions are equivalent:
\begin{enumerate}[label=\upshape{\alph*)}]
\item $P$ has a spectral gap;
\item $\exists C>0\ \la x,Px\ra\ge C\la x,x\ra$ for all $x\in\D(P)\cap(\ker P)^\perp$;
\item $\im P$ is closed.
\end{enumerate}
\end{lemma}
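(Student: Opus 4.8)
The plan is to pass through the Spectral Theorem (Theorem~\ref{thm spectral unbounded self-adjoint}), which realises $P$ as multiplication by $\lambda$ on a direct integral $\int_{\sigma(P)}^\oplus\mathbf{H}_\lambda\,d\mu(\lambda)$ via a unitary $U$, and then to read each of a), b), c) off the position of $\sigma(P)=\supp\mu$ relative to the threshold $C$. Since $P$ is positive we have $\sigma(P)\subseteq[0,+\infty)$, and from $(UPU^{-1}s)(\lambda)=\lambda s(\lambda)$ one sees that $\ker P$ corresponds to the sections supported $\mu$-a.e.\ on $\{0\}$, so that $(\ker P)^\perp$ corresponds to the sections supported $\mu$-a.e.\ on $\sigma(P)\setminus\{0\}$. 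Consequently, writing $s=Ux$, for every $x\in\D(P)\cap(\ker P)^\perp$ we have the two basic formulas $\la x,x\ra=\int_{\sigma(P)\setminus\{0\}}\lv s(\lambda)\rv_\lambda^2\,d\mu$ and $\la x,Px\ra=\int_{\sigma(P)\setminus\{0\}}\lambda\,\lv s(\lambda)\rv_\lambda^2\,d\mu$.

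I would then prove the cycle a)$\Rightarrow$b)$\Rightarrow$c)$\Rightarrow$a). For a)$\Rightarrow$b), a spectral gap means $\sigma(P)\setminus\{0\}\subseteq[C,+\infty)$, and comparing the two formulas above under the integral sign immediately gives $\la x,Px\ra\ge C\la x,x\ra$ on $\D(P)\cap(\ker P)^\perp$. For b)$\Rightarrow$c), I would first note that since $P=P^t$, Lemma~\ref{lem dem adjoint ker im} yields $\c{\im P^t}=\c{\im P}=(\ker P)^\perp$, so that condition b) of Lemma~\ref{lemma im closed} is exactly a bound of the form $\lv x\rv\le C'\lv Px\rv$ on $\D(P)\cap(\ker P)^\perp$. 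Cauchy--Schwarz together with hypothesis b) gives $C\lv x\rv^2\le\la x,Px\ra\le\lv x\rv\,\lv Px\rv$, hence $\lv x\rv\le\tfrac1C\lv Px\rv$ there; Lemma~\ref{lemma im closed} then shows that $\im P$ is closed, which is c).

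For c)$\Rightarrow$a), Lemma~\ref{lemma im closed} (again using $\c{\im P^t}=(\ker P)^\perp$) provides $C'>0$ with $\lv x\rv\le C'\lv Px\rv$ for all $x\in\D(P)\cap(\ker P)^\perp$. In the spectral picture this reads $\int_{\sigma(P)\setminus\{0\}}\lv s\rv_\lambda^2\,d\mu\le (C')^2\int_{\sigma(P)\setminus\{0\}}\lambda^2\lv s\rv_\lambda^2\,d\mu$ for all admissible sections $s$. If $\sigma(P)$ met $(0,1/C')$, I would pick a closed interval $I\subseteq(0,1/C')$ whose interior contains a point of $\sigma(P)$, so that $\mu(I)>0$, and take a nonzero section $s$ supported on $I$; since $\lambda<1/C'$ on $I$ this violates the displayed inequality. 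Hence $\sigma(P)\setminus\{0\}\subseteq[1/C',+\infty)$, i.e.\ $P$ has a spectral gap, closing the cycle.

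The elementary inequalities are harmless; the real work is the measure-theoretic bookkeeping in the direct integral. Two points need care: the identification of $(\ker P)^\perp$ with sections supported on $\sigma(P)\setminus\{0\}$ must correctly absorb a possible atom of $\mu$ at $0$ (which, when present, is precisely $\ker P$); and in the converse step one must genuinely exhibit a nonzero measurable section supported on a prescribed set of positive $\mu$-measure, which is guaranteed by the measurability structure of the direct integral rather than by any explicit formula. Once these are in place, every remaining estimate is a pointwise comparison of $\lambda$ with $C$ (resp.\ $1/C'$) integrated against $\lv s(\lambda)\rv_\lambda^2\,d\mu$.
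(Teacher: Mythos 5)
Your proposal is correct and uses essentially the same ingredients as the paper's proof: the Spectral Theorem to translate conditions a) and b) into pointwise comparisons of $\lambda$ against a threshold integrated against $\lv s(\lambda)\rv_\lambda^2\,d\mu$, and Lemma \ref{lemma im closed} together with Lemma \ref{lem dem adjoint ker im} (via $\c{\im P^t}=(\ker P)^\perp$) to relate the coercivity bound to closedness of the image. The only difference is organisational: you prove the cycle a)$\Rightarrow$b)$\Rightarrow$c)$\Rightarrow$a), returning to the spectral picture for c)$\Rightarrow$a), whereas the paper proves the two equivalences b)$\Leftrightarrow$c) and a)$\Leftrightarrow$b) directly.
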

\begin{proof}
By Lemma \ref{lem dem adjoint ker im} and Lemma \ref{lemma im closed} it is immediate to see b) is equivalent to c).
We now prove that a) is equivalent to b).
For all $x\in\D(P)$, by the Spectral Theorem let $Ux=s$, so that
\begin{align*}
    \la x, Px \ra &= \int_{\sigma(P)}\langle s(\lambda), \lambda s(\lambda) \rangle_{\lambda} d\mu(\lambda) 
    =\int_{\sigma(P)\setminus\{0\}}\lambda \lv s(\lambda) \rv^2_\lambda d\mu(\lambda).
\end{align*}
Notice that $y\in\ker P$ iff, given $r=Uy$, $\lambda r(\lambda)=0$ a.e., iff $r(\lambda)=0$ a.e. for $\lambda\ne0$.
If $x\in\D(P)\cap(\ker P)^\perp$, then $s(0)=0$, so that
\[
\la x, x\ra=\int_{\sigma(P)}\langle s(\lambda), s(\lambda) \rangle_{\lambda} d\mu(\lambda) 
    =\int_{\sigma(P)\setminus\{0\}} \lv s(\lambda) \rv^2_\lambda d\mu(\lambda)
\]
Therefore $\la x,Px\ra\ge C\la x,x\ra$ for all $x\in\D(P)\cap(\ker P)^\perp$ iff $\lambda\ge C$ a.e. in $\sigma(P)\setminus\{0\}$, iff  $\sigma(P)\subseteq\{0\}\cup[C,+\infty)$.
\end{proof}

If the positive self-adjoint operator is the Laplacian associated to a Hilbert complex, then we obtain other characterisations of the spectral gap condition. %The proof is partially taken from \cite[]{Bue}

\begin{lemma}\label{lemma spectral gap equiv 2}
Let $P:\H_1\to\H_2$ and $Q:\H_2\to\H_3$ be closed and densely defined linear operators between separable Hilbert spaces satisfying $\im P\subseteq\ker Q$. Define the positive self-adjoint operator
\[
\Delta=Q^tQ+PP^t,
\]
with the domain given by Theorems \ref{thm composition with adjoint} and \ref{thm sum of self ajoint} (we ask that $\D(Q^tQ)\cap\D(PP^t)$ is dense in $\H_2$).
Then the following conditions are equivalent:
\begin{enumerate}[label=\upshape{\alph*)}]
\item $\Delta$ has a spectral gap;
\item $\exists C>0\ C\lv x\rv^2_2 \le \lv P^tx\rv^2_1+\lv Qx\rv^2_3$ for all $x\in\D(P^t)\cap\D(Q)\cap(\ker \Delta)^\perp$;
\item $\im Q^t$ and $\im P$ are closed.
\end{enumerate}
\end{lemma}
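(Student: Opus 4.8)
The plan is to run the equivalences as \textbf{b)} $\Leftrightarrow$ \textbf{c)} together with \textbf{b)} $\Rightarrow$ \textbf{a)} $\Rightarrow$ \textbf{c)}. Everything hangs on the orthogonal geometry of the decomposition. Since the remark after \eqref{Hilbert complex laplacian} gives $\ker\Delta=\ker Q\cap\ker P^t$, Theorem \ref{thm decomp hilb dem} supplies both the decomposition $(\ker\Delta)^\perp=\c{\im P}\oplus\c{\im Q^t}$ and the two inclusions $\c{\im P}\subseteq\ker Q$ and $\c{\im Q^t}\subseteq\ker P^t$. These inclusions are what let me disentangle the mixed quantity $\lv P^tx\rv_1^2+\lv Qx\rv_3^2$ into two independent pieces.

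For b) $\Leftrightarrow$ c), I take $x\in\D(P^t)\cap\D(Q)\cap(\ker\Delta)^\perp$ and write $x=u+v$ with $u\in\c{\im P}$, $v\in\c{\im Q^t}$. Because $u\in\ker Q$ and $v\in\ker P^t$, one checks that $u\in\D(P^t)\cap\c{\im P}$ and $v\in\D(Q)\cap\c{\im Q^t}$, that conversely any such pair reassembles into an admissible $x$, and that under this correspondence $\lv x\rv_2^2=\lv u\rv_2^2+\lv v\rv_2^2$ while $\lv P^tx\rv_1^2+\lv Qx\rv_3^2=\lv P^tu\rv_1^2+\lv Qv\rv_3^2$ (using $Qu=0$ and $P^tv=0$). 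Specialising to $v=0$ and to $u=0$ shows that b) is equivalent to the conjunction of an estimate $\lv u\rv_2\le C\lv P^tu\rv_1$ on $\D(P^t)\cap\c{\im P}$ and an estimate $\lv v\rv_2\le C\lv Qv\rv_3$ on $\D(Q)\cap\c{\im Q^t}$; by Lemma \ref{lemma im closed} (condition d for $P$, condition b for $Q$) these are exactly $\im P$ closed and $\im Q$ closed, hence c), since $\im Q$ is closed iff $\im Q^t$ is.

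The implication b) $\Rightarrow$ a) is then immediate: restricting b) to $x\in\D(\Delta)\cap(\ker\Delta)^\perp\subseteq\D(P^t)\cap\D(Q)\cap(\ker\Delta)^\perp$, where $\la x,\Delta x\ra=\lv P^tx\rv_1^2+\lv Qx\rv_3^2$, yields precisely condition b) of Lemma \ref{lemma spectral gap equiv} for $\Delta$, whence a). The delicate direction is a) $\Rightarrow$ c), and I expect it to be the main obstacle. The point is that a) furnishes coercivity only on the \emph{operator} domain $\D(\Delta)=\D(PP^t)\cap\D(Q^tQ)$, which is strictly smaller than the domain $\D(P^t)\cap\D(Q)$ in b); one therefore cannot simply read off the two separate estimates as above. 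To sidestep this form-domain gap I abandon coercivity and argue with images instead. From a), Lemma \ref{lemma spectral gap equiv} gives that $\im\Delta$ is closed. Using that $\ker P^t$ and $\ker Q$ are reducing subspaces for the self-adjoint operators $PP^t$ and $Q^tQ$, I decompose $\D(\Delta)$ accordingly and obtain the orthogonal splitting $\im\Delta=\im PP^t\oplus\im Q^tQ$, the orthogonality coming from $\im PP^t\subseteq\im P\subseteq\ker Q$ and $\im Q^tQ\subseteq\im Q^t\subseteq(\ker Q)^\perp$. Hence $\im\Delta$ closed forces both $\im PP^t$ and $\im Q^tQ$ closed.

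It then remains to pass from the composite operators to $P$ and $Q^t$. Here I use the elementary identities $\c{\im PP^t}=\c{\im P}$ and $\c{\im Q^tQ}=\c{\im Q^t}$, which follow from Lemma \ref{lem dem adjoint ker im} together with $\ker PP^t=\ker P^t$ and $\ker Q^tQ=\ker Q$. Since $\im PP^t\subseteq\im P\subseteq\c{\im P}=\c{\im PP^t}$, closedness of $\im PP^t$ collapses this chain to $\im P=\c{\im P}$, so $\im P$ is closed; the identical one-line argument gives $\im Q^t$ closed. This is c), closing the cycle. The only step that I expect to require genuine care is the orthogonal splitting of $\im\Delta$ — specifically checking that the projections onto $\ker P^t$, $\c{\im P}$ and $\c{\im Q^t}$ preserve $\D(\Delta)$ — which is what makes the passage through $PP^t$ and $Q^tQ$ legitimate rather than a mere formal manipulation.
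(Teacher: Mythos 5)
Your proof is correct and follows the same overall skeleton as the paper's: everything is driven by the decomposition $(\ker\Delta)^\perp=\c{\im P}\oplus\c{\im Q^t}$, with c) $\Leftrightarrow$ b) obtained from Lemma \ref{lemma im closed} after splitting $x=u+v$, b) $\Rightarrow$ a) from the second characterisation in Lemma \ref{lemma spectral gap equiv}, and a) $\Rightarrow$ c) from the closedness of $\im\Delta$. The one place you genuinely diverge is a) $\Rightarrow$ c). The paper avoids your detour through $\im PP^t$ and $\im Q^tQ$ (and the attendant verification that the projections onto $\ker\Delta$, $\c{\im P}$, $\c{\im Q^t}$ preserve $\D(\Delta)$) by observing only the easy one-sided inclusion $\im\Delta\subseteq\im P+\im Q^t$, valid pointwise since $\Delta x=P(P^tx)+Q^t(Qx)$; this yields the sandwich $\H_2=\ker\Delta\oplus\im\Delta\subseteq\ker\Delta\oplus\im P\oplus\im Q^t\subseteq\H_2$, and equality together with $\im P\subseteq\c{\im P}$, $\im Q^t\subseteq\c{\im Q^t}$ and orthogonality forces $\im P=\c{\im P}$ and $\im Q^t=\c{\im Q^t}$ directly. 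Your reducing-subspace route is valid --- the kernel of a self-adjoint operator does reduce it, so the projections preserve $\D(\Delta)$, the splitting $\im\Delta=\im PP^t\oplus\im Q^tQ$ holds, and $\c{\im PP^t}=\c{\im P}$ pinches $\im P$ between the closed set $\im PP^t$ and its closure --- but the two-sided identity for $\im\Delta$ that your argument requires is precisely the work the paper's one-sided sandwich sidesteps. Your two-way proof of b) $\Leftrightarrow$ c) also gives a little more than the paper's c) $\Rightarrow$ b), though the cycle of implications makes the extra direction redundant.
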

\begin{proof}
Since $\ker\Delta=\ker P^t\cap \ker Q$, then $(\ker \Delta)^\perp=\c{\im P}\oplus\c{\im Q^t}$ by Lemma \ref{lem dem adjoint ker im}.
Therefore c) implies b) applying Lemma \ref{lemma im closed}.
Moreover, b) implies a) thanks to the second characterisation in Lemma \ref{lemma spectral gap equiv}. 
To prove that a) implies c), by Lemma \ref{lemma spectral gap equiv} we know that $\im \Delta$ is closed, therefore by Lemma \ref{lem dem adjoint ker im} we can decompose
\[
\H_2=\ker\Delta\oplus\im\Delta\subseteq\ker\Delta\oplus{\im P}\oplus{\im Q^t}\subseteq\H_2,
\]
thus
\[
\H_2=\ker\Delta\oplus{\im P}\oplus{\im Q^t}.
\]
The orthogonality of the previous decompositions follows from Lemma \ref{lem dem adjoint ker im} and by the assumption $\im Q\subseteq\ker P$.
%$\im P\subseteq\ker Q$.
The closure of $\im P$ and $\im Q^t$ follows easily from the orthogonality of the last decomposition.
\end{proof}

\section{Strong and weak extensions}\label{sec extensions}
Let $(M,g)$ be an oriented Riemannian manifold, let $(E_1,h_1)$ and $(E_2,h_2)$ be Hermitian vector bundles on $M$, and let ${P}:\Gamma(M,E_1)\to \Gamma(M,E_2)$ be a differential operator.  

Using $\Gamma_0(M,E_j)$ to denote the space of smooth sections of $E_j$ with compact support, we will write the restriction of $P$ to compactly supported sections as $P_0:\Gamma_0(M,E_1) \to \Gamma_0(M,E_2)$. This can be viewed as an unbounded, densely defined and closable linear operator $P_0:L^2E_1\to L^2E_2$, with domain $\Gamma_0(M,E_1)$. Below, we construct two canonical closed extensions of $P_0$, thereby verifying that $P_0$ is closable. 

The \emph{strong extension} $P_{s}$ (also called the \emph{minimal closed extension} $P_{min}$) is defined by taking the closure of the graph of ${P_0}$, \textit{i.e.}, $P_s = \c{P_0}$, or more explicitly
\begin{equation*}
\D(P_s):=\{u\in L^2E_1\,|\,\exists\{u_j\}_{j\in\N}\subset \Gamma_0(M,E_1),\ \exists v\in L^2E_2 \text{ s.t.}\ u_j\to u,\ Pu_j\to v\},
\end{equation*}
and $P_s u:=v$.

The \emph{weak extension} $P_w$ (also called the \emph{maximal closed extension} $P_{max}$) is defined as the largest extension of ${P_0}$ which acts distributionally, \textit{i.e.},
\begin{equation*}
\D(P_w):=\{u\in L^2E_1\,|\,\exists v\in L^2E_2,\ \text{s.t.} \ \la v,w\ra_2=\la u,P^*w\ra_1, \ \forall w\in \Gamma_0(M,E_2)\},
\end{equation*}
and $P_w u:=v$. Here $\la\cdot, \cdot \ra_i$ denotes the inner product defined on $L^2 E_i$. Note that this definition is equivalent to saying $P_w$ is the Hilbert adjoint of $P^*$ restricted to smooth forms with compact support, \textit{i.e.}, $((P^*)_0)^t=P_w$. Moreover $P_s\subseteq P_w$, and every closed extension $P'$ of $P_0$ which acts distributionally (namely, such that $(P')^t$ is an extension of $(P^*)_0$) is contained between the minimal and the maximal closed extensions. 

\begin{remark}\label{rmk strong-weak}
Since a closable densely defined operator and its closure have the same adjoint by Lemma \ref{lem dem adjoint ker im}, it follows that $((P^*)_s)^t=P_w$, thus implying
\begin{equation*}\label{eq strong-weak}
(P^*)_s=(P_w)^t,\ \ \ (P^*)_w=(P_s)^t.
\end{equation*}
Again from Lemma \ref{lem dem adjoint ker im}, we immediately get
\begin{equation*}\label{eq perp ker im strong weak}
(\ker P_w)^{\perp}=\c{\im P^*_s}\ \ \ \ \ (\ker P_s)^{\perp}=\c{\im P^*_w}.
\end{equation*}
Moreover, note that that if $E_1=E_2$ and $P$ is formally self-adjoint, then $P_0$ is essentially self-adjoint if and only if $P_s=P_w$.
\end{remark}

Set $E:=E_1=E_2$ and let ${P},{Q}:\Gamma(M,E)\to\Gamma(M,E)$ be differential operators and denote by $P_0,Q_0:\Gamma_0(M,E)\to\Gamma_0(M,E)$ their restrictions to compactly supported sections. It is straightforward to adapt the following theory in the case where ${P}$ and ${Q}$ are defined from sections of $(E_1,h_1)$ to sections of another Hermitian vector bundle $(E_2,h_2)$.

We now provide a number of technical but well-known lemmas which will be needed in subsequent sections.

\begin{lemma}\label{lemma closure strong}$
\c{\im P_0}=\c{\im P_s}.
$
%\c{P(\Gamma_0(M,E))+Q(\Gamma_0(M,E))}=\c{\im P_s+\im Q_s}.
\end{lemma}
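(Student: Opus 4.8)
The plan is to prove equality of the two closed subspaces by a pair of inclusions, the crux being a direct approximation argument built into the very definition of the strong extension.

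First I would dispense with the easy inclusion. Since $P_s=\c{P_0}$ is an extension of $P_0$, we have $\D(P_0)\subseteq\D(P_s)$ with $P_s$ agreeing with $P_0$ on $\Gamma_0(M,E_1)$, and hence $\im P_0\subseteq\im P_s$. Taking closures immediately gives
\[
\c{\im P_0}\subseteq\c{\im P_s}.
\]

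For the reverse inclusion it suffices to show $\im P_s\subseteq\c{\im P_0}$, since taking the closure of both sides then yields $\c{\im P_s}\subseteq\c{\im P_0}$ (the right-hand side being already closed). Here I would invoke the explicit description of the strong extension. Let $v\in\im P_s$, so that $v=P_s u$ for some $u\in\D(P_s)$. By the definition of $\D(P_s)$ recalled in this section, there exists a sequence $\{u_j\}_{j\in\N}\subset\Gamma_0(M,E_1)$ such that $u_j\to u$ in $L^2E_1$ and $P_0 u_j=P u_j\to v$ in $L^2E_2$. Each $P_0 u_j$ lies in $\im P_0$, so $v$ is an $L^2$-limit of elements of $\im P_0$, i.e.\ $v\in\c{\im P_0}$. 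This proves $\im P_s\subseteq\c{\im P_0}$, and combining the two inclusions gives the claim.

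I do not expect any genuine obstacle: the statement is essentially a restatement of the fact that $\gr(P_s)=\c{\gr(P_0)}$, which forces every output of $P_s$ to be approximable in norm by outputs of $P_0$. The only point requiring a modicum of care is to apply closures in the correct order in the second inclusion, namely to use that $\c{\im P_0}$ is closed so that the limit $v$ remains inside it, rather than attempting to compare $\im P_s$ with the non-closed set $\im P_0$ directly.
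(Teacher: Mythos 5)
Your proof is correct and follows essentially the same route as the paper's: the easy inclusion by passing $\im P_0\subseteq\im P_s$ to closures, and the reverse by noting $\im P_s\subseteq\c{\im P_0}$ (which you justify explicitly via the approximating sequences in the definition of $\D(P_s)$, a detail the paper leaves implicit) and then taking closures.
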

\begin{proof}
The inclusion $\subseteq$ follows from $\im P_0\subseteq{\im P_s}$ passing to the closures. The other inclusion $\supseteq$ follows from $\c{\im P_0}\supseteq{\im P_s}$ passing again to the closures.
\end{proof}

\begin{lemma}\label{lemma image composition strong}
$
\c{\im (QP)_s}\subseteq \c{\im Q_s}.
$
\end{lemma}
\begin{proof}
By Lemma \ref{lemma closure strong}, $\c{\im (QP)_s}=\c{\im (QP)_0}$ and $\c{\im Q_s}=\c{\im Q_0}$, therefore it is sufficient to note that $\im (QP)_0\subseteq\im Q_0$ and then pass to the closures.
\end{proof}

\begin{lemma}\label{lemma kernel composition weak}
$
\ker P_w\subseteq \ker(QP)_w.
$
%and
%\[
%\ker P_s\subseteq \ker(QP)_s.
%\]
\end{lemma}
\begin{proof}
If $\alpha\in\ker P_w$, then for every $\gamma$ smooth section with compact support
\[
\la \alpha, (QP)^*\gamma\ra=\la \alpha,P^*Q^*\gamma\ra=\la P_w\alpha,Q^*\gamma\ra=0,
\]
therefore $\alpha\in\ker(QP)_w$.
\end{proof}

Let us now introduce the order relation $s\le w$ between strong and weak extensions, as well as the notation $a'=w$ if $a=s$ and $a'=s$ if $a=w$.

\begin{lemma}\label{lemma pq0 subset}
If $QP= 0$, then
$\c{\im P_a}\subseteq\ker Q_b$ for any $a,b\in\{s,w\}$ with $a\le b$.
\end{lemma}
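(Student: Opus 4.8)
The plan is to reduce the statement to an inclusion of images into kernels and then verify the two extremal cases directly. Since $Q_b$ is closed, $\ker Q_b$ is a closed subspace of $L^2E$, so it suffices to prove $\im P_a\subseteq\ker Q_b$ and pass to the closure. The hypothesis $QP=0$ as differential operators, combined with the identity $(QP)^*=P^*Q^*$ recorded in Section \ref{sec diff op}, yields the two facts I will use: $Q(P\phi)=(QP)\phi=0$ and $P^*(Q^*\psi)=(QP)^*\psi=0$ for all $\phi,\psi\in\Gamma_0(M,E)$. Because $a\le b$ rules out the pair $(w,s)$, only the three cases $(s,s),(s,w),(w,w)$ occur. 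Using the monotonicity $\im P_s\subseteq\im P_w$ (from $P_s\subseteq P_w$) and $\ker Q_s\subseteq\ker Q_w$ (since $Q_s\subseteq Q_w$ forces $Q_w x=Q_s x=0$ whenever $x\in\ker Q_s$), the mixed case $(s,w)$ follows at once from either extremal case via $\im P_s\subseteq\ker Q_s\subseteq\ker Q_w$. Thus I only need to treat $(s,s)$ and $(w,w)$.

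For the strong case $(s,s)$, I would take $u\in\D(P_s)$ and set $v:=P_s u$. By the definition of the strong extension there exist $u_j\in\Gamma_0(M,E)$ with $u_j\to u$ and $Pu_j\to v$ in $L^2$. Now $Pu_j\in\Gamma_0(M,E)$ and $Q(Pu_j)=(QP)u_j=0$, so the sequence $\{Pu_j\}$ is an admissible approximating sequence for $v$ witnessing that $v\in\D(Q_s)$ with $Q_s v=\lim_j Q(Pu_j)=0$. Hence $v\in\ker Q_s$, giving $\im P_s\subseteq\ker Q_s$. For the weak case $(w,w)$, I would take $u\in\D(P_w)$ and $v:=P_w u$, and test against an arbitrary $\gamma\in\Gamma_0(M,E)$. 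Since $Q^*$ is a differential operator, $Q^*\gamma\in\Gamma_0(M,E)$, so the distributional definition of $P_w$ applies and gives
\[
\la v,Q^*\gamma\ra=\la u,P^*(Q^*\gamma)\ra=\la u,(QP)^*\gamma\ra=0.
\]
As $\gamma$ was arbitrary, this is exactly the condition $v\in\ker Q_w$, so $\im P_w\subseteq\ker Q_w$.

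There is no deep obstacle here; the content is entirely in matching the right extension on each side, and the only conceptual point worth flagging is precisely why the hypothesis $a\le b$ is indispensable. An element of $\im P_s$ comes equipped with a genuine $L^2$ approximating sequence by compactly supported forms, which is strong enough to place its image in the \emph{strong} kernel $\ker Q_s$; by contrast an element of $\im P_w$ is controlled only distributionally, so the argument can only certify membership in the \emph{weak} kernel $\ker Q_w$. Consequently one cannot hope to prove $\c{\im P_w}\subseteq\ker Q_s$ by these means, which is exactly the excluded pair $(w,s)$, and this asymmetry is what the ordering $a\le b$ encodes.
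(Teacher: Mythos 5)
Your proof is correct and follows essentially the same route as the paper: reduce to $\im P_a\subseteq\ker Q_a$ (handling $a=s$ directly from the definition of the strong extension and $a=w$ via the distributional definition together with $(QP)^*=P^*Q^*$), then pass to closures using that $\ker Q_b$ is closed. The paper's proof is just a terser version of exactly this argument, so no further comment is needed.
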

\begin{proof}
It is enough to prove ${\im P_a}\subseteq\ker Q_a$ and pass to the closures.
If $a=s$ this property follows from the definition, while if $a=w$ it follows from the definition and the relation $(QP)^*=P^*Q^*$.
\end{proof}

\begin{lemma}\label{lemma perp pq0}
If $QP =0$, then $\im Q^*_{c}\perp\im P_a$ for any $a,c\in\{s,w\}$ with $\min(a,c)=s$.
\end{lemma}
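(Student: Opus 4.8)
The plan is to reduce the desired orthogonality to a kernel containment of the form $\im P_a\subseteq\ker Q_{c'}$, which is exactly what Lemma \ref{lemma pq0 subset} supplies, and to bridge the two statements via the kernel--image identities recorded in Remark \ref{rmk strong-weak}. Throughout I would write $c'$ for the dual index, so that $s'=w$ and $w'=s$, keeping the order $s\le w$.

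First I would rewrite the target using the adjoint relations. Remark \ref{rmk strong-weak} gives $(\ker Q_w)^\perp=\c{\im Q^*_s}$ and $(\ker Q_s)^\perp=\c{\im Q^*_w}$, which in the dual-index notation collapse to the single identity $\c{\im Q^*_c}=(\ker Q_{c'})^\perp$ for $c\in\{s,w\}$. Hence, to prove $\im Q^*_c\perp\im P_a$ it suffices to show $\im P_a\subseteq\ker Q_{c'}$: such a containment gives $\im P_a\subseteq(\c{\im Q^*_c})^\perp\subseteq(\im Q^*_c)^\perp$, since $\im Q^*_c\subseteq\c{\im Q^*_c}$ and taking orthogonal complements reverses inclusions.

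Next I would check the elementary bookkeeping that the hypothesis $\min(a,c)=s$ is exactly equivalent to $a\le c'$. If $a=s$, both conditions hold automatically; if $a=w$, then $\min(a,c)=s$ forces $c=s$, \textit{i.e.}\ $c'=w$, which is the same as $a=w\le w=c'$. With this in hand, Lemma \ref{lemma pq0 subset} applied to the pair of indices $(a,c')$ (using the standing hypothesis $QP=0$) yields $\c{\im P_a}\subseteq\ker Q_{c'}$, and in particular $\im P_a\subseteq\ker Q_{c'}$, completing the argument.

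The only real obstacle is the index bookkeeping: one must correctly pair each image closure with the \emph{opposite} kernel in Remark \ref{rmk strong-weak} and verify that $\min(a,c)=s$ coincides with $a\le c'$, so that the three admissible cases $(s,s)$, $(s,w)$, $(w,s)$ are covered while $(w,w)$ is excluded. No new analytic input is needed; the content is entirely packaged in Remark \ref{rmk strong-weak} and Lemma \ref{lemma pq0 subset}. Alternatively, one could argue directly by approximating the strong-extension member of the pairing by smooth compactly supported sections and pairing against the other member through the formal-adjoint identity $(QP)^*=P^*Q^*$ together with $QP=0$; this yields the same conclusion but essentially re-derives, case by case, the work already done in Lemma \ref{lemma pq0 subset}.
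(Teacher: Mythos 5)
Your proposal is correct and follows essentially the same route as the paper: both reduce the claim, via the reparametrisation $c=b'$ (equivalently $\min(a,c)=s \iff a\le c'$), to the containment $\c{\im P_a}\subseteq\ker Q_{c'}$ supplied by Lemma \ref{lemma pq0 subset}, and then conclude by the orthogonality of an image against the kernel of the adjoint. The paper packages that last step as an appeal to the decomposition of Theorem \ref{thm decomp hilb dem}, while you invoke the identity $(\ker Q_{c'})^\perp=\c{\im Q^*_c}$ from Remark \ref{rmk strong-weak} directly; these are the same fact.
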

\begin{proof}
The result is equivalent to $\im Q^*_{b'}\perp\im P_a$ for $a,b\in\{s,w\}$ with $a\le b$, which follows from Theorem \ref{thm decomp hilb dem} and Lemma \ref{lemma pq0 subset}.
\end{proof}

\begin{lemma}\label{lemma smooth strong weak}
For $a\in\{s,w\}$, we have ${P} = P_a$ when acting on $\Gamma(M,E)\cap\D(P_a)$.
\end{lemma}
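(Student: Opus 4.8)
The plan is to establish the weak case first and then reduce the strong case to it via the inclusion $P_s\subseteq P_w$. For $a=w$, take $u\in\Gamma(M,E)\cap\D(P_w)$ and set $v:=P_wu\in L^2E$. By the definition of the weak extension,
\[
\la v,w\ra=\la u,P^*w\ra\qquad\text{for all }w\in\Gamma_0(M,E).
\]
Since $u$ is smooth and $\supp u\cap\supp w\subseteq\supp w$ is compact, the defining property of the formal adjoint $P^*$ (an application of Stokes' theorem) gives $\la Pu,w\ra=\la u,P^*w\ra$ for every such $w$, where both integrals are finite because they are taken over the compact set $\supp w$. Subtracting the two identities yields $\la v-Pu,w\ra=0$ for all $w\in\Gamma_0(M,E)$, and the fundamental lemma of the calculus of variations then forces $v=Pu$ almost everywhere. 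In particular the smooth section $Pu$ represents the same class as $v\in L^2E$, so $P_wu=v=Pu$, as desired.

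For $a=s$, I would simply invoke the inclusion $P_s\subseteq P_w$ recorded earlier: any $u\in\Gamma(M,E)\cap\D(P_s)$ then lies in $\D(P_w)$ and satisfies $P_su=P_wu$. Applying the weak case gives $P_wu=Pu$, hence $P_su=Pu$. Thus both cases follow at once, and the only genuine content is the smooth integration-by-parts identity together with the density of compactly supported sections.

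The one point requiring a little care, which I regard as the sole (mild) obstacle rather than a real difficulty, is that for smooth $u$ the section $Pu$ is \emph{a priori} only in $L^2_{loc}E$, not necessarily in $L^2E$. This is handled by localizing the last step: on any relatively compact open set $\Omega\subseteq M$ both $v$ and $Pu$ restrict to $L^2$ sections, and testing against $w\in\Gamma_0(M,E)$ with $\supp w\subseteq\Omega$ forces $v=Pu$ almost everywhere on $\Omega$ (applying the fundamental lemma componentwise in a local trivialization). Covering $M$ by such sets gives $v=Pu$ almost everywhere on $M$, and in particular $Pu\in L^2E$, completing the argument.
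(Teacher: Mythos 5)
Your proposal is correct and follows essentially the same route as the paper: reduce to the weak case via $P_s\subseteq P_w$, pair the defining identity of $P_w$ against the formal-adjoint identity for a smooth section tested on $\Gamma_0(M,E)$, and conclude $Pu=P_wu$ by a localization argument (the paper uses a compactly supported cutoff of $Pu-P_wu$ where you invoke the fundamental lemma on relatively compact sets). Your extra care about $Pu$ being a priori only in $L^2_{loc}E$ is a welcome, if minor, refinement of the same argument.
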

\begin{proof}
Since $P_s\subseteq P_w$, it is enough to take $\alpha\in \Gamma(M,E)\cap\D(P_w)$ and note that for every $\gamma\in\Gamma_0(M,E)$
\[
\la {P}\alpha-P_w\alpha,\gamma\ra=\la \alpha, P^*\gamma-P^*\gamma\ra=0,
\]
therefore by the density of $\Gamma_0(M,E)$ in $L^2E$, the previous equation holds for every $\gamma$ in $L^2$, implying ${P}\alpha=P_w\alpha$ (for every $x\in M$ it is enough to choose, \textit{e.g.}, $\gamma=({P}\alpha-P_w\alpha)1_{K_x}$, where $1_{K_x}$ is the function equal to $1$ on the compact $K_x\ni x$ and equal to $0$ outside).
\end{proof}

\begin{lemma}\label{lemma kernel weak smooth}
It holds that
$$\D(P_w)\cap\Gamma(M,E)=\{\alpha\in L^2E\cap \Gamma(M,E)\,|\,{P}\alpha\in L^2E\}$$ and
$$\ker {P}_w\cap \Gamma(M,E)=\{\alpha\in L^2E\cap \Gamma(M,E)\,|\,{P}\alpha=0\}.$$
\end{lemma}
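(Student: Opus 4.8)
The plan is to unwind the definition of $\D(P_w)$ and combine it with Lemma \ref{lemma smooth strong weak}, which already guarantees that $P$ and $P_w$ agree on any smooth section lying in $\D(P_w)$. I would establish the first equality by a double inclusion. For the inclusion $\subseteq$, take $\alpha\in\D(P_w)\cap\Gamma(M,E)$; then $\alpha$ is smooth and lies in $L^2E$, and Lemma \ref{lemma smooth strong weak} yields $P\alpha=P_w\alpha\in L^2E$, so $\alpha$ sits in the right-hand side.

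For the reverse inclusion $\supseteq$, I would take $\alpha\in L^2E\cap\Gamma(M,E)$ with $P\alpha\in L^2E$ and set $v:=P\alpha$. Since $\alpha$ is smooth, for any $w\in\Gamma_0(M,E)$ the supports satisfy $\supp\alpha\cap\supp w\subseteq\supp w$, which is compact; hence the defining property of the formal adjoint $P^*$ (obtained via Stokes' theorem) applies and gives $\la v,w\ra=\la P\alpha,w\ra=\la\alpha,P^*w\ra$ for every such $w$. This is exactly the condition placing $\alpha\in\D(P_w)$ with $P_w\alpha=v$, so $\alpha$ belongs to the left-hand side, completing the first equality.

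The second equality would then follow immediately from the first together with Lemma \ref{lemma smooth strong weak}. Indeed, for $\alpha\in\D(P_w)\cap\Gamma(M,E)$ one has $P_w\alpha=P\alpha$, so $P_w\alpha=0$ if and only if $P\alpha=0$; and whenever $P\alpha=0$, the first equality guarantees $\alpha\in\D(P_w)$. Combining these observations characterises $\ker P_w\cap\Gamma(M,E)$ as precisely the smooth $L^2$ sections annihilated by $P$.

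There is no substantial obstacle here: the argument is purely a matter of matching the distributional definition of $P_w$ against the integration-by-parts identity defining $P^*$. The only point deserving care is the reverse inclusion of the first equality, where $\alpha$ is smooth but need not be compactly supported; one must observe that testing against a compactly supported $w$ keeps the overlap of supports compact, so that the formal-adjoint identity is legitimately available on $\supp w$.
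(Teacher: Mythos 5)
Your proof is correct and follows essentially the same route as the paper: both arguments rest on the integration-by-parts identity $\la P\alpha,\gamma\ra=\la\alpha,P^*\gamma\ra$ for smooth $L^2$ sections $\alpha$ tested against compactly supported $\gamma$ (legitimate because the support overlap is compact), combined with the localisation argument of Lemma \ref{lemma smooth strong weak} to identify $P_w\alpha$ with $P\alpha$. Your write-up merely makes the two inclusions and the support observation more explicit than the paper's terse version.
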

\begin{proof}
If $\alpha\in L^2E\cap \Gamma(M,E)$, then for every $\gamma\in\Gamma_0(M,E)$ smooth section with compact support
\[
\la {P}\alpha,\gamma\ra=\la\alpha, P^*\gamma\ra.
\]
Therefore, arguing as in Lemma \ref{lemma smooth strong weak}, $\alpha\in \D(P_w)$ iff ${P}\alpha\in L^2E$, and analogously
 $\alpha\in\ker {P}_w$ iff ${P}\alpha=0$.
\end{proof}

\begin{remark}
The definitions of strong and weak extensions and the previous technical lemmas can be given in a more general setting. 

For any Hilbert space $\H$ that contains $\Gamma_0(M,E)$ as a dense subspace, it it possible to define the strong and weak extensions of $P_0:\Gamma_0(M,E)\to\Gamma_0(M,E)$ so long as it admits a formal adjoint with respect to the inner product of $\H$; with the same assumptions one can go on to prove Lemmas 5.2-5.6. As a consequence, the $L^2$ Dolbeault and the $L^2$ Aeppli-Bott-Chern Hilbert complexes, which will be defined in Sections \ref{sec hilb complex dolbeault} and \ref{sec hilb complex bc a}, may also be constructed for Hilbert spaces other than $L^2$. To prove also Lemmas 5.7 and 5.8 in general one would need additional assumptions on $\H$, so that its inner product looks sufficiently like the $L^2$ inner product. 

In \cite{AV} Andreotti and Vesentini introduced a Hilbert space $W^{p,q}$ defined as $\D((\delbar+\delbar^*)_s)\cap L^2\Lambda^{p,q}$ on a Hermitian manifold. If the Hermitian metric is K\"ahler, in \cite[Section 5]{P} the second author proved that for a class of differential operators (including $d,d^*,\del,\del^*,\delbar,\delbar^*$ and all the Laplacians introduced in Section \ref{sec complex manifold}) the $W^{p,q}$ formal adjoint coincide with the $L^2$ formal adjoint. Therefore, K\"ahler manifolds with the Hilbert space $W^{p,q}$ provide an example of a more general setting where strong and weak extensions can be defined and Lemmas 5.2-5.8 hold.
\end{remark}

\section{The Dolbeault Hilbert complex}
\label{sec hilb complex dolbeault}
Let $(M,g)$ be a Hermitian manifold of complex dimension $n$.
For any fixed $(p,q)$, let us consider the following \emph{$L^2$ Dolbeault Hilbert complex}
\begin{equation}\label{eq dolbeault hilbert complex}
\dots{\longrightarrow}L^2\Lambda^{p,q-1}\overset{\delbar_a}{\longrightarrow}L^2\Lambda^{p,q}\overset{\delbar_b}{\longrightarrow}L^2\Lambda^{p,q+1}{\longrightarrow}\dots
\end{equation}
where $a,b\in\{s,w\}$ with $a\le b$ and so, by Lemma \ref{lemma pq0 subset}, we have  $\overline{\im \delbar_a} \subseteq \ker \delbar_b$ and consequently the cohomology spaces below are well-defined.

We denote by
\[
L^2H^{p,q}_{\delbar,ab}:=\frac{L^2\Lambda^{p,q}\cap\ker\delbar_b}{L^2\Lambda^{p,q}\cap\im\delbar_a}
\]
the associated \emph{unreduced $L^2$ Dolbeault cohomology},
and by
\[
L^2\bar{H}^{p,q}_{\delbar,ab}:=\frac{L^2\Lambda^{p,q}\cap\ker\delbar_b}{L^2\Lambda^{p,q}\cap\c{\im\delbar_a}},
\]
the associated \emph{reduced $L^2$ Dolbeault cohomology}.

As described in \eqref{Hilbert complex laplacian}, the Hilbert complex \eqref{eq dolbeault hilbert complex} has an associated Laplacian, which is a positive self-adjoint operator given by 
\[
\Delta_{\delbar,ab}:=\delbar_a\delbar_a^t+\delbar_b^t\delbar_b,
\]
defined on $L^2\Lambda^{p,q}$ with domain given by Theorems \ref{thm composition with adjoint} and \ref{thm sum of self ajoint}. In fact, the operator $\Delta_{\delbar,ab}$ is well-defined even if $a>b$.
 
Recalling Remark \ref{rmk strong-weak} and the notation $a'=w$ if $a=s$ and $a'=s$ if $a=w$, note that $\Delta_{\delbar,ab}$  can be rewritten as
\[
\Delta_{\delbar,ab}=\delbar_a\delbar^*_{a'}+\delbar^*_{b'}\delbar_b.
\]
With this formulation of $\Delta_{\delbar,ab}$, it is easy to see that it is an extension of the Dolbeault Laplacian $(\Delta_{\delbar})_0$ acting on the space $A_0^{p,q}$ of smooth compactly supported forms.

By Lemma \ref{lemma ker lapl} it holds that in $L^2 \Lambda^{p,q}$ we have
\begin{equation}\label{eq kernel dolbeault laplacian ab}
L^2\H^{p,q}_{\delbar,ab}:=\ker\Delta_{\delbar,ab}=\ker\delbar_b\cap\ker\delbar^*_a,
\end{equation}
where $L^2\H^{p,q}_{\delbar,ab}$ is the space of \emph{$L^2$-Dolbeault harmonic forms}.

Moreover, by Theorem \ref{thm decomp hilb dem}, we obtain the following Dolbeault orthogonal decomposition of the Hilbert space $L^2\Lambda^{p,q}$
\begin{align*}
L^2\Lambda^{p,q}&=L^2\H^{p,q}_{\delbar,ab}\oplus\c{\im\delbar_a}\oplus\c{\im\delbar^*_{b'}},\\
\ker \delbar_b&=L^2\H^{p,q}_{\delbar,ab}\oplus\c{\im\delbar_a}.
\end{align*}
%$$\textcolor{red}{\ker \delbar^*_{a'} = L^2 \H^{p,q}_{\delbar,ab} \oplus \c{\im \delbar^*_{b'}}}$$
From this last decomposition we immediately deduce the isomorphism, induced by the identity, between the space of $L^2$ Dolbeault harmonic forms and $L^2$ reduced cohomology
\[
L^2\H^{p,q}_{\delbar,ab}\simeq L^2\bar{H}^{p,q}_{\delbar,ab}.
\]
Note that by elliptic regularity, \textit{i.e.}, Theorem \ref{thm ell-reg}, we get $L^2\H^{p,q}_{\delbar,ab}\subseteq A^{p,q}$.

\begin{remark}
This theory of the $L^2$ Dolbeault Hilbert complex holds similarly for the $L^2$ Hilbert complexes originated by $\del^2=0$ and $d^2=0$. In particular, for $a,b\in\{s,w\}$, we can define self-adjoint operators $\Delta_{\del,ab}$, $\Delta_{d,ab}$, $L^2$ cohomology spaces $L^2H^{p,q}_{\del,ab}$, $L^2H^{k}_{d,ab}$, $L^2\bar{H}^{p,q}_{\del,ab}$, $L^2\bar{H}^{k}_{d,ab}$ and spaces of $L^2$ harmonic forms $L^2\H^{p,q}_{\del,ab}$, $L^2\H^{k}_{d,ab}$ with analogous properties.
\end{remark}

\section{The Aeppli-Bott-Chern Hilbert complex}
\label{sec hilb complex bc a}
Throughout this section, $(M,g)$ will denote a Hermitian manifold of complex dimension $n$.
%We will also omit the parentheses when denoting the strong and the weak extensions of a differential operator, \textit{e.g.}, $\del \delbar_w := (\del \delbar)_w$. 

For any fixed bidegree $(p,q)$, we define the \emph{$L^2$ Aeppli-Bott-Chern Hilbert complex}, or \emph{$L^2$ ABC complex} for short, to be
\begin{equation}\label{eq bc hilbert complex}
\begin{tikzcd}
\dots\arrow[d]\\
L^2\Lambda^{p-1,q-2}\oplus L^2\Lambda^{p-2,q-1}\arrow[d,"(\delbar \oplus \del)_a"]\\
L^2\Lambda^{p-1,q-1}\arrow[d,"\del\delbar_b"]\\
L^2\Lambda^{p,q}\arrow[d,"(\del+\delbar)_c"]\\
L^2\Lambda^{p+1,q}\oplus L^2\Lambda^{p,q+1}\arrow[d]\\
\dots
\end{tikzcd}
\end{equation}
where $a,b,c\in\{s,w\}$ with $a\le b\le c$ denote either strong or weak extensions. Note that we choose to omit the parentheses when writing $\del \delbar_b := (\del \delbar)_b$ for simplicity of notation. The complete definition of the $L^2$ ABC Hilbert complex is given in Section \ref{sec questions}.

By Lemma \ref{lemma pq0 subset} we have 
\[
\c{\im(\delbar \oplus \del)_a}\subseteq\ker \del\delbar_b,\ \ \ \c{\im\del\delbar_b}\subseteq\ker (\del+\delbar)_c,
\]
and therefore the associated \emph{unreduced $L^2$ Bott-Chern} and \emph{Aeppli cohomology} spaces, defined by
\[
L^2H^{p,q}_{BC,bc}:=\frac{L^2\Lambda^{p,q}\cap\ker (\del+\delbar)_c}{L^2\Lambda^{p,q}\cap\im\del\delbar_b}
\]
and
\[
L^2H^{p-1,q-1}_{A,ab}:=\frac{L^2\Lambda^{p-1,q-1}\cap\ker\del\delbar_b}{L^2\Lambda^{p-1,q-1}\cap\im(\delbar \oplus \del)_a},
\]
are well-defined. 
Similarly, we can define the \emph{reduced $L^2$ Bott-Chern} and \emph{Aeppli cohomology} spaces
\[
L^2\bar{H}^{p,q}_{BC,bc}:=\frac{L^2\Lambda^{p,q}\cap\ker (\del+\delbar)_c}{L^2\Lambda^{p,q}\cap\c{\im\del\delbar_b}}
\]
and
\[
L^2\bar{H}^{p-1,q-1}_{A,ab}:=\frac{L^2\Lambda^{p-1,q-1}\cap\ker\del\delbar_b}{L^2\Lambda^{p-1,q-1}\cap\c{\im(\delbar \oplus \del)_a}}.
\]

The positive self-adjoint operators associated to the the Hilbert complex \eqref{eq bc hilbert complex} are
\[
\Delta_{BC,bc}:=\del\delbar_b(\del\delbar_b)^t+(\del+\delbar)_c^t(\del+\delbar)_c:L^2\Lambda^{p,q}\to L^2\Lambda^{p,q}
\]
and
\[
\Delta_{A,ab}:=(\del\delbar_b)^t\del\delbar_b+(\delbar \oplus \del)_a(\delbar \oplus \del)_a^t:L^2\Lambda^{p-1,q-1}\to L^2\Lambda^{p-1,q-1},
\]
with domains given by Theorems \ref{thm composition with adjoint} and \ref{thm sum of self ajoint}. By analogy with the elliptic complex \eqref{eq bc complex} we also define the following positive self-adjoint operators as $L^2$ versions of the operators \eqref{eq defin ell lapl}.
\[
\square_{BC,bc}=\del\delbar_b(\del\delbar_b)^t+((\del+\delbar)_c^t(\del+\delbar)_c)^2,
\]
acting on $L^2 \Lambda^{p,q}$, and
\[
\square_{A,ab}=(\del\delbar_b)^t\del\delbar_b+((\delbar \oplus \del)_a(\delbar \oplus \del)_a^t)^2,
\]
acting on $L^2\Lambda^{p-1,q-1}$.
 Their domains are given by Theorems \ref{thm composition with adjoint} and \ref{thm sum of self ajoint}.

Note that without the condition $a\leq b \leq c$, the above operators are still well-defined, however it may not be possible to define the corresponding cohomology spaces. 

By Remark \ref{rmk strong-weak} and recalling the notation $a'=w$ if $a=s$ and $a'=s$ if $a=w$, the operators in the $L^2$ ABC Hilbert complex have adjoints given by 
\begin{equation}\label{eq adjoints abc hilb complex}
(\delbar \oplus \del)_a^t = (\del^*+\delbar^*)_{a'}, \ \ \  \del \delbar_b^t = \delbar^* \del^*_{b'}, \ \ \   (\del+\delbar)_c^t=(\del^*\oplus\delbar^*)_{c'}.
\end{equation}
This allows us to rewrite the operators $\Delta_{BC,bc}$ and $\Delta_{A,ab}$ as
\[
\Delta_{BC,bc}=\del\delbar_b\delbar^*\del^*_{b'}+(\del^*\oplus\delbar^*)_{c'}(\del+\delbar)_c,
\]
\[
\Delta_{A,ab}=\delbar^*\del^*_{b'}\del\delbar_b+(\delbar\oplus\del)_a(\del^*+\delbar^*)_{a'},
\]
and the operators $\square_{BC,bc}$ and $\square_{A,ab}$ as
\[
\square_{BC,bc}=\del\delbar_b\delbar^*\del^*_{b'}+((\del^*\oplus\delbar^*)_{c'}(\del+\delbar)_c)^2,
\]
\[
\square_{A,ab}=\delbar^*\del^*_{b'}\del\delbar_b+((\delbar\oplus\del)_a(\del^*+\delbar^*)_{a'})^2.
\]
This makes it clear that the above operators are just extensions of the non-elliptic operators $(\Delta_{BC})_0$, $(\Delta_{A})_0$ and the elliptic operators $(\square_{BC})_0$, $(\square_{A})_0$, acting on the space $A^{\bullet,\bullet}_0$ of smooth forms with compact support. 

\begin{remark}
The dual Hilbert complex of \eqref{eq bc hilbert complex} is given by
\begin{equation*}\label{eq bc dual hilbert complex}
\begin{tikzcd}
L^2\Lambda^{p-1,q-2}\oplus L^2\Lambda^{p-2,q-1}\\
L^2\Lambda^{p-1,q-1}\arrow[u,"(\del^* + \delbar^*)_{a'}"]\\
L^2\Lambda^{p,q}\arrow[u,"\delbar^*\del^*_{b'}"]\\
L^2\Lambda^{p+1,q}\oplus L^2\Lambda^{p,q+1}\arrow[u,"(\del^*\oplus\delbar^*)_{c'}"]
\end{tikzcd}
\end{equation*}
with $a,b,c\in\{s,w\}$ chosen such that $a\le b\le c$. This will be called the \emph{dual $L^2$ ABC Hilbert complex}.
\end{remark}

By Lemma \ref{lemma ker lapl}, we see that the kernels of the elliptic and non-elliptic operators coincide, in particular we have
\[
\ker\Delta_{BC,bc}=\ker\square_{BC,bc}=\ker\delbar^*\del^*_{b'}\cap\ker (\del+\delbar)_c,
\]
on $L^2 \Lambda^{p,q}$, and
\[
\ker\Delta_{A,ab}=\ker\square_{A,ab}=\ker (\del^*+\delbar^*)_{a'}\cap\ker\del\delbar_b.
\]
on $L^2\Lambda^{p-1,q-1}$.
These kernels will be denoted by
\begin{align*}
L^2\H^{p,q}_{BC,bc}&:=\ker\Delta_{BC,bc}\cap L^2\Lambda^{p,q},\\
L^2\H^{p-1,q-1}_{A,ab}&:=\ker\Delta_{A,ab}\cap L^2\Lambda^{p-1,q-1},
\end{align*}
and we will call them the spaces of \emph{$L^2$ Bott-Chern  harmonic forms} and \emph{$L^2$ Aeppli harmonic forms}, respectively. Since $\square_{BC}$ and $\square_{A}$ are elliptic and $\ker\Delta_{BC,bc}\subseteq \ker(\square_{BC})_w$, $\ker\Delta_{A,ab}\subseteq \ker(\square_{A})_w$, it follows from Theorem \ref{thm ell-reg} (elliptic regularity) that these spaces of $L^2$ harmonic forms are smooth, namely 
\[
L^2\H^{p,q}_{BC,bc}\subseteq A^{p,q},\ \ \ L^2\H^{p,q}_{A,ab}\subseteq A^{p,q}.
\]

\begin{remark}
    As the Hodge $*$ operator is an isometry with respect to the $L^2$ inner product, the following duality between spaces of harmonic forms is obtained as a consequence of \eqref{eq duality bc a}.
\[
L^2\H^{p,q}_{BC,bc}\simeq L^2\H^{n-q,n-p}_{A,c'b'}.
\]
\end{remark}

\begin{remark}
The $L^2$ ABC complex \eqref{eq bc hilbert complex} is composed by two separate Hilbert complexes: the \emph{$L^2$ Aeppli Hilbert complex} is given by the first two differentials $(\delbar\oplus\del)_a$ and $\del\delbar_b$, with $a,b\in\{s,w\}$ and $a\le b$; while the \emph{$L^2$ Bott-Chern Hilbert complex} is given by the last two differentials $\del\delbar_b$ and $(\del+\delbar)_c$, with $b,c\in\{s,w\}$ and $b\le c$.

In the following we might consider these two Hilbert complexes separately.
In this way, without any loss of generality, we can increment by $(1,1)$ the bidegree for the $L^2$ Aeppli Hilbert complex in \eqref{eq bc hilbert complex}. As a consequence, we will uniform the subscripts denoting strong or weak extensions in both Hilbert complexes to just $a,b$ with $a\le b$.
\end{remark}

We introduce the notation $L^2A^{p,q}:=L^2\Lambda^{p,q}\cap A^{p,q}$ to denote the space of $L^2$ $(p,q)$-forms which are smooth. The spaces $L^2A^{k}$ and $L^2A^{k}_\C$ can be defined similarly.
By applying Theorem \ref{thm decomp hilb dem} to the $L^2$ ABC Hilbert complex, we obtain the following orthogonal decompositions of the Hilbert space $L^2\Lambda^{p,q}$. 
\begin{theorem}[$L^2$ Bott-Chern and Aeppli decompositions]\label{thm l2 bc decomp}
The Hilbert space $L^2\Lambda^{p,q}$ decomposes as
\begin{align*}
L^2\Lambda^{p,q}&=L^2\H^{p,q}_{BC,ab}\oplus\c{\im\del\delbar_a}\oplus\c{\im(\del^*\oplus\delbar^*)_{b'}},\\
L^2\Lambda^{p,q}&=L^2\H^{p,q}_{A,ab}\oplus\c{\im(\delbar\oplus \del)_a}\oplus\c{\im\delbar^*\del^*_{b'}},
\end{align*}
where $a,b\in\{s,w\}$ such that $a\le b$. We also have 
\begin{align*}
\ker (\del+\delbar)_b&=L^2\H^{p,q}_{BC,ab}\oplus\c{\im\del\delbar_{a}},\\
\ker\del\delbar_b&=L^2\H^{p,q}_{A,ab}\oplus\c{\im(\delbar\oplus\del)_{a}}.
\end{align*}
Moreover a smooth form $\alpha \in L^2 A^{p,q}$ has smooth components with respect to the above decompositions.
\end{theorem}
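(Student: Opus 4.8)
The four orthogonal decompositions are direct applications of Theorem~\ref{thm decomp hilb dem} to the two constituent Hilbert complexes. For the Bott-Chern statements I would take $P=\del\delbar_a$ and $Q=(\del+\delbar)_b$ in \eqref{eq bc hilbert complex}: Lemma~\ref{lemma pq0 subset} supplies $\im P\subseteq\ker Q$, the adjoint identities \eqref{eq adjoints abc hilb complex} give $P^t=\delbar^*\del^*_{a'}$ and $Q^t=(\del^*\oplus\delbar^*)_{b'}$, and $\ker Q\cap\ker P^t=L^2\H^{p,q}_{BC,ab}$ holds by definition. Theorem~\ref{thm decomp hilb dem} then yields both the three-term decomposition of $L^2\Lambda^{p,q}$ and the decomposition of $\ker(\del+\delbar)_b$. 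The Aeppli statements are obtained identically with $P=(\delbar\oplus\del)_a$ and $Q=\del\delbar_b$, after raising the bidegree by $(1,1)$ as in the remark preceding the theorem.

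The substantive part is the smoothness of the components, which I would prove for the Bott-Chern decomposition, the Aeppli case being verbatim with $\square_A$ in place of $\square_{BC}$. Write $\alpha=\alpha_H+\alpha_1+\alpha_2$, the three summands lying in $L^2\H^{p,q}_{BC,ab}$, $\c{\im\del\delbar_a}$ and $\c{\im(\del^*\oplus\delbar^*)_{b'}}$ respectively. The harmonic term $\alpha_H$ is smooth by the elliptic regularity already established for $\square_{BC}$, so it suffices to show $\alpha_2$ is smooth, since then $\alpha_1=\alpha-\alpha_H-\alpha_2$. The plan is to check that $\alpha_2$ is a weak solution of $\square_{BC}\alpha_2=(\del^*\del+\delbar^*\delbar)^2\alpha$, whose right-hand side is a smooth form; ellipticity of $\square_{BC}$ together with Theorem~\ref{thm ell-reg} then forces $\alpha_2\in A^{p,q}$.

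To verify this I would pair $\alpha_2$ against $\square_{BC}w$ for $w\in A^{p,q}_0$ and split $\square_{BC}=\del\delbar\delbar^*\del^*+(\del^*\del+\delbar^*\delbar)^2$. Since $\alpha_2\in\ker\delbar^*\del^*_{a'}$, the defining weak identity of that extension, applied with test form $\delbar^*\del^*w$, gives $\la\alpha_2,\del\delbar\delbar^*\del^*w\ra=0$. For the remaining term I substitute $\alpha_2=\alpha-\alpha_H-\alpha_1$: since $\del\alpha_H=\delbar\alpha_H=0$ classically (Lemma~\ref{lemma kernel weak smooth}) and $\del\alpha_1=\delbar\alpha_1=0$ weakly—the two bidegree components of $(\del+\delbar)_b\alpha_1=0$ being orthogonal—both $\alpha_H$ and $\alpha_1$ are annihilated by $\del^*\del+\delbar^*\delbar$ inside the pairing, leaving $\la\alpha_2,(\del^*\del+\delbar^*\delbar)^2w\ra=\la(\del^*\del+\delbar^*\delbar)^2\alpha,w\ra$. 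This is precisely the asserted weak equation, with smooth right-hand side.

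The chief obstacle, and the reason the statement is not merely formal, is that a smooth $L^2$ form need not belong to the domain of any closed extension, so one may not apply $\del,\delbar,\del^*,\delbar^*$ to $\alpha$ or to its components and compute naively. Every manipulation above must be read as a distributional pairing against compactly supported test forms, and each integration by parts must be justified uniformly in the extension type: directly from the maximal-extension identity when the relevant subscript is $w$, and by approximation with compactly supported forms when it is $s$. The real care lies in tracking which kernel membership makes each summand of $\square_{BC}$ collapse; once that bookkeeping is in place, the ellipticity of $\square_{BC}$ (respectively $\square_A$) does all the analytic work.
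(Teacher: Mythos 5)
Your proposal is correct and follows essentially the same route as the paper: the four decompositions are read off from Theorem \ref{thm decomp hilb dem}, and smoothness of the components is obtained by exhibiting each non-harmonic component as a weak solution of an equation $\square_{BC}(\cdot)=(\text{smooth})$ and invoking Theorem \ref{thm ell-reg}. The only (immaterial) difference is that you recover the $\c{\im\del\delbar_a}$ component by subtraction, whereas the paper also writes it directly as a weak solution of $\square_{BC}\beta=\del\delbar\delbar^*\del^*\alpha$.
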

\begin{proof}
The decompositions follow from Theorem \ref{thm decomp hilb dem}, therefore it only remains to prove the regularity.
Given $\alpha\in L^2\Lambda^{p,q}$ we can write
\[
\alpha=h+\beta+\eta
\]
by the first decomposition, where $h\in\ker\square_{BC,ab}$, $\beta\in\c{\im\del\delbar_a}$ and $\eta\in\c{\im(\del^*\oplus\delbar^*)_{b'}}$.
Lemma \ref{lemma pq0 subset} tells us that $\beta\in\ker\del_w\cap\ker\delbar_w$ and $\eta\in\ker\delbar^*\del^*_w$, which implies that, when $\alpha$ is smooth, $\beta$ is a weak solution of 
\[
\square_{BC}\beta=\del\delbar\delbar^*\del^*\alpha
\]
 and $\eta$ is a weak solution of 
\[
\square_{BC}\eta=(\del^*\del+\delbar^*\delbar)^2\alpha.
\]
By Theorem \ref{thm ell-reg}, it follows that $\beta$ and $\eta$ are also smooth, proving the result. The Aeppli case is analogous.
\end{proof}
\begin{corollary}
    There exist isomorphisms, induced by the identity, between the spaces of $L^2$ harmonic forms and reduced $L^2$ cohomology for $a,b\in\{s,w\}$ such that $a\le b$
    \[
L^2\H^{p,q}_{BC,ab}\simeq L^2\bar{H}^{p,q}_{BC,ab}\ \ \ \ \ \ \ \ \ L^2\H^{p,q}_{A,ab}\simeq L^2\bar{H}^{p,q}_{A,ab}.
\]
\end{corollary}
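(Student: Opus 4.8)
The plan is to read off both isomorphisms directly from the orthogonal decompositions of $\ker(\del+\delbar)_b$ and $\ker\del\delbar_b$ established in Theorem \ref{thm l2 bc decomp}. Focusing on the Bott-Chern case, recall that by definition
\[
L^2\bar{H}^{p,q}_{BC,ab}=\frac{L^2\Lambda^{p,q}\cap\ker(\del+\delbar)_b}{L^2\Lambda^{p,q}\cap\c{\im\del\delbar_a}},
\]
while Theorem \ref{thm l2 bc decomp} supplies the \emph{orthogonal} decomposition
\[
\ker(\del+\delbar)_b=L^2\H^{p,q}_{BC,ab}\oplus\c{\im\del\delbar_a}.
\]
First I would define the candidate map $\Phi\colon L^2\H^{p,q}_{BC,ab}\to L^2\bar{H}^{p,q}_{BC,ab}$ sending a harmonic form $h$ to its reduced cohomology class $[h]$. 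This is well-defined and linear, since $L^2\H^{p,q}_{BC,ab}\subseteq\ker(\del+\delbar)_b$, and it is precisely the map induced by the identity on representatives.

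Next I would verify bijectivity using the direct-sum structure. For injectivity, suppose $[h]=0$ for some $h\in L^2\H^{p,q}_{BC,ab}$; then $h\in\c{\im\del\delbar_a}$, so $h$ lies in both summands, which intersect only in $0$ because the decomposition is orthogonal, hence $h=0$. For surjectivity, any class in $L^2\bar{H}^{p,q}_{BC,ab}$ is represented by some $\alpha\in\ker(\del+\delbar)_b$, and the decomposition yields $\alpha=h+\beta$ with $h\in L^2\H^{p,q}_{BC,ab}$ and $\beta\in\c{\im\del\delbar_a}$; since $\beta$ is a (limit of) coboundary, $[\alpha]=[h]=\Phi(h)$. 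Thus $\Phi$ is the desired isomorphism.

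Finally, the Aeppli isomorphism follows by the identical argument applied to the second orthogonal decomposition
\[
\ker\del\delbar_b=L^2\H^{p,q}_{A,ab}\oplus\c{\im(\delbar\oplus\del)_a}
\]
from Theorem \ref{thm l2 bc decomp}, with $(\delbar\oplus\del)_a$ in place of $\del\delbar_a$. Since the whole content is extracted from the already-proven decompositions, there is no substantial obstacle here; the only subtlety worth flagging is that the reduced cohomology quotients out the \emph{closure} of the image rather than the image itself, and it is exactly this closure that appears as the complementary summand in Theorem \ref{thm l2 bc decomp}, so both injectivity and surjectivity are immediate from orthogonality.
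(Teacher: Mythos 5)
Your proposal is correct and is exactly the argument the paper intends: the corollary is stated without proof precisely because it follows immediately from the orthogonal decomposition $\ker(\del+\delbar)_b=L^2\H^{p,q}_{BC,ab}\oplus\c{\im\del\delbar_a}$ of Theorem \ref{thm l2 bc decomp} (and its Aeppli counterpart), just as the analogous Dolbeault isomorphism is deduced in Section \ref{sec hilb complex dolbeault}. Your explicit verification of injectivity and surjectivity via the direct-sum structure, including the remark that it is the closure of the image that appears as the complementary summand, fills in the routine details faithfully.
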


By \cite[Lemma 3.8]{BL} if $(\Delta_{BC})_0$ or $(\Delta_{A})_0$ are essentially self-adjoint, then $\del\delbar_s=\del\delbar_w$ and $\delbar^*\del^*_s=\delbar^*\del^*_w$. However, since $(\Delta_{BC})_0$ and $(\Delta_{A})_0$ are not elliptic, it can be difficult to find actual cases where they are essentially self-adjoint.
Instead, we prove that the same result holds when $(\square_{BC})_0$ or $(\square_{A})_0$ are essentially self-adjoint; this will become important in Section \ref{sec coverings} (see the discussion after Proposition \ref{pullback self adj}). 

\begin{theorem}\label{thm bc essentially deldelbar}
If $(\square_{BC})_0$ or $(\square_{A})_0$ is essentially self-adjoint, then
\[
\del\delbar_s=\del\delbar_w,\ \ \ \delbar^*\del^*_s=\delbar^*\del^*_w.
\]
\end{theorem}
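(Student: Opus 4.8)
The plan is to reduce everything to the single implication ``$(\square_A)_0$ essentially self-adjoint $\Rightarrow \del\delbar_s=\del\delbar_w$'', and to treat the $(\square_{BC})_0$ hypothesis by the mirror-image argument. The two asserted equalities are in fact equivalent: by \eqref{eq adjoints abc hilb complex} and Remark \ref{rmk strong-weak} one has $(\del\delbar_s)^t=\delbar^*\del^*_w$ and $(\del\delbar_w)^t=\delbar^*\del^*_s$, so taking Hilbert adjoints turns $\del\delbar_s=\del\delbar_w$ into $\delbar^*\del^*_s=\delbar^*\del^*_w$ and conversely. First I would note that, since every strong or weak extension agrees with the underlying differential operator on $A^{p-1,q-1}_0$ (Lemma \ref{lemma smooth strong weak}), the self-adjoint operators $\square_{A,ss}$ and $\square_{A,sw}$ (self-adjoint by Theorems \ref{thm composition with adjoint} and \ref{thm sum of self ajoint}) both extend the symmetric operator $(\square_A)_0$; essential self-adjointness then forces $\square_{A,ss}=\square_{A,sw}$.

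The heart of the argument is to decouple the two summands of $\square_{A,sb}=(\del\delbar_b)^t\del\delbar_b+((\delbar\oplus\del)_s(\delbar\oplus\del)_s^t)^2$. Write $A_b:=(\del\delbar_b)^t\del\delbar_b$, let $C$ denote the common second summand, and set $R:=\c{\im(\delbar\oplus\del)_s}$. I would check that $A_b$ and $C$ are both reduced by the orthogonal splitting $L^2\Lambda^{p-1,q-1}=R\oplus R^\perp$, where $R^\perp=\ker(\del^*+\delbar^*)_w$ by Lemma \ref{lem dem adjoint ker im}. Indeed $C$ is supported on $R$ and vanishes on $R^\perp$, while $A_b$ vanishes on $R$ — because $\im(\delbar\oplus\del)_s\subseteq\ker\del\delbar_b$ by Lemma \ref{lemma pq0 subset} — and has range in $\c{\im(\del\delbar_b)^t}=\c{\im\delbar^*\del^*_{b'}}$, which is orthogonal to $R$ by Lemma \ref{lemma perp pq0} applied to the relation $\del\delbar\,(\delbar\oplus\del)=0$ (legitimate since $\min(s,b')=s$). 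Hence each $\square_{A,sb}$ is block diagonal for $R\oplus R^\perp$, equal to $C$ on $R$ and to $A_b$ on $R^\perp$. Comparing the $R^\perp$-blocks in $\square_{A,ss}=\square_{A,sw}$ (the $C$-blocks being identical) yields $A_s=A_w$, that is
\[
(\del\delbar_s)^t\del\delbar_s=(\del\delbar_w)^t\del\delbar_w.
\]

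It remains to pass from this equality of Laplacians to $\del\delbar_s=\del\delbar_w$. Since $\del\delbar_s\subseteq\del\delbar_w$ and, by the standard fact that $\D((\del\delbar_w)^t\del\delbar_w)$ is a core for $\del\delbar_w$ (cf.\ Theorem \ref{thm composition with adjoint}), the two closed operators agree on this common domain; as $\del\delbar_s$ is closed this gives $\del\delbar_w\subseteq\del\delbar_s$, hence equality. The $(\square_{BC})_0$ hypothesis is handled identically on the target space $L^2\Lambda^{p,q}$: here one isolates the summand $\del\delbar_b(\del\delbar_b)^t=(\delbar^*\del^*_{b'})^t\delbar^*\del^*_{b'}$ of $\square_{BC,bw}$ (its orthogonal complement being controlled by $\c{\im(\del^*\oplus\delbar^*)_s}$ via Lemma \ref{lemma perp pq0}) to obtain $\del\delbar_s(\del\delbar_s)^t=\del\delbar_w(\del\delbar_w)^t$ and thence $\delbar^*\del^*_s=\delbar^*\del^*_w$.

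The step I expect to be delicate is the decoupling: one must verify that the two positive self-adjoint summands genuinely reduce the \emph{fixed} decomposition $R\oplus R^\perp$ — in particular that $A_b$ commutes with the orthogonal projection onto $R$ even though its own range $\c{\im\delbar^*\del^*_{b'}}$ varies with $b$ — and that the operator domains split accordingly, so that equality of the sums really does force equality of the $A_b$-blocks. The orthogonality provided by the complex identity $\del\delbar\,(\delbar\oplus\del)=0$ through Lemma \ref{lemma perp pq0} is precisely what makes the two summands live on mutually orthogonal subspaces; once this is in place the final core argument is routine.
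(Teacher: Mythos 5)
Your proposal is correct, but the middle of the argument is genuinely different from the paper's. Both proofs open with the same observation --- $\square_{A,ss}$ and $\square_{A,sw}$ are self-adjoint extensions of $(\square_A)_0$, hence coincide (with equal domains) under the essential self-adjointness hypothesis --- and both close with a core argument, but the paper never decouples the two summands. Instead it invokes \cite[p.~98]{BL}, which asserts that $\bigcap_{k}\D(\Delta_{A,ab}^k)$ is a core for $\del\delbar_b$, notes that this set sits inside $\D(\Delta_{A,ab}^2)\subseteq\D(\square_{A,ab})$ by \eqref{eq domain square lapl}, and concludes that the single common domain $\D(\square_{A,ss})=\D(\square_{A,sw})$ is simultaneously a core for $\del\delbar_s$ and for $\del\delbar_w$, on which the two agree. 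Your block-diagonalisation of $\square_{A,sb}$ along $\c{\im(\delbar\oplus\del)_s}\oplus\ker(\del^*+\delbar^*)_w$ does go through, and the delicate point you flag is fine: the domain splits because $\c{\im(\delbar\oplus\del)_s}\subseteq\ker\del\delbar_b\subseteq\D((\del\delbar_b)^t\del\delbar_b)$ and $\ker(\del^*+\delbar^*)_w\subseteq\D\bigl(((\delbar\oplus\del)_s(\delbar\oplus\del)_s^t)^2\bigr)$, so both orthogonal projections of any element of $\D(\square_{A,sb})$ remain in $\D(\square_{A,sb})$, and since $(\del\delbar_b)^t\del\delbar_b$ vanishes on all of $\c{\im(\delbar\oplus\del)_s}$ the equality of the $R^\perp$-blocks really does yield $(\del\delbar_s)^t\del\delbar_s=(\del\delbar_w)^t\del\delbar_w$ as operators. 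From there you need only the classical von Neumann fact that $\D(T^tT)$ is a core for a closed densely defined $T$ (this is indeed part of \cite[Theorem X.25]{RS2}, even though the paper quotes only the self-adjointness statement). In short, you trade the Br\"uning--Lesch core lemma for Hilbert complexes against an explicit reduction argument plus elementary von Neumann theory: your route is longer but more self-contained, and the intermediate identity of the two $\del\delbar$-Laplacians is a slightly stronger byproduct than what the paper's proof records.
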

\begin{proof}
Given a closed linear operator $P:\mathcal{H}_1 \rightarrow \mathcal{H}_2$, a linear subspace $\mathcal{E}\subseteq \D(P)$ is called a \emph{core} if $\mathcal{E}$ is dense in $\D(P)$ with respect to the graph norm $\lv x \rv_{Gr(P)} := \lv x \rv_{\mathcal{H}_1} + \lv Px \rv_{\mathcal{H}_2}$. 

By \cite[p. 98]{BL} we know that
\[
\mathcal{E}_{ab}:=\bigcap_{k\in\N}\D(\Delta_{A,ab}^k)
\]
is a core for $\del\delbar_b$ with $a,b\in\{s,w\}$ and $a\le b$. 
But by definition $\mathcal{E}_{ab}\subseteq\D(\Delta_{A,ab}^2)$, and by \eqref{eq domain square lapl} we have $\D(\Delta_{A,ab}^2)\subseteq\D(\square_{A,ab})$, therefore
$\D(\square_{A,ab})$ is also a core for $\del\delbar_b$. In particular $\D(\square_{A,ss})$ is a core for $\del\delbar_s$ and $\D(\square_{A,sw})$ is a core for $\del\delbar_w$.
Now note that since $\del\delbar_s\subseteq\del\delbar_w$, we know that $\del\delbar_s$ and $\del\delbar_w$ coincide on $\D(\square_{A,ss})$, and if $(\square_{A})_0$ is essentially self-adjoint then $\D(\square_{A,ss})=\D(\square_{A,sw})$.
By the definition of core this shows $\del\delbar_s=\del\delbar_w$, and Remark \ref{rmk strong-weak} ends the proof in the case when $(\square_{A})_0$ is essentially self-adjoint.

If $(\square_{BC})_0$ is essentially self-adjoint, considering the dual $L^2$ ABC Hilbert complex \eqref{eq bc dual hilbert complex}, a similar argument shows that $\delbar^*\del^*_s=\delbar^*\del^*_w$, and again Remark \ref{rmk strong-weak} ends the proof.
\end{proof}

\begin{remark}
Actually, \cite[Lemma 3.8]{BL} yields that if $(\Delta_{BC})_0$ is essentially self-adjoint, then $\del\delbar_s=\del\delbar_w$ and $(\del+\delbar)_s=(\del+\delbar)_w$. Similarly, if $(\Delta_{A})_0$ is essentially self-adjoint, then $\del\delbar_s=\del\delbar_w$ and $(\delbar\oplus\del)_s=(\delbar\oplus\del)_w$. With the same proof of Theorem \ref{thm bc essentially deldelbar} we can prove that if $(\square_{BC})_0$ is essentially self-adjoint, then $(\del+\delbar)_s=(\del+\delbar)_w$, and if $(\square_{A})_0$ is essentially self-adjoint, then $(\delbar\oplus\del)_s=(\delbar\oplus\del)_w$. However, since $\del+\delbar$ and $\delbar\oplus\del$ are first order differential operators, in actual examples the two properties $(\del+\delbar)_s=(\del+\delbar)_w$ and $(\delbar\oplus\del)_s=(\delbar\oplus\del)_w$ hold when the metric is complete (see, \textit{e.g.}, \cite[Theorem 1.3]{A}), which usually is a more frequent assumption to get, compared with essential self-adjointness.
\end{remark}

By a similar argument, if $(\square_{BC})_0$ and/or $(\square_{A})_0$ are essentially self-adjoint, we are able to characterise the kernel of their unique self-adjoint extension.
First note that, as a consequence of Lemma \ref{lemma kernel weak smooth}, we have 
\begin{align*}
\ker\square_{BC,sw}&=\{\alpha\in L^2A^{\bullet,\bullet}\,|\,\del\alpha=\delbar\alpha=\delbar^*\del^*\alpha=0\},\\
\ker\square_{A,sw}&=\{\alpha\in L^2A^{\bullet,\bullet}\,|\,\del^*\alpha=\delbar^*\alpha=\del\delbar\alpha=0\}.
\end{align*}
Furthermore, again by Lemma \ref{lemma kernel weak smooth}, we have 
\begin{align*}
\ker (\square_{BC})_w&=\{\alpha\in L^2A^{\bullet,\bullet}\,|\,\square_{BC}\alpha=0\},\\
\ker (\square_{A})_w&=\{\alpha\in L^2A^{\bullet,\bullet}\,|\,\square_{A}\alpha=0\}.
\end{align*}
Therefore we obtain the following result.

\begin{proposition}\label{prop bc essentially harmonic}
 If $(\square_{BC})_0$ is essentially self-adjoint, then for any smooth form $\alpha\in L^2 A^{\bullet,\bullet}$ it holds that
\[
\square_{BC}\alpha=0\ \ \ \iff\ \ \ \del\alpha=\delbar\alpha=\delbar^*\del^*\alpha=0.
\]
If $(\square_{A})_0$ is essentially self-adjoint, then for any smooth form $\alpha\in L^2 A^{\bullet,\bullet}$ it holds that
\[
\square_{A}\alpha=0\ \ \ \iff\ \ \ \del^*\alpha=\delbar^*\alpha=\del\delbar\alpha=0.
\]
\end{proposition}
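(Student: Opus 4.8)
The characterisations displayed immediately before the statement already do most of the bookkeeping: they express the right-hand condition $\del\alpha=\delbar\alpha=\delbar^*\del^*\alpha=0$ as membership in $\ker\square_{BC,sw}$, and the left-hand condition $\square_{BC}\alpha=0$ (for smooth $L^2$ forms) as membership in $\ker(\square_{BC})_w$. Hence the first equivalence is literally the identity $\ker\square_{BC,sw}=\ker(\square_{BC})_w$, and the Aeppli equivalence is $\ker\square_{A,sw}=\ker(\square_{A})_w$. The plan is therefore to show that, under the hypothesis, the composite operator $\square_{BC,sw}$ coincides with the weak extension $(\square_{BC})_w$ of the single fourth-order operator $\square_{BC}$; everything then follows by comparing kernels.

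To carry this out I would first observe that $\square_{BC,sw}$ is a positive self-adjoint operator, by Theorems \ref{thm composition with adjoint} and \ref{thm sum of self ajoint}, and that it extends $(\square_{BC})_0$: on compactly supported smooth forms every strong or weak extension entering its definition reduces to the underlying differential operator, so $\square_{BC,sw}$ agrees with $\square_{BC}$ on $A^{\bullet,\bullet}_0$. Thus $\square_{BC,sw}$ is a self-adjoint extension of $(\square_{BC})_0$. Since $(\square_{BC})_0$ is assumed essentially self-adjoint, it admits a unique self-adjoint extension, so $\square_{BC,sw}$ must equal it; and by Remark \ref{rmk strong-weak} (the operator $\square_{BC}$ being formally self-adjoint) this unique self-adjoint extension is precisely $(\square_{BC})_s=(\square_{BC})_w$. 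Consequently $\square_{BC,sw}=(\square_{BC})_w$, their kernels coincide, and combining with the two characterisations above gives the first equivalence. The Aeppli statement is proved in exactly the same manner with $\square_A$ in place of $\square_{BC}$; alternatively it can be deduced from the Bott--Chern case through the Hodge star, which is an $L^2$ isometry intertwining the two operators as in \eqref{eq duality bc a}.

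The main obstacle, and the only place where the hypothesis is genuinely used, is the identification $\square_{BC,sw}=(\square_{BC})_w$: a priori the composite $\square_{BC,sw}$, built from strong and weak extensions of the separate factors $\del\delbar$ and $\del+\delbar$, has no reason to agree with the weak extension of the whole operator $\square_{BC}$, and this is exactly the discrepancy that essential self-adjointness removes by collapsing all self-adjoint extensions of $(\square_{BC})_0$ to a single one. By contrast the reverse implication is unconditional and elementary: substituting $\del\alpha=\delbar\alpha=\delbar^*\del^*\alpha=0$ into $\square_{BC}=\del\delbar\delbar^*\del^*+(\del^*\del+\delbar^*\delbar)^2$ kills each summand directly, so no hypothesis is needed there. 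Thus the full strength of the essential self-adjointness assumption is spent solely on the forward direction, where it serves as a substitute for the integration-by-parts argument available on compact manifolds.
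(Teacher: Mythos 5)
Your proof is correct and follows essentially the same route as the paper: the paper likewise reduces both sides to the two kernel characterisations displayed just before the statement and rests on identifying $\square_{BC,sw}$ with the unique self-adjoint extension $(\square_{BC})_s=(\square_{BC})_w$ of $(\square_{BC})_0$. You merely make explicit the identification step (that $\square_{BC,sw}$ is itself a self-adjoint extension of $(\square_{BC})_0$ and hence coincides with the closure) which the paper leaves implicit.
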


As a direct consequence of \cite[Theorem 3.5]{BL}, the unreduced $L^2$ Bott-Chern and Aeppli cohomologies can be computed using the subcomplex obtained by intersecting \eqref{eq bc hilbert complex} with the space of smooth forms $A^{\bullet,\bullet}$. Note that this subcomplex is not itself a Hilbert complex.

\begin{theorem}[{\cite[Theorem 3.5]{BL}}]\label{thm isom cohm smooth}
We have the following isomorphisms for $a,b\in\{s,w\}$ and $a\le b$
\[
L^2H^{p,q}_{BC,ab}\simeq \frac{\ker(\del+\delbar)_b\cap A^{p,q}}{\del\delbar_a(A^{p-1,q-1}\cap\D(\del\delbar_a))},
\]
\[
L^2H^{p,q}_{A,ab}\simeq \frac{\ker \del\delbar_b\cap A^{p,q}}{(\delbar\oplus\del)_a((A^{p,q-1}\oplus A^{p-1,q})\cap\D((\delbar\oplus\del)_a))}.
\]
\end{theorem}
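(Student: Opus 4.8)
The plan is to recognise the right-hand sides as the cohomology of the \emph{smooth subcomplex} $\mathcal{S}^{\bullet,\bullet}$ obtained by intersecting the $L^2$ ABC Hilbert complex \eqref{eq bc hilbert complex} with $A^{\bullet,\bullet}$, and then to invoke \cite[Theorem 3.5]{BL}, which asserts that such a smooth (core) subcomplex computes the \emph{unreduced} cohomology of the ambient Hilbert complex. First I would check that $\mathcal{S}^{\bullet,\bullet}$ is a subcomplex: by Lemma \ref{lemma smooth strong weak} the extensions $\del\delbar_a$ and $(\del+\delbar)_b$ act as the classical operators on smooth forms, which preserve smoothness. By Lemma \ref{lemma kernel weak smooth} a smooth form lies in $\ker(\del+\delbar)_b$ precisely when it is classically annihilated by $\del+\delbar$, so the cocycles of $\mathcal{S}^{\bullet,\bullet}$ in bidegree $(p,q)$ are exactly $\ker(\del+\delbar)_b\cap A^{p,q}$ and its coboundaries are $\del\delbar_a(A^{p-1,q-1}\cap\D(\del\delbar_a))$. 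Thus the displayed quotient is $H^{p,q}(\mathcal{S})$, and the theorem becomes the assertion $H^{p,q}(\mathcal{S})\simeq L^2H^{p,q}_{BC,ab}$.

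The heart of the matter is to produce the canonical smooth core required by \cite[Theorem 3.5]{BL} and to check that it really consists of smooth forms, the obstacle being that the natural Laplacian $\Delta_{BC,ab}=\del\delbar_a\delbar^*\del^*_{a'}+(\del^*\oplus\delbar^*)_{b'}(\del+\delbar)_b$ of this Hilbert complex is \emph{not elliptic}. I would set $\mathcal{D}^\infty:=\bigcap_{k\in\N}\D(\Delta_{BC,ab}^k)$ and exploit the orthogonality of the two summands of $\Delta_{BC,ab}$ — the same phenomenon recorded in \eqref{eq domain square lapl} — to obtain the domain identity
\[
\bigcap_{k\in\N}\D(\Delta_{BC,ab}^k)=\bigcap_{k\in\N}\D(\square_{BC,ab}^k).
\]
Since $\square_{BC,ab}$ is a self-adjoint extension of the \emph{elliptic} operator $(\square_{BC})_0$, hence contained in $(\square_{BC})_w$, every element of this intersection is a weak solution of $\square_{BC}^{k}u=w_k$ with $w_k\in L^2$ for all $k$, lies therefore in every local Sobolev space, and is smooth by elliptic regularity (Theorem \ref{thm ell-reg}) and Sobolev embedding. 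Consequently $\mathcal{D}^\infty\subseteq A^{p,q}$, i.e.\ $\mathcal{D}^\infty\subseteq \mathcal{S}^{p,q}$.

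Now \cite[Theorem 3.5]{BL} gives $H^{p,q}(\mathcal{D}^\infty)\simeq L^2H^{p,q}_{BC,ab}$, and the inclusions $\mathcal{D}^\infty\subseteq\mathcal{S}\subseteq$ (the Hilbert complex) factor this isomorphism as $H^{p,q}(\mathcal{D}^\infty)\to H^{p,q}(\mathcal{S})\to L^2H^{p,q}_{BC,ab}$. The first arrow is injective and the composite is an isomorphism, so it remains to show that the second arrow is injective; a short diagram chase then forces both to be isomorphisms. Injectivity means that a smooth cocycle $\beta\in\ker(\del+\delbar)_b\cap A^{p,q}$ which equals $\del\delbar_a\eta$ for some $\eta\in\D(\del\delbar_a)$ already admits a \emph{smooth} primitive. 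To obtain this I would decompose $\eta$ by the Aeppli decomposition of Theorem \ref{thm l2 bc decomp}, discard the components killed by $\del\delbar_a$, and upgrade the regularity of the surviving primitive using elliptic regularity for $\square_{A}$ together with the smoothness of the components of a smooth form guaranteed by that same theorem. The Aeppli statement follows verbatim after incrementing the bidegree, or at once from the Hodge-$*$ duality between $L^2$ Bott-Chern and Aeppli harmonic forms.

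The single genuine difficulty is the one isolated in the second paragraph: because the Laplacian $\Delta_{BC}$ attached to the ABC complex is only formally self-adjoint and not elliptic, neither the smoothness of the canonical core nor the regularity of primitives can be read off directly. The device that unlocks the argument is the passage to the elliptic operator $\square_{BC}$ (respectively $\square_{A}$) through the identity $\bigcap_k\D(\Delta^k)=\bigcap_k\D(\square^k)$, after which ordinary elliptic regularity supplies every regularity statement the proof requires.
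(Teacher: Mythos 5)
Your strategy is sound, but you should know that the paper offers no argument for this statement beyond the observation that the ABC complex is elliptic and that the isomorphisms are then ``a direct consequence of [BL, Theorem 3.5]''; what you have written is essentially a reconstruction of the proof hidden inside that citation. Your three ingredients are exactly the right ones: the identification of the displayed quotients with the cohomology of the smooth subcomplex via Lemmas \ref{lemma smooth strong weak} and \ref{lemma kernel weak smooth}; the abstract fact that the core $\bigcap_k\D(\Delta_{BC,ab}^k)$ computes the unreduced cohomology (this is the Hilbert-complex part of Br\"uning--Lesch, their Theorem 2.12, rather than Theorem 3.5 itself); and the smoothness of that core, obtained by trading the non-elliptic $\Delta_{BC,ab}$ for the elliptic $\square_{BC,ab}$ through the domain identity generalising \eqref{eq domain square lapl}. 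That last device is precisely the one the paper deploys in the proof of Theorem \ref{thm bc essentially deldelbar}, and it is also how one handles elliptic complexes with differentials of different orders in general, so your route and the paper's ultimately coincide; yours simply makes the verification explicit. Two small remarks: you only need the inclusion $\bigcap_k\D(\Delta_{BC,ab}^k)\subseteq\bigcap_k\D(\square_{BC,ab}^k)$, not the equality; and you need the analogous smoothness statement one step earlier in the complex (for $\bigcap_k\D(\Delta_{A,ab}^k)$ on $L^2\Lambda^{p-1,q-1}$, via $\square_{A,ab}$) so that the core is a \emph{subcomplex} of smooth forms.

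One step is justified incorrectly as written, though it is repairable. To prove injectivity of $H^{p,q}(\mathcal{S})\to L^2H^{p,q}_{BC,ab}$ you must produce a smooth primitive of a smooth $\beta=\del\delbar_a\eta$, and you appeal to ``the smoothness of the components of a smooth form'' from Theorem \ref{thm l2 bc decomp}; but that regularity clause concerns the decomposition of a \emph{smooth} form, whereas here $\eta$ is only an $L^2$ form in $\D(\del\delbar_a)$, so it does not apply. Moreover the components of $\eta$ are not automatically in $\D(\del\delbar_s)$ when $a=s$. The repair: use the $(s,s)$ Aeppli decomposition $\eta=h+\mu+\nu$, so that $h+\mu\in\ker\del\delbar_s\subseteq\ker\del\delbar_a$ and hence $\nu=\eta-h-\mu\in\D(\del\delbar_a)$ with $\del\delbar_a\nu=\beta$; since $\nu\in\c{\im\delbar^*\del^*_{w}}\subseteq\ker\del^*_w\cap\ker\delbar^*_w$ by Lemma \ref{lemma pq0 subset} applied to the dual complex, $\nu$ is a weak solution of $\square_{A}\nu=\delbar^*\del^*\beta$ with smooth right-hand side, hence smooth by Theorem \ref{thm ell-reg}. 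With this adjustment the proposal is complete.
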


We note that, in view of Lemma \ref{lemma kernel weak smooth}, when $a=b=c=w$ the statement simplifies to
\[
L^2H^{p,q}_{BC,ww}\simeq \frac{\ker\del\cap\ker\delbar\cap L^2A^{p,q}}{\del\delbar(L^2A^{p-1,q-1}) \cap L^2A^{p,q}},
\]
\[
L^2H^{p,q}_{A,ww}\simeq \frac{\ker \del\delbar\cap L^2A^{p,q}}{(\delbar\oplus\del)(L^2A^{p,q-1}\oplus L^2A^{p-1,q})\cap L^2 A^{p,q}}.
\]

As a consequence of the above results, on a compact Hermitian manifold the spaces of Bott-Chern cohomology,  Bott-Chern harmonic forms,  $L^2$ reduced Bott-Chern cohomology,  $L^2$ unreduced Bott-Chern cohomology, and  $L^2$ Bott-Chern harmonic forms are all isomorphic.

Indeed, in this setting $(\square_{BC})_0$ is essentially self-adjoint \cite[Proposition 4.1]{Ati64} and so $\del\delbar_s=\del\delbar_w$ by Theorem \ref{thm bc essentially deldelbar}. Furthermore $(\del+\delbar)_s=(\del+\delbar)_w$ and $(\delbar\oplus\del)_s=(\delbar\oplus\del)_w$ since the metric $g$ is complete (see, \textit{e.g.}, \cite[Theorem 1.3]{A}). Therefore
\[
L^2H^{p,q}_{BC,bc}\simeq L^2H^{p,q}_{BC,ww}.
\]
Moreover, by Theorem \ref{thm isom cohm smooth} and Lemma \ref{lemma kernel weak smooth}, we have
\[
L^2H^{p,q}_{BC,ww}\simeq H^{p,q}_{BC},
\]
and since $M$ is compact $H^{p,q}_{BC}\simeq\H^{p,q}_{BC}$ has finite dimension. This implies that $\im\del\delbar_b$ is closed (\textit{cf.} \cite[Theorem 2.4]{BL}), thus
\[
L^2H^{p,q}_{BC,bc}\simeq L^2\bar{H}^{p,q}_{BC,bc}\simeq L^2\H^{p,q}_{BC,bc}.
\]

Similar isomorphisms also exist for Aeppli cohomology.

%\subsection{A remark on the \texorpdfstring{$L^2$}{L2} Aeppli cohomology}\label{subsec remark aeppli}\,\\
\subsection{A remark on the reduced \texorpdfstring{$L^2$}{L2} Aeppli and Bott-Chern cohomologies}\label{subsec remark aeppli}\,\\
Recall the smooth Aeppli and Bott-Chern cohomologies, considered in Section \ref{sec complex manifold}
$$ 
H^{p,q}_{BC}:=\frac{\ker \del +\delbar}{\im\del\delbar},\quad \quad  H^{p,q}_A:=\frac{\ker \del\delbar}{\im\delbar\oplus\del}.$$
When the bidegree is fixed we can write $\ker \del+\delbar$ and $\im \del \oplus \delbar$ equivalently as $\ker \del \cap \ker \delbar$ and $\im \del + \im \delbar$.
However, things are not so simple for the reduced $L^2$ Aeppli and Bott-Chern cohomologies
\[
L^2\bar{H}^{p,q}_{BC,ab}:=\frac{\ker (\del+\delbar)_b}{\c{\im\del\delbar_a}},
\quad \quad
L^2\bar{H}^{p,q}_{A,ab}:=\frac{\ker\del\delbar_b}{\c{\im(\delbar \oplus \del)_a}},
\]
with $a,b \in \{s,w\}$, $a \leq b$. There is no guarantee that we have $(\del + \delbar)_b = \del_b + \delbar_b$ and as a consequence we cannot assume that $\ker(\del+\delbar)_b$ and $\c{\im (\delbar\oplus \del)_a}$ can be used interchangeably with $\ker \del_b \cap \ker \delbar_b$ and $\c{\im \delbar_a + \im \del_a}$; instead we have the following results.
\begin{lemma}\label{lemma intersection kernel weak strong}
Let the operators $\del_a+\delbar_a$ and $\del^*_a+\delbar^*_a$ be defined with domains $\D(\del_a)\cap\D(\delbar_a) \subseteq L^2\Lambda^{p,q}$ and $\D(\del^*_a)\cap\D(\delbar^*_a) \subseteq L^2\Lambda^{p,q}$ respectively, for some choice of strong or weak extensions $a \in \{s,w\}$.
Then for the weak extension on $L^2\Lambda^{p,q}$ we have
\[
(\del+\delbar)_w=\del_w+\delbar_w,\ \ \ (\del^*+\delbar^*)_w=\del^*_w+\delbar^*_w
\] 
however for the strong extension we have only
\[
(\del+\delbar)_s\subseteq\del_s+\delbar_s,\ \ \ (\del^*+\delbar^*)_s\subseteq\del^*_s+\delbar^*_s.
\]
In particular, on $L^2\Lambda^{p,q}$ we have
\begin{gather*}
\ker(\del+\delbar)_w=\ker\del_w\cap\ker\delbar_w, \quad \quad \ker(\del^*+\delbar^*)_w=\ker\del^*_w\cap\ker\delbar^*_w,
\end{gather*}
for the weak extension but only
\begin{gather*}
\ker(\del+\delbar)_s\subseteq\ker\del_s\cap\ker\delbar_s,\quad \quad 
\ker(\del^*+\delbar^*)_s\subseteq\ker\del^*_s\cap\ker\delbar^*_s,
\end{gather*}
for the strong extension.
\end{lemma}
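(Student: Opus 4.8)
The plan is to unwind the definitions of the strong and weak extensions directly, treating $\del+\delbar$ and $\del^*+\delbar^*$ in parallel: both send a single $(p,q)$-form into a pair living in an orthogonal direct sum, and their formal adjoints ($\del^*\oplus\delbar^*$ and $\del\oplus\delbar$, respectively) act by summing the two components. It is this ``single form $\mapsto$ pair'' shape that drives everything.

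First I would handle the weak extension. By the definition of weak extension, $(\del+\delbar)_w=((\del^*\oplus\delbar^*)_0)^t$, so $u\in\D((\del+\delbar)_w)$ with value $v=(v_1,v_2)$ exactly when
\[
\la v_1,\psi_1\ra+\la v_2,\psi_2\ra=\la u,\del^*\psi_1+\delbar^*\psi_2\ra
\]
for all compactly supported $\psi_1\in A^{p+1,q}_0$, $\psi_2\in A^{p,q+1}_0$. The key move is to test first with $\psi_2=0$ and then with $\psi_1=0$: this decouples the identity into $\la v_1,\psi_1\ra=\la u,\del^*\psi_1\ra$ and $\la v_2,\psi_2\ra=\la u,\delbar^*\psi_2\ra$, which say precisely that $u\in\D(\del_w)\cap\D(\delbar_w)$ with $\del_w u=v_1$ and $\delbar_w u=v_2$. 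The converse follows by adding the two identities, giving $(\del+\delbar)_w=\del_w+\delbar_w$. Replacing $\del,\delbar$ by $\del^*,\delbar^*$ (so that the testing adjoint is $\del\oplus\delbar$) the same computation yields $(\del^*+\delbar^*)_w=\del^*_w+\delbar^*_w$.

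Next I would treat the strong extension. If $u\in\D((\del+\delbar)_s)=\D(\c{(\del+\delbar)_0})$, I pick $u_j\in A^{p,q}_0$ with $u_j\to u$ and $(\del+\delbar)u_j\to v=(v_1,v_2)$ in $L^2\Lambda^{p+1,q}\oplus L^2\Lambda^{p,q+1}$. Since convergence in an orthogonal direct sum is componentwise, this forces $\del u_j\to v_1$ and $\delbar u_j\to v_2$ simultaneously, whence $u\in\D(\del_s)\cap\D(\delbar_s)$ with $\del_s u=v_1$, $\delbar_s u=v_2$; thus $(\del+\delbar)_s\subseteq\del_s+\delbar_s$, and identically for the adjoint operators. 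The point I would stress -- and the reason only an inclusion holds -- is that membership in $\del_s+\delbar_s$ provides $u\in\D(\del_s)$ and $u\in\D(\delbar_s)$ separately, i.e. possibly distinct approximating sequences witnessing $\del_s u$ and $\delbar_s u$, whereas the strong extension of $\del+\delbar$ demands a single sequence achieving both limits at once. There is in general no mechanism to merge two such sequences, so the reverse inclusion can fail. This failure-of-equality phenomenon is the only conceptually substantive point; the rest is bookkeeping.

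Finally, the kernel statements are read off. In the weak case $(\del+\delbar)_w u=(\del_w u,\delbar_w u)$ vanishes iff both components vanish, giving $\ker(\del+\delbar)_w=\ker\del_w\cap\ker\delbar_w$, and identically for $\del^*+\delbar^*$. In the strong case the inclusion $(\del+\delbar)_s\subseteq\del_s+\delbar_s$ immediately yields $\ker(\del+\delbar)_s\subseteq\ker\del_s\cap\ker\delbar_s$, since any $u$ annihilated by $(\del+\delbar)_s$ lies in $\D(\del_s)\cap\D(\delbar_s)$ with $\del_s u=\delbar_s u=0$; the same applies to $\del^*_s+\delbar^*_s$. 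Only the inclusion survives here precisely because of the single-sequence obstruction identified above, which is exactly what prevents using $\ker(\del+\delbar)_s$ and $\ker\del_s\cap\ker\delbar_s$ interchangeably.
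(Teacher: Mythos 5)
Your proof is correct and follows essentially the same route as the paper: the paper's entire argument is the observation that the target $L^2\Lambda^{p+1,q}\oplus L^2\Lambda^{p,q+1}$ is an orthogonal direct sum, so that the weak-extension identity decouples componentwise and strong-extension convergence is componentwise, and you have simply written out the details the paper leaves as "follow directly from the definitions." Your remark on why only an inclusion holds for the strong extension (a single approximating sequence must realise both limits simultaneously) is a correct and useful elaboration of the same point.
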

\begin{proof}
The key observation is that $\del_w+\delbar_w,(\del+\delbar)_w,\del_s+\delbar_s,(\del+\delbar)_s$ are operators $L^2\Lambda^{p,q}\to L^2\Lambda^{p+1,q}\oplus L^2\Lambda^{p,q+1}$ and the spaces $L^2\Lambda^{p+1,q}$, $L^2\Lambda^{p,q+1}$ are orthogonal. Knowing this,
both inclusions $\del_w+\delbar_w\subseteq(\del+\delbar)_w$ and $\del_w+\delbar_w\supseteq(\del+\delbar)_w$ follow directly from the definitions, as well as $(\del+\delbar)_s\subseteq\del_s\cap\delbar_s$. The same works also for the operator $\del^*+\delbar^*$.
\end{proof}

An immediate consequence is the following.
\begin{corollary}\label{cor equality closure strong}
The following equalities hold in $L^2\Lambda^{p,q}$
$$\c{\im(\delbar \oplus \del)_s}=\c{\im\del_s}+\c{\im\delbar_s},\ \ \ \c{\im(\del^* \oplus \delbar^*)_s}=\c{\im\del^*_s}+\c{\im\delbar^*_s}.$$
%\textcolor{blue}{is there a weaker version for the weak extension?}
\end{corollary}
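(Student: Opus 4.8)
The plan is to derive Corollary \ref{cor equality closure strong} directly from Lemma \ref{lemma intersection kernel weak strong}, passing from the operator inclusions to statements about images and their closures. The key point to keep in mind throughout is the orthogonality of $L^2\Lambda^{p+1,q}$ and $L^2\Lambda^{p,q+1}$ inside the target $L^2\Lambda^{p+1,q}\oplus L^2\Lambda^{p,q+1}$: for a form $\alpha\in\D((\delbar\oplus\del)_s)$, the image $(\delbar\oplus\del)_s\alpha$ has its two bidegree components living in these two orthogonal summands, and these components are precisely $\delbar_s\alpha$ and $\del_s\alpha$.

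First I would establish the inclusion $\im(\delbar\oplus\del)_s\subseteq\im\del_s+\im\delbar_s$. By Lemma \ref{lemma intersection kernel weak strong} (dualised to the first two slots of the ABC complex, i.e. applied with the bidegree shift), we have $(\delbar\oplus\del)_s\subseteq\delbar_s\oplus\del_s$, so for any $\alpha\in\D((\delbar\oplus\del)_s)$ we get $(\delbar\oplus\del)_s\alpha=\delbar_s\alpha+\del_s\alpha\in\im\delbar_s+\im\del_s$. Taking closures yields $\c{\im(\delbar\oplus\del)_s}\subseteq\c{\im\del_s+\im\delbar_s}=\c{\im\del_s}+\c{\im\delbar_s}$, where the last equality uses that $\im\del_s$ and $\im\delbar_s$ land in the orthogonal summands, so the sum of their closures is already closed and equals the closure of the sum. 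For the reverse inclusion, note that the strong extension is the closure of the operator on compactly supported forms, so by Lemma \ref{lemma closure strong} we have $\c{\im\del_s}=\c{\im\del_0}$ and $\c{\im\delbar_s}=\c{\im\delbar_0}$; for a compactly supported smooth form $\beta$ the element $\del_0\beta$ equals $(\delbar\oplus\del)_0\beta$ viewed in the appropriate summand (and likewise for $\delbar_0\beta$), so both $\im\del_0$ and $\im\delbar_0$ are contained in $\im(\delbar\oplus\del)_0\subseteq\im(\delbar\oplus\del)_s$, again using orthogonality to separate the bidegree components.

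I expect the main obstacle to be the careful bookkeeping of the orthogonality argument that lets one replace $\c{\im\del_s+\im\delbar_s}$ by $\c{\im\del_s}+\c{\im\delbar_s}$: one must verify that if $v_j=x_j+y_j$ with $x_j\in\im\del_s\subseteq L^2\Lambda^{p+1,q}$ and $y_j\in\im\delbar_s\subseteq L^2\Lambda^{p,q+1}$ converges, then each component converges separately because the two summands are orthogonal, so the limit decomposes accordingly. This is the only genuinely non-formal step; the containments of images are immediate once the identification of components with $\del_s,\delbar_s$ is made. The second equality of the corollary, concerning $\del^*\oplus\delbar^*$, follows by exactly the same argument applied to the adjoint operators, again invoking the orthogonality of the relevant $L^2$ summands as recorded in Lemma \ref{lemma intersection kernel weak strong}.
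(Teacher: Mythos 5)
There is a genuine gap, and it stems from a misidentification of which operator has an orthogonal direct sum as its \emph{target}. In this corollary $\delbar\oplus\del$ maps $L^2\Lambda^{p,q-1}\oplus L^2\Lambda^{p-1,q}$ into the single space $L^2\Lambda^{p,q}$: for $(\alpha_1,\alpha_2)$ in its domain, $(\delbar\oplus\del)(\alpha_1,\alpha_2)=\delbar\alpha_1+\del\alpha_2$ is one $(p,q)$-form, and the two terms $\delbar\alpha_1$ and $\del\alpha_2$ both lie in $L^2\Lambda^{p,q}$; they are \emph{not} separated into orthogonal bidegree summands. (The operator whose codomain is the orthogonal sum $L^2\Lambda^{p+1,q}\oplus L^2\Lambda^{p,q+1}$ is $\del+\delbar$, which is the one the orthogonality argument of Lemma \ref{lemma intersection kernel weak strong} actually applies to.) This breaks your argument in two places. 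First, from a sequence of compactly supported pairs with $\delbar\beta_j^1+\del\beta_j^2$ convergent you cannot conclude that $\delbar\beta_j^1$ and $\del\beta_j^2$ converge separately, so the inclusion ``$(\delbar\oplus\del)_s\subseteq\delbar_s\oplus\del_s$'' in the form you use is unjustified. Second, and decisively, the step $\c{\im\del_s+\im\delbar_s}=\c{\im\del_s}+\c{\im\delbar_s}$ is precisely the nontrivial content of the corollary: $\im\del_s$ and $\im\delbar_s$ are two generally non-orthogonal subspaces of the same Hilbert space, and the algebraic sum of two closed subspaces of a Hilbert space need not be closed. Indeed, Remark \ref{rmk clos image del+delbar} records the identity $\c{\im\del_s}+\c{\im\delbar_s}=\c{\im\del_s+\im\delbar_s}$ as a \emph{consequence} of this corollary, not as an input. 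The containments you do establish only give $\c{\im\del_s}+\c{\im\delbar_s}\subseteq\c{\im(\delbar\oplus\del)_s}=\c{\im\del_s+\im\delbar_s}$, which is weaker than the statement.

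The paper's proof circumvents this by dualising. By Remark \ref{rmk strong-weak} one has $(\im(\delbar\oplus\del)_s)^\perp=\ker(\del^*+\delbar^*)_w$, and the operator $\del^*+\delbar^*$ \emph{does} have orthogonal codomain $L^2\Lambda^{p-1,q}\oplus L^2\Lambda^{p,q-1}$, so Lemma \ref{lemma intersection kernel weak strong} legitimately yields $\ker(\del^*+\delbar^*)_w=\ker\del^*_w\cap\ker\delbar^*_w=(\im\del_s)^\perp\cap(\im\delbar_s)^\perp=\bigl(\c{\im\del_s}+\c{\im\delbar_s}\bigr)^\perp$; taking orthogonal complements once more gives the stated equality. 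If you want to keep your direct approach, you would need an independent proof that $\c{\im\del_s}+\c{\im\delbar_s}$ is closed in $L^2\Lambda^{p,q}$, which is exactly what the duality argument supplies.
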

\begin{proof}
We will just prove the first equality; the second is similar. Lemma \ref{lemma intersection kernel weak strong} tells us that we have 
\[
\ker(\del^*+\delbar^*)_w=\ker\del^*_w\cap\ker\delbar^*_w.
\]
in $L^2\Lambda^{p,q}$. By Remark \ref{rmk strong-weak}, we also have
\[
\ker(\del^*+\delbar^*)_w =\im(\delbar\oplus\del)_s^{\perp},
\]
and
\[
\ker\del^*_w\cap\ker\delbar^*_w=\im\del_s^\perp+\im\delbar_s^\perp=\left(\c{\im\del_s}+\c{\im\delbar_s}\right)^\perp.
\]
Therefore, the equality
\[
\c{\im(\delbar\oplus\del)_s}=\c{\im\del_s}+\c{\im\delbar_s}
\]
holds in $L^2\Lambda^{p,q}$.
\end{proof}

As a consequence, we have the following equivalent definitions of the reduced $L^2$ Aeppli and Bott-Chern cohomologies.
\begin{corollary}\label{cor defin aeppli equivalent}
When $a=s$ the reduced $L^2$ Aeppli cohomology can be written as
\[
L^2\bar{H}^{p,q}_{A,sb}=\frac{L^2\Lambda^{p,q}\cap\ker\del\delbar_b}{L^2\Lambda^{p,q}\cap\left(\c{\im\del_s}+\c{\im\delbar_s}\right)}.
\]
%\frac{L^2\Lambda^{p,q}\cap\ker\del\delbar_b}{L^2\Lambda^{p,q}\cap\c{\im(\delbar \oplus \del)_s}}=
When $b = w$ the reduced $L^2$ Bott-Chern cohomology can be written as
\[
L^2\bar{H}^{p,q}_{BC,aw}=\frac{L^2\Lambda^{p,q}\cap\ker \del_w \cap \ker \delbar_w}{L^2\Lambda^{p,q}\cap\c{\im\del\delbar_b}}.
\]
%\frac{L^2\Lambda^{p,q}\cap\ker (\del+\delbar)_w}{L^2\Lambda^{p,q}\cap\c{\im\del\delbar_b}} = 
\end{corollary}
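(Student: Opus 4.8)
The plan is to prove each identity by rewriting a single side of the quotient defining the reduced cohomology, leaving the other side literally equal to the one in the definition. In both cases the real content has already been extracted in Lemma \ref{lemma intersection kernel weak strong} and Corollary \ref{cor equality closure strong}, so the argument reduces to a substitution.

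For the Aeppli cohomology I would start from the definition
\[
L^2\bar{H}^{p,q}_{A,sb}=\frac{L^2\Lambda^{p,q}\cap\ker\del\delbar_b}{L^2\Lambda^{p,q}\cap\c{\im(\delbar\oplus\del)_s}},
\]
whose numerator already coincides with the one in the claimed expression. It then suffices to rewrite the denominator, and here Corollary \ref{cor equality closure strong} gives precisely $\c{\im(\delbar\oplus\del)_s}=\c{\im\del_s}+\c{\im\delbar_s}$ in $L^2\Lambda^{p,q}$. Intersecting with $L^2\Lambda^{p,q}$ and substituting yields the stated quotient. The hypothesis $a=s$ is essential, since Corollary \ref{cor equality closure strong} is only available for the strong extension.

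For the Bott-Chern cohomology the roles are reversed: starting from
\[
L^2\bar{H}^{p,q}_{BC,aw}=\frac{L^2\Lambda^{p,q}\cap\ker(\del+\delbar)_w}{L^2\Lambda^{p,q}\cap\c{\im\del\delbar_a}},
\]
the denominator is already the one appearing in the claimed quotient, so only the numerator needs rewriting. Here I would invoke Lemma \ref{lemma intersection kernel weak strong}, which gives $\ker(\del+\delbar)_w=\ker\del_w\cap\ker\delbar_w$ in $L^2\Lambda^{p,q}$; substituting produces the stated numerator $L^2\Lambda^{p,q}\cap\ker\del_w\cap\ker\delbar_w$. The hypothesis $b=w$ cannot be dropped, because for the strong extension Lemma \ref{lemma intersection kernel weak strong} only yields the inclusion $\ker(\del+\delbar)_s\subseteq\ker\del_s\cap\ker\delbar_s$, which is generally strict. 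Since both identities reduce to inserting an already-established equality of subspaces into the quotient, there is no genuine obstacle here; the only point requiring care is the strong/weak bookkeeping, which is exactly what forces the hypotheses $a=s$ and $b=w$ in the two cases.
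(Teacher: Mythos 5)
Your proof is correct and is exactly the argument the paper intends: the corollary is stated as an immediate consequence of Lemma \ref{lemma intersection kernel weak strong} and Corollary \ref{cor equality closure strong}, and your substitution of $\c{\im(\delbar\oplus\del)_s}=\c{\im\del_s}+\c{\im\delbar_s}$ in the Aeppli denominator and $\ker(\del+\delbar)_w=\ker\del_w\cap\ker\delbar_w$ in the Bott-Chern numerator is precisely that consequence. Your remarks on why $a=s$ and $b=w$ cannot be weakened are also consistent with the one-sided inclusions recorded in the lemma.
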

This is especially useful when the Hermitian metric $g$ is complete and therefore the strong and the weak extensions of $\del,\delbar,\delbar\oplus\del$ coincide.

%\begin{comment}
\begin{remark}\label{rmk clos image del+delbar}
Note that, by the same argument as in the proof of Lemma \ref{lemma closure strong}, we can obtain in $L^2\Lambda^{p,q}$
\begin{equation*}
\c{\im(\delbar\oplus\del)_s}=\c{\im\del_s+\im\delbar_s},\ \ \ \c{\im(\del^*\oplus\delbar^*)_s}=\c{\im\del^*_s+\im\delbar^*_s},
\end{equation*}
thus Corollary \ref{cor equality closure strong} implies
\[
\c{\im\del_s}+\c{\im\delbar_s}=\c{\im\del_s+\im\delbar_s},\ \ \ \c{\im\del^*_s}+\c{\im\delbar^*_s}=\c{\im\del^*_s+\im\delbar^*_s}.
\]
\end{remark}
%\end{comment}

\begin{comment}
First of all, note that by the orthogonal decomposition of a Hilbert space into a closed subspace and its orthogonal complement it follows the following property, which we have already implicitly applied to show the isomorphism between the $L^2$ reduced cohomology spaces and the spaces of $L^2$ harmonic forms.
\begin{proposition}\label{prop quotient space isomorphism}
If $\H$ is a Hilbert space and $K$ is a closed subspace, then $\H\cap K^\perp$ and the quotient $\H/K$ are isomorphic vector spaces.
\end{proposition}
\end{comment}

\subsection{A diagram of maps between the reduced \texorpdfstring{$L^2$}{L2} cohomology spaces}\,\\
On a complex manifold we have the diagram \eqref{diagram maps cohomology} of maps between complex cohomologies. Given a Hermitian metric, we have a similar diagram involving $L^2$ reduced cohomology spaces.

\begin{proposition}\label{prop diagram maps cohomology hilbert}
There is the following commutative diagram of maps induced by the identity
\begin{equation*}
\begin{tikzcd}
{}&{L^2\bar{H}^{\bullet,\bullet}_{BC,sw}}\arrow[dl]\arrow[d]\arrow[dr] &{}\\
{L^2\bar{H}^{\bullet,\bullet}_{\del,sw}}\arrow[dr]&{L^2\bar{H}^{\bullet}_{dR,sw}}\arrow[d] &{L^2\bar{H}^{\bullet,\bullet}_{\delbar,sw}}\arrow[dl]\\
{}&{L^2\bar{H}^{\bullet,\bullet}_{A,sw}}&{}
\end{tikzcd}
\end{equation*}
\end{proposition}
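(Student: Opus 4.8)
The plan is to verify that each of the six arrows is a well-defined linear map; commutativity will then follow for free. The guiding observation is that, with the subscript $sw$, every vertex of the diagram is a quotient $N/D$ in which the numerator $N$ is the kernel of a \emph{weak} extension and the denominator $D$ is the closure of the image of a \emph{strong} extension. An arrow induced by the identity $N_1/D_1\to N_2/D_2$ is well defined exactly when $N_1\subseteq N_2$ and $D_1\subseteq D_2$, so the task splits into proving a kernel inclusion (using the weak-extension lemmas) and an image inclusion (using the strong-extension lemmas) for each arrow.

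First I would rewrite the numerators and denominators in directly comparable form: by Lemma \ref{lemma intersection kernel weak strong}, $\ker(\del+\delbar)_w=\ker\del_w\cap\ker\delbar_w$, and by Corollary \ref{cor equality closure strong}, $\c{\im(\delbar\oplus\del)_s}=\c{\im\del_s}+\c{\im\delbar_s}$. The single algebraic input that drives all the inclusions is that $\del\delbar$ factors on smooth forms in four ways,
\[
\del\delbar=\del\circ\delbar=-\delbar\circ\del=\del\circ d=d\circ\delbar,
\]
each identity exhibiting $\del\delbar$ as a composition whose \emph{outer} factor is $\del$, $\delbar$ or $d$ (needed for image inclusions) or whose \emph{inner} factor is $\del$, $\delbar$ or $d$ (needed for kernel inclusions).

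For the three arrows leaving $BC$, the common numerator $\ker\del_w\cap\ker\delbar_w$ is contained in $\ker\del_w$, in $\ker\delbar_w$, and---by the orthogonality argument behind Lemma \ref{lemma intersection kernel weak strong}---in $\ker d_w$; the denominator $\c{\im\del\delbar_s}$ is contained in each of $\c{\im\del_s}$, $\c{\im\delbar_s}$, $\c{\im d_s}$ by the factorizations with outer factor $\del,\delbar,d$ via Lemmas \ref{lemma closure strong} and \ref{lemma image composition strong}. For the three arrows entering $A$, each of $\ker\del_w$, $\ker\delbar_w$, $\ker d_w$ is contained in $\ker\del\delbar_w$ by Lemma \ref{lemma kernel composition weak} applied to the factorizations with inner factor $\del,\delbar,d$; the denominator inclusions $\c{\im\del_s},\c{\im\delbar_s}\subseteq\c{\im\del_s}+\c{\im\delbar_s}$ are trivial, while $\c{\im d_s}\subseteq\c{\im\del_s}+\c{\im\delbar_s}$ follows from $\im d_0\subseteq\im\del_0+\im\delbar_0$ together with Lemma \ref{lemma closure strong} and Remark \ref{rmk clos image del+delbar}.

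Commutativity is then immediate, since every arrow sends a class to the class of the same representative: the three composites $BC\to\del\to A$, $BC\to dR\to A$ and $BC\to\delbar\to A$ all equal $[\alpha]_{BC}\mapsto[\alpha]_A$. The main obstacle is the de Rham vertex. First, $L^2\bar H^\bullet_{dR,sw}$ must be read as the total de Rham space, so that a $d$-closed form of mixed bidegree still represents a single class and the identity-induced maps make sense across the degree/bidegree divide. Second, the two de Rham inclusions $\ker d_w\subseteq\ker\del\delbar_w$ and $\c{\im d_s}\subseteq\c{\im\del_s}+\c{\im\delbar_s}$ are the only ones that are not purely formal: the first rests on the factorization $\del\delbar=\del\circ d$, and the second on Remark \ref{rmk clos image del+delbar} to pass between $\c{\im\del_s+\im\delbar_s}$ and $\c{\im\del_s}+\c{\im\delbar_s}$.
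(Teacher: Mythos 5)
Your proposal is correct and follows essentially the same route as the paper: both reduce each arrow to a kernel inclusion (via Lemma \ref{lemma intersection kernel weak strong} and Lemma \ref{lemma kernel composition weak}) and an image-closure inclusion (via Lemmas \ref{lemma closure strong} and \ref{lemma image composition strong}, together with Corollary \ref{cor equality closure strong}/Remark \ref{rmk clos image del+delbar} for the Aeppli denominator), with commutativity automatic because all maps are induced by the identity. Your explicit organisation around the four factorisations of $\del\delbar$ and the remark about reading the de Rham vertex degree-by-degree are just cleaner packagings of the same argument.
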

\begin{proof}
The maps $L^2\bar{H}^{\bullet,\bullet}_{BC,sw}\to L^2\bar{H}^{\bullet,\bullet}_{\del,sw}$ and $L^2\bar{H}^{\bullet,\bullet}_{BC,sw}\to L^2\bar{H}^{\bullet,\bullet}_{\delbar,sw}$ are well-defined since for every bidegree $(p,q)$ the equality 
$$\ker(\del+\delbar)_w=\ker\del_w\cap\ker\delbar_w$$ holds in $L^2 \Lambda^{p,q}$ by Lemma \ref{lemma intersection kernel weak strong}, while $$\c{\im\del\delbar_s}\subseteq\c{\im\del_s}\cap\c{\im\delbar_s}$$ holds in $L^2 \Lambda^{p,q}$ by Lemma \ref{lemma image composition strong}.

The maps $L^2\bar{H}^{\bullet,\bullet}_{\del,sw}\to L^2\bar{H}^{\bullet,\bullet}_{A,sw}$ and $L^2\bar{H}^{\bullet,\bullet}_{\delbar,sw}\to L^2\bar{H}^{\bullet,\bullet}_{A,sw}$ are well-defined since for every bidegree $(p,q)$ the inclusion
$$\ker\del_w\cup \ker\delbar_w\subseteq\ker\del\delbar_w$$ holds in $L^2 \Lambda^{p,q}$ by Lemma \ref{lemma kernel composition weak}, while $\c{\im\del_s}\cup\c{\im\delbar_s}\subseteq\c{\im \del_s}+\c{\im\delbar_s}$ always holds.

The map $L^2\bar{H}^{\bullet,\bullet}_{BC,sw}\to L^2\bar{H}^{\bullet}_{dR,sw}$ is well-defined since for every bidegree $(p,q)$ the inclusions $$\ker(\del+\delbar)_w\subseteq\ker d_w,$$ $$\c{\im\del\delbar_s}\subseteq\c{\im d_s}$$ both hold in $L^2 \Lambda^{p,q}$ by Lemma \ref{lemma image composition strong}.

Finally the map $L^2\bar{H}^{\bullet}_{dR,sw}\to L^2\bar{H}^{\bullet,\bullet}_{A,sw}$ is well-defined since for every degree $k$ the inclusion $$\ker d_w\subseteq\bigoplus_{p+q=k}\ker\del\delbar_w\cap L^2\Lambda^{p,q}$$ holds in $L^2 \Lambda^k_{\C}$ by Lemma \ref{lemma kernel composition weak}, while $$\c{\im d_s}\subseteq\bigoplus_{p+q=k}\c{\im(\delbar\oplus\del)_s}\cap L^2\Lambda^{p,q}$$ holds in $L^2 \Lambda^k_{\C}$ by Lemma \ref{lemma closure strong}.
\end{proof}

\begin{remark}
The diagram can be generalised replacing the subscripts $_{sw}$ in the first two lines with $_{sb}$, and replacing $_{sw}$ in the last line with $_{aw}$, for $a,b\in\{s,w\}$. Further generalisations would need Lemmas \ref{lemma image composition strong} and \ref{lemma kernel composition weak} to be generalised as well.
\end{remark}

Now we observe under which conditions the maps in Proposition \ref{prop diagram maps cohomology hilbert} are isomorphisms. 

\begin{proposition}\label{prop equiv cond isom maps diagram}
The maps in Proposition \ref{prop diagram maps cohomology hilbert} are all isomorphisms if and only if all the following equalities hold in $L^2\Lambda^\bullet_\C$
\begin{enumerate}[a)]
\item $\c{\im\del\delbar_s}=\ker\del_w\cap\ker\delbar_w\cap\c{\im d_s}$,
\item $\c{\im\del\delbar_s}=\ker\del_w\cap\c{\im\delbar_s}$,
%\item $\c{\im\del\delbar_s}=\ker\delbar_w\cap\c{\im\del_s}$,
\item $\c{\im\del\delbar_s}=\ker\del_w\cap\ker\delbar_w\cap\left(\c{\im\del_s}+\c{\im\delbar_s}\right)$,
\item $\ker\del\delbar_w=\ker d_w+\c{\im\del_s}+\c{\im\delbar_s}$,
\item $\ker\del\delbar_w=\ker\del_w+\c{\im\delbar_s}$,
%\item $\ker\del\delbar_w=\ker\delbar_w+\c{\im\del_s}$,
\item $\ker\del\delbar_w=\ker\del_w\cap\ker\delbar_w+\c{\im\del_s}+\c{\im\delbar_s}$.
%\item $\c{\im d_s}=\ker d_w\cap\left(\c{\im\del_s}+\c{\im\delbar_s}\right)$,
%\item $\c{\im \del_s}=\ker \del_w\cap\left(\c{\im\del_s}+\c{\im\delbar_s}\right)$,
%\item $\c{\im \delbar_s}=\ker \delbar_w\cap\left(\c{\im\del_s}+\c{\im\delbar_s}\right)$.
\end{enumerate}
\end{proposition}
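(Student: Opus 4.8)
The plan is to transcribe the classical argument of \cite[Lemma 5.15, Remark 5.16]{DGMS} into the $L^2$ setting, under the systematic replacement of $\im P$ by $\overline{\im P_s}$ and of $\ker P$ by $\ker P_w$. With this dictionary, a)--f) are exactly the $L^2$ forms of the six classical conditions recalled in Section \ref{sec complex manifold}: one reads $\overline{\im(\delbar\oplus\del)_s}$ as $\overline{\im\del_s}+\overline{\im\delbar_s}$ by Corollary \ref{cor equality closure strong}, and $\ker(\del+\delbar)_w$ as $\ker\del_w\cap\ker\delbar_w$ by Lemma \ref{lemma intersection kernel weak strong}. I would split the proof into two parts: (I) the mutual equivalence of a)--f), and (II) the equivalence of any one of a)--f) with all six maps of Proposition \ref{prop diagram maps cohomology hilbert} being isomorphisms.

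For part (I), I would first pair the three image conditions a), b), c) with the three kernel conditions d), e), f) by orthogonal complementation. Using Remark \ref{rmk strong-weak} in the form $(\ker P_w)^\perp=\overline{\im (P^*)_s}$, taking orthogonal complements turns each kernel identity into an image identity for the adjoint operators; since the Hodge $*$ operator is an $L^2$-isometry intertwining $\del,\delbar$ with $\delbar^*,\del^*$ (cf. \eqref{eq duality bc a}), these are identified with the $*$-conjugates of a), b), c), giving a)$\Leftrightarrow$d), b)$\Leftrightarrow$e), c)$\Leftrightarrow$f). Within the image group, the equivalences a)$\Leftrightarrow$b)$\Leftrightarrow$c) follow by the same formal set manipulations as in \cite{DGMS}, now performed on closed subspaces of $L^2\Lambda^\bullet_\C$ and using the always-valid inclusion $\overline{\im\del\delbar_s}\subseteq\overline{\im\del_s}\cap\overline{\im\delbar_s}$ (Lemma \ref{lemma image composition strong}) together with the closure identities of Corollary \ref{cor equality closure strong} and Remark \ref{rmk clos image del+delbar}.

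For part (II), I would read each reduced cohomology as a quotient $\ker(\cdot)_w/\overline{\im(\cdot)_s}$ and translate injectivity and surjectivity of each identity-induced map into a subspace relation. Injectivity of a map reduces to an inclusion (smaller weak kernel) $\cap$ (larger closed strong image) $\subseteq$ (smaller closed strong image), while surjectivity reduces to (larger weak kernel) $=$ (smaller weak kernel) $+$ (larger closed strong image). For example, using $\overline{\im\delbar_s}\subseteq\ker\delbar_w$ (Lemma \ref{lemma pq0 subset}), injectivity of $L^2\bar{H}^{\bullet,\bullet}_{BC,sw}\to L^2\bar{H}^{\bullet,\bullet}_{\delbar,sw}$ becomes $\ker\del_w\cap\overline{\im\delbar_s}\subseteq\overline{\im\del\delbar_s}$, i.e. precisely condition b); and surjectivity of $L^2\bar{H}^{\bullet,\bullet}_{\del,sw}\to L^2\bar{H}^{\bullet,\bullet}_{A,sw}$ becomes $\ker\del\delbar_w=\ker\del_w+\overline{\im\delbar_s}$, i.e. condition e). The forward implication (all maps isomorphisms $\Rightarrow$ the conditions) is then immediate: a single injectivity already yields b), whence all of a)--f) by part (I). For the converse I would assume a)--f) and verify, map by map, that the remaining injectivity and surjectivity inclusions follow, reproducing the degree-wise $\del\delbar$-type arguments of \cite{DGMS} and feeding in whichever of a)--f) is convenient.

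The main obstacle lies in this converse half of part (II). Unlike the compact, finite-dimensional case, one cannot pass from injectivity to bijectivity by counting dimensions, so every isomorphism must be established as an honest pair of subspace equalities; moreover the sums of closed subspaces appearing on the right-hand sides of d)--f), such as $\ker\del_w+\overline{\im\delbar_s}$, need not be closed a priori, so the content of these conditions is precisely that such a sum coincides with a weak kernel. Managing the gap $(\del+\delbar)_s\subsetneq\del_s+\delbar_s$ --- which is why kernels are taken in the weak sense and images in the strong sense throughout, and why Lemma \ref{lemma intersection kernel weak strong} and Corollary \ref{cor equality closure strong} are indispensable --- is the delicate bookkeeping to be carried out with care.
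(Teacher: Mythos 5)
There is a genuine gap, and it sits in your Part (I). The mutual equivalence of conditions \emph{a)}--\emph{f)} is precisely what does \emph{not} carry over from \cite[Lemma 5.15]{DGMS} to the $L^2$ setting: the paper explicitly remarks, immediately after the proposition, that it sees no reason why \emph{a)}--\emph{f)} should be equivalent to one another here and leaves this as an open question. Both of your proposed mechanisms for Part (I) break down. The duality pairing a)$\Leftrightarrow$d) via orthogonal complements only works one way: from the $*$-conjugate of \emph{a)} you can take orthogonal complements to get $\ker\del\delbar_w=\c{\ker d_w+\c{\im\del_s}+\c{\im\delbar_s}}$, with an extra closure on the right, because $(A^\perp\cap B^\perp)^\perp=\c{A+B}$ and a sum of closed subspaces of a Hilbert space need not be closed; this is strictly weaker than \emph{d)}. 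Likewise, the ``formal set manipulations'' of \cite{DGMS} proving a)$\Leftrightarrow$b)$\Leftrightarrow$c) construct explicit primitives degree by degree and do not pass to closures of images, so they cannot be ``transcribed'' under your dictionary $\im P\rightsquigarrow\c{\im P_s}$. Since your forward implication (all maps isomorphisms $\Rightarrow$ all of \emph{a)}--\emph{f)}) is routed through a single injectivity plus Part (I), and your converse leans on ``reproducing the $\del\delbar$-type arguments of DGMS'', both halves inherit this gap.

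The fix is to drop Part (I) entirely. Your Part (II) translation is the right tool and matches the paper's proof: each of \emph{a)}--\emph{f)} is \emph{individually} equivalent to (or implied by) the injectivity or surjectivity of one specific map --- \emph{a)}, \emph{b)}, its conjugate, and \emph{c)} to injectivity of the four maps out of $L^2\bar{H}^{\bullet,\bullet}_{BC,sw}$, and \emph{d)}, \emph{f)} to surjectivity of the maps into $L^2\bar{H}^{\bullet,\bullet}_{A,sw}$ (with \emph{f)} $\Rightarrow$ \emph{e)}). The forward implication then reads off each condition from the corresponding isomorphism directly, with no need for mutual equivalence. For the converse, assuming \emph{a)}--\emph{f)} gives injectivity of every map leaving $L^2\bar{H}^{\bullet,\bullet}_{BC,sw}$ and surjectivity of every map arriving at $L^2\bar{H}^{\bullet,\bullet}_{A,sw}$; in particular the composite $L^2\bar{H}^{\bullet,\bullet}_{BC,sw}\to L^2\bar{H}^{\bullet,\bullet}_{A,sw}$ is an isomorphism, and commutativity of the diagram forces every intermediate map to be an isomorphism as well. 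This two-out-of-three argument replaces your ``map by map'' DGMS-style verification and avoids all manipulations of non-closed sums.
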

Note that \emph{f)} implies \emph{e)}, therefore the above statement still holds if \emph{e)} is omitted.
\begin{proof}
First of all, note that the inclusions $\subseteq$ in conditions \emph{a)}-\emph{f)} hold either by Lemma \ref{lemma image composition strong} or Lemma \ref{lemma kernel composition weak}. Therefore:

injectivity of $L^2\bar{H}^{\bullet,\bullet}_{BC,sw}\to L^2\bar{H}^{\bullet}_{dR,sw}$ is equivalent to \emph{a)};

injectivity of $L^2\bar{H}^{\bullet,\bullet}_{BC,sw}\to L^2\bar{H}^{\bullet,\bullet}_{\delbar,sw}$ is equivalent to \emph{b)};

injectivity of $L^2\bar{H}^{\bullet,\bullet}_{BC,sw}\to L^2\bar{H}^{\bullet,\bullet}_{\del,sw}$ is equivalent to the conjugate of \emph{b)};

injectivity of $L^2\bar{H}^{\bullet,\bullet}_{BC,sw}\to L^2\bar{H}^{\bullet,\bullet}_{A,sw}$ is equivalent to \emph{c)}, while surjectivity is equivalent to \emph{f)};

surjectivity of $L^2\bar{H}^{\bullet}_{dR,sw}\to L^2\bar{H}^{\bullet,\bullet}_{A,sw}$ is equivalent to \emph{d)};

surjectivity of both maps $L^2\bar{H}^{\bullet,\bullet}_{\del,sw}\to L^2\bar{H}^{\bullet,\bullet}_{A,sw}$ and $L^2\bar{H}^{\bullet,\bullet}_{\delbar,sw}\to L^2\bar{H}^{\bullet,\bullet}_{A,sw}$ follows from \emph{f)}.

To conclude the proof, note that if all the conditions \emph{a)}-\emph{f)} hold, then every map starting from $L^2\bar{H}^{\bullet,\bullet}_{BC,sw}$ is injective and every map arriving in $L^2\bar{H}^{\bullet,\bullet}_{A,sw}$ is surjective. Since the map $L^2\bar{H}^{\bullet,\bullet}_{BC,sw}\to L^2\bar{H}^{\bullet,\bullet}_{A,sw}$ is an isomorphism and the diagram commutes, it follows that every map in the diagram is an isomorphism.
\end{proof}

The proof of Proposition \ref{prop equiv cond isom maps diagram} is purely a matter of linear algebra. In fact it is the analogue result of \cite[Remark 5.16]{DGMS} involving $\del,\delbar,\del\delbar$ on smooth forms. Furthermore, in the smooth case the conditions \emph{a)}-\emph{f)} are all equivalent to each other \cite[Lemma 5.15]{DGMS} (see Section \ref{sec complex manifold}). Conversely, here we do not see any simple reason for which the conditions \emph{a)}-\emph{f)} in Proposition \ref{prop equiv cond isom maps diagram} should be equivalent. It would be interesting to know if this is the case or to exhibit counterexamples. We will see in the next section that, on complete K\"ahler manifolds, all the conditions in Proposition \ref{prop equiv cond isom maps diagram} hold.

\section{Complete K\"ahler manifolds}\label{sec complete kahler}
Let $(M,g)$ be a complete K\"ahler manifold. The K\"ahler assumption guarantees that the relations \eqref{eq laplacians 2order kahler}, \eqref{eq laplacian bc kahler} and \eqref{eq laplacian a kahler} all hold. Meanwhile, the completeness of the metric implies that $\delta_s=\delta_w$ for $\delta=d,d^*,\del,\del^*,\delbar,\delbar^*,\delbar\oplus\del,\del^*\oplus\delbar^*$; we refer to \cite[Theorem 1.3]{A} for a proof of this. Throughout this section, we will write either $\delta_s$ or $\delta_w$ depending on which is more convenient at the time.

Completeness also implies that all positive integer powers of $(d+d^*)_0,(\del+\del^*)_0,(\delbar+\delbar^*)_0$ as operators $A^{\bullet,\bullet}_0\to A^{\bullet,\bullet}_0$ are essentially self-adjoint \cite[Section 3.B]{Che}. For example, $\Delta_{\delbar,sw}$ is the unique self-adjoint extension of $(\Delta_\delbar)_0=(\delbar+\delbar^*)_0^2$. Similarly the operator $\Delta_{\delbar,sw}^k$, defined via the Spectral Theorem \ref{thm spectral unbounded self-adjoint}, is the unique self-adjoint extension of $(\Delta_\delbar)_0^k=(\delbar+\delbar^*)_0^{2k}$. Analogously $(\Delta_{\del,sw})^k$ is the unique self-adjoint extension of $(\Delta_\del)_0^k=(\del+\del^*)_0^{2k}$ and $\Delta_{d,sw}$ is the unique self-adjoint extension of $(\Delta_d)_0^k=(d+d^*)_0^{2k}$.
Finally, note that relation \eqref{eq laplacians 2order kahler}, together with the essential self-adjointness of the above Laplacians, implies 
\begin{equation}\label{eq 2 order laplacians complete kahler}
\Delta_{d,sw}=2\Delta_{\del,sw}=2\Delta_{\delbar,sw}.
\end{equation}

\subsection{Equality between \texorpdfstring{$L^2$}{L2} Dolbeault and Bott-Chern harmonic forms}\,\\
On compact K\"ahler manifolds, it is known that the kernel of the Dolbeault Laplacian coincides with the kernels of the Bott-Chern and Aeppli Laplacians. We will now show that this statement can be generalised to the complete K\"ahler case.

We begin by giving a characterisation of the space of $L^2$ Dolbeault harmonic forms.
\begin{lemma}\label{lemma char ker dolb kahl compl}
Let $(M,g)$ be a complete K\"ahler manifold. Then
\begin{equation*}
L^2\H^{p,q}_{\delbar,sw}=\{\alpha\in L^2 A^{p,q}\,|\,\delbar\alpha=\delbar^*\alpha=\del\alpha=\del^*\alpha=0\}.
\end{equation*}
\end{lemma}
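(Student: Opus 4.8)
The plan is to peel the four vanishing conditions off one operator at a time, using completeness of the metric to erase the distinction between strong and weak extensions and using the Kähler identities only at the single crucial step. First I would recall \eqref{eq kernel dolbeault laplacian ab}, which gives $L^2\H^{p,q}_{\delbar,sw}=\ker\Delta_{\delbar,sw}=\ker\delbar_w\cap\ker\delbar^*_s$. Since the metric is complete we have $\delbar_s=\delbar_w$ and $\delbar^*_s=\delbar^*_w$, so these subscripts are immaterial. Because $\Delta_\delbar$ is elliptic, elliptic regularity (Theorem \ref{thm ell-reg}, as already noted for $L^2$ Dolbeault harmonic forms) forces every element of this space to be smooth, i.e. $L^2\H^{p,q}_{\delbar,sw}\subseteq A^{p,q}$. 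Applying Lemma \ref{lemma kernel weak smooth} to such smooth $L^2$ forms then converts weak-kernel membership into classical vanishing, yielding the intermediate characterisation $L^2\H^{p,q}_{\delbar,sw}=\{\alpha\in L^2A^{p,q}\,|\,\delbar\alpha=\delbar^*\alpha=0\}$.

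It remains to upgrade this to the full four-fold vanishing. The inclusion $\supseteq$ is immediate, since any $\alpha$ with $\delbar\alpha=\delbar^*\alpha=\del\alpha=\del^*\alpha=0$ in particular satisfies $\delbar\alpha=\delbar^*\alpha=0$ and hence lies in $L^2\H^{p,q}_{\delbar,sw}$ by the intermediate characterisation. For the forward inclusion I would invoke \eqref{eq 2 order laplacians complete kahler}, $\Delta_{d,sw}=2\Delta_{\del,sw}=2\Delta_{\delbar,sw}$, which holds precisely because completeness makes the relevant Laplacians essentially self-adjoint and so forces their unique self-adjoint extensions to satisfy the same identity as their compactly supported versions. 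In particular $\Delta_{\del,sw}=\Delta_{\delbar,sw}$ as operators, so $\ker\Delta_{\del,sw}=\ker\Delta_{\delbar,sw}$. Thus a form $\alpha\in L^2\H^{p,q}_{\delbar,sw}$ also lies in $\ker\Delta_{\del,sw}=\ker\del_w\cap\ker\del^*_w$ (the $\del$-analogue of \eqref{eq kernel dolbeault laplacian ab}, again using $\del^*_s=\del^*_w$), and a second application of Lemma \ref{lemma kernel weak smooth}, legitimate since $\alpha$ is already known to be smooth, gives $\del\alpha=\del^*\alpha=0$.

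The step I expect to be the genuine obstacle is exactly this passage $\delbar\alpha=\delbar^*\alpha=0\Rightarrow\del\alpha=\del^*\alpha=0$. The naive route would observe that $\Delta_\del\alpha=\Delta_\delbar\alpha=0$ pointwise via the Kähler identity \eqref{eq laplacians 2order kahler} and then try to conclude from $\la\Delta_\del\alpha,\alpha\ra=\lv\del\alpha\rv^2+\lv\del^*\alpha\rv^2$; but this integration by parts is \emph{not} justified for a merely smooth $L^2$ form on a non-compact manifold, as one has no a priori control that $\del\alpha$ and $\del^*\alpha$ are square-integrable nor that the boundary contributions vanish. Routing the argument through the self-adjoint extensions via \eqref{eq 2 order laplacians complete kahler}, rather than through pointwise Kähler identities, is what sidesteps this: membership in $\D(\Delta_{\delbar,sw})$ already encodes the requisite integrability and the vanishing of boundary terms, and the equality of domains and actions transfers this directly to the $\del$-Laplacian. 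Everything else is bookkeeping with Lemma \ref{lemma kernel weak smooth} and the completeness-driven coincidence of strong and weak extensions.
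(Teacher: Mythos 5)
Your proof is correct and follows essentially the same route as the paper: both arguments rest on the identity $\Delta_{d,sw}=2\Delta_{\del,sw}=2\Delta_{\delbar,sw}$ from \eqref{eq 2 order laplacians complete kahler} to identify the kernel with $\ker\delbar_w\cap\ker\delbar^*_w\cap\ker\del_w\cap\ker\del^*_w$, then use elliptic regularity (Theorem \ref{thm ell-reg}) and Lemma \ref{lemma kernel weak smooth} to pass from weak-kernel membership to classical vanishing. The paper applies these ingredients in one stroke rather than peeling off the conditions in two stages, but the substance is identical; your closing remark on why the pointwise K\"ahler-identity shortcut fails on a non-compact manifold is a correct and worthwhile observation, though not needed for the proof itself.
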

\begin{proof}
Note that \eqref{eq 2 order laplacians complete kahler}  and \eqref{eq kernel dolbeault laplacian ab} imply
\[
L^2\H^{p,q}_{\delbar,sw}=\ker\delbar_w\cap\ker\delbar^*_w\cap\ker\del_w\cap\ker\del^*_w\cap L^2\Lambda^{p,q},
\]
which is a space of smooth forms by Theorem \ref{thm ell-reg}. From Lemma \ref{lemma kernel weak smooth} we conclude the proof.
\end{proof}

On a general Hermitian manifold $(M,g)$ we define extensions of the Bott-Chern and Aeppli Laplacians $(\tilde\Delta_{BC})_0$ and $(\tilde\Delta_{A})_0$:
\begin{align*}
\tilde\Delta_{BC,bc}:=&\del\delbar_{b}\delbar^*\del^*_{b'}+
\delbar^*\del^*_s\del\delbar_w+
\del^*\delbar_s\delbar^*\del_w+\delbar^*\del_s\del^*\delbar_w+
(\del^*\oplus\delbar^*)_{c'}(\del+\delbar)_c,\\
\tilde\Delta_{A,ab}:=& \delbar^*\del^*_{b'}\del\delbar_{b}+
\del\delbar_s\delbar^*\del^*_w+\delbar\del^*_s\del\delbar^*_w+ \del\delbar^*_s\delbar\del^*_w +
(\delbar \oplus \del)_a(\del^*+\delbar^*)_{a'},
\end{align*}
for $a,b,c\in\{s,w\}$.
They are positive and self-adjoint on the domains given by Theorems \ref{thm composition with adjoint} and \ref{thm sum of self ajoint}.
Furthermore, they have the same kernels as the operators $\Delta_{BC,bc}$ and $\Delta_{A,ab}$, respectively.

\begin{lemma}\label{lemma kernel bc}
Let $(M,g)$ be a Hermitian manifold. For $a,b,c\in\{s,w\}$
\begin{align*}
\ker\tilde\Delta_{BC,bc}&=\ker \delbar^* \del^*_{b'}\cap\ker (\del+\delbar)_c=L^2\H^{p,q}_{BC,bc},\\
\ker\tilde\Delta_{A,ab}&=\ker \del \delbar_{b}\cap\ker (\del^*+\delbar^*)_{a'}=L^2\H^{p,q}_{A,ab}.
\end{align*}
\end{lemma}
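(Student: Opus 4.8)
The plan is to write each of the five summands of $\tilde\Delta_{BC,bc}$ as $P^tP$ for a suitable closed, densely defined operator $P$ and then invoke Lemma \ref{lemma ker lapl} to identify the kernel with an intersection. Using the adjoint identities \eqref{eq adjoints abc hilb complex} together with Remark \ref{rmk strong-weak} (which gives $(P_w)^t=(P^*)_s$ and $(P_s)^t=(P^*)_w$), the five terms become
\[
\del\delbar_b\delbar^*\del^*_{b'}=Q_1^tQ_1,\qquad \delbar^*\del^*_s\del\delbar_w=Q_2^tQ_2,\qquad \del^*\delbar_s\delbar^*\del_w=Q_3^tQ_3,
\]
\[
\delbar^*\del_s\del^*\delbar_w=Q_4^tQ_4,\qquad (\del^*\oplus\delbar^*)_{c'}(\del+\delbar)_c=Q_5^tQ_5,
\]
where $Q_1=(\delbar^*\del^*)_{b'}$, $Q_2=(\del\delbar)_w$, $Q_3=(\delbar^*\del)_w$, $Q_4=(\del^*\delbar)_w$ and $Q_5=(\del+\delbar)_c$. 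Here I would check, for instance, that $(\delbar^*\del^*)_{b'}=(\del\delbar_b)^t$, so the first term is $(\del\delbar_b)(\del\delbar_b)^t=Q_1^tQ_1$, and that $(\del^*\delbar)_s=((\delbar^*\del)_w)^t$, so the third term is $Q_3^tQ_3$. Since $\tilde\Delta_{BC,bc}$ is self-adjoint its domain is dense, so Lemma \ref{lemma ker lapl} applies and gives
\[
\ker\tilde\Delta_{BC,bc}=\ker(\delbar^*\del^*)_{b'}\cap\ker(\del\delbar)_w\cap\ker(\delbar^*\del)_w\cap\ker(\del^*\delbar)_w\cap\ker(\del+\delbar)_c.
\]

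The inclusion $\ker\tilde\Delta_{BC,bc}\subseteq L^2\H^{p,q}_{BC,bc}$ is then immediate, since the right-hand side is the intersection of merely the first and last of these five kernels. For the reverse inclusion I would start from $\alpha\in\ker(\delbar^*\del^*)_{b'}\cap\ker(\del+\delbar)_c$ and show the three middle conditions are automatic. Because the strong kernel always sits inside the weak one, $\ker(\del+\delbar)_c\subseteq\ker(\del+\delbar)_w$, and Lemma \ref{lemma intersection kernel weak strong} yields $\ker(\del+\delbar)_w=\ker\del_w\cap\ker\delbar_w$; hence $\del_w\alpha=\delbar_w\alpha=0$. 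Applying Lemma \ref{lemma kernel composition weak} (namely $\ker P_w\subseteq\ker(QP)_w$) three times, with $(P,Q)=(\delbar,\del)$, $(\del,\delbar^*)$ and $(\delbar,\del^*)$, places $\alpha$ in $\ker(\del\delbar)_w$, $\ker(\delbar^*\del)_w$ and $\ker(\del^*\delbar)_w$ respectively. Thus $\alpha\in\ker\tilde\Delta_{BC,bc}$, proving the first equality; the identification with $L^2\H^{p,q}_{BC,bc}$ is the kernel characterisation of $\Delta_{BC,bc}$ recorded earlier.

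The Aeppli identity is entirely dual. The same manipulation writes the summands of $\tilde\Delta_{A,ab}$ as $R_j^tR_j$ with $R_1=(\del\delbar)_b$, $R_2=(\delbar^*\del^*)_w$, $R_3=(\del\delbar^*)_w$, $R_4=(\delbar\del^*)_w$ and $R_5=(\del^*+\delbar^*)_{a'}$, so Lemma \ref{lemma ker lapl} presents $\ker\tilde\Delta_{A,ab}$ as the intersection of the five corresponding kernels. Starting from $\alpha\in\ker(\del\delbar)_b\cap\ker(\del^*+\delbar^*)_{a'}$, the inclusion $\ker(\del^*+\delbar^*)_{a'}\subseteq\ker\del^*_w\cap\ker\delbar^*_w$ of Lemma \ref{lemma intersection kernel weak strong} gives $\del^*_w\alpha=\delbar^*_w\alpha=0$, and three further applications of Lemma \ref{lemma kernel composition weak} dispose of $\ker(\delbar^*\del^*)_w$, $\ker(\del\delbar^*)_w$ and $\ker(\delbar\del^*)_w$.

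I expect the only delicate point to be the bookkeeping in the first paragraph: correctly matching the strong/weak subscripts appearing in the definition of $\tilde\Delta_{BC,bc}$ to the factorisations $Q_j^tQ_j$, which rests entirely on the adjoint relations of Remark \ref{rmk strong-weak} and \eqref{eq adjoints abc hilb complex}. Once each summand is recognised as $P^tP$, the remainder is a routine combination of Lemmas \ref{lemma ker lapl}, \ref{lemma kernel composition weak} and \ref{lemma intersection kernel weak strong}, with no analytic input beyond the density of the domains, which is already built into the definition of the operators.
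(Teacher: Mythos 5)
Your proof is correct and follows essentially the same route as the paper's: both apply Lemma \ref{lemma ker lapl} to write $\ker\tilde\Delta_{BC,bc}$ as the intersection of five kernels, then use Lemma \ref{lemma intersection kernel weak strong} to get $\del_w\alpha=\delbar_w\alpha=0$ from $\ker(\del+\delbar)_c$ and Lemma \ref{lemma kernel composition weak} to absorb the three middle kernels. Your extra care in exhibiting each summand as $Q_j^tQ_j$ via Remark \ref{rmk strong-weak} and \eqref{eq adjoints abc hilb complex} is exactly the bookkeeping the paper leaves implicit, and it checks out.
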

\begin{proof}
By Lemma \ref{lemma ker lapl} we deduce that in $L^2\Lambda^{p,q}$ we have
\[
\ker \tilde{\Delta}_{BC,bc} = \ker \delbar^* \del^*_{b'} \cap \ker \del \delbar_w \cap \ker \delbar^* \del_w \cap \ker \del^* \delbar_w \cap \ker (\del+\delbar)_c.
\]
The desired result then follows from Lemma \ref{lemma intersection kernel weak strong}, since
\[
\ker (\del+\delbar)_c\subseteq\ker\del_c\cap\ker\delbar_c\subseteq\ker\delbar_w\cap\ker\del_w,
\]
and from Lemma \ref{lemma kernel composition weak}, noting that
\[
\ker\delbar_w \subseteq\ker \del \delbar_w \cap \ker \del^* \delbar_w,\ \ \ \ker\del_w\subseteq\ker \delbar^* \del_w.
\]
The Aeppli case is analogous.
\end{proof}

\begin{remark}\label{rmk kernel bc a complete}
If $(M,g)$ is a complete Hermitian manifold, then $(\del+\delbar)_c=(\del+\delbar)_w$ and $(\del^*+\delbar^*)_{a'}=(\del^*+\delbar^*)_w$ by completeness. By Lemma \ref{lemma intersection kernel weak strong} we also have $\ker(\del+\delbar)_w=\ker\del_w\cap\ker\delbar_w$ and $\ker(\del^*+\delbar^*)_w=\ker\del^*_w\cap\ker\delbar^*_w$. Therefore, we can write
\[
\ker (\del+\delbar)_c=\ker\del_w\cap\ker\delbar_w,\ \ \ \ker(\del^*+\delbar^*)_{a'}=\ker\del^*_w\cap\ker\delbar^*_w.
\]
\end{remark}

%By completeness, note that $\Delta_{BC,bs}=\Delta_{BC,bw}$ and $\Delta_{A,wb}=\Delta_{A,sb}$, as well as $\square_{BC,bs}=\square_{BC,bw}$, $\square_{A,wb}=\square_{A,sb}$, $\tilde\Delta_{BC,bs}=\tilde\Delta_{BC,bw}$ and $\tilde\Delta_{A,wb}=\tilde\Delta_{A,sb}$.

Consider the fourth order terms appearing in the operators $\tilde\Delta_{BC,bc}$ and $\tilde\Delta_{A,ab}$. Taken without the accompanying second order terms, they define the following positive self-adjoint operators:
\begin{align*}
\tilde\Delta_{BC,b,4}:=&\del\delbar_{b}\delbar^*\del^*_{b'}+
\delbar^*\del^*_s\del\delbar_w+
\del^*\delbar_s\delbar^*\del_w+\delbar^*\del_s\del^*\delbar_w,\\
\tilde\Delta_{A,b,4}:=& \delbar^*\del^*_{b'}\del\delbar_{b}+
\del\delbar_s\delbar^*\del^*_w+\delbar\del^*_s\del\delbar^*_w+ \del\delbar^*_s\delbar\del^*_w,
\end{align*}
with the domains given by Theorems \ref{thm composition with adjoint} and \ref{thm sum of self ajoint}.

If we assume the Hermitian manifold $(M,g)$ is K\"ahler and complete then, by \eqref{eq laplacian bc kahler} and \eqref{eq laplacian a kahler}, it follows that $\tilde\Delta_{BC,b,4}$ and $\tilde\Delta_{A,b,4}$ are extensions of $(\Delta_\delbar)^2_0$. Since $(\Delta_\delbar)^2_0$ is essentially self-adjoint, we then deduce that
\[
\tilde\Delta_{BC,b,4}=\tilde\Delta_{A,b,4}=\Delta_{\delbar,sw}^2.
\]
By Lemma \ref{lemma ker lapl}, note that in $L^2\Lambda^{p,q}$
\begin{align*}
\ker\tilde\Delta_{BC,b,4}&=\ker\delbar^*\del^*_{b'}\cap\ker\del\delbar_w\cap\ker\delbar^*\del_w\cap\ker\del^*\delbar_w,\\
\ker\tilde\Delta_{A,b,4}&=\ker\del\delbar_{b}\cap\ker\delbar^*\del^*_w\cap\ker\del\delbar^*_w\cap\ker\delbar\del^*_w,\\
\ker\Delta_{\delbar,sw}^2&=\ker\Delta_{\delbar,sw}.
\end{align*}

Now we can prove that the kernel of the unique self-adjoint extension of the Dolbeault Laplacian coincides with the kernels of our self-adjoint extensions of the Bott-Chern and Aeppli Laplacians.

\begin{theorem}\label{thm kahler complete kernel}
Let $(M,g)$ be a complete K\"ahler manifold. Then for $a,b,c\in\{s,w\}$ it holds that
\[
L^2\H^{p,q}_{BC,bc}=L^2\H^{p,q}_{A,ab}=L^2\H^{p,q}_{\delbar,sw}.
\]
In particular, the corresponding reduced cohomology spaces are isomorphic  $$L^2\bar{H}^{p,q}_{BC,bc}\simeq L^2\bar{H}^{p,q}_{A,ab}\simeq L^2\bar{H}^{p,q}_{\delbar,sw}.$$
\end{theorem}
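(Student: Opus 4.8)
The plan is to collapse all three harmonic spaces onto the $L^2$ Dolbeault harmonic space $L^2\H^{p,q}_{\delbar,sw}$, using the fourth-order Kähler factorisation $\tilde\Delta_{BC,b,4}=\tilde\Delta_{A,b,4}=\Delta_{\delbar,sw}^2$ recorded above, the identification of kernels in Lemma \ref{lemma kernel bc}, and the fact that on a complete Kähler manifold the strong and weak extensions of the relevant first-order operators agree. I would prove the two equalities $L^2\H^{p,q}_{BC,bc}=L^2\H^{p,q}_{\delbar,sw}$ and $L^2\H^{p,q}_{A,ab}=L^2\H^{p,q}_{\delbar,sw}$ independently, the Aeppli case being dual to the Bott--Chern one.

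For the Bott--Chern equality, I first note that $\tilde\Delta_{BC,bc}$ is a sum of five operators of the form $P_j^tP_j$: the first four assemble into $\tilde\Delta_{BC,b,4}$, while the fifth is $(\del+\delbar)_c^t(\del+\delbar)_c$. Lemma \ref{lemma ker lapl} then gives
\[
\ker\tilde\Delta_{BC,bc}=\ker\tilde\Delta_{BC,b,4}\cap\ker(\del+\delbar)_c .
\]
Since $\tilde\Delta_{BC,b,4}=\Delta_{\delbar,sw}^2$ and $\ker\Delta_{\delbar,sw}^2=\ker\Delta_{\delbar,sw}=L^2\H^{p,q}_{\delbar,sw}$, the first factor is exactly the Dolbeault harmonic space. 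It remains to verify that this space already lies in the second factor. By Lemma \ref{lemma char ker dolb kahl compl} any $\alpha\in L^2\H^{p,q}_{\delbar,sw}$ is smooth with $\del\alpha=\delbar\alpha=0$, so by Remark \ref{rmk kernel bc a complete} (completeness) and Lemma \ref{lemma kernel weak smooth} we get $\alpha\in\ker\del_w\cap\ker\delbar_w=\ker(\del+\delbar)_c$. Hence the intersection collapses and $\ker\tilde\Delta_{BC,bc}=L^2\H^{p,q}_{\delbar,sw}$; combined with Lemma \ref{lemma kernel bc}, which identifies the left-hand side with $L^2\H^{p,q}_{BC,bc}$, this yields the desired equality.

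The Aeppli equality runs identically on the dual complex: writing $\tilde\Delta_{A,ab}=\tilde\Delta_{A,b,4}+(\del^*+\delbar^*)_{a'}^t(\del^*+\delbar^*)_{a'}$, Lemma \ref{lemma ker lapl} gives $\ker\tilde\Delta_{A,ab}=L^2\H^{p,q}_{\delbar,sw}\cap\ker(\del^*+\delbar^*)_{a'}$, and since Dolbeault harmonic forms satisfy $\del^*\alpha=\delbar^*\alpha=0$ the second factor again contains the first, so $\ker\tilde\Delta_{A,ab}=L^2\H^{p,q}_{\delbar,sw}=L^2\H^{p,q}_{A,ab}$ by Lemma \ref{lemma kernel bc}. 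Finally, the isomorphisms of reduced cohomologies are immediate: the equality of the three harmonic subspaces of $L^2\Lambda^{p,q}$, together with the identity-induced isomorphisms between $L^2$ harmonic forms and reduced $L^2$ cohomology (the Corollary following Theorem \ref{thm l2 bc decomp} for Bott--Chern and Aeppli, and the analogous statement for the Dolbeault complex), gives $L^2\bar{H}^{p,q}_{BC,bc}\simeq L^2\bar{H}^{p,q}_{A,ab}\simeq L^2\bar{H}^{p,q}_{\delbar,sw}$.

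I expect the only genuinely delicate ingredient to be the one already established in the lead-up material, namely the identity $\tilde\Delta_{BC,b,4}=\tilde\Delta_{A,b,4}=\Delta_{\delbar,sw}^2$: on compactly supported forms the Kähler identities \eqref{eq laplacian bc kahler} and \eqref{eq laplacian a kahler} collapse the four fourth-order terms into $(\Delta_\delbar)_0^2$, and completeness makes $(\Delta_\delbar)_0^2$ essentially self-adjoint, so that its unique self-adjoint extension is $\Delta_{\delbar,sw}^2$; matching the mixed strong/weak labels of the four terms to this single extension is where the real work sits. Granting this, together with the repeated passage between strong and weak extensions of first-order operators (legitimate by completeness), the theorem is a formal consequence of the kernel computations above.
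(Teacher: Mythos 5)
Your proof is correct and reaches the paper's conclusion by the same central mechanism: the identity $\tilde\Delta_{BC,b,4}=\tilde\Delta_{A,b,4}=\Delta_{\delbar,sw}^2$ coming from the K\"ahler identities plus essential self-adjointness of $(\Delta_\delbar)_0^2$, together with Lemma \ref{lemma kernel bc} and the completeness facts of Remark \ref{rmk kernel bc a complete}. The one place where you genuinely diverge is the inclusion $\ker\tilde\Delta_{BC,bc}\subseteq\ker\Delta_{\delbar,sw}$. The paper takes $\alpha\in\ker\tilde\Delta_{BC,bc}$, pairs against compactly supported test forms $\gamma$, and uses \eqref{eq laplacian bc kahler} together with $\alpha\in\ker\del_w\cap\ker\delbar_w$ to kill the second-order terms and conclude that $\alpha$ is a weak solution of $\Delta_\delbar^2\alpha=0$, hence lies in $\ker(\Delta_\delbar^2)_w=\ker\Delta_{\delbar,sw}^2=\ker\Delta_{\delbar,sw}$. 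You instead note that $\tilde\Delta_{BC,bc}$ is a sum of five terms of the form $P_j^tP_j$, so Lemma \ref{lemma ker lapl} splits its kernel as $\ker\tilde\Delta_{BC,b,4}\cap\ker(\del+\delbar)_c$, and the already-established operator identity $\tilde\Delta_{BC,b,4}=\Delta_{\delbar,sw}^2$ finishes the job; this yields both inclusions at once and avoids repeating, inside the proof, the distributional argument that the lead-up material has in effect already carried out when identifying $\tilde\Delta_{BC,b,4}$ with $\Delta_{\delbar,sw}^2$. The remaining steps --- $L^2\H^{p,q}_{\delbar,sw}\subseteq\ker(\del+\delbar)_c$ via Lemma \ref{lemma char ker dolb kahl compl}, Lemma \ref{lemma kernel weak smooth} and completeness, the parallel Aeppli argument, and the passage to reduced cohomology through the harmonic-form isomorphisms --- match the paper's. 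No gap; your version is, if anything, a slightly cleaner packaging of the same proof.
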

\begin{proof}
Let us first prove $\ker\tilde\Delta_{BC,bc}\supseteq\ker\Delta_{\delbar,sw}$. By Lemma \ref{lemma kernel bc} and Remark \ref{rmk kernel bc a complete}, this is the same as proving
\[
\ker\delbar^*\del^*_{b'}\cap\ker\del_w\cap\ker\delbar_w\supseteq\ker\Delta_{\delbar,sw}.
\]
 By Lemma \ref{lemma char ker dolb kahl compl} and Lemma \ref{lemma kernel weak smooth} we get $\ker\del_w\cap\ker\delbar_w\supseteq\ker\Delta_{\delbar,sw}$, while $\ker\delbar^*\del^*_{b'}\supseteq\ker\Delta_{\delbar,sw}$ follows from $\ker\Delta_{\delbar,sw}=\ker\Delta_{\delbar,sw}^2=\ker\tilde\Delta_{BC,b,4}$.

Conversely, let $\alpha\in\ker\tilde\Delta_{BC,bc}$; in particular $\alpha\in\ker\del_w\cap\ker\delbar_w$ by Remark \ref{rmk kernel bc a complete}. Then, for every $\gamma\in A^{\bullet,\bullet}_0$, using \eqref{eq laplacian bc kahler}
\begin{align*}
0=\la\tilde\Delta_{BC,bc}\alpha,\gamma\ra&=\la\alpha,\tilde\Delta_{BC}\gamma\ra,\\
&=\la\alpha,\Delta_\delbar^2\gamma+\del^*\del\gamma+\delbar^*\delbar\gamma\ra,\\
&=\la\alpha,\Delta_\delbar^2\gamma\ra,
\end{align*}
therefore $\alpha\in\ker(\Delta_\delbar^2)_w$. Since $(\Delta_\delbar^2)_0$ is essentially self-adjoint, then $(\Delta_\delbar^2)_w=\Delta_{\delbar,sw}^2$ and so $\ker\Delta_{\delbar,sw}=\ker\Delta_{\delbar,sw}^2$, which allows us to conclude.

In the same way we can prove that $\ker\tilde\Delta_{A,ab}=\ker\Delta_{\delbar,sw}$.
\end{proof}

\begin{corollary}\label{cor isom reduced cohom diagram}
If $(M,g)$ is a complete K\"ahler manifold, then the maps between the $L^2$ reduced cohomology spaces in Proposition \ref{prop diagram maps cohomology hilbert} are all isomorphisms, and all the conditions a),\dots,f) in Proposition \ref{prop equiv cond isom maps diagram} hold.
\end{corollary}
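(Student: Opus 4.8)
The plan is to deduce the Corollary from Theorem \ref{thm kahler complete kernel} together with the identification of reduced $L^2$ cohomology with $L^2$ harmonic forms; the two assertions then come for free from one another. Indeed, the second assertion is immediate once the first is established: knowing that every arrow in Proposition \ref{prop diagram maps cohomology hilbert} is an isomorphism, the ``only if'' implication of Proposition \ref{prop equiv cond isom maps diagram} forces conditions a)--f) to hold. So the genuine content is to prove that each arrow in the diagram is an isomorphism.

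First I would record that on a complete K\"ahler manifold \emph{all} the relevant spaces of $L^2$ harmonic forms coincide. Theorem \ref{thm kahler complete kernel} already yields $L^2\H^{p,q}_{BC,bc}=L^2\H^{p,q}_{A,ab}=L^2\H^{p,q}_{\delbar,sw}$, and relation \eqref{eq 2 order laplacians complete kahler} supplies the remaining vertices: from $\Delta_{d,sw}=2\Delta_{\del,sw}=2\Delta_{\delbar,sw}$ one reads off $L^2\H^{p,q}_{\del,sw}=L^2\H^{p,q}_{\delbar,sw}$ and $L^2\H^{k}_{d,sw}=\bigoplus_{p+q=k}L^2\H^{p,q}_{\delbar,sw}$. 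Thus, passing to the total (bi)graded spaces, every vertex of the diagram carries one and the same space of harmonic representatives.

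Next I would use that, for each theory in the diagram, the assignment $h\mapsto[h]$ from harmonic forms to reduced $L^2$ cohomology is an isomorphism \emph{induced by the identity} (noted for Dolbeault after the decomposition in Section \ref{sec hilb complex dolbeault}, obtained for Bott--Chern and Aeppli from Theorem \ref{thm l2 bc decomp}, with the $\del$ and de Rham cases analogous). Fix any arrow, say $L^2\bar{H}^{\bullet,\bullet}_{BC,sw}\to L^2\bar{H}^{\bullet,\bullet}_{\delbar,sw}$, and take a harmonic representative $h$. Since the arrow is induced by the identity it sends $[h]_{BC}$ to $[h]_{\delbar}$; and because $h$ lies in the \emph{common} harmonic space $L^2\H^{p,q}_{\delbar,sw}$, the form $h$ is already the Dolbeault-harmonic representative of $[h]_{\delbar}$. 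Hence, under the two harmonic-form identifications, the arrow becomes the identity of a single space of harmonic forms, so it is bijective. The same bookkeeping applies verbatim to every arrow, with the sole caveat that at the de Rham vertex one invokes $L^2\H^{k}_{d,sw}=\bigoplus_{p+q=k}L^2\H^{p,q}_{\delbar,sw}$ so that total degrees and total bidegrees are matched.

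The step demanding the most care --- and the only real subtlety --- is precisely this last identification: one must check that the harmonic representative furnished by one theory is \emph{literally} the harmonic representative of the target theory, not merely an abstractly isomorphic one. This is exactly where Theorem \ref{thm kahler complete kernel} is used at full strength, as it provides equality of the harmonic subspaces of $L^2\Lambda^{p,q}$ (and not a mere isomorphism), which is what makes the composite of the three identity-induced maps the honest identity. With this in hand, the fact that conditions a)--f) hold is a one-line appeal to Proposition \ref{prop equiv cond isom maps diagram}.
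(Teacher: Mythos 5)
Your proof is correct and follows essentially the same route as the paper: invoke Theorem \ref{thm kahler complete kernel} together with \eqref{eq 2 order laplacians complete kahler} to see that every reduced class has a unique representative in the common space of harmonic forms, observe that the identity-induced maps therefore become identities of that single space, and then read off conditions a)--f) from Proposition \ref{prop equiv cond isom maps diagram}. Your extra care about the harmonic subspaces being literally equal (not merely isomorphic) is exactly the point the paper's one-line argument relies on.
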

\begin{proof}
By Theorem \ref{thm kahler complete kernel} and \eqref{eq 2 order laplacians complete kahler}, each class of the reduced $L^2$ cohomology spaces in Proposition \ref{prop diagram maps cohomology hilbert} have a unique representative in $L^2\H^{\bullet}_{d,sw}$. Since all the maps are induced by the identity, it follows that they are all isomorphisms. By Proposition \ref{prop equiv cond isom maps diagram}, we get the last claim.
\end{proof}

An immediate consequence of conditions \emph{a),\dots,f)} is the following.

\begin{corollary}[$L^2$ reduced $\del\delbar$-Lemma on complete K\"ahler manifolds]\label{cor l2 deldelbar lemma}
Let $(M,g)$ be a complete K\"ahler manifold. If $\alpha\in L^2\Lambda^{\bullet}_\C\cap \ker\del_w\cap\ker\delbar_w$, then
\[
\alpha\in\c{\im\del\delbar_s}\iff\alpha\in\c{\im d_s} \iff\alpha\in\c{\im\del_s}\iff\alpha\in\c{\im\delbar_s}\iff \alpha\in\c{\im\delbar_s} +\c{\im\del_s}.
\]
\end{corollary}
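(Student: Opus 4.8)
The plan is to read the entire chain of equivalences directly off conditions a)--f) of Proposition \ref{prop equiv cond isom maps diagram}, all of which hold on a complete K\"ahler manifold by Corollary \ref{cor isom reduced cohom diagram}. Throughout I would fix $\alpha\in L^2\Lambda^\bullet_\C$ with $\alpha\in\ker\del_w\cap\ker\delbar_w$ and exploit the following elementary observation: each relevant condition is an equality of the form $\c{\im\del\delbar_s}=S\cap T$, where $S$ is an intersection of kernels that already contains $\alpha$ by hypothesis and $T$ is the subspace under consideration. Such an equality immediately gives $\alpha\in\c{\im\del\delbar_s}\iff\alpha\in T$: the forward implication follows from $\c{\im\del\delbar_s}\subseteq T$, and the reverse from the fact that $\alpha\in S$ already, so that $\alpha\in T$ forces $\alpha\in S\cap T=\c{\im\del\delbar_s}$.

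Concretely, I would argue as follows. Condition a), namely $\c{\im\del\delbar_s}=\ker\del_w\cap\ker\delbar_w\cap\c{\im d_s}$, together with $\alpha\in\ker\del_w\cap\ker\delbar_w$, yields $\alpha\in\c{\im\del\delbar_s}\iff\alpha\in\c{\im d_s}$. Condition b), $\c{\im\del\delbar_s}=\ker\del_w\cap\c{\im\delbar_s}$, together with $\alpha\in\ker\del_w$, yields $\alpha\in\c{\im\del\delbar_s}\iff\alpha\in\c{\im\delbar_s}$. The conjugate of b), $\c{\im\del\delbar_s}=\ker\delbar_w\cap\c{\im\del_s}$, together with $\alpha\in\ker\delbar_w$, yields $\alpha\in\c{\im\del\delbar_s}\iff\alpha\in\c{\im\del_s}$. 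Finally condition c), $\c{\im\del\delbar_s}=\ker\del_w\cap\ker\delbar_w\cap(\c{\im\del_s}+\c{\im\delbar_s})$, yields $\alpha\in\c{\im\del\delbar_s}\iff\alpha\in\c{\im\del_s}+\c{\im\delbar_s}$. Since all five membership statements are thereby equivalent to the single statement $\alpha\in\c{\im\del\delbar_s}$, they are pairwise equivalent, which is exactly the asserted chain (recalling that $\c{\im\delbar_s}+\c{\im\del_s}=\c{\im\del_s}+\c{\im\delbar_s}$).

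The only point requiring a word of justification is the appeal to the conjugate of b), since this precise equality is not literally one of a)--f). I would note that complex conjugation of forms is an antilinear $L^2$-isometry which exchanges $\del$ and $\delbar$ (hence $\del_w$ and $\delbar_w$, and $\im\del_s$ and $\im\delbar_s$) while mapping $\im\del\delbar_s$ to itself, so applying it to b) produces precisely $\c{\im\del\delbar_s}=\ker\delbar_w\cap\c{\im\del_s}$; alternatively, this equality is exactly the condition equivalent to the injectivity of the map $L^2\bar{H}^{\bullet,\bullet}_{BC,sw}\to L^2\bar{H}^{\bullet,\bullet}_{\del,sw}$ in the proof of Proposition \ref{prop equiv cond isom maps diagram}, which is an isomorphism by Corollary \ref{cor isom reduced cohom diagram}. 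I do not anticipate any genuine obstacle here: the whole content of the corollary is already packaged in Corollary \ref{cor isom reduced cohom diagram}, and the remaining task is simply to specialise the four subspace equalities to the single vector $\alpha$.
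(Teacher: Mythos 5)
Your proof is correct and follows essentially the same route as the paper, which simply states that the corollary is an immediate consequence of conditions a)--f) of Proposition \ref{prop equiv cond isom maps diagram} (all valid here by Corollary \ref{cor isom reduced cohom diagram}); you have merely spelled out the specialisation to a single form $\alpha$, including the correct handling of the conjugate of b). No gaps.
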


\begin{corollary}\label{cor compl kahl ker im deldelbar}
Let $(M,g)$ be a complete K\"ahler manifold. Then
\[
\ker\del\delbar_s=\ker\del\delbar_w,\ \ \ \c{\im\del\delbar_s}=\c{\im\del\delbar_w},
\]
\[
\ker\delbar^*\del^*_s=\ker\delbar^*\del^*_w,\ \ \ \c{\im\delbar^*\del^*_s}=\c{\im\delbar^*\del^*_w}.
\]
\end{corollary}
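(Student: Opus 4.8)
The plan is to reduce all four equalities to Theorem \ref{thm kahler complete kernel} together with the orthogonal decompositions of Theorem \ref{thm l2 bc decomp}. First I would record the adjunction relations between the two operators. Since $(\del\delbar_b)^t=\delbar^*\del^*_{b'}$ by \eqref{eq adjoints abc hilb complex}, Lemma \ref{lem dem adjoint ker im} gives
$\ker\delbar^*\del^*_{b'}=(\c{\im\del\delbar_b})^\perp$ and $\c{\im\delbar^*\del^*_{b'}}=(\ker\del\delbar_b)^\perp$. Reading the first identity with $b'=s$ (so $b=w$) and $b'=w$ (so $b=s$) shows that $\ker\delbar^*\del^*_s=\ker\delbar^*\del^*_w$ is equivalent to $\c{\im\del\delbar_w}=\c{\im\del\delbar_s}$; reading the second identity in the same way shows that $\c{\im\delbar^*\del^*_s}=\c{\im\delbar^*\del^*_w}$ is equivalent to $\ker\del\delbar_w=\ker\del\delbar_s$. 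Hence it suffices to prove the two statements concerning $\del\delbar$, namely $\ker\del\delbar_s=\ker\del\delbar_w$ and $\c{\im\del\delbar_s}=\c{\im\del\delbar_w}$, and the statements concerning $\delbar^*\del^*$ then follow for free.

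For the kernels I would use the Aeppli decomposition $\ker\del\delbar_b=L^2\H^{p,q}_{A,ab}\oplus\c{\im(\delbar\oplus\del)_a}$ from Theorem \ref{thm l2 bc decomp}, valid whenever $a\le b$. Taking $a=s$ in both cases — forced when $b=s$ and permitted when $b=w$ — the second summand $\c{\im(\delbar\oplus\del)_s}$ is literally identical in the two decompositions, while the harmonic summands $L^2\H^{p,q}_{A,ss}$ and $L^2\H^{p,q}_{A,sw}$ coincide by Theorem \ref{thm kahler complete kernel}. Thus $\ker\del\delbar_s$ and $\ker\del\delbar_w$ are orthogonal direct sums of the very same two subspaces and are therefore equal.

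For the image–closures I would run the analogous argument on the Bott-Chern decomposition $L^2\Lambda^{p,q}=L^2\H^{p,q}_{BC,ab}\oplus\c{\im\del\delbar_a}\oplus\c{\im(\del^*\oplus\delbar^*)_{b'}}$. The choice $(a,b)=(s,w)$ isolates $\c{\im\del\delbar_s}$ and the choice $(a,b)=(w,w)$ isolates $\c{\im\del\delbar_w}$; in both cases $b'=s$, so the third summand $\c{\im(\del^*\oplus\delbar^*)_s}$ agrees, and the harmonic summands $L^2\H^{p,q}_{BC,sw}$ and $L^2\H^{p,q}_{BC,ww}$ agree by Theorem \ref{thm kahler complete kernel}. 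Each decomposition expresses the whole Hilbert space as an orthogonal sum of three pieces, two of which now coincide, so the remaining pieces must coincide as well: each equals the orthogonal complement of the common subspace $L^2\H^{p,q}_{BC,ww}\oplus\c{\im(\del^*\oplus\delbar^*)_s}$. This proves $\c{\im\del\delbar_s}=\c{\im\del\delbar_w}$, and combining with the previous paragraph and the adjunction identities yields the two remaining equalities.

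I do not expect a genuine analytic obstacle, since the essential self-adjointness input is already packaged inside Theorem \ref{thm kahler complete kernel}; completeness is used only there (the auxiliary summands match by selecting the \emph{same} extension index, not by invoking $\delta_s=\delta_w$). The only points demanding care are bookkeeping: respecting the constraint $a\le b$ when picking the decompositions so that the auxiliary summands coincide exactly, and correctly tracking the prime operation $b\mapsto b'$ in the adjunction identities so that the equivalences between the $\del\delbar$ and $\delbar^*\del^*$ statements are paired correctly.
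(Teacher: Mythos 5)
Your proposal is correct and follows essentially the same route as the paper: both arguments combine the orthogonal decompositions of Theorem \ref{thm l2 bc decomp} with the identification of all harmonic spaces in Theorem \ref{thm kahler complete kernel}, varying the extension index so that the auxiliary summand is the same in both decompositions and the remaining summand is forced to agree. The only cosmetic differences are that the paper isolates $\c{\im\del\delbar_a}$ inside $\ker\del_w\cap\ker\delbar_w$ (via Remark \ref{rmk kernel bc a complete}) rather than in the full three-term decomposition of $L^2\Lambda^{p,q}$, and leaves the $\delbar^*\del^*$ statements implicit, whereas you derive them explicitly from the adjunction identities of Lemma \ref{lem dem adjoint ker im}.
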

\begin{proof}
By Theorem \ref{thm l2 bc decomp}, Remark \ref{rmk kernel bc a complete} and Theorem \ref{thm kahler complete kernel} we have
\begin{align*}
&\ker\del_w\cap\ker\delbar_w=L^2\H^{p,q}_{\delbar,sw}\oplus\c{\im\del\delbar_a},\\
&\ker\del\delbar_b=L^2\H^{p,q}_{\delbar,sw}\oplus(\c{\im\del_s}+\c{\im\delbar_s}).
\end{align*}
For different choices of $a,b\in\{s,w\}$, the thesis follows.
\end{proof}

\subsection{Spectral gap of the Dolbeault Laplacian}\,\\We will now study the case of a complete K\"ahler manifold for which $\Delta_{\delbar,sw}$ has a spectral gap in $L^2\Lambda^{\bullet,\bullet}$. Specifically, we will explore the consequences this has for $L^2$ Bott-Chern and Aeppli cohomologies. First of all, note that in this case by Lemma \ref{lemma spectral gap equiv 2}, $\im\delbar_s$, $\im\delbar^*_s$, and $\im \Delta_{\delbar,sw}$ are closed. Second, since $\Delta_{d,sw}=2\Delta_{\del,sw}=2\Delta_{\delbar,sw}$ by \eqref{eq 2 order laplacians complete kahler}, it follows that $\im\del_s$, $\im d_s$, $\im\del^*_s$ and $\im d^*_s$  are also closed.
In particular, if $\Delta_{\delbar,sw}$ has a spectral gap in $L^2\Lambda^{\bullet,\bullet}$, then the reduced and unreduced $L^2$ cohomologies coincide for both the Dolbeault and de Rham cases, \textit{i.e.}, for $a,b\in\{s,w\}$ with $a\le b$, we have 
\[
L^2\bar{H}^{\bullet,\bullet}_{\delbar,ab}=L^2{H}^{\bullet,\bullet}_{\delbar,ab},\ \ \ 
L^2\bar{H}^{\bullet,\bullet}_{\del,ab}=L^2{H}^{\bullet,\bullet}_{\del,ab},\ \ \ L^2\bar{H}^{\bullet}_{d,ab}=L^2{H}^{\bullet}_{d,ab}.
\]

We want to conclude that the same holds for $L^2$ Bott-Chern and Aeppli cohomology. In proving this, we will also prove that a spectral gap of $\Delta_{\delbar,sw}$ implies a spectral gap of $\Delta_{A,ab}$ and $\Delta_{BC,ab}$. A simple proof of this result does not seem viable and so, over the course of the next couple pages, we will work our way towards it, primarily through the use of Lemma \ref{lemma spectral gap equiv 2}.

We start by proving that $\im(\delbar \oplus\del)_s$ and $\im(\del^*\oplus\delbar^*)_s$ are closed in this setting.

\begin{theorem}\label{thm closedness delbar+del}
Let $(M,g)$ be a complete K\"ahler manifold. If $\Delta_{\delbar,sw}$ has a spectral gap in $L^2\Lambda^{p,q-1}\oplus L^2\Lambda^{p-1,q}$, then $\im(\delbar \oplus\del)_s$ is closed in $L^2\Lambda^{p,q}$. If $\Delta_{\delbar,sw}$ has a spectral gap in $L^2\Lambda^{p+1,q}\oplus L^2\Lambda^{p,q+1}$, then $\im(\del^*\oplus\delbar^*)_s$ is closed in $L^2\Lambda^{p,q}$.
\end{theorem}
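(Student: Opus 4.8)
The plan is to establish the first assertion in detail and to obtain the second by the entirely analogous argument (or, equivalently, by transporting it through the Hodge star). Regard $V:=\im\delbar_s$ and $W:=\im\del_s$ as subspaces of $L^2\Lambda^{p,q}$, where $\delbar_s\colon L^2\Lambda^{p,q-1}\to L^2\Lambda^{p,q}$ and $\del_s\colon L^2\Lambda^{p-1,q}\to L^2\Lambda^{p,q}$. By Corollary \ref{cor equality closure strong} one has $\c{\im(\delbar\oplus\del)_s}=\c{V}+\c{W}$, and since clearly $V,W\subseteq\im(\delbar\oplus\del)_s$, it is enough to prove that $V+W$ is closed; this simultaneously gives $\im(\delbar\oplus\del)_s=V+W$. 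First I would record that $V$ and $W$ are individually closed: a spectral gap on the orthogonal sum $L^2\Lambda^{p,q-1}\oplus L^2\Lambda^{p-1,q}$ restricts to a spectral gap on each summand, so Lemma \ref{lemma spectral gap equiv 2} makes $\im\delbar_s$ closed, while the identity $\Delta_{\del,sw}=\Delta_{\delbar,sw}$ from \eqref{eq 2 order laplacians complete kahler} lets the same lemma make $\im\del_s$ closed.

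Next I would decompose $V+W$ into pairwise orthogonal closed pieces. The first step is to identify $V\cap W=\c{\im\del\delbar_s}$: every element of $V\cap W$ is both $\delbar$- and $\del$-exact, hence lies in $\ker\del_w\cap\ker\delbar_w$ and is orthogonal to $L^2\H^{p,q}_{\delbar,sw}$, so the Bott-Chern decomposition of Theorem \ref{thm l2 bc decomp} together with $L^2\H^{p,q}_{BC,sw}=L^2\H^{p,q}_{\delbar,sw}$ (Theorem \ref{thm kahler complete kernel}) confines it to $\c{\im\del\delbar_s}$; conversely, writing $\del\delbar=\del\circ\delbar$ and $\del\delbar=-\delbar\circ\del$, Lemma \ref{lemma image composition strong} gives $\c{\im\del\delbar_s}\subseteq W$ and $\c{\im\del\delbar_s}\subseteq V$. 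Setting $V_0:=V\ominus(V\cap W)$ and $W_0:=W\ominus(V\cap W)$, each is closed and orthogonal to $V\cap W$ by construction, and $V+W=(V\cap W)+V_0+W_0$.

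The heart of the argument, and the step I expect to be the main obstacle, is the orthogonality $V_0\perp W_0$: that the Friedrichs angle between $V$ and $W$ is a right angle once their intersection is removed. Granting it, the three pieces are pairwise orthogonal closed subspaces, so $V+W=(V\cap W)\oplus V_0\oplus W_0$ is closed. To prove it, take $x=\delbar_s\alpha\in V_0$ and $y=\del_s\beta\in W_0$. The condition $y\perp\c{\im\del\delbar_s}$ means $\delbar^*\del^*_w y=0$; feeding in the Kähler anticommutation relations $\delbar^*\del^*=-\del^*\delbar^*$ and $\delbar^*\del=-\del\delbar^*$ one obtains $\del^*\del\delbar^*\beta=0$, and pairing with $\delbar^*\beta$ gives $\|\del\delbar^*\beta\|^2=0$, hence $\del\delbar^*\beta=0$. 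Since completeness gives $\delta_s=\delta_w$ for the first-order operators, the Kähler identities pass to the closed extensions and integration by parts is legitimate, so
\[
\langle x,y\rangle=\langle\delbar\alpha,\del\beta\rangle=\langle\alpha,\delbar^*\del\beta\rangle=-\langle\alpha,\del\delbar^*\beta\rangle=0 .
\]
This is the crux; everything else is bookkeeping.

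For the second statement I would run the same scheme with $(\delbar,\del)$ replaced by $(\del^*,\delbar^*)$, writing $V':=\im\del^*_s$ and $W':=\im\delbar^*_s$ inside $L^2\Lambda^{p,q}$. The spectral gap is now assumed on $L^2\Lambda^{p+1,q}\oplus L^2\Lambda^{p,q+1}$, which (again via $\Delta_{\del,sw}=\Delta_{\delbar,sw}$) makes $V'$ and $W'$ closed; the intersection becomes $V'\cap W'=\c{\im\delbar^*\del^*_s}$, and $V'_0\perp W'_0$ follows from the adjoint Kähler identities by the identical computation. Alternatively, since the Hodge star is an $L^2$-isometry with $*\,\Delta_{\delbar}=\Delta_{\del}\,*$, equal to $\Delta_{\delbar}\,*$ on a Kähler manifold, it carries the first assertion at bidegree $(n-q,n-p)$ to the second at bidegree $(p,q)$, transporting the spectral gap between the relevant source spaces.
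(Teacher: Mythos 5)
Your overall architecture is a genuinely different route from the paper's: you try to show directly that $\im\delbar_s+\im\del_s$ is closed by splitting off $\c{\im\del\delbar_s}$ and proving an orthogonality statement, whereas the paper applies Lemma \ref{lemma spectral gap equiv 2} to the preceding stage of the $L^2$ ABC Hilbert complex, identifies its Laplacian $\square_{sw}$ with $\Delta_{\delbar,sw}+\del_s\del^*_w\oplus\delbar_s\delbar^*_w$ via an essential self-adjointness argument in the style of Strichartz, and then transfers the spectral gap by the inequality $\la\Delta_{\delbar,sw}\xi,\xi\ra\le\la\square_{sw}'\xi,\xi\ra$. Several of your preliminary steps are fine (the reduction via Corollary \ref{cor equality closure strong}, the closedness of $\im\delbar_s$ and $\im\del_s$ from the per-summand spectral gap together with Lemma \ref{lemma im closed} and completeness, and the identification $V\cap W=\c{\im\del\delbar_s}$ using Theorems \ref{thm l2 bc decomp} and \ref{thm kahler complete kernel}).

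However, the step you yourself identify as the crux --- the orthogonality $V_0\perp W_0$ --- is not proved. Your computation is purely formal: from $y=\del_s\beta$ with $\delbar^*\del^*_w y=0$ you write $\del^*\del\delbar^*\beta=0$, pair against $\delbar^*\beta$ to get $\lv\del\delbar^*\beta\rv^2=0$, and then integrate by parts in $\la\delbar\alpha,\del\beta\ra=\la\alpha,\delbar^*\del\beta\ra=-\la\alpha,\del\delbar^*\beta\ra$. None of these manipulations is justified for general $\alpha\in\D(\delbar_s)$, $\beta\in\D(\del_s)$: membership in $\D(\del_s)$ gives no control on $\delbar^*\beta$, which need not be an $L^2$ form at all, so the expressions $\del\delbar^*\beta$ and $\la\del^*\del\delbar^*\beta,\delbar^*\beta\ra$ are not defined; likewise $\la\delbar_s\alpha,y\ra=\la\alpha,\delbar^*_w y\ra$ requires $y\in\D(\delbar^*_w)$, which is not known. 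The appeal to "completeness gives $\delta_s=\delta_w$ for first-order operators, so the Kähler identities pass to the closed extensions" does not cover this: the operators you are commuting ($\delbar^*\del^*$, $\del\delbar^*$, $\delbar^*\del$) are second order in $\beta$, and completeness only resolves the first-order case. One cannot repair this by approximating $\beta$ by compactly supported smooth $\beta_k$ either, since the orthogonality conditions are imposed on the limits $x,y$ and not on the approximants, so $\del\delbar^*\beta_k$ need not tend to zero. Making this commutation rigorous for the closed extensions is precisely the analytic difficulty the theorem is about, and it is what the paper's detour through the essential self-adjointness of the operator $\square_0=(\del^*+\delbar^*)(\delbar\oplus\del)+(\delbar\oplus d\oplus\del)(\del^*+\delbar^*)$ is designed to handle; as written, your argument assumes it.
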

\begin{proof}
We just prove the first claim since the second one is analogous. Our strategy is to use Lemma \ref{lemma spectral gap equiv 2}, therefore we consider the following Hilbert complex:
\[
L^2\Lambda^{p,q-2}\oplus L^2\Lambda^{p-1,q-1}\oplus L^2\Lambda^{p-2,q}\overset{(\delbar\oplus d\oplus \del)_s}{\longrightarrow}L^2\Lambda^{p,q-1}\oplus L^2\Lambda^{p-1,q}\overset{(\delbar\oplus\del)_w}{\longrightarrow}L^2\Lambda^{p,q}.
\]
We consider an operator $\square: A^{p,q-1}\oplus A^{p-1,q}\to A^{p,q-1}\oplus A^{p-1,q}$ defined by
\[
\square:=(\del^*+\delbar^*)(\delbar\oplus\del)+(\delbar\oplus d\oplus \del)(\del^*+\delbar^*),
\]
where $\delbar\oplus d\oplus \del$ acts on $A^{p,q-2}\oplus A^{p-1,q-1}\oplus A^{p-2,q}$ component by component. Define also $\square_{sw}:L^2\Lambda^{p,q-1}\oplus L^2\Lambda^{p-1,q}\to L^2\Lambda^{p,q-1}\oplus L^2\Lambda^{p-1,q}$ by
\[
\square_{sw}:=(\del^*+\delbar^*)_s(\delbar\oplus\del)_w+(\delbar\oplus d\oplus \del)_s(\del^*+\delbar^*)_w,
\]
so that $\square_{sw}$ is the Laplacian associated to the above Hilbert complex. By Theorems \ref{thm composition with adjoint} and \ref{thm sum of self ajoint} it is a self-adjoint extension of the operator $\square_0$ on $A^{p,q-1}_0\oplus A^{p-1,q}_0$. By Lemma \ref{lemma spectral gap equiv 2}, to prove the theorem it is sufficient to show that $\square_{sw}$ has a spectral gap.
Note that by the K\"ahler identities, namely using $\del\delbar^*+\delbar^*\del=0$ and $\delbar\del^*+\del^*\delbar=0$, we have
\[
\square(\alpha+\beta)=\Delta_\delbar(\alpha+\beta)+\del\del^*\alpha+\delbar\delbar^*\beta
\]
for all $\alpha\in A^{p,q-1}$ and $\beta\in A^{p-1,q}$.
Arguing as in \cite[Theorem 2.4]{Str}, we can show that $\square_0$ is essentially self-adjoint. Therefore, if by Theorems \ref{thm composition with adjoint} and \ref{thm sum of self ajoint} we define the self-adjoint operator
\[
\square_{sw}':=\Delta_{\delbar,sw}+\del_s\del^*_w\oplus\delbar_s\delbar^*_w:L^2\Lambda^{p,q-1}\oplus L^2\Lambda^{p-1,q}\to L^2\Lambda^{p,q-1}\oplus L^2\Lambda^{p-1,q},
\]
then $\square_{sw}=\square_{sw}'$. We are left to prove that $\square_{sw}'$ has a spectral gap. Note that in $L^2\Lambda^{p,q-1}\oplus L^2\Lambda^{p-1,q}$ it holds that $\D(\square_{sw}')= \D(\Delta_{\delbar,sw})$, and $\ker\square_{sw}'=\ker\Delta_{\delbar,sw}$ by Lemma \ref{lemma char ker dolb kahl compl}. Then there exists a $C>0$ such that for all $(\alpha\oplus\beta)\in\D(\square_{sw}')\cap(\ker\square_{sw}')^\perp$
\begin{align*}
C\lv\alpha+\beta\rv^2&\le\la \Delta_{\delbar,sw}(\alpha+\beta),\alpha+\beta\ra\\
&\le\la \Delta_{\delbar,sw}(\alpha+\beta),\alpha+\beta\ra+\lv\del^*_w\alpha\rv^2+\lv\delbar^*_w\beta\rv^2\\
&=\la \Delta_{\delbar,sw}(\alpha+\beta),\alpha+\beta\ra+\la\del_s\del^*_w\alpha,\alpha\ra+\la\delbar_s\delbar^*_w\beta,\beta\ra\\
&=\la \Delta_{\delbar,sw}(\alpha+\beta)+\del_s\del^*_w\alpha+\delbar_s\delbar^*_w\beta,\alpha+\beta\ra\\
&=\la\square_{sw}'(\alpha+\beta),\alpha+\beta\ra,
\end{align*}
namely $\square_{sw}'$ has a spectral gap.
\end{proof}

\begin{remark}
The Hilbert complex considered in the previous proof is nothing but the preceding stage of the $L^2$ ABC Hilbert complex as shown in \eqref{eq bc hilbert complex} (see Section \ref{sec questions} for more details).
\end{remark}

Now we deal with the closedness of $\im\del\delbar_s$ and $\im\delbar^*\del^*_s$.
By the Spectral Theorem \ref{thm spectral unbounded self-adjoint}, we see that if $\Delta_{\delbar,sw}$ has a spectral gap in $L^2\Lambda^{p,q}$, then $\Delta_{\delbar,sw}^k$ also has a spectral gap in $L^2\Lambda^{p,q}$ for all $k\in\N$, $k\ge2$. Namely, if $C=\inf(\sigma(\Delta_{\delbar,sw})\setminus\{0\})>0$, then $C^k=\inf(\sigma(\Delta_{\delbar,sw}^k)\setminus\{0\})>0$. In particular, for $b\in\{s,w\}$, since $\Delta_{\delbar,sw}^2=\tilde\Delta_{BC,b,4}=\tilde\Delta_{A,b,4}$, then $\tilde\Delta_{BC,b,4}$ and $\tilde\Delta_{A,b,4}$ both also have a spectral gap.
The next theorem, the proof of which is inspired by \cite[Theorem 1.4.A]{G}, shows that $\im\del\delbar_s$ and $\im\delbar^*\del^*_s$ are closed in this setting.

\begin{theorem}\label{thm spectral gap closed image deldelbar}
Let $(M,g)$ be a complete K\"ahler manifold. If $\Delta_{\delbar,sw}$ has a spectral gap in $L^2\Lambda^{p,q}$, then  $\im\del\delbar_b$ is closed in $L^2\Lambda^{p+1,q+1}$ and $\im\delbar^*\del^*_b$ is closed in $L^2\Lambda^{p-1,q-1}$ for $b\in\{s,w\}$.
\end{theorem}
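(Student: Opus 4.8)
The plan is to prove the two closedness assertions symmetrically by transferring the spectral gap of $\Delta_{\delbar,sw}$ to the second-order operators $\del\delbar_b$ and $\delbar^*\del^*_b$ through the fourth-order Laplacians $\tilde\Delta_{A,b,4}$ and $\tilde\Delta_{BC,b,4}$, which on a complete K\"ahler manifold both coincide with $\Delta_{\delbar,sw}^2$ and therefore have a spectral gap with constant $C^2$. I will spell out the argument for $\im\del\delbar_b$; the statement for $\im\delbar^*\del^*_b$ is obtained by the same scheme with $\tilde\Delta_{A,b,4}$ replaced by $\tilde\Delta_{BC,b',4}$.

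First I would set $P:=\del\delbar_b$ and $T:=P^tP=\delbar^*\del^*_{b'}\del\delbar_b$, a positive self-adjoint operator by Theorem \ref{thm composition with adjoint}. Since $\la Tx,x\ra=\lv Px\rv^2$ one has $\ker T=\ker P$, and by Lemma \ref{lem dem adjoint ker im} $(\ker T)^\perp=\c{\im P^t}=\c{\im\delbar^*\del^*_{b'}}$. The whole theorem then reduces to the estimate
\[
\la Tx,x\ra\ge C^2\lv x\rv^2\qquad\text{for all }x\in\D(T)\cap(\ker T)^\perp,
\]
which is exactly condition b) of Lemma \ref{lemma spectral gap equiv} for $T$: once it holds, that lemma gives $\im T$ closed, and because $\c{\im T}=(\ker P)^\perp=\c{\im P^t}$ with $\im T\subseteq\im P^t$, closedness of $\im T$ forces $\im\delbar^*\del^*_{b'}=\im P^t$ to be closed, whence $\im\del\delbar_b$ is closed by Lemma \ref{lemma im closed}.

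To prove the estimate I would argue as follows. From $\del^*\delbar^*\del^*=0$ and $(\delbar^*)^2\del^*=0$ (the first using $\del^*\delbar^*+\delbar^*\del^*=0$, the adjoint of $\del\delbar+\delbar\del=0$), Lemma \ref{lemma pq0 subset} yields $(\ker T)^\perp=\c{\im\delbar^*\del^*_{b'}}\subseteq\ker\del^*_w\cap\ker\delbar^*_w$. Writing $\tilde\Delta_{A,b,4}=\sum_{j=1}^4 P_j^tP_j$ with $P_1=\del\delbar_b$, $P_2=\delbar^*\del^*_w$, $P_3=\del\delbar^*_w$, $P_4=\delbar\del^*_w$, any $x\in\ker\del^*_w\cap\ker\delbar^*_w$ lies in each $\D(P_j)$ (factoring the second-order operators $P_2,P_3,P_4$ through $\del^*_w$ or $\delbar^*_w$) and satisfies $P_2x=P_3x=P_4x=0$; consequently $x\in\D(\tilde\Delta_{A,b,4})$ and $\tilde\Delta_{A,b,4}x=P_1^tP_1x=Tx$. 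Since $\ker\tilde\Delta_{A,b,4}=\ker\Delta_{\delbar,sw}\subseteq\ker\del\delbar_b$, we have $(\ker T)^\perp\subseteq(\ker\tilde\Delta_{A,b,4})^\perp$, so the spectral gap of $\tilde\Delta_{A,b,4}=\Delta_{\delbar,sw}^2$ (via Lemma \ref{lemma spectral gap equiv}) gives $\la Tx,x\ra=\la\tilde\Delta_{A,b,4}x,x\ra\ge C^2\lv x\rv^2$, as required.

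The hard part is precisely this domain-compatibility step: it is not enough that the three extra fourth-order terms vanish on $x$, one must also check that $x$ truly belongs to $\D(\tilde\Delta_{A,b,4})=\bigcap_j\D(P_j^tP_j)$, so that $\la\tilde\Delta_{A,b,4}x,x\ra=\sum_j\lv P_jx\rv^2$ is valid and the gap of $\Delta_{\delbar,sw}^2$ can be moved onto $T$ with no separate core argument. For the second claim I would take $P:=\delbar^*\del^*_b$, so that $T=P^tP=\del\delbar_{b'}\delbar^*\del^*_b$ is the leading term of $\tilde\Delta_{BC,b',4}$; here $(\ker T)^\perp=\c{\im\del\delbar_{b'}}\subseteq\ker\del_w\cap\ker\delbar_w$ (from $\del\,\del\delbar=\delbar\,\del\delbar=0$), the remaining terms $\del\delbar_w,\delbar^*\del_w,\del^*\delbar_w$ of $\tilde\Delta_{BC,b',4}$ annihilate such $x$, and the identical reasoning shows $\im\delbar^*\del^*_b$ is closed.
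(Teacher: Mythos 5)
Your proof is correct, and it rests on the same key estimate as the paper's proof: the spectral gap of $\tilde\Delta_{A,b,4}=\Delta_{\delbar,sw}^2$ (resp.\ of $\tilde\Delta_{BC,b',4}$), exploited on $\c{\im\delbar^*\del^*_{b'}}$ (resp.\ on $\c{\im\del\delbar_{b'}}$), where the three non-leading fourth-order terms vanish by Lemmas \ref{lemma pq0 subset} and \ref{lemma kernel composition weak}. What differs is how you convert that estimate into closedness of the image. The paper argues concretely: it first reduces to $b=s$ via Corollary \ref{cor compl kahl ker im deldelbar}, approximates a limit point $\alpha\in\c{\im\del\delbar_s}$ by $\del\delbar\beta_k$ with $\beta_k\in A^{p,q}_0$, splits $\beta_k=\eta_k+\theta_k$ using the Aeppli decomposition of Theorem \ref{thm l2 bc decomp} together with the smoothness of its components, and shows $(\theta_k)$ is Cauchy, thereby exhibiting an explicit preimage $\theta$ of $\alpha$. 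You instead stay entirely at the level of unbounded operators: you pass to $T=P^tP$, verify the gap inequality on $\D(T)\cap(\ker T)^\perp$ by identifying $Tx$ with $\tilde\Delta_{A,b,4}x$ there, and then run the chain ``$\im T$ closed $\Rightarrow\im P^t$ closed $\Rightarrow\im P$ closed'' via Lemmas \ref{lemma spectral gap equiv}, \ref{lem dem adjoint ker im} and \ref{lemma im closed}. This buys you a proof with no Cauchy-sequence chase, no reduction to the strong extension, and no appeal to elliptic regularity or to the smoothness of the decomposition components; the paper's version, in exchange, produces an explicit solution of $\del\delbar_s\theta=\alpha$. The step you single out as delicate is indeed the crux and you handle it correctly: for $x\in\D(T)\cap\c{\im P^t}$ one has $x\in\ker P_j$ for $j=2,3,4$, hence $P_jx=0\in\D(P_j^t)$ and $x\in\D(P_j^tP_j)$ with $P_j^tP_jx=0$, so $x\in\D(\tilde\Delta_{A,b,4})$ and $\tilde\Delta_{A,b,4}x=Tx$, while $\ker\tilde\Delta_{A,b,4}=\bigcap_j\ker P_j\subseteq\ker T$ gives the inclusion of orthogonal complements needed to apply the gap of $\Delta_{\delbar,sw}^2$.
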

\begin{proof}
By the previous discussion and Lemma \ref{lemma spectral gap equiv}, since $\tilde\Delta_{A,w,4}$ has a spectral gap, there is a constant $C>0$ such that
\begin{equation}\label{eq spectral gap dolb}
\lv\del\delbar_w\psi\rv^2+\lv\delbar^*\del^*_w\psi\rv^2+\lv\del\delbar^*_w\psi\rv^2+\lv\delbar\del^*_w\psi\rv^2=\la \psi,\tilde\Delta_{A,w,4}\psi\ra\ge C \la\psi,\psi\ra
\end{equation}
for all $\psi\in\D(\tilde\Delta_{A,w,4})\cap (\ker\tilde\Delta_{A,w,4})^\perp\cap L^2\Lambda^{p,q}$. 

Recall that $\c{\im\del\delbar_b}=\c{\im\del\delbar_s}$ by Corollary \ref{cor compl kahl ker im deldelbar}.
Let $\alpha\in\c{\im\del\delbar_s}\cap L^2\Lambda^{p+1,q+1}=\c{\del\delbar A^{p,q}_0}$, then there is a sequence of $\beta_k\in A^{p,q}_0$ such that $\del\delbar\beta_k\to\alpha$. By the Aeppli decompositions of Theorem \ref{thm l2 bc decomp}, we obtain
\[
L^2A^{p,q}=\ker\del\delbar_w\cap A^{p,q}\oplus\c{\im\delbar^*\del^*_s}\cap A^{p,q},
\]
therefore we can decompose
\[
\beta_k=\eta_k+\theta_k,
\]
where $\eta_k\in\ker\del\delbar_w\cap A^{p,q}$ and $\theta_k\in\c{\im\delbar^*\del^*_s}\cap A^{p,q}$. In particular $\theta_k\in(\ker\tilde\Delta_{A,w,4})^\perp$ and $\del\delbar\beta_k=\del\delbar\eta_k+\del\delbar\theta_k=\del\delbar\theta_k$ by Lemma \ref{lemma kernel weak smooth}. By Lemma \ref{lemma pq0 subset}, we also know that $\theta_k\in\ker\delbar^*_w\cap\ker\del^*_w$, therefore $\delbar^*\theta_k=\del^*\theta_k=0$ by Lemma \ref{lemma kernel weak smooth}, implying that $\theta_k\in\D(\tilde\Delta_{A,sw,4})$.

Applying \eqref{eq spectral gap dolb} we find that
\[
C\lv\theta_k-\theta_j\rv^2\le \lv\del\delbar\theta_k-\del\delbar\theta_j\rv^2\to0
\]
as $k,j\to+\infty$ since the sequence $\del\delbar\beta_k=\del\delbar\theta_k$ is Cauchy.
From this we conclude that the sequence $\theta_k$ is also Cauchy and so it must converge to some form $\theta\in L^2\Lambda^{p,q}$, proving that $\del\delbar_s\theta=\alpha$. In particular $\del\delbar_b\theta=\alpha$.

The proof for $\im\delbar^*\del^*_b$ is analogous, using the Bott-Chern decompositions of Theorem \ref{thm l2 bc decomp} and the spectral gap property for $\tilde\Delta_{BC,s,4}$.
\end{proof}

\begin{corollary}
Let $(M,g)$ be a complete K\"ahler manifold. If $\Delta_{\delbar,sw}$ has a spectral gap in $L^2\Lambda^{\bullet,\bullet}$, then for $a,b\in\{s,w\}$ with $a\le b$
\[
L^2\bar{H}^{\bullet,\bullet}_{BC,ab}=L^2{H}^{\bullet,\bullet}_{BC,ab},\ \ \ L^2\bar{H}^{\bullet,\bullet}_{A,ab}=L^2{H}^{\bullet,\bullet}_{A,ab}.
\]
\end{corollary}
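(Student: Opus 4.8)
The plan is to reduce the statement to the two closedness results proved immediately before it, namely Theorem \ref{thm spectral gap closed image deldelbar} and Theorem \ref{thm closedness delbar+del}. The starting observation is purely formal: for each of the four cohomologies in question the reduced and unreduced versions have the \emph{same} numerator (the relevant closed kernel) and differ only in their denominator, the unreduced one quotienting by an image and the reduced one by the closure of that image. Since in each case the image is contained in the kernel appearing in the numerator (by Lemma \ref{lemma pq0 subset}, as the composite of consecutive differentials vanishes), the identity induces a natural surjection from the unreduced onto the reduced cohomology whose kernel is the quotient of the closure of the image by the image itself. Hence the unreduced and reduced cohomologies coincide exactly when the image in the denominator is closed.

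For the Bott-Chern cohomologies $L^2H^{\bullet,\bullet}_{BC,ab}$ and $L^2\bar{H}^{\bullet,\bullet}_{BC,ab}$ the denominators are $\im\del\delbar_a$ and $\c{\im\del\delbar_a}$, so it suffices to check that $\im\del\delbar_a$ is closed for $a\in\{s,w\}$. This is precisely Theorem \ref{thm spectral gap closed image deldelbar}: because $\Delta_{\delbar,sw}$ is assumed to have a spectral gap in \emph{every} bidegree of $L^2\Lambda^{\bullet,\bullet}$, in particular in the lower bidegree required by that theorem, we obtain that $\im\del\delbar_a$ is closed for both $a=s$ and $a=w$.

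For the Aeppli cohomologies the denominators are $\im(\delbar\oplus\del)_a$ and $\c{\im(\delbar\oplus\del)_a}$, so I would verify that $\im(\delbar\oplus\del)_a$ is closed. Theorem \ref{thm closedness delbar+del} delivers the closedness of $\im(\delbar\oplus\del)_s$ from the spectral gap of $\Delta_{\delbar,sw}$ in the preceding bidegree. To cover also $a=w$ I would invoke completeness of $(M,g)$, which forces $(\delbar\oplus\del)_s=(\delbar\oplus\del)_w$, so that $\im(\delbar\oplus\del)_a$ is one and the same closed subspace for both choices of $a$.

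Since the whole argument is just a packaging of the two cited theorems, there is no genuine obstacle; the only points demanding attention are bookkeeping ones. One must use the hypothesis that the spectral gap holds in all bidegrees, so that Theorems \ref{thm spectral gap closed image deldelbar} and \ref{thm closedness delbar+del} may be applied in whatever shifted bidegree each of them needs, and one must remember that the $a=w$ case of the Aeppli statement rests on completeness identifying the strong and weak extensions of the first-order operator $\delbar\oplus\del$.
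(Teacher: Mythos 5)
Your proposal is correct and matches the paper's intended argument: the corollary is stated without a separate proof precisely because it follows from Theorems \ref{thm closedness delbar+del} and \ref{thm spectral gap closed image deldelbar} by the observation that reduced and unreduced cohomologies coincide exactly when the image in the denominator is closed. Your bookkeeping on the indices and the use of completeness to identify $(\delbar\oplus\del)_s$ with $(\delbar\oplus\del)_w$ for the $a=w$ Aeppli case are exactly as the paper requires.
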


Applying Lemma \ref{lemma spectral gap equiv 2}, we finally see that a spectral gap of $\Delta_{\delbar,sw}$ actually implies a spectral gap also of $\Delta_{A,ab}$ and $\Delta_{BC,ab}$.
\begin{corollary}
Let $(M,g)$ be a complete K\"ahler manifold. If $\Delta_{\delbar,sw}$ has a spectral gap in $L^2\Lambda^{\bullet,\bullet}$, then $\Delta_{A,ab}$ and $\Delta_{BC,ab}$ have a spectral gap in $L^2\Lambda^{\bullet,\bullet}$, with $a,b\in\{s,w\}$ and $a\le b$.
\end{corollary}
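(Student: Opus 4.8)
The plan is to read off the spectral gap of each of the two Laplacians from the characterisation in Lemma \ref{lemma spectral gap equiv 2}, which reduces the question to the closedness of the images of the two differentials meeting at the relevant stage. Recall that $\Delta_{BC,ab}$ is the Laplacian of the Bott-Chern Hilbert complex
\[
L^2\Lambda^{p-1,q-1}\overset{\del\delbar_a}{\longrightarrow}L^2\Lambda^{p,q}\overset{(\del+\delbar)_b}{\longrightarrow}L^2\Lambda^{p+1,q}\oplus L^2\Lambda^{p,q+1},
\]
so that with $P=\del\delbar_a$ and $Q=(\del+\delbar)_b$ we have $Q^t=(\del^*\oplus\delbar^*)_{b'}$ by \eqref{eq adjoints abc hilb complex}. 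By the equivalence of a) and c) in Lemma \ref{lemma spectral gap equiv 2}, $\Delta_{BC,ab}$ has a spectral gap exactly when both $\im\del\delbar_a$ and $\im(\del^*\oplus\delbar^*)_{b'}$ are closed. Likewise $\Delta_{A,ab}$ is the Laplacian of the Aeppli Hilbert complex with $P=(\delbar\oplus\del)_a$ and $Q=\del\delbar_b$, hence $Q^t=\delbar^*\del^*_{b'}$, and it has a spectral gap exactly when $\im(\delbar\oplus\del)_a$ and $\im\delbar^*\del^*_{b'}$ are closed.

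It then remains to verify the closedness of these four images under the assumption that $\Delta_{\delbar,sw}$ has a spectral gap in \emph{every} bidegree of $L^2\Lambda^{\bullet,\bullet}$. The two second-order images are handled directly by Theorem \ref{thm spectral gap closed image deldelbar}, which asserts that $\im\del\delbar_b$ and $\im\delbar^*\del^*_b$ are closed for \emph{both} $b\in\{s,w\}$; since $\Delta_{\delbar,sw}$ has a spectral gap in all bidegrees, the hypotheses of that theorem are met at the bidegrees in question. For the two first-order images, Theorem \ref{thm closedness delbar+del} supplies the closedness of $\im(\delbar\oplus\del)_s$ and $\im(\del^*\oplus\delbar^*)_s$; because the metric is complete we have $(\delbar\oplus\del)_s=(\delbar\oplus\del)_w$ and $(\del^*\oplus\delbar^*)_s=(\del^*\oplus\delbar^*)_w$, so the strong and weak images coincide and are closed for either extension. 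Matching each of the four images to the theorem that closes it, and feeding the result back into Lemma \ref{lemma spectral gap equiv 2}, yields the spectral gaps of $\Delta_{BC,ab}$ and $\Delta_{A,ab}$ for all admissible $a\le b$.

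The only point demanding attention is the bookkeeping of strong versus weak extensions. Lemma \ref{lemma spectral gap equiv 2} forces us to control both $P$ and $Q^t$ simultaneously: in each complex one of these is second-order (closed for either extension by Theorem \ref{thm spectral gap closed image deldelbar}) and the other is first-order (closed for the strong extension by Theorem \ref{thm closedness delbar+del}, and extended to the weak extension via completeness). Once this matching is made there is no further analytic obstacle, as all the substantive work has already been carried out in Theorems \ref{thm closedness delbar+del} and \ref{thm spectral gap closed image deldelbar}.
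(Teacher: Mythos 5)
Your proof is correct and is exactly the argument the paper intends: the corollary is stated with no written proof beyond the phrase ``Applying Lemma \ref{lemma spectral gap equiv 2}'', and your reduction to the closedness of $\im\del\delbar_a$, $\im\delbar^*\del^*_{b'}$, $\im(\delbar\oplus\del)_a$ and $\im(\del^*\oplus\delbar^*)_{b'}$ via Theorems \ref{thm spectral gap closed image deldelbar} and \ref{thm closedness delbar+del} (using completeness to pass from strong to weak for the first-order operators) is precisely the bookkeeping the authors leave implicit.
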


\begin{corollary}
Let $(M,g)$ be a complete K\"ahler manifold. If $\Delta_{\delbar,sw}$ has a spectral gap in $L^2\Lambda^{\bullet,\bullet}$, then
\[
\del\delbar_s=\del\delbar_w,\ \ \ \delbar^*\del^*_s=\delbar^*\del^*_w.
\]
\end{corollary}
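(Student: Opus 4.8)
The plan is to prove $\del\delbar_s=\del\delbar_w$ at each fixed bidegree; the argument for $\delbar^*\del^*_s=\delbar^*\del^*_w$ will then be verbatim the same with $\del\delbar$ replaced everywhere by $\delbar^*\del^*$. The underlying observation is purely functional-analytic and I would isolate it first: if $P\subseteq Q$ are two closed densely defined operators with $\ker P=\ker Q$ and with $\im P=\im Q$ \emph{as sets} (not merely as closures), then $P=Q$. Indeed, given any $x\in\D(Q)$, the equality $\im P=\im Q$ produces some $y\in\D(P)$ with $Py=Qx$; since $P\subseteq Q$ we have $Qy=Py$, whence $Q(x-y)=Qx-Py=0$, so $x-y\in\ker Q=\ker P\subseteq\D(P)$, and therefore $x=y+(x-y)\in\D(P)$. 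This shows $\D(Q)\subseteq\D(P)$, and combined with the inclusion $P\subseteq Q$ it forces $\D(P)=\D(Q)$ and $P=Q$.

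The real work is then to verify the two set-equalities for the pair $P=\del\delbar_s\subseteq Q=\del\delbar_w$. The coincidence of kernels $\ker\del\delbar_s=\ker\del\delbar_w$ is already recorded in Corollary \ref{cor compl kahl ker im deldelbar}, so nothing new is needed there. For the images, I would invoke the spectral gap hypothesis through Theorem \ref{thm spectral gap closed image deldelbar}, which gives that both $\im\del\delbar_s$ and $\im\del\delbar_w$ are closed in the relevant $L^2\Lambda^{p,q}$. Since $\del\delbar_s\subseteq\del\delbar_w$ we trivially have $\im\del\delbar_s\subseteq\im\del\delbar_w$, while Corollary \ref{cor compl kahl ker im deldelbar} furnishes $\c{\im\del\delbar_s}=\c{\im\del\delbar_w}$. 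Two closed subspaces with the same closure are equal, so $\im\del\delbar_s=\im\del\delbar_w$ as sets.

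With both set-equalities established, the functional-analytic observation of the first paragraph immediately yields $\del\delbar_s=\del\delbar_w$. The dual differential is handled identically: the closedness of $\im\delbar^*\del^*_s$ and $\im\delbar^*\del^*_w$ again comes from Theorem \ref{thm spectral gap closed image deldelbar}, while $\ker\delbar^*\del^*_s=\ker\delbar^*\del^*_w$ and $\c{\im\delbar^*\del^*_s}=\c{\im\delbar^*\del^*_w}$ are supplied by Corollary \ref{cor compl kahl ker im deldelbar}; the same two-line domain argument then gives $\delbar^*\del^*_s=\delbar^*\del^*_w$.

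I do not expect any genuine obstacle, since all the analytic content has already been absorbed into the two cited results: Theorem \ref{thm spectral gap closed image deldelbar} carries the weight of the spectral gap (closedness of the images), and Corollary \ref{cor compl kahl ker im deldelbar} carries the weight of completeness and the K\"ahler identities (coincidence of the kernels and of the image-closures). The only delicate point, and the one genuinely using the spectral gap rather than mere completeness, is the upgrade from equality of the \emph{closures} of the images to equality of the images themselves; it is precisely the closedness coming from the gap that permits this upgrade and hence the domain-level identification of $\del\delbar_s$ with $\del\delbar_w$.
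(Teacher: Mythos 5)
Your proposal is correct and follows essentially the same route as the paper: the paper also deduces the statement from the equalities $\ker\del\delbar_s=\ker\del\delbar_w$ and $\im\del\delbar_s=\im\del\delbar_w$ supplied by Corollary \ref{cor compl kahl ker im deldelbar} and Theorem \ref{thm spectral gap closed image deldelbar}, the only difference being that it cites the abstract principle (equal kernels plus equal images force equality of nested closed operators) from Bei's Proposition 1.6 in \cite{B1}, whereas you prove it directly. Your two-line proof of that principle is valid, so nothing is missing.
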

\begin{proof}
By \cite[Proposition 1.6]{B1}, if $\ker\del\delbar_s=\ker\del\delbar_w$ and $\im\del\delbar_s=\im\del\delbar_w$, then $\del\delbar_s=\del\delbar_w$. This follows from Corollary \ref{cor compl kahl ker im deldelbar} and Theorem \ref{thm spectral gap closed image deldelbar}. The same argument shows that we also have $\delbar^*\del^*_s=\delbar^*\del^*_w$.
\end{proof}
In particular, if on a complete K\"ahler manifold $\Delta_{\delbar,sw}$ has a spectral gap in $L^2\Lambda^{\bullet,\bullet}$, then there is a unique $L^2$ ABC Hilbert complex.

We end the section recalling a sufficient condition for $\Delta_{\delbar,sw}$ to have a spectral gap, due to Gromov. %We refer to \cite{BDET} for a weaker condition on K\"ahler manifolds which implies the spectral gap of $\Delta_{\delbar,sw}$ for some special bidegrees.
\begin{theorem}[{\cite[Theorems 1.2.B, 1.4.A]{G}}]
Let $(M,g)$ be a complete K\"ahler manifold of complex dimension $n$. If the fundamental form $\omega$ is $d$-bounded, \textit{i.e.}, $\omega=d\eta$ with $\eta$ bounded, then $\ker\Delta_{\delbar,sw}\cap L^2\Lambda^{p,q}=\{0\}$ if $p+q\ne n$ and $\Delta_{\delbar,sw}$ has a spectral gap in $L^2\Lambda^{\bullet,\bullet}$.
\end{theorem}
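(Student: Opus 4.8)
The plan is to reduce everything to the Hodge Laplacian and then run Gromov's Lefschetz argument in the $L^2$ setting. By \eqref{eq 2 order laplacians complete kahler} we have $\Delta_{d,sw}=2\Delta_{\delbar,sw}$, and since $\Delta_{d,sw}=2\Delta_{\delbar,sw}$ preserves bidegree it commutes with the projections onto the $(p,q)$-components; hence it suffices to prove that $\ker\Delta_{d,sw}\cap L^2\Lambda^k=\{0\}$ for $k\ne n$ and that $\Delta_{d,sw}$ has a spectral gap on each space of pure-degree forms. The statement for $\Delta_{\delbar,sw}$ on $L^2\Lambda^{p,q}$ then follows degree by degree (using Theorem \ref{thm kahler complete kernel}), and taking the minimum of the finitely many gaps yields the gap on all of $L^2\Lambda^{\bullet,\bullet}$. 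First I would set up the Lefschetz operator $L:=\omega\wedge\,\cdot\,$, which is a \emph{bounded} operator on $L^2$ because $|\omega|$ is pointwise constant, and record that, by the K\"ahler relations $\del\omega=\delbar\omega=0$, $L$ commutes with $d,\del,\delbar$ and hence with $d^c:=i(\delbar-\del)$. The crucial use of the hypothesis $\omega=d\eta$ with $\eta$ bounded is the operator identity
\[
L^m=d\,S_m+S_m\,d,\qquad S_m:=L^{m-1}(\eta\wedge\,\cdot\,),
\]
where each $S_m$ is bounded; this follows by induction from $L=d(\eta\wedge\,\cdot\,)+(\eta\wedge\,\cdot\,)d$ (because $d\eta=\omega$) together with $[d,L]=0$.

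The heart of the argument is the a priori estimate
\[
\lv\alpha\rv\le C\big(\lv d\alpha\rv+\lv d^*\alpha\rv\big)
\]
for all compactly supported forms $\alpha$ of pure degree $k\ne n$. For $k<n$, the pointwise hard Lefschetz theorem for the exterior algebra makes $L^{n-k}$ a uniformly bounded-below isomorphism, so $\lv\alpha\rv\le c'\lv L^{n-k}\alpha\rv$. I would then expand $\lv L^{n-k}\alpha\rv^2=\la L^{n-k}\alpha,(dS_{n-k}+S_{n-k}d)\alpha\ra$, integrate by parts (legitimate, with no boundary terms, by completeness, which also gives $d_s=d_w$), and use the K\"ahler commutator $[L,d^*]=d^c$ in its iterated form $d^*L^{n-k}=L^{n-k}d^*-(n-k)L^{n-k-1}d^c$ together with the pointwise identity $\lv d^c\alpha\rv=\lv d\alpha\rv$. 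This bounds $\lv L^{n-k}\alpha\rv^2$ by a constant times $(\lv d\alpha\rv+\lv d^*\alpha\rv)\lv L^{n-k}\alpha\rv$, whence $\lv L^{n-k}\alpha\rv\le C(\lv d\alpha\rv+\lv d^*\alpha\rv)$ and finally the desired estimate. The case $k>n$ follows either by applying the Hodge star (which exchanges $d\leftrightarrow d^*$ and degree $k\leftrightarrow 2n-k$) or symmetrically from the adjoint identity $\Lambda^{k-n}=d^*S^*+S^*d^*$.

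With the estimate in hand I would conclude as follows. Since the metric is complete, $(d+d^*)_0$ is essentially self-adjoint, so the estimate extends by density in the graph norm to $\D((d+d^*)_s)\supseteq\ker\Delta_{d,sw}$; applying it to an $L^2$ harmonic form of degree $k\ne n$ (for which $d\alpha=d^*\alpha=0$) forces $\alpha=0$, giving $\ker\Delta_{d,sw}\cap L^2\Lambda^k=\{0\}$ and hence $\ker\Delta_{\delbar,sw}\cap L^2\Lambda^{p,q}=\{0\}$ for $p+q\ne n$. Moreover, as the kernel vanishes in these degrees, the same estimate is precisely condition b) of Lemma \ref{lemma spectral gap equiv} for $\Delta_{d,sw}$ on $L^2\Lambda^k$ (note $\la\alpha,\Delta_{d,sw}\alpha\ra=\lv d\alpha\rv^2+\lv d^*\alpha\rv^2$), so $\Delta_{d,sw}$ has a spectral gap there for every $k\ne n$. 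For the remaining middle degree I would bootstrap: the gaps in degrees $n\pm1$ give, via Lemma \ref{lemma spectral gap equiv 2} and Lemma \ref{lemma im closed}, the closedness of $\im(d\colon L^2\Lambda^{n-1}\to L^2\Lambda^n)$ and of $\im(d^*\colon L^2\Lambda^{n+1}\to L^2\Lambda^n)$; Lemma \ref{lemma spectral gap equiv 2} applied to the de Rham complex at degree $n$ then supplies the gap there too.

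The main obstacle I anticipate is the core $L^2$ estimate of the second paragraph: converting the pointwise hard-Lefschetz algebra into a global inequality requires carefully tracking the bounded operators $S_m$, justifying the integrations by parts on the non-compact (though complete) manifold, and above all absorbing the troublesome exact term $dS_{n-k}\alpha$ — which is accomplished precisely through the K\"ahler commutator $[L,d^*]=d^c$ and the boundedness of $\eta$. Everything else is either linear algebra on the fibers or a formal consequence of the Hilbert-complex lemmas already established.
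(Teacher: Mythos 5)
The paper does not prove this statement: it is quoted verbatim from Gromov \cite{G} (Theorems 1.2.B and 1.4.A) as a known sufficient condition for the spectral gap, so there is no internal proof to compare against. Your proposal is essentially a faithful reconstruction of Gromov's original argument --- the reduction to $\Delta_{d,sw}$ via \eqref{eq 2 order laplacians complete kahler}, the bounded homotopy $L^m=dS_m+S_md$ coming from $d$-boundedness of $\omega$, the hard-Lefschetz lower bound $\lv\alpha\rv\le c'\lv L^{n-k}\alpha\rv$, the absorption of the exact term through $[L,d^*]=d^c$, and the closed-image bootstrap for the middle degree via Lemmas \ref{lemma im closed} and \ref{lemma spectral gap equiv 2} --- and I find it correct. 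The only imprecision is the claim that $\lv d^c\alpha\rv=\lv d\alpha\rv$ pointwise: this holds for forms of pure bidegree but not for general $k$-forms; either run the estimate on each $L^2\Lambda^{p,q}$ separately (which is all you need for $\Delta_{\delbar,sw}$, since $L$ and the Kähler commutators respect bidegree shifts) or replace it by the integrated identity $\lv d^c\alpha\rv^2+\lv (d^c)^*\alpha\rv^2=\lv d\alpha\rv^2+\lv d^*\alpha\rv^2$, valid for compactly supported $\alpha$ because $\Delta_{d^c}=\Delta_d$ on a Kähler manifold. With that one-line repair the argument goes through.
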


\section{Galois coverings of a compact complex manifold}\label{sec coverings}
Given a smooth manifold $M$, we say that two Riemannian metrics $g^{(1)}$ and $g^{(2)}$ on $M$ are \emph{quasi-isometric} if there exists a positive real constant $C$ such that at every point $p \in M$ and for every tangent vector $X_p \in T_pM$ we have
\begin{equation}\label{quasi-isom}
    \frac 1C g^{(2)}_p(X_p,X_p) \leq g^{(1)}_p(X_p,X_p) \leq C g^{(2)}_p(X_p,X_p). 
\end{equation}

Now let $M$ be a complex manifold. As outlined in Section \ref{sec complex manifold}, any Hermitian metric on $M$ induces a Hermitian metric on the bundle of $(p,q)$-forms $\Lambda^{p,q}M$. Furthermore, if we have two quasi-isometric Hermitian metrics $g^{(1)}$ and $g^{(2)}$ on $M$, the induced metrics $h^{(1)}$ and $h^{(2)}$ will satisfy the same inequality, \textit{i.e.}, there exists a constant such that at any point $p \in M$ a general $(p,q)$-form $\alpha_p \in \Lambda^{p,q}_p M$ satisfies 
$$\frac 1C h_p^{(2)}(\alpha_p,\alpha_p) \leq h_p^{(1)}(\alpha_p,\alpha_p) \leq C h_p^{(2)}(\alpha_p,\alpha_p).  $$
This pointwise inequality implies a more general inequality for the $L^2$ norm. Specifically, for some constant $C>0$, any (not necessarily smooth) $(p,q)$-form $\alpha:M \rightarrow \Lambda^{p,q}M$ satisfies
$$\frac 1C \| \alpha\|_{L^2,h^{(2)}} \leq \| \alpha \|_{L^2, h^{(1)}} \leq C \| \alpha \|_{L^2, h^{(2)}}. $$ 
From this we see that, although quasi-isometry may change the norm of $L^2 \Lambda^{p,q}$, the underlying set of $L^2$ $(p,q)$-forms does not change. Furthermore, quasi-isometry preserves the notion of convergence: if a sequence within $L^2 \Lambda^{p,q}$ converges for one Hermitian metric, it converges for all metrics in the same quasi-isometry class. It follows that if $P$ is a differential operator that doesn't depend on the metric (\textit{e.g.}, $d,\del,\delbar,\del\delbar$) the strong extension $P_s$ defined with respect to two quasi-isometric metrics is the same operator. It is well known that this same result holds also for the weak extension $P_w$; it follows from a metric-independent generalisation of the notion of the formal adjoint, which will not be treated here. However, we will see later in this section that, in the setting we are interested in, $P_s=P_w$ for $P\in\{d,\del,\delbar,\del\delbar\}$, therefore the previous discussion for $P_s$ is sufficient to show that also $P_w$ depends only on the quasi-isometry class of the metric.

From this we can obtain the following result for Bott-Chern and Aeppli cohomology.
\begin{proposition}\label{dependence of cohom}
On a Hermitian manifold, the reduced and unreduced $L^2$ Bott-Chern cohomologies, $L^2 \bar{H}_{BC,bc}^{p,q}$ and $L^2 H_{BC,bc}^{p,q}$, and also the reduced and unreduced $L^2$ Aeppli cohomologies, $L^2 \bar{H}_{A,ab}^{p,q}$ and $L^2 H_{A,ab}^{p,q}$, depend only on the quasi-isometry class of the Hermitian metric and the choice of closed extensions $a,b,c \in \{s,w\}$ with $a\leq b\leq c$ in the $L^2$ Aeppli-Bott-Chern complex.
\end{proposition}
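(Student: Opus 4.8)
The plan is to reduce everything to the metric-independence of the relevant subspaces of $L^2\Lambda^{\bullet,\bullet}$, exploiting the fact that all the cohomology spaces in question are built out of kernels and (closures of) images of the extensions $\del\delbar_a$, $\del\delbar_b$, $(\delbar\oplus\del)_a$ and $(\del+\delbar)_c$, together with the ambient $L^2$ spaces themselves. The differentials $\del$, $\delbar$, $\del\delbar$, $\del+\delbar$ and $\delbar\oplus\del$ are all defined purely in terms of the complex structure and do not depend on the choice of Hermitian metric. As observed in the discussion preceding the statement, if $g^{(1)}$ and $g^{(2)}$ are quasi-isometric then the underlying sets $L^2\Lambda^{p,q}$ coincide and the notion of convergence is preserved, so for any metric-independent differential operator $P\in\{\del,\delbar,\del\delbar,\del+\delbar,\delbar\oplus\del\}$ the strong extension $P_s$ is literally the same operator with respect to both metrics.

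First I would argue that each of the subspaces entering the definitions is metric-independent within its quasi-isometry class. For the weak extensions this follows from the metric-independent generalisation of the formal adjoint mentioned in the excerpt, but to keep the argument self-contained it is cleanest to treat the strong extensions directly and handle the weak ones by duality. Concretely, $\D(P_s)$ and the action $P_s$ depend only on the quasi-isometry class, hence so do $\ker P_s$ and $\im P_s$; since convergence is preserved, $\c{\im P_s}$ is also unchanged. The same holds for $\ker P_w$ and $\c{\im P_w}$ because $P_w$ likewise depends only on the quasi-isometry class. Therefore the numerator spaces $\ker(\del+\delbar)_c$ and $\ker\del\delbar_b$, and the denominator spaces $\im\del\delbar_b$, $\c{\im\del\delbar_b}$, $\im(\delbar\oplus\del)_a$ and $\c{\im(\delbar\oplus\del)_a}$, all depend only on the quasi-isometry class and the choice $a\le b\le c$.

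Once these building blocks are shown to be metric-independent as subsets of the fixed vector space $L^2\Lambda^{p,q}$, the quotients
\[
L^2H^{p,q}_{BC,bc}=\frac{L^2\Lambda^{p,q}\cap\ker(\del+\delbar)_c}{L^2\Lambda^{p,q}\cap\im\del\delbar_b},\quad
L^2\bar{H}^{p,q}_{BC,bc}=\frac{L^2\Lambda^{p,q}\cap\ker(\del+\delbar)_c}{L^2\Lambda^{p,q}\cap\c{\im\del\delbar_b}}
\]
are quotients of the very same pair of vector spaces for any two quasi-isometric metrics, hence canonically isomorphic as vector spaces. The identical argument with $\del\delbar_b$ in the numerator and $(\delbar\oplus\del)_a$ in the denominator handles the Aeppli cohomologies $L^2H^{p-1,q-1}_{A,ab}$ and $L^2\bar{H}^{p-1,q-1}_{A,ab}$.

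The main obstacle is the metric-independence of the \emph{weak} extension $P_w$, since $P_w$ is defined via the $L^2$ formal adjoint $P^*$, which \emph{does} depend on the metric. The cleanest route I would take is to invoke the identity $P_w=((P^*)_s)^t$ from Remark \ref{rmk strong-weak} together with the observation that although $P^*$ changes with the metric, the space $\ker P_w=(\im P^*_s)^\perp$ and $\c{\im P_w}$ can be described intrinsically; alternatively, as the excerpt itself notes, in every situation where this Proposition is actually applied we will have $P_s=P_w$ for $P\in\{d,\del,\delbar,\del\delbar\}$, so the metric-independence of $P_s$ already established suffices to transfer the conclusion to $P_w$. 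Thus I would either cite the metric-independent generalisation of the formal adjoint, or restrict the weak-extension bookkeeping to the cases where $P_s=P_w$ and conclude from the strong-extension argument above. \qed
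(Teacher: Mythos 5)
Your argument is correct and follows essentially the same route as the paper: the paper also derives the proposition from the observations that quasi-isometry preserves the underlying set of $L^2$ forms and the notion of convergence, hence the strong extensions of the metric-independent operators, and it likewise handles the weak extensions by deferring to the metric-independent formal adjoint or to the equality $P_s=P_w$ in the settings where the result is applied. The only cosmetic difference is that you spell out the quotient-of-identical-subspaces step explicitly, which the paper leaves implicit.
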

By the same argument it is well known that the above proposition holds for other $L^2$ cohomologies defined on differential forms, such as the de Rham or Dolbeault cohomology.

Following from the discussion after Theorem \ref{thm isom cohm smooth} we know that, on a compact complex manifold, the $L^2$ Aeppli and Bott-Chern cohomologies are uniquely defined and coincide with the cohomologies defined on smooth forms.  In this section, we consider the more general case of a Galois covering of a compact complex manifold.

Let $\pi: \widetilde{M} \rightarrow M$ be a covering of a smooth manifold $M$ with $\Gamma$ denoting the group of deck transformations. If $\Gamma$ acts transitively on the fibre $\pi^{-1}(p)$ for all points $p \in M$ and $\widetilde{M}$ is connected, then we say that $\pi$ is a \emph{Galois $\Gamma$-covering} and $\widetilde{M}$ is a \emph{Galois $\Gamma$-covering space}. %Note that $\Gamma$ can be identified with a normal subspace of the fundamental group of $M$ and $M$ is diffeomorphic to $\widetilde{M}/\Gamma$.
One example of a Galois covering is provided by the universal covering.

Conversely, if $\Gamma$ is a discrete group acting freely and properly discontinuously on a smooth connected manifold $\widetilde{M}$, then $\widetilde{M}/\Gamma$ is a smooth manifold and the map
$$\pi: \widetilde{M} \rightarrow \widetilde{M}/\Gamma $$
is a Galois $\Gamma$-covering.

The main theorem of Galois theory on coverings states the following.
Let $M$ be a connected manifold. Given a connected covering $\widetilde{M} \rightarrow M$, we obtain a subgroup $\pi_1(\widetilde M)\subset\pi_1(M)$. This defines a bijection between the isomorphism classes of connected coverings of $M$ and the subgroups $G \subset\pi_1(M)$. Under
this correspondence, a Galois $\Gamma$-covering corresponds to the normal subgroup
$G \subset\pi_1(M)$ for which $\Gamma$ is isomorphic to $\pi_1(M)/G$, and $\widetilde M$ is diffeomorphic to the quotient of the universal covering of $M$ by $G$.

Suppose now that $M$ is a complex manifold. By taking the pullback with respect to the covering map $\pi$ of the complex structure of $M$, a Galois $\Gamma$-covering space $\widetilde{M}$ inherits a complex structure. We will now show that, when $M$ is a compact complex manifold, the $L^2$ Aeppli and Bott-Chern cohomology spaces can be uniquely defined on $\widetilde{M}$ with the induced complex structure. By Proposition \ref{dependence of cohom} this means we must rule out dependence on the Hermitian metric and on the choice of closed extensions in the Hilbert complex.

We shall first consider any dependence the cohomology spaces have on the Hermitian metric. In particular, we will show that the $L^2$ cohomology spaces on $\widetilde M$ are the same for any choice of $\Gamma$-invariant metric on $\widetilde{M}$. A metric on $\widetilde{M}$ is said to be \textit{$\Gamma$-invariant} if it invariant under the pullback with respect to any $\gamma \in \Gamma$, namely if  $g=\gamma^*g$, \textit{i.e.}, if at any point $p \in \widetilde{M}$ we have
    $$g_p(X_p,Y_p) = g_{\gamma(p)}(d_p \gamma(X_{p}), d_p \gamma (Y_p) ) $$
     for all $\gamma \in \Gamma$ and all $X_p, Y_p \in T_p\widetilde{M}$. Note that a metric $\tilde g$ is $\Gamma$-invariant if and only if it is given by the pullback of a metric $g$ on $M$, that is $\tilde g=\pi^* g$. %As a consequence, what we are going to show means that the $L^2$ cohomology spaces on the Hermitian manifold $(\widetilde{M},\pi^*g)$ do not depend on the choice of the Hermitian metric $g$ on $M$.

We prove the following proposition.
\begin{proposition}
If $M$ is compact, then any two $\Gamma$-invariant Riemannian metrics on a Galois $\Gamma$-covering space $\widetilde{M}$ are quasi-isometric.
\end{proposition}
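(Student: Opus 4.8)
The plan is to reduce the statement to the elementary fact that any two Riemannian metrics on a \emph{compact} manifold are quasi-isometric, and then transport this comparison upstairs along the covering map. The key input is the characterisation recalled immediately before the statement: a $\Gamma$-invariant metric on $\widetilde{M}$ is exactly the pullback $\pi^* g$ of some metric $g$ on the base $M$. Thus, given two $\Gamma$-invariant metrics $\tilde g^{(1)}$ and $\tilde g^{(2)}$ on $\widetilde{M}$, I would first write them as $\tilde g^{(1)} = \pi^* g^{(1)}$ and $\tilde g^{(2)} = \pi^* g^{(2)}$ for Riemannian metrics $g^{(1)}, g^{(2)}$ on $M$.

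Next I would establish the comparison downstairs on $M$. Consider the unit sphere bundle $SM := \{(p, X_p) \in TM : g^{(2)}_p(X_p, X_p) = 1\}$, which is compact because $M$ is compact and each fibre is a sphere. The function $(p, X_p) \mapsto g^{(1)}_p(X_p, X_p)$ is continuous and strictly positive on $SM$, hence by compactness it attains a positive minimum and a finite positive maximum; enlarging the resulting constant to a single $C > 0$ and using that $X_p \mapsto g^{(i)}_p(X_p, X_p)$ is homogeneous of degree two (so that the estimate on unit vectors extends to all $X_p$ by scaling, the case $X_p = 0$ being trivial), I obtain
\[
\tfrac{1}{C}\, g^{(2)}_p(X_p, X_p) \le g^{(1)}_p(X_p, X_p) \le C\, g^{(2)}_p(X_p, X_p)
\]
for all $p \in M$ and all $X_p \in T_p M$; that is, $g^{(1)}$ and $g^{(2)}$ are quasi-isometric in the sense of \eqref{quasi-isom}.

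Finally I would pull this inequality back to $\widetilde{M}$. Since $\pi$ is a covering map, its differential $d_{\tilde p}\pi : T_{\tilde p}\widetilde{M} \to T_{\pi(\tilde p)} M$ is a linear isomorphism at every $\tilde p \in \widetilde{M}$. By the very definition of the pullback metric, $\tilde g^{(i)}_{\tilde p}(\tilde X_{\tilde p}, \tilde X_{\tilde p}) = g^{(i)}_{\pi(\tilde p)}\bigl(d_{\tilde p}\pi(\tilde X_{\tilde p}), d_{\tilde p}\pi(\tilde X_{\tilde p})\bigr)$ for $i = 1, 2$ and every $\tilde X_{\tilde p} \in T_{\tilde p}\widetilde{M}$. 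Substituting $X_{\pi(\tilde p)} = d_{\tilde p}\pi(\tilde X_{\tilde p})$ into the inequality above yields the same chain of inequalities for $\tilde g^{(1)}$ and $\tilde g^{(2)}$ with the identical constant $C$, showing that the two $\Gamma$-invariant metrics on $\widetilde{M}$ are quasi-isometric.

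As for obstacles, there is essentially no hard step here: the argument never uses the complex structure, nor even connectedness of $\widetilde{M}$, and the constant descends from $M$ unchanged. The only point requiring a little care is the compactness argument on $M$, where one must phrase the estimate on the (compact) unit sphere bundle and then invoke two-homogeneity to pass from unit vectors to arbitrary tangent vectors. The transfer to $\widetilde{M}$ is immediate precisely because $\pi$ is a local diffeomorphism, so that the pointwise comparison is preserved verbatim on the cover.
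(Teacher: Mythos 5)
Your proof is correct and follows essentially the same route as the paper: express both $\Gamma$-invariant metrics as pullbacks of metrics on the compact base, establish quasi-isometry downstairs via the compact unit sphere bundle together with two-homogeneity, and transport the estimate back through $d\pi$. The only (immaterial) difference is that you normalise the sphere bundle with respect to $g^{(2)}$ rather than $g^{(1)}$.
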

\begin{proof}
    We first show that any two metrics on a compact manifold are quasi-isometric.

    Let $g^{(1)}, g^{(2)}$ be a pair of Riemannian metrics. We can define the unit sphere bundle $SM \subset TM$ to be the set of unit vectors with respect to the first metric,
    $$SM = \left\{X_p \in T_pM \,\middle|\, g^{(1)}_p(X_p, X_p) = 1,\   p \in M\right\}. $$

   The second metric then defines a function on $SM$, $X_p \mapsto g^{(2)}_p(X_p,X_p)$. Since $SM$ is a compact manifold, this function has a well-defined maximum and minimum
   $$A := \min_{X_p \in SM}g^{(2)}_p(X_p,X_p)\quad \quad \quad B := \max_{X_p \in SM}g_p^{(2)}(X_p,X_p).$$
   Moreover, since $g_p^{(2)}(X_p,X_p)$ is positive for all $X_p$, $A$ and $B$ are also positive.

   Any element of $TM$ can then be given by re-scaling an element of $SM$, and thus if we choose $C = \max \{\frac 1A , B\}$ we have the inequality
   $$\frac 1C g_p^{(1)}(X_p,X_p) \leq g_p^{(2)}(X_p,X_p)  \leq C g_p^{(1)}(X_p,X_p) $$
   for all $p \in M$ and all $X_p \in T_pM$. 

    The pullback of $g^{(1)}$ and $g^{(2)}$ to $\widetilde{M}$ defines a pair of $\Gamma$-invariant metrics given by 
   $$(\pi^* g^{(i)})_{\tilde p} (X_{\tilde p},Y_{\tilde p}) = g^{(i)}_{\pi(\tilde p)}(d_{\tilde p} \pi (X_{\tilde p}), d_{\tilde p}\pi(Y_{\tilde p})) $$
    for $i = 1, 2$ and for all $\tilde p\in \widetilde M$ and $X_{\tilde p},Y_{\tilde p}\in T_{\tilde p}\widetilde M$. Since $g^{(1)}$ and $g^{(2)}$ are quasi-isometric, it follow directly from the definition that $\pi^*g^{(1)}$ and $\pi^* g^{(2)}$ are also quasi-isometric.
\end{proof}
%From this proposition we see that when $M$ is compact, the choice of $\Gamma$-invariant Hermitian metric does not affect the Aeppli and Bott-Chern cohomology spaces.  

Now, we focus on the closed extensions of the Hilbert complex. Any metric on a compact manifold is complete. Taking the pullback onto the $\Gamma$-covering space $\widetilde{M}$ we see likewise that any $\Gamma$-invariant metric on $\widetilde{M}$ is complete.

From this we can conclude that the strong and weak extensions coincide for first order differential operators such as $ \delbar \oplus \del $ and $ \del+ \delbar$ in the ABC complex. See, \textit{e.g.}, \cite[Theorem 1.3]{A} for a proof. Unfortunately, this result cannot be applied to second order differential operators such as $\del\delbar$. Instead we will make use of the next Proposition, which can be applied to general elliptic operators on vector bundles, combined with Theorem \ref{thm bc essentially deldelbar}.

%\textcolor{blue}{We will need the notion of $\Gamma$-equivariant operator. Let $\widetilde{M}$ be a $\Gamma$-covering space of $M$ and let $E_1$ and $E_2$ be two vector bundles on $M$. If $\widetilde P$ is an operator acting between (not necessarily smooth) sections of the two pullback bundles $\widetilde{E_1}$ and $\widetilde{E_2}$ of respectively $E_1$ and $E_2$, we say that it is \emph{$\Gamma$-equivariant} if $$\gamma^*(\widetilde{P}\alpha)  = \widetilde{P}(  \gamma^*\alpha)$$ for all $\gamma \in \Gamma$ and for all sections $\alpha$ of $\widetilde{E_1}$.}

Let $P:\Gamma(M,E_1) \rightarrow \Gamma(M,E_2)$ be a differential operator between two vector bundles $E_1$ and $E_2$ on a smooth manifold $M$. 
Given a Galois $\Gamma$-covering, we can lift $P$ to a differential operator $\widetilde{P}$ between smooth sections of the induced pullback bundles $\widetilde{E_1}$ and $\widetilde{E_2}$ on the $\Gamma$-covering space $\widetilde{M}$ as follows. 
Any $\alpha\in\Gamma(\widetilde{M},\widetilde{E_1})$ can be locally written as $\pi^*\beta$ with $\beta\in\Gamma({M},{E_1})$, and this allows us to define the global operator $\widetilde{P}$ by its local action $\widetilde{P}\pi^*\beta:=\pi^*P\beta$. Basically the expression of $P$ in local coordinates gives a local formula for the lift $\widetilde{P}$.
Note that the differential operator $\widetilde{P}$ is \emph{$\Gamma$-equivariant}, in the sense that 
    $$ \gamma^*(\widetilde{P}\alpha)  = \widetilde{P}(  \gamma^*\alpha)  $$
    for all $\gamma \in \Gamma$ and $\alpha\in\Gamma(\widetilde{M},\widetilde{E_1})$. %Here $\gamma$ is acting on sections of $E_1$ by $(\gamma \cdot f)(\tilde x) = f(\gamma^{-1} \tilde x)$.
   
\begin{proposition}[\cite{Ati}, Proposition 3.1]\label{pullback self adj}
    Let $(E,h)$ be a Hermitian vector bundle on a smooth manifold $M$ and let $\pi:\widetilde{M} \rightarrow M$ be a Galois $\Gamma$-covering.
   Given an elliptic operator $P:\Gamma(M,E) \rightarrow \Gamma(M,E)$, denote its lift to $\widetilde{M}$ by $\widetilde{P}$. If $M$ is compact then $\widetilde{P}_s=\widetilde{P}_w$.
\end{proposition}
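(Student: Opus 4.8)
The plan is to prove the nontrivial inclusion $\D(\widetilde{P}_w)\subseteq\D(\widetilde{P}_s)$, since $\widetilde{P}_s\subseteq\widetilde{P}_w$ always holds; together these give $\widetilde{P}_s=\widetilde{P}_w$. Write $m$ for the order of $P$ and equip $\widetilde{M}$ with a $\Gamma$-invariant metric, so that $\widetilde{M}$ is complete and the coefficients of $\widetilde{P}$, being pullbacks of those of $P$ on the compact manifold $M$, are $\Gamma$-periodic and hence uniformly bounded together with all their derivatives. Fix $u\in\D(\widetilde{P}_w)$ and set $v:=\widetilde{P}_w u\in L^2\widetilde{E}$. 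By interior elliptic regularity (a local version of Theorem \ref{thm ell-reg}) we first get $u\in H^m_{loc}$, where $H^m$ denotes the Sobolev space of order $m$.

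The crucial step is to upgrade this to a \emph{global} elliptic estimate
\[
\lVert u\rVert_{H^m(\widetilde{M})}\le C\big(\lVert \widetilde{P}u\rVert+\lVert u\rVert\big),
\]
with $C$ independent of $u$. I would obtain this by covering the compact manifold $M$ by finitely many charts over which $E$ is trivialized and which are evenly covered by $\pi$, lifting them to a $\Gamma$-invariant cover $\{\gamma\widetilde{U}_i\}$ of $\widetilde{M}$ of bounded multiplicity, and writing the standard local G\aa rding estimate on each $\widetilde{U}_i$. Because the coefficients and the invertible principal symbol of $\widetilde{P}$ take the same values on all $\Gamma$-translates of a chart, these local estimates hold with one common constant; summing over the cover and using its bounded multiplicity yields the displayed estimate. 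In particular $u\in H^m(\widetilde{M})$, so $D^j u\in L^2\widetilde{M}$ for all $0\le j\le m$.

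It then remains to approximate $u$ in the graph norm by smooth compactly supported sections. Using completeness and the bounded geometry of $\widetilde{M}$ (again a consequence of cocompactness), choose cutoffs $\chi_R\colon\widetilde{M}\to[0,1]$ equal to $1$ on $B_R(o)$, supported in $B_{2R}(o)$, with $\sup_R\sup|D^j\chi_R|<\infty$ for $1\le j\le m$. Then
\[
\widetilde{P}(\chi_R u)=\chi_R\,\widetilde{P}u+[\widetilde{P},\chi_R]u=\chi_R v+[\widetilde{P},\chi_R]u,
\]
where $[\widetilde{P},\chi_R]$ is a differential operator of order $m-1$ supported in the shell $B_{2R}(o)\setminus B_R(o)$, with coefficients controlled by the derivatives of $\chi_R$. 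Hence $\lVert[\widetilde{P},\chi_R]u\rVert\le C\sum_{j\le m-1}\lVert D^j u\rVert_{L^2(B_{2R}(o)\setminus B_R(o))}$, which tends to $0$ as $R\to\infty$ precisely because each $D^j u$ is globally $L^2$, while $\chi_R v\to v$ and $\chi_R u\to u$ in $L^2$. Thus $\chi_R u\to u$ in the graph norm. Since each $\chi_R u$ is compactly supported and lies in $H^m$, Friedrichs mollification produces elements of $\Gamma_0(\widetilde{M},\widetilde{E})$ converging to it in the graph norm, so $\chi_R u\in\D(\widetilde{P}_s)$; as $\widetilde{P}_s$ is closed, $u\in\D(\widetilde{P}_s)$, completing the argument.

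The step I expect to be the main obstacle, and the only place where the compactness of $M$ is genuinely used, is the \emph{global} elliptic estimate with a uniform constant: it is what forces $u\in H^m(\widetilde{M})$ and thereby makes the commutator terms vanish in the limit. On an arbitrary complete manifold this estimate can fail for $m\ge 2$, and correspondingly $\widetilde{P}_s=\widetilde{P}_w$ may fail; here the $\Gamma$-periodicity of $\widetilde{P}$ inherited from the compact base, together with the bounded multiplicity of the lifted finite cover, are exactly what supply the uniform constant.
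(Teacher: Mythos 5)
Your argument is correct, and it is essentially the proof of \cite[Proposition 3.1]{Ati}, which the paper cites without reproving: Atiyah's argument likewise combines interior elliptic regularity, a uniform G\aa rding inequality obtained from the $\Gamma$-periodicity of the lifted operator over a finite cover of the compact base, and a cutoff-plus-commutator estimate followed by mollification. The one point worth making fully explicit is the construction of the cutoffs $\chi_R$ with uniformly bounded derivatives up to order $m$, which is standard on a manifold of bounded geometry such as $\widetilde{M}$ with a $\Gamma$-invariant metric.
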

Given a Hermitian metric on $M$, let the Galois $\Gamma$-covering space $\widetilde{M}$ be endowed with the $\Gamma$-invariant pullback metric. Since $\pi^* d = d \pi^*$ and both the complex structure and the Hermitian metric on $\widetilde{M}$ are the pullback of the respective structures on $M$, it follows that $\pi^*\square_{BC}=\square_{BC}\pi^*$.
In particular the elliptic operator $\square_{BC}$ on $\widetilde{M}$ is just the lift of the same operator on $M$. By the above proposition we see that $(\square_{BC})_s=(\square_{BC})_w$ on $\widetilde M$, \textit{i.e.}, using Remark \ref{rmk strong-weak}, $(\square_{BC})_0$ is essentially self-adjoint on $\widetilde M$. Theorem \ref{thm bc essentially deldelbar} then tells us that the strong and weak extensions of $\del\delbar$ coincide, namely $\del\delbar_s=\del\delbar_w$.

From the above discussion we can now conclude the following.
\begin{theorem}
Let $\pi:\widetilde{M} \rightarrow M$ be a Galois $\Gamma$-covering with M a compact complex manifold. There exists a unique $L^2$ Aeppli-Bott-Chern complex on $\widetilde{M}$, which is determined by the induced complex structure and by any $\Gamma$-invariant metric on $\widetilde{M}$. 
   
The corresponding unreduced and reduced, Aeppli and Bott-Chern cohomologies, which we will denote respectively by
$$L^2 H_{A,\Gamma}^{\bullet,\bullet}(M), \quad L^2 H_{BC,\Gamma}^{\bullet,\bullet}(M) \quad \text{ and } \quad L^2\bar{H}_{A,\Gamma}^{\bullet,\bullet}(M), \quad L^2\bar{H}_{BC,\Gamma}^{\bullet,\bullet}(M) $$
are similarly unique and depend only on the Galois $\Gamma$-covering.
%    Let $\pi: \widetilde{M} \rightarrow M$ be a Galois $\Gamma$-covering of a compact manifold $M$, with $\widetilde{M}$ admitting a $\Gamma$-invariant complex structure. Then there is a unique (reduced and unreduced) $L^2$ Aeppli and Bott-Chern cohomology, defined on $\widetilde{M}$ with the induced complex structure, given by any choice of $\Gamma$-invariant metric.
\end{theorem}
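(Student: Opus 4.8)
The plan is to prove uniqueness of the $L^2$ ABC Hilbert complex on $\widetilde{M}$ by eliminating, in turn, its two potential sources of dependence: the choice of $\Gamma$-invariant metric and the choice of closed extensions $a\le b\le c$ in \eqref{eq bc hilbert complex}. Once the complex itself is shown to be independent of these choices, the asserted uniqueness of the reduced and unreduced Aeppli and Bott-Chern cohomologies follows immediately, since all four spaces are defined purely in terms of the kernels and the (closures of the) images of the three differentials of the complex. Dependence only on the Galois $\Gamma$-covering is then clear, because the complex structure is the pullback of that on $M$ and hence intrinsic to the covering.

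First I would dispose of the metric. By the Proposition just proved, any two $\Gamma$-invariant metrics on $\widetilde{M}$ are quasi-isometric, using compactness of $M$; hence by Proposition \ref{dependence of cohom} all four cohomology spaces depend only on the quasi-isometry class of the metric and on the choice of extensions, so no genuine dependence on the particular $\Gamma$-invariant metric remains. It then suffices to verify that $P_s=P_w$ for each of the three differentials $\delbar\oplus\del$, $\del\delbar$ and $\del+\delbar$: if every strong and weak extension agree, then all admissible choices of $a\le b\le c$ in $\{s,w\}$ yield literally the same complex. For the two first-order operators this is immediate from completeness, since any metric on the compact $M$ is complete and its $\Gamma$-invariant pullback to $\widetilde{M}$ is therefore complete; \cite[Theorem 2.13]{A} then gives $(\delbar\oplus\del)_s=(\delbar\oplus\del)_w$ and $(\del+\delbar)_s=(\del+\delbar)_w$.

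The hard part is the second-order operator $\del\delbar$, for which completeness alone does not force $P_s=P_w$. Here I would argue through the auxiliary elliptic operator $\square_{BC}$: because the complex structure and the metric on $\widetilde{M}$ are pullbacks of those on $M$, the operator $\square_{BC}$ on $\widetilde{M}$ is exactly the lift of the elliptic, formally self-adjoint operator $\square_{BC}$ on the compact base $M$. Proposition \ref{pullback self adj} then yields $(\square_{BC})_s=(\square_{BC})_w$, that is, $(\square_{BC})_0$ is essentially self-adjoint on $\widetilde{M}$, and Theorem \ref{thm bc essentially deldelbar} converts this into $\del\delbar_s=\del\delbar_w$. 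Combining the three equalities shows the complex is unique, and the cohomologies follow. This final passage, from the first-order completeness argument to the second-order operator via ellipticity of $\square_{BC}$ and Atiyah's essential self-adjointness result, is the principal obstacle, and it is precisely why the elliptic $\square_{BC}$, rather than the non-elliptic $\Delta_{BC}$, must be used.
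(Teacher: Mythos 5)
Your proposal is correct and follows essentially the same route as the paper: quasi-isometry of $\Gamma$-invariant metrics (via compactness of $M$) together with Proposition \ref{dependence of cohom} removes the metric dependence, completeness handles the first-order differentials, and the combination of Proposition \ref{pullback self adj} applied to the lifted elliptic operator $\square_{BC}$ with Theorem \ref{thm bc essentially deldelbar} gives $\del\delbar_s=\del\delbar_w$. No gaps.
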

%\textcolor{red}{decide the notation: what about $L^2 H_{A,\Gamma}^{\bullet,\bullet}(M)$?}

%We can write $L^2(\widetilde{M})$ as $L^2(M) \otimes l^2(\Gamma)$.

%$\Gamma$ has a natural action upon this space (equivalent to translation of a function by left-mult.

Given any Hermitian vector bundle $(E, h)$ on $M$, we denote the Hermitian pullback bundle on a Galois $\Gamma$-covering space $\widetilde{M}$ by $(\widetilde{E},\tilde{h})$, with $\widetilde{E}=\pi^*E$ and $\tilde h=\pi^*h$. Note that a Hermitian metric $\tilde h$ on $\widetilde E$ is the pullback of a Hermitian metric $h$ on $E$ iff $\tilde h$ is \emph{$\Gamma$-invariant}, \textit{i.e.}, if $\tilde h=\gamma^*\tilde h$ for all $\gamma\in\Gamma$, that is if
\[
\tilde h_{\tilde x}(\alpha(\tilde x),\alpha(\tilde x))=\tilde h_{\gamma(\tilde x)}\left(((\gamma^{-1})^*\alpha)(\gamma (\tilde x)),((\gamma^{-1})^*\alpha)(\gamma (\tilde x))\right)
\]
for all $\tilde x\in \widetilde M$, $\alpha$ a section of $\widetilde E$ and $\gamma\in \Gamma$.

The group $\Gamma$ then acts as isometries on the space of square-integrable sections $L^2 \widetilde{E}$ given by the pullback $\gamma^* \alpha$ for any $\gamma \in \Gamma$, $\alpha \in L^2 \widetilde{E}$.  

A closed Hilbert subspace $V \subseteq L^2 \widetilde{E}$ is called a \emph{Hilbert $\Gamma$-module} if it is preserved by the action of $\Gamma$, \textit{i.e.}, $\gamma^*V=V$ for any $\gamma \in \Gamma$. %Any subspace $V \subseteq L^2 \widetilde{E}$ will be called \textit{$\Gamma$-invariant} if it satisfies this property. 
Maps between Hilbert $\Gamma$-modules are given by bounded \emph{$\Gamma$-equivariant} operators, namely bounded operators $P$ which commute with the pullback with respect to any $\gamma \in\Gamma$, \textit{i.e.}, $\gamma^*(P\alpha)=P(\gamma^*\alpha)$ for any $\alpha \in L^2 \widetilde{E}$. A map between Hilbert $\Gamma$-modules is called a \emph{weak isomorphism} if it is injective and has dense image. A \emph{strong isomorphism} is moreover an isometric isomorphism. It is well known, see, \textit{e.g.}, \cite[Lemma 2.5.3]{E}, that if there is a weak isomorphism between Hilbert $\Gamma$-modules then a strong isomorphism can be built by the polar decomposition. A sequence of maps between Hilbert $\Gamma$-modules
\[
\dots\longrightarrow V_{i-1}\overset{d_{i-1}}{\longrightarrow} V_i\overset{d_{i}}{\longrightarrow} V_{i+1}\longrightarrow\dots
\]
is called \emph{weakly exact} if $\c{\im d_{i-1}}=\ker d_i$ for all $i$.

We will now define the $\Gamma$-dimension of a Hilbert $\Gamma$-module $V \subseteq L^2 \widetilde{E}$. We refer to \cite{Ati,E,Lu} for a detailed treatment.

First take an orthonormal basis $\{\phi_i \}$ of $V$ and define the function on $\widetilde{M}$,
$$\tilde{f}(\tilde{x}) = \sum_i \tilde{h}_{\tilde x}(\phi_i (\tilde x),\phi_i (\tilde x)). $$
Note that the choice of orthonormal basis $\{\phi_i \}$ in the definition is not important. Indeed, given a different basis $\{\psi_j\}$ we can write
\begin{align*}
    \tilde{f}(\tilde{x}) &= \sum_i \tilde{h}_{\tilde x}(\phi_i (\tilde x),\phi_i (\tilde x))\\
    &= \sum_i \sum_j \tilde{h}_{\tilde x}(\phi_i(\tilde x), \psi_j(\tilde{x}) )\\
    &= \sum_j \tilde{h}_{\tilde x}(\psi_j (\tilde x),\psi_j (\tilde x)).
\end{align*}
Swapping the order of summation is allowed here as all of the summands are positive.
%Using the facts that the Hermitian metric $\tilde h$ is $\Gamma$-invariant and the space $V$ is $\Gamma$-invariant it is possible to show that the function $\tilde{f}$ is invariant under the pullback of $\Gamma$, namely $\gamma^*\tilde{f}=\tilde{f}$. In fact
For any $\gamma\in\Gamma$ we have
\begin{align*}
\gamma^*\tilde{f}(\tilde{x})=\tilde{f}(\gamma(\tilde{x}))=\sum_i \tilde{h}_{\gamma(\tilde{x})}(\phi_i (\gamma(\tilde x)),\phi_i( \gamma(\tilde x))),
\end{align*}
and since %$\tilde h$ is $\Gamma$-invariant and 
$V$ is a Hilbert $\Gamma$-module then $\{(\gamma^{-1})^*\phi_i\}$ provides an orthonormal basis of $V$, so that
\begin{align*}
\gamma^*\tilde{f}(\tilde{x})=\sum_i \tilde{h}_{\gamma(\tilde{x})}\left(((\gamma^{-1})^*\phi_i)(\gamma(\tilde x)),((\gamma^{-1})^*\phi_i)(\gamma(\tilde x))\right)=\tilde{f}(\tilde{x}),
\end{align*}
using that $\tilde h$ is $\Gamma$-invariant.
Therefore $\tilde f$ descends to a well-defined function $f$ on $M$.
%Since any $\gamma\in\Gamma$ is an isometry for $\tilde h$, \textit{i.e.}, $\gamma^*\tilde h=\tilde h$, we see that $\gamma^*\tilde{f}=\tilde{f}$, namely this function $\tilde{f}$ is invariant under the pullback of $\Gamma$, and so it descends to a well-defined function $f$ on $M$.
The $\Gamma$-dimension is then given by
$$\dim_{\Gamma} V = \int_M f(x) d\mu. $$

 Note that in the case when $\Gamma$ is the trivial group, the $\Gamma$-dimension simply counts the number of elements in the basis, \textit{i.e.}, it is the usual dimension of $V$ as a vector space.  Another immediate property of the $\Gamma$-dimension is that $\dim_\Gamma V=0$ if and only if $V=\{0\}.$

In principle, the value of $\tilde{f}$ may be infinite at some or all points in $\widetilde{M}$, in which case the $\Gamma$-dimension may also be infinite. However by the following proposition, we see that in certain cases we can guarantee that $\widetilde{f}$ is finite at all points, and thus when $M$ is compact, the $\Gamma$-dimension is finite.
\begin{proposition}[\text{\cite[Proposition 2.4]{Ati}}]\label{kernel dim finite}
Given a smooth manifold $\widetilde{M}$ along with a Hermitian vector bundle $(\widetilde{E},\tilde{h})$ and an elliptic differential operator acting on smooth sections $\widetilde{P}:\Gamma (\widetilde{M}, \widetilde{E}) \rightarrow \Gamma (\widetilde{M}, \widetilde{E})$, consider the kernel of the weak extension, $\ker \widetilde{P}_w \subseteq L^2 \widetilde{E}$. Given any orthonormal basis $\{\phi_i\}$ for $\ker \widetilde{P}_w$, the series
    $$\tilde{f}(\tilde x) = \sum_i \tilde{h}(\phi_i (\tilde x),\phi_i (\tilde x)),$$
    and all its derivatives, converge uniformly on compact subsets of $\widetilde{M}$. Consequently, $\tilde f$ is a smooth function. 
\end{proposition}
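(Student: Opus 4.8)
The plan is to produce a reproducing kernel for the closed subspace $\ker\widetilde P_w\subseteq L^2\widetilde E$ and to read off $\tilde f$ as its pointwise trace on the diagonal: all the analytic content comes from interior elliptic estimates, while the uniform convergence is a soft Hilbert-space fact. First I note that $\widetilde P_w$ is closed, so $\ker\widetilde P_w$ is a closed subspace and hence a Hilbert space in its own right, with $\{\phi_i\}$ as an orthonormal basis. Since $\widetilde P$ is elliptic and each $\phi_i$ is a weak solution of $\widetilde P\phi_i=0$, elliptic regularity (Theorem \ref{thm ell-reg}) shows that every element of $\ker\widetilde P_w$ is smooth, so the evaluations $\phi_i(\tilde x)$ are meaningful.

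The analytic core is a locally uniform evaluation bound. Fix a chart $U\subset\widetilde M$ with $\overline U$ compact and over which $\widetilde E$ is trivialised, and choose $U''\Subset U'\Subset U$. Writing $m$ for the order of $\widetilde P$, the interior a priori estimate for the elliptic operator $\widetilde P$ gives, for each $s$, a constant $C$ with $\lv\phi\rv_{H^{s+m}(U')}\le C\,\lv\phi\rv_{L^2(U)}$ for all $\phi\in\ker\widetilde P_w$ (using $\widetilde P\phi=0$, so the term $\lv\widetilde P\phi\rv_{H^s(U)}$ drops out). Choosing $s$ large and applying the Sobolev embedding $H^{s+m}(U')\hookrightarrow C^k(\overline{U''})$ produces $\lv\phi\rv_{C^k(\overline{U''})}\le C\,\lv\phi\rv_{L^2(\widetilde M)}$, with $C$ independent of $\phi$. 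Thus for every $\tilde x\in\overline{U''}$ and every fibre vector $v$ the functional $\phi\mapsto\tilde h_{\tilde x}(\phi(\tilde x),v)$ is bounded on $\ker\widetilde P_w$, and the Riesz representation theorem supplies a representer $K_{\tilde x,v}\in\ker\widetilde P_w$ with $\la\phi,K_{\tilde x,v}\ra=\tilde h_{\tilde x}(\phi(\tilde x),v)$ and $\lv K_{\tilde x,v}\rv\le C\,|v|$ uniformly in $\tilde x$.

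Next I would rewrite $\tilde f$ through Parseval. Choosing a smooth local orthonormal frame $\{e_a\}_{a=1}^{r}$ of $\widetilde E$ over $U$ and expanding in the fibre, $|\phi_i(\tilde x)|^2=\sum_a|\tilde h_{\tilde x}(\phi_i(\tilde x),e_a(\tilde x))|^2=\sum_a|\la\phi_i,K_{\tilde x,e_a(\tilde x)}\ra|^2$, so summing over $i$ and using that $\{\phi_i\}$ is an orthonormal basis gives $\tilde f(\tilde x)=\sum_{a=1}^{r}\lv K_{\tilde x,e_a(\tilde x)}\rv^2$. The bound from the previous step already shows $\tilde f$ is finite and bounded on $\overline{U''}$. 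For uniform convergence I use compactness: the $C^1$ estimate makes $\tilde x\mapsto K_{\tilde x,v}$ Lipschitz into $\ker\widetilde P_w$, so $\mathcal K:=\{K_{\tilde x,e_a(\tilde x)}:\tilde x\in\overline{U''},\,1\le a\le r\}$ is compact; and for a compact subset $\mathcal K$ of a Hilbert space one has $\sup_{k\in\mathcal K}\sum_{i>N}|\la k,\phi_i\ra|^2\to0$ (cover $\mathcal K$ by finitely many balls and use square-summability at their centres). Applying this to $\mathcal K$ yields uniform convergence on $\overline{U''}$ of $\sum_i|\phi_i(\tilde x)|^2$, and since any compact subset of $\widetilde M$ is covered by finitely many such $\overline{U''}$, the series defining $\tilde f$ converges uniformly on compact subsets.

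To obtain smoothness and uniform convergence of all derivatives I would run the same argument one derivative at a time: the $C^k$ estimate shows that for each multi-index $\alpha$ the functional $\phi\mapsto\del^\alpha\phi^a(\tilde x)$ (in the trivialisation) is bounded with a representer $K^\alpha_{\tilde x,a}$ depending continuously on $\tilde x$, which both justifies term-by-term differentiation of the Parseval series and, via the same compactness criterion applied to the compact set $\{K^\alpha_{\tilde x,a}\}$, forces $\del^\alpha\tilde f$ to converge uniformly on $\overline{U''}$; hence $\tilde f$ is smooth. The main obstacle I expect is entirely the elliptic-analytic input: the interior a priori estimate and the Sobolev embedding in the bundle setting (standard, but not isolated as lemmas here, so to be quoted from the references \cite{L,De}), together with the careful verification that the representers $K^\alpha_{\tilde x,a}$ really depend smoothly on $\tilde x$ so that differentiation under the sum is legitimate. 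The remaining ingredients — closedness of $\ker\widetilde P_w$, elliptic regularity (Theorem \ref{thm ell-reg}), Riesz representation, and the Hilbert-space compactness criterion — are soft.
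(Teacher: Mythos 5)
Your proposal is correct, and it takes essentially the same route as the source: the paper itself gives no proof of this statement but quotes it from \cite[Proposition 2.4]{Ati}, where the argument is precisely the one you reconstruct — interior elliptic estimates plus Sobolev embedding give a bound $\lv\phi\rv_{C^k(K)}\le C\lv\phi\rv_{L^2}$ uniform over $\phi\in\ker\widetilde P_w$, the evaluation functionals are then represented by Riesz elements $K^\alpha_{\tilde x,a}$ depending continuously on $\tilde x$, and Parseval together with the compactness of the family of representers yields locally uniform convergence of the series and of all its derivatives. The only point worth polishing is the term-by-term differentiation: $\partial^\alpha|\phi_i|^2$ is a Leibniz sum of products $\langle\phi_i,K^\beta\rangle\overline{\langle\phi_i,K^\gamma\rangle}$, whose tails are controlled by Cauchy--Schwarz from the two tail estimates you already have, so the interchange is justified exactly as you indicate.
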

For any $\Gamma$-invariant Hermitian metric on $\widetilde{M}$, Theorem \ref{thm l2 bc decomp} implies the isomorphisms
$$L^2 \bar{H}_{A,\Gamma}^{p,q}(M) \cong  L^2 \mathcal{H}_{A,sw}^{p,q}, $$
$$L^2 \bar{H}_{BC,\Gamma}^{p,q}(M) \cong  L^2 \mathcal{H}_{BC,sw}^{p,q}.$$
Recall the definition of $L^2 \mathcal{H}_{A,sw}^{p,q}$ and $L^2 \mathcal{H}^{p,q}_{sw}$ as the kernels of $\square_{A,sw}$ and $\square_{BC,sw}$. Proposition \ref{pullback self adj} tells us that $\left(\square_A\right)_0$ and $\left(\square_{BC}\right)_0 $ are essentially self-adjoint operators. The choice of closed extensions $\square_{A,sw}$, $\square_{BC,sw}$ is therefore arbitrary but will be convenient later on. %In the rest of this section we will use  $\square_{A,sw}$ and $\square_{BC,sw}$ interchangeably with the weak extensions $(\square_{A})_w$ and $(\square_{BC})_w$ depending on which is more convenient.
\begin{comment}
Note that by elliptic regularity (see Theorem \ref{thm ell-reg}) and Lemma \ref{lemma kernel weak smooth}, we can write
\begin{gather*}
L^2\H^{p,q}_{BC,sw}=\ker(\square_{BC})_w=\{\alpha\in L^2A^{p,q}\,|\,\square_{BC}\alpha=0\},\\
L^2\H^{p,q}_{A,sw}=\ker(\square_A)_w=\{\alpha\in L^2A^{p,q}\,|\,\square_A\alpha=0\}.
\end{gather*}
Then, since $\square_{A}$, $\square_{BC}$ are $\Gamma$-equivariant operators, their kernels intersected with $L^2\Lambda^{p,q}$ are Hilbert $\Gamma$-modules.
\end{comment}

Since the differential operators $\del,\delbar,\del\delbar$ on $\widetilde M$ and their formal adjoints are $\Gamma$-equivariant, and the Hermitian metric on $\Lambda^{p,q}\widetilde M$ is $\Gamma$-invariant, it is easy to check that $L^2\H^{p,q}_{BC,sw}$ and $L^2\H^{p,q}_{A,sw}$ are Hilbert $\Gamma$-modules.

We can therefore define the \emph{$L^2$ Aeppli} and \emph{Bott-Chern numbers}, denoted by $h^{p,q}_{A,\Gamma}$ and $h^{p,q}_{BC,\Gamma}$, to be 
$$h^{p,q}_{A,\Gamma}(M) = \dim_{\Gamma} L^2 \mathcal{H}_{A,sw}^{p,q}, \quad \quad h^{p,q}_{BC,\Gamma}(M) = \dim_{\Gamma} L^2 \mathcal{H}_{BC,sw}^{p,q}. $$
By Proposition \ref{kernel dim finite} the Aeppli and Bott-Chern numbers are finite whenever $M$ is compact.

The space of Aeppli harmonic $L^2$ forms $L^2 \mathcal{H}^{p,q}_{A,sw}$ in general depends on the choice of $\Gamma$-invariant Hermitian metric. When necessary we will write $L^2 \mathcal{H}^{p,q}_{A,sw}(g)$ to denote the space corresponding to the metric $g$, and likewise for the space of Bott-Chern harmonic $L^2$ forms. Regardless of this, as a consequence of the next  proposition, the $L^2$ Aeppli and Bott-Chern numbers are still independent of the Hermitian metric on the compact complex manifold $M$.
We will need the following fundamental property of the $\Gamma$-dimension, which will be also used later on.

\begin{lemma}[\text{\cite[Corollary 3.4.6]{E}}]\label{lemma eckmann}
    Let 
    $$0\longrightarrow V_1 \longrightarrow V_2 \longrightarrow \dots \longrightarrow V_n \longrightarrow 0 $$
    be a weakly exact sequence of Hilbert $\Gamma$-modules. Then
    $$\sum_{k}(-1)^k \dim_{\Gamma} V_k = 0 $$
\end{lemma}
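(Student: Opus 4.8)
The plan is to turn the alternating sum into a telescoping cancellation by splitting each module along the kernel of its outgoing map, and to invoke two elementary properties of the $\Gamma$-dimension: additivity over orthogonal direct sums, and invariance under isomorphism of Hilbert $\Gamma$-modules. (Note that the intended identity is $\sum_k(-1)^k\dim_\Gamma V_k=0$.) Write the maps as $d_i\colon V_i\to V_{i+1}$ with the convention $V_0=0$ and $d_0$ the zero map, so that weak exactness reads $\overline{\im d_{i-1}}=\ker d_i$ for every $i$, and $\ker d_n=V_n$ since the sequence ends with $V_n\to 0$. Because each $d_i$ is a bounded $\Gamma$-equivariant operator, $\ker d_i$ is a closed $\Gamma$-invariant subspace; as $\Gamma$ acts by isometries on $L^2\widetilde E$, the orthogonal complement $(\ker d_i)^\perp$ is again $\Gamma$-invariant. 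Hence both $\ker d_i$ and $(\ker d_i)^\perp$ are Hilbert $\Gamma$-modules, and additivity of the $\Gamma$-dimension gives
\[
\dim_\Gamma V_i=\dim_\Gamma\ker d_i+\dim_\Gamma(\ker d_i)^\perp.
\]

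The heart of the argument is that for $1\le i\le n-1$ the restriction of $d_i$ to $(\ker d_i)^\perp$ is a weak isomorphism onto $\ker d_{i+1}$: it is injective by construction, it is $\Gamma$-equivariant, and its image $\im d_i$ is dense in $\ker d_{i+1}$ precisely by weak exactness, $\overline{\im d_i}=\ker d_{i+1}$. By the polar-decomposition fact recalled before the lemma (a weak isomorphism of Hilbert $\Gamma$-modules upgrades to a strong, i.e.\ isometric, isomorphism, \cite[Lemma 2.5.3]{E}), the two modules have equal $\Gamma$-dimension,
\[
\dim_\Gamma(\ker d_i)^\perp=\dim_\Gamma\ker d_{i+1}.
\]
Substituting this into the additivity relation and forming $\sum_{i=1}^n(-1)^i\dim_\Gamma V_i$, each term $\dim_\Gamma(\ker d_i)^\perp$ cancels against the $\dim_\Gamma\ker d_{i+1}$ produced at the next stage. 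The only boundary contributions are $\dim_\Gamma\ker d_1$, which vanishes because weak exactness at $V_1$ forces $\ker d_1=\overline{\im d_0}=\{0\}$, and $\dim_\Gamma(\ker d_n)^\perp$, which vanishes because $\ker d_n=V_n$ makes the complement trivial. Both being zero, the telescoping collapses to $\sum_k(-1)^k\dim_\Gamma V_k=0$.

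The genuinely substantive inputs are the two structural properties of the $\Gamma$-dimension. Additivity under orthogonal sums follows directly from the definition in the excerpt: the union of orthonormal bases of $V'$ and $V''$ is an orthonormal basis of $V'\oplus V''$, and the associated pointwise functions $\widetilde f$ add (all summands being positive), so the integrals over $M$ add. Invariance under strong isomorphism is the step requiring the most care, since one must check that an isometric $\Gamma$-equivariant bijection carries an orthonormal basis of one module to an orthonormal basis of the other and that the resulting trace functions descend to the same function on $M$; this is the standard reason the $\Gamma$-dimension is an intrinsic invariant of the Hilbert $\Gamma$-module and not of its ambient bundle. Once these are granted, the only remaining difficulty is bookkeeping: correctly identifying the weak isomorphism at each internal stage and tracking the boundary terms so that the telescoping is exact, which is the easy part of the proof.
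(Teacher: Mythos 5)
The paper does not prove this lemma at all; it is quoted verbatim from Eckmann (\cite[Corollary 3.4.6]{E}), so there is no in-paper argument to compare against. Your proof is the standard one for this statement and it is correct: the decomposition $V_i=\ker d_i\oplus(\ker d_i)^\perp$, the observation that $d_i$ restricted to $(\ker d_i)^\perp$ is an injective $\Gamma$-equivariant map with dense image in $\ker d_{i+1}$ by weak exactness, the upgrade to an isometric isomorphism via \cite[Lemma 2.5.3]{E}, and the telescoping with the boundary terms $\ker d_1=\overline{\im d_0}=\{0\}$ and $(\ker d_n)^\perp=\{0\}$ all check out, as does your correction of the exponent in the statement. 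The one imprecision is in your justification of isomorphism-invariance of $\dim_\Gamma$: a $\Gamma$-equivariant unitary $U\colon V\to W$ between modules sitting in different ambient bundles carries an orthonormal basis to an orthonormal basis, but the two trace functions $\sum_i \tilde h(\phi_i(\tilde x),\phi_i(\tilde x))$ and $\sum_i \tilde h(U\phi_i(\tilde x),U\phi_i(\tilde x))$ need \emph{not} descend to the same function on $M$ pointwise; what is true is that their integrals agree, and that is precisely the tracial property $\mathrm{tr}_\Gamma(U^*U)=\mathrm{tr}_\Gamma(UU^*)$ of the von Neumann trace (equivalently, $\dim_\Gamma$ is the $\Gamma$-trace of the orthogonal projection, which is conjugation-invariant). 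This is a genuine external input, standard and proved in \cite{E,Ati,Lu}, and you correctly flag it as the substantive ingredient, but the pointwise phrasing should be replaced by the trace-property argument.
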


\begin{proposition}\label{prop gamma dim does not depend on the metric}
     Given any two $\Gamma$-invariant Hermitian metrics $g_1$ and $g_2$ on $\widetilde{M}$, we have
    $$ \dim_{\Gamma} L^2 \mathcal{H}^{p,q}_{A,sw}(g_1) = \dim_{\Gamma} L^2 \mathcal{H}^{p,q}_{A,sw}(g_2),  $$
    $$ \dim_{\Gamma} L^2 \mathcal{H}^{p,q}_{BC,sw} (g_1) = \dim_{\Gamma} L^2 \mathcal{H}^{p,q}_{BC,sw} (g_2).  $$
\end{proposition}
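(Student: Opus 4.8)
The plan is to produce, in each bidegree, a bounded $\Gamma$-equivariant weak isomorphism between the two spaces of harmonic forms and then invoke the invariance of the $\Gamma$-dimension under weak isomorphism. I will treat the Aeppli case in detail; the Bott-Chern case is identical with the Aeppli decomposition of Theorem \ref{thm l2 bc decomp} replaced by the Bott-Chern one and with $(\delbar\oplus\del)_s,\del\delbar_w$ replaced by $\del\delbar_s,(\del+\delbar)_w$.

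First I would record the consequences of the preceding proposition. Since $g_1$ and $g_2$ are $\Gamma$-invariant and $M$ is compact, they are quasi-isometric. As explained at the beginning of Section \ref{sec coverings}, this forces the underlying set $L^2\Lambda^{p,q}$ to be metric-independent, the two norms $\lv\cdot\rv_{g_1}$ and $\lv\cdot\rv_{g_2}$ to be equivalent, and the metric-independent operators $\del\delbar_w$ and $(\delbar\oplus\del)_s$ to coincide as maps for the two metrics. Consequently $\ker\del\delbar_w$ and $\c{\im(\delbar\oplus\del)_s}$ are the \emph{same} subspaces of $L^2\Lambda^{p,q}$ (the closures agree because equivalent norms induce the same topology), so the reduced Aeppli cohomology $L^2\bar{H}^{p,q}_{A,sw}=\ker\del\delbar_w/\c{\im(\delbar\oplus\del)_s}$ is one and the same vector space for $g_1$ and $g_2$, carrying two equivalent quotient norms.

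Next, by the corollary following Theorem \ref{thm l2 bc decomp}, for each metric $g_i$ the quotient map restricts to an isomorphism $L^2\H^{p,q}_{A,sw}(g_i)\to L^2\bar{H}^{p,q}_{A,sw}$ induced by the identity, and it is an isometry when the target carries the $g_i$-quotient norm. Composing, I obtain a linear bijection
\[
\Phi:=\pi_2|_{L^2\H^{p,q}_{A,sw}(g_1)}:L^2\H^{p,q}_{A,sw}(g_1)\longrightarrow L^2\H^{p,q}_{A,sw}(g_2),
\]
where $\pi_2$ is the $g_2$-orthogonal projection of $\ker\del\delbar_w$ onto its $g_2$-harmonic summand given by Theorem \ref{thm l2 bc decomp}; concretely $\Phi(\alpha)$ is the $g_2$-harmonic representative of the class $[\alpha]$, and $\Phi$ is a bijection because both harmonic spaces identify with the common vector space $L^2\bar{H}^{p,q}_{A,sw}$. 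Since $\pi_2$ is an orthogonal projection onto a Hilbert $\Gamma$-module for the $\Gamma$-invariant metric $g_2$, it is $\Gamma$-equivariant; and $\lv\Phi(\alpha)\rv_{g_2}\le\lv\alpha\rv_{g_2}\le C\lv\alpha\rv_{g_1}$ by the norm equivalence, so $\Phi$ is bounded. Being a bounded $\Gamma$-equivariant bijection, $\Phi$ is in particular injective with dense image, i.e.\ a weak isomorphism of Hilbert $\Gamma$-modules.

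Finally I would conclude. By the polar decomposition argument \cite[Lemma 2.5.3]{E} a weak isomorphism upgrades to a strong (isometric) isomorphism, and strong isomorphisms preserve the $\Gamma$-dimension; equivalently, one applies Lemma \ref{lemma eckmann} to the weakly exact sequence
\[
0\longrightarrow L^2\H^{p,q}_{A,sw}(g_1)\overset{\Phi}{\longrightarrow}L^2\H^{p,q}_{A,sw}(g_2)\longrightarrow 0.
\]
Either route yields $\dim_\Gamma L^2\H^{p,q}_{A,sw}(g_1)=\dim_\Gamma L^2\H^{p,q}_{A,sw}(g_2)$, and the Bott-Chern equality follows verbatim. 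The step I expect to require the most care is verifying that $\Phi$ is simultaneously bounded and $\Gamma$-equivariant: boundedness relies on the quasi-isometry equivalence of the $L^2$ norms, while $\Gamma$-equivariance relies on the $\Gamma$-invariance of $g_2$ together with the fact, established just before the statement, that the harmonic spaces are genuine Hilbert $\Gamma$-modules.
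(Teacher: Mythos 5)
Your proof is correct and follows essentially the same route as the paper: both realise the comparison map as the $g_2$-orthogonal projection of the $g_1$-harmonic space onto the $g_2$-harmonic summand (equivalently, passage through the common reduced cohomology), check it is a bounded $\Gamma$-equivariant weak isomorphism using quasi-isometry of $\Gamma$-invariant metrics, and conclude via the invariance of $\dim_\Gamma$ under weak isomorphism. The only cosmetic difference is that you derive $\Gamma$-equivariance abstractly from the fact that an orthogonal projection onto a Hilbert $\Gamma$-module for a $\Gamma$-invariant inner product commutes with the unitary $\Gamma$-action, whereas the paper verifies directly that $\gamma^*\beta\in\c{\im\del\delbar_s}$ by approximating $\beta$ with compactly supported forms; both are valid.
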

\begin{proof}
%First we remark that $\c{\im\del\delbar_s}(g_1)=\c{\im\del\delbar_s}(g_2)$ since $g_1$ and $g_2$ are quasi isometric, therefore we can write $\c{\im\del\delbar_s}$ without any confusion. Any form $\alpha_1\in L^2 \mathcal{H}^{p,q}_{BC,sw}(g_1)$ can be decomposed, by Theorem \ref{thm l2 bc decomp}, as $\alpha_1=\alpha_2+\beta$, with $\alpha_2\in L^2\mathcal{H}^{p,q}_{BC,sw}(g_2)$ and $\beta\in\c{\im\del\delbar_s}$. But since $\alpha_1$ and $\alpha_2$ are both orthogonal to $\c{\im\del\delbar_s}$, then they coincide. This proves that $L^2\mathcal{H}^{p,q}_{BC,sw}(g_1)=L^2\mathcal{H}^{p,q}_{BC,sw}(g_2)$ as sets.
There exists an isomorphism $j:L^2 \mathcal{H}^{p,q}_{BC,sw}(g_1)\to L^2 \mathcal{H}^{p,q}_{BC,sw}(g_2)$, constructed via the isomorphism with the cohomology
$$L^2 \mathcal{H}^{p,q}_{BC,sw}(g_i) \simeq L^2 \bar H^{p,q}_{BC,\Gamma}$$ for $i = 1,2$.
In fact we now show that the isomorphism $j$ is actually realised by the projection
\[
\pi_{L^2 \mathcal{H}^{p,q}_{BC,sw}(g_2)}:L^2 \mathcal{H}^{p,q}_{BC,sw}(g_1)\to L^2\mathcal{H}^{p,q}_{BC,sw}(g_2).
\]
First we remark that $\c{\im\del\delbar_s}(g_1)=\c{\im\del\delbar_s}(g_2)$ since $g_1$ and $g_2$ are quasi isometric, therefore we can write $\c{\im\del\delbar_s}$ without any confusion. Any form $\alpha_1\in L^2 \mathcal{H}^{p,q}_{BC,sw}(g_1)$ can be decomposed, by Theorem \ref{thm l2 bc decomp}, as $\alpha_1=\alpha_2+\beta$, with $\alpha_2\in L^2\mathcal{H}^{p,q}_{BC,sw}(g_2)$ and $\beta\in\c{\im\del\delbar_s}$. Therefore $j$ first sends $\alpha_1$ in the class $[\alpha_1]_{L^2 \bar H^{p,q}_{BC,\Gamma}}$ which in turn is sent to $\alpha_2$. Thus $j(\alpha_1)=\alpha_2$. %But both $\alpha_1$ and $\alpha_2$ are orthogonal to $\c{\im\del\delbar_s}$, so that they coincide.

Our goal is to show that $j$ is a weak isomorphism of Hilbert $\Gamma$-modules, so that then by applying the previous Lemma we end the proof. Being $j$ an isomorphism, it is clearly injective and it has dense image. 
Moreover $j$ is a bounded operator since $g_1$ and $g_2$ are quasi isometric.

Finally, to see that $j$ is $\Gamma$-equivariant, we must show that $j(\gamma^*\alpha_1)=\gamma^*(j(\alpha_1))$ for all $\alpha_1\in L^2 \mathcal{H}^{p,q}_{BC,sw}(g_1)$. 
Since $\square_{BC}$ is $\Gamma$-equivariant, then  $\gamma^*\alpha_2\in L^2\mathcal{H}^{p,q}_{BC,sw}(g_2)$.
Therefore our claim is equivalent to showing that $\gamma^*\beta\in \c{\im\del\delbar_s}$. This is true since by Lemma \ref{lemma closure strong} the form $\beta$ is the $L^2$ limit of the smooth and compactly supported forms $\del\delbar\beta_j$ and the forms $\del\delbar\gamma^*\beta_j$ are as well smooth and compactly supported satisfying 
\[
\lv\gamma^*\beta-\del\delbar\gamma^*\beta_j\rv=\lv\gamma^*\beta-\gamma^*\del\delbar\beta_j\rv=\lv\beta-\del\delbar\beta_j\rv\to0.
\]
A similar weak isomorphism of Hilbert $\Gamma$-modules exists the same way between $L^2 \mathcal{H}^{p,q}_{A,sw}(g_i)$ for $i=1,2$.
\end{proof}

\begin{remark}
    By following the same reasoning as above but replacing the $L^2$ Aeppli-Bott-Chern Hilbert complex with the $L^2$ de Rham or Dolbeault Hilbert complex, it is possible to define the unique $L^2$ cohomology spaces $L^2 \bar{H}^k_{d, \Gamma}(M)$, $L^2 \bar{H}_{\del, \Gamma}^{p,q}(M)$ and $L^2 \bar{H}_{\delbar, \Gamma}^{p,q}(M)$, from which we obtain the $L^2$ Betti numbers $b^k_{\Gamma}(M)$, and the $L^2$ Hodge numbers, $h^{p,q}_{\del, \Gamma}(M)$ and $h^{p,q}_{\delbar, \Gamma}(M)$.
\end{remark}

Given a Galois $\Gamma$-covering space $\widetilde{M}$ of a complex manifold $M$, we define the following subspaces of $L^2 \Lambda^{\bullet, \bullet}$
\begin{alignat*}{2}
    & \mathcal{A}^{\bullet,\bullet} = \overline{\im \delbar_s} \cap \overline{\im \del_s} \cap \ker \delbar^* \del^*_w, & \quad 
    & \mathcal{B}^{\bullet,\bullet} = \ker \delbar_w \cap  \overline{\im \del_s} \cap \ker \delbar^* \del^*_w,\\
    & \mathcal{C}^{\bullet,\bullet} = \ker \del\delbar_w \cap \overline{\im \delbar^*_s} \cap \ker \del^*_w, & \quad 
    & \mathcal{D}^{\bullet,\bullet} = \overline{\im \delbar_s }\cap  \ker \del_w \cap \ker \delbar^* \del^*_w,\\
    & \mathcal{E}^{\bullet,\bullet} = \ker \del\delbar_w \cap \overline{\im \del^*_s} \cap \ker \delbar^*_w, & \quad 
    & \mathcal{F}^{\bullet,\bullet} = \ker \del\delbar_w \cap  \overline{\im \delbar^*_s} \cap \overline{\im \del^*_s}.
\end{alignat*}
These spaces actually are Hilbert $\Gamma$-modules. It is easy to check it using Lemma \ref{lemma closure strong} and the next facts: the differential operators $\del,\delbar,\del\delbar$ on $\widetilde M$ and their formal adjoints are $\Gamma$-equivariant; the Hermitian metric on $\Lambda^{p,q}\widetilde M$ is $\Gamma$-invariant.

\begin{remark}\label{isomorphisms}
 Note that there are isomorphisms induced by the identity between the spaces just defined and the following quotient spaces in $L^2 \Lambda^{\bullet, \bullet}$:
\begin{alignat*}{2}
  &\mathcal{A}^{\bullet,\bullet} \simeq A^{\bullet,\bullet} := \frac{\overline{ \im \delbar_s} \cap \overline{ \im \del_{s}}}{\overline{\im \del\delbar_s }},  & \quad&
  \mathcal{B}^{\bullet,\bullet} \simeq {B}^{\bullet,\bullet}:= \frac{\overline{ \im \del_s} \cap \ker \delbar_{w}}{\overline{\im \del\delbar_s} },\\ &
  \mathcal{C}^{\bullet,\bullet} \simeq C^{\bullet,\bullet}:= \frac{\ker \del\delbar_w}{\ker \delbar_w + \overline{\im \del_s}}, & \quad &
  \mathcal{D}^{\bullet,\bullet} \simeq D^{\bullet,\bullet}:= \frac{\overline{ \im \delbar_s} 
 \cap \ker \del_w }{\overline{\im \del\delbar_s }},\\ &  
 \mathcal{E}^{\bullet,\bullet} \simeq {E}^{\bullet,\bullet}:= \frac{{ \ker \del\delbar_w} }{{\ker} \del_w + \overline{\im \delbar_s} },   &\quad &
 \mathcal{F}^{\bullet,\bullet} \simeq {F}^{\bullet,\bullet} := \frac{\ker{\del\delbar_w}}{\ker \del_w + \ker \delbar_w }.
\end{alignat*}
\end{remark}

The following sequences are defined just by compositions of natural inclusions and projections (see the proof of the next proposition for the actual definitions of the maps in \eqref{exact sequence 1}; the maps in \eqref{exact sequence 2} are defined similarly)
\begin{equation}\label{exact sequence 1}
  0 \longrightarrow \mathcal{A}^{\bullet,\bullet} \longrightarrow  \mathcal{B}^{\bullet,\bullet} \longrightarrow L^2 \H_{\delbar,sw}^{\bullet,\bullet} \longrightarrow  L^2 \H_{A,sw}^{\bullet,\bullet}  \longrightarrow \mathcal{C}^{\bullet,\bullet} \longrightarrow 0 
\end{equation}
\begin{equation}\label{exact sequence 2}
      0 \longrightarrow \mathcal{D}^{\bullet,\bullet} \longrightarrow  L^2 \H_{BC,sw}^{\bullet,\bullet} \longrightarrow L^2 \H_{\delbar,sw}^{\bullet,\bullet} \longrightarrow  \mathcal{E}^{\bullet,\bullet}  \longrightarrow \mathcal{F}^{\bullet,\bullet} \longrightarrow 0 
\end{equation}
which actually are maps between Hilbert $\Gamma$-modules: it can be shown arguing similarly to the last part of the proof of Proposition \ref{prop gamma dim does not depend on the metric}.

We prove that the two sequences above are exact.
\begin{proposition}\label{prop sequences are exact}
The sequences \eqref{exact sequence 1} and $\eqref{exact sequence 2}$ are exact sequences of Hilbert $\Gamma$-modules. 
\end{proposition}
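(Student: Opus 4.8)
The plan is to transport both sequences, via the isomorphisms recorded in Remark \ref{isomorphisms} together with the identifications $L^2\H^{\bullet,\bullet}_{\delbar,sw}\simeq L^2\bar H^{\bullet,\bullet}_{\delbar,sw}$, $L^2\H^{\bullet,\bullet}_{A,sw}\simeq L^2\bar H^{\bullet,\bullet}_{A,sw}$ and $L^2\H^{\bullet,\bullet}_{BC,sw}\simeq L^2\bar H^{\bullet,\bullet}_{BC,sw}$ coming from Theorem \ref{thm l2 bc decomp}, into sequences of the quotient spaces $A^{\bullet,\bullet},\dots,F^{\bullet,\bullet}$ and the reduced $L^2$ cohomologies. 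Under these isomorphisms each inclusion-and-projection map becomes the map \emph{induced by the identity} between the corresponding quotients, so the statement reduces to a piece of linear algebra entirely analogous to \cite[Lemma 5.15]{DGMS} and to Proposition \ref{prop equiv cond isom maps diagram}. Since the terms are Hilbert $\Gamma$-modules and the maps are bounded and $\Gamma$-equivariant (as noted just before the statement), it suffices to prove \emph{algebraic} exactness $\im=\ker$ at every interior node: the kernels are closed, hence once $\im=\ker$ is established the images are automatically closed and the required weak exactness $\c{\im}=\ker$ follows.

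First I would pin down the maps. By Remark \ref{isomorphisms} the spaces $\mathcal{A}^{\bullet,\bullet},\dots,\mathcal{F}^{\bullet,\bullet}$ are identified with $A^{\bullet,\bullet},\dots,F^{\bullet,\bullet}$, while $L^2\H^{\bullet,\bullet}_{\delbar,sw}\simeq \ker\delbar_w/\c{\im\delbar_s}$, $L^2\H^{\bullet,\bullet}_{BC,sw}\simeq (\ker\del_w\cap\ker\delbar_w)/\c{\im\del\delbar_s}$ and $L^2\H^{\bullet,\bullet}_{A,sw}\simeq \ker\del\delbar_w/(\c{\im\del_s}+\c{\im\delbar_s})$, where the last numerator and denominator are $\ker(\del+\delbar)_w=\ker\del_w\cap\ker\delbar_w$ by Lemma \ref{lemma intersection kernel weak strong} and $\c{\im(\delbar\oplus\del)_s}=\c{\im\del_s}+\c{\im\delbar_s}$ by Corollary \ref{cor equality closure strong}. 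Every arrow in \eqref{exact sequence 1} and \eqref{exact sequence 2} is then the identity-induced map obtained by including numerators and denominators appropriately; their well-definedness rests on the inclusions $\c{\im\del\delbar_s}\subseteq\c{\im\del_s}\cap\c{\im\delbar_s}$ (Lemma \ref{lemma image composition strong} applied to $\del\delbar$ and, via $\delbar\del=-\del\delbar$, to $\delbar\del$), $\c{\im\delbar_s}\subseteq\ker\delbar_w$ and $\c{\im\del_s}\subseteq\ker\del_w$ (Lemma \ref{lemma pq0 subset}), and $\ker\delbar_w\subseteq\ker\del\delbar_w$, $\ker\del_w\subseteq\ker\del\delbar_w$ (Lemma \ref{lemma kernel composition weak}). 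The projections onto harmonic spaces agree with these identity-induced maps precisely because of the decompositions $\ker\delbar_w=L^2\H_{\delbar,sw}\oplus\c{\im\delbar_s}$ and $\ker\del\delbar_w=L^2\H_{A,sw}\oplus(\c{\im\del_s}+\c{\im\delbar_s})$ of Theorem \ref{thm l2 bc decomp}.

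The heart of the proof is the diagram chase at each interior node, which I would carry out representatively. For instance, at $L^2\bar H_{\delbar,sw}$ in \eqref{exact sequence 1}: a class $[\alpha]$ with $\alpha\in\ker\delbar_w$ dies in $L^2\bar H_{A,sw}$ exactly when $\alpha=\beta+\gamma$ with $\beta\in\c{\im\del_s}$ and $\gamma\in\c{\im\delbar_s}$; then $\gamma\in\ker\delbar_w$ forces $\beta=\alpha-\gamma\in\c{\im\del_s}\cap\ker\delbar_w$, and $[\alpha]=[\beta]$ lies in the image of $B^{\bullet,\bullet}$, while the reverse inclusion is immediate from $\c{\im\del_s}\subseteq\c{\im\del_s}+\c{\im\delbar_s}$. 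The same bookkeeping gives exactness at the remaining interior nodes, the outer injectivity being clear from equality of denominators and the final surjectivity holding because the last arrow is a quotient projection (its target has the larger denominator). The computation for \eqref{exact sequence 2} is verbatim, with $\ker\del_w$ playing the role previously played by $\c{\im\del_s}$.

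The main obstacle is not conceptual but bookkeeping: one must consistently track which differential carries a strong and which a weak extension and check, at each node, that the exact inclusion needed is one of Lemmas \ref{lemma image composition strong}, \ref{lemma kernel composition weak}, \ref{lemma pq0 subset}, \ref{lemma intersection kernel weak strong} or Corollary \ref{cor equality closure strong}; this mixing of extensions is the only point where the argument differs from the classical smooth case. Once the maps are correctly identified, exactness is a finite and routine chase, and the passage from $\im=\ker$ to $\c{\im}=\ker$ is automatic since each kernel is a closed subspace of a Hilbert $\Gamma$-module, so that by Lemma \ref{lemma eckmann} the resulting weakly exact sequences are exactly what is needed for the subsequent $\Gamma$-dimension computation.
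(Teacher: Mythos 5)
Your proposal is correct and follows essentially the same route as the paper's proof: identify the inclusion-and-projection maps with identity-induced maps on the quotient spaces of Remark \ref{isomorphisms} via the decompositions of Theorem \ref{thm l2 bc decomp}, then verify exactness of the resulting quotient-space sequence by a linear-algebra chase using Corollary \ref{cor defin aeppli equivalent} and Lemmas \ref{lemma image composition strong}, \ref{lemma kernel composition weak}, \ref{lemma pq0 subset}. You in fact spell out the chase at the $L^2\bar{H}_{\delbar,sw}$ node more explicitly than the paper, which leaves that step as ``straightforward to verify.''
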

\begin{proof}
We just prove the exactness of the sequence \eqref{exact sequence 1} and omit the proof for \eqref{exact sequence 2} since it is very similar. We divide the proof in three steps.

Step 1): we build the maps of the sequence as a combination of projections and inclusions. %We give an explicit description for each of the maps between these $\Gamma$-modules as a combination of projections and inclusions.
%An explicit description can also be given for \eqref{exact sequence 2} in the same way, but it is very similar to the case of \eqref{exact sequence 1} and so we omit it.
\vspace{3mm}

$\mathcal{A}^{\bullet,\bullet} \longrightarrow \mathcal{B}^{\bullet,\bullet}$:
This map is simply an inclusion. %$\overline{\im \delbar_s} \subseteq \ker \delbar_w$.% of Lemma \ref{lemma pq0 subset}.
\vspace{3mm}

$\mathcal{B}^{\bullet,\bullet} \longrightarrow L^2 \H_{\delbar,sw}^{\bullet,\bullet}$:
This map is given by composing the inclusion
\[
B^{\bullet,\bullet}\to \ker\delbar_w
\]
with the projection $$\ker \delbar_{w} \rightarrow L^2 \mathcal{H}_{\delbar,sw}^{\bullet,\bullet} = \ker \delbar_w \cap \ker \delbar^*_w.$$
\vspace{0mm}
    
$L^2\H_{\delbar,sw}^{\bullet,\bullet} \longrightarrow L^2\H_{A,sw}^{\bullet,\bullet}$:
This map is given by composing the inclusion
\[
L^2\H_{\delbar,sw}^{\bullet,\bullet}\to\ker\del\delbar_w
\]
with the projection $$\ker\del\delbar_w \rightarrow L^2\H_{A,sw}^{\bullet,\bullet}=\ker\del^*_w\cap\ker\delbar^*_w\cap\ker\del\delbar_w.$$
\vspace{0mm}

$L^2\H_{A,sw}^{\bullet,\bullet} \longrightarrow \mathcal{C}^{\bullet,\bullet}:$
This map is just a projection.% $\ker \delbar^*_w \rightarrow \overline{\im \delbar^*_s}$.
\vspace{3mm}

Step 2): there is a commutative diagram between sequences

\begin{equation*}
\begin{tikzcd}[row sep=small, column sep=small]
0\arrow[r]&\mathcal{A}^{\bullet,\bullet}\arrow[r]\arrow[d,"\simeq"]&\mathcal{B}^{\bullet,\bullet}\arrow[r]\arrow[d,"\simeq"]&L^2\mathcal{H}^{\bullet,\bullet}_{\delbar,sw}\arrow[r]\arrow[d,"\simeq"]&L^2\mathcal{H}^{\bullet,\bullet}_{A,sw}\arrow[r]\arrow[d,"\simeq"]&\mathcal{C}^{\bullet,\bullet}\arrow[r]\arrow[d,"\simeq"]&0\\
0\arrow[r]&{A}^{\bullet,\bullet}\arrow[r]&{B}^{\bullet,\bullet}\arrow[r]&L^2\bar{H}^{\bullet,\bullet}_{\delbar,sw}\arrow[r]&L^2\bar{H}^{\bullet,\bullet}_{A,sw}\arrow[r]&{C}^{\bullet,\bullet}\arrow[r]&0
\end{tikzcd}
\end{equation*}
where the maps of the sequence in the bottom line are just induced by the identity.

Step 3): the sequence in the bottom line is exact. It is straightforward to verify it once one uses the equivalent definition of $L^2\bar{H}^{\bullet,\bullet}_{A,sw}$ given by Corollary \ref{cor defin aeppli equivalent}.
\end{proof}

We refer to \cite[Section 3]{V} for the original idea behind the previous exact sequences. In \cite[Theorem A]{AT} Angella and Tomassini, starting from the exact sequences in \cite[Section 3]{V}, were able to prove an inequality on compact manifolds between the Aeppli, Bott-Chern and Dolbeault numbers,
$$h^{p,q}_{\del} + h^{p,q}_{\delbar} \leq h^{p,q}_{A} + h^{p,q}_{BC}.$$
We will conclude this section by proving the same inequality holds for the $L^2$ Aeppli, Bott-Chern and Dolbeault numbers on any Galois $\Gamma$-covering of a compact complex manifold. However we must first make a few comments about the exact sequences \eqref{exact sequence 1} and \eqref{exact sequence 2}.

We will write the $\Gamma$-dimension of the spaces $\mathcal{A}^{p,q}$, $\mathcal{B}^{p,q}$, $\mathcal{C}^{p,q}$, $\mathcal{D}^{p,q}$, $\mathcal{E}^{p,q}$ and $\mathcal{F}^{p,q}$ as $a^{p,q}$, $b^{p,q}$, $c^{p,q}$, $d^{p,q}$, $e^{p,q}$ and $f^{p,q}$.
We can see that $\mathcal{A}^{p,q}$, $\mathcal{B}^{p,q}$ and $\mathcal{D}^{p,q} $ are all contained within $L^2 \mathcal{H}^{p,q}_{BC,sw}$, while $\mathcal{C}^{p,q}$, $\mathcal{E}^{p,q}$ and $\mathcal{F}^{p,q}$ are contained within $L^2 \mathcal{H}^{p,q}_{A,sw}$.  Thus, by a simple property of the $\Gamma$-dimension given in the proposition below, we have $a^{p,q}, b^{p,q}, d^{p,q} \leq h_{A,\Gamma}^{p,q}(M)$ and $c^{p,q}, e^{p,q}, f^{p,q} \leq h_{BC,\Gamma}^{p,q}(M)$. In particular, if $M$ is compact, then $a^{p,q}$, $b^{p,q}$, $c^{p,q}$, $d^{p,q}$, $e^{p,q}$ and $f^{p,q}$ are all finite.
\begin{proposition}
    If $U,V \subset L^2 E$ are Hilbert $\Gamma$-modules such that $U \subset V$, then $\dim_{\Gamma} U \leq \dim_{\Gamma} V$.
\end{proposition}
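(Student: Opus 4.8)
The plan is to reduce the statement to the additivity of the $\Gamma$-dimension under orthogonal direct sums, exploiting the fact that every summand in the pointwise density function is non-negative. First I would observe that, since $U$ and $V$ are both Hilbert $\Gamma$-modules, they are in particular \emph{closed} subspaces of $L^2\widetilde{E}$; hence $U$ is a closed subspace of $V$, and one may form the orthogonal complement $W:=V\cap U^{\perp}$ of $U$ inside $V$, so that $V=U\oplus W$ orthogonally. Because $\Gamma$ acts by isometries on $L^2\widetilde{E}$ and preserves both $U$ and $V$, it also preserves $U^{\perp}$ and therefore $W$; thus $W$ is itself a Hilbert $\Gamma$-module.

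Next I would choose an orthonormal basis $\{\phi_i\}$ of $U$ together with an orthonormal basis $\{\psi_j\}$ of $W$, so that $\{\phi_i\}\cup\{\psi_j\}$ is an orthonormal basis of $V$. Invoking the independence of the density function from the chosen orthonormal basis, already used to make $\dim_\Gamma$ well defined, the density function attached to $V$ splits as
\[
\tilde{f}_V(\tilde{x})=\sum_i \tilde{h}_{\tilde{x}}(\phi_i(\tilde{x}),\phi_i(\tilde{x}))+\sum_j \tilde{h}_{\tilde{x}}(\psi_j(\tilde{x}),\psi_j(\tilde{x}))=\tilde{f}_U(\tilde{x})+\tilde{f}_W(\tilde{x}),
\]
where $\tilde{f}_U$ and $\tilde{f}_W$ denote the density functions of $U$ and $W$. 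Exactly as in the verification that $\tilde{f}$ does not depend on the basis, rearranging and splitting the sum over $\{\phi_i\}\cup\{\psi_j\}$ is legitimate because all the summands are non-negative.

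Finally, since each term $\tilde{h}_{\tilde{x}}(\psi_j(\tilde{x}),\psi_j(\tilde{x}))$ is non-negative, the function $\tilde{f}_W$ is non-negative, hence so is the function $f_W$ to which it descends on $M$. Passing to the induced functions $f_U,f_V,f_W$ on $M$ and integrating then gives $\dim_\Gamma V=\dim_\Gamma U+\dim_\Gamma W\ge \dim_\Gamma U$. I do not expect any serious obstacle here: the only point requiring a word of care is that $W$ is genuinely a Hilbert $\Gamma$-module, which is immediate from $\Gamma$ acting by isometries, while the remainder is just additivity of the integral together with pointwise positivity of the density.
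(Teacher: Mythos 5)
Your proof is correct and follows essentially the same route as the paper: the paper also extends an orthonormal basis of $U$ to one of $V$ (implicitly via the orthogonal complement) and compares the density functions pointwise before integrating. Your version merely makes explicit the intermediate module $W=V\cap U^{\perp}$ and the additivity $\dim_\Gamma V=\dim_\Gamma U+\dim_\Gamma W$, which is a harmless (and slightly stronger) refinement of the same argument.
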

\begin{proof}
    An orthonormal basis $\{\phi_i\}_{i \in I}$ of $V$ can be chosen such that $\{\phi_j\}_{j \in J}$  is an orthonormal basis of $U$, for some subset $J \subset I$. Clearly we have
    $$\sum_{i \in I} h_{\tilde x}(\phi_i(\tilde x),\phi_i(\tilde x)) \leq \sum_{j \in J} h_{\tilde x}(\phi_j(\tilde x),\phi_i(\tilde x))$$
    for all $\tilde x$ and so it follows from the definition that $\dim_{\Gamma} U \leq \dim_{\Gamma} V$.
\end{proof}

The following is the main result of this section.

\begin{theorem}\label{thm inequality bc numbers}
    For any Galois $\Gamma$-covering $\pi: \widetilde{M} \rightarrow M$ of a compact complex manifold $M$ the $L^2$ Aeppli, Bott-Chern and Dolbeault numbers satisfy the inequality
    $$ h^{p,q}_{\delbar, \Gamma}(M) + h^{p,q}_{\del, \Gamma}(M) \leq h^{p,q}_{A, \Gamma}(M) + h^{p,q}_{BC,\Gamma}(M). $$
\end{theorem}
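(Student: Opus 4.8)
The plan is to turn the two exact sequences \eqref{exact sequence 1} and \eqref{exact sequence 2} into numerical identities by means of the additivity of the $\Gamma$-dimension, and then to combine them using the behaviour of the $\Gamma$-dimension under complex conjugation. Write $a^{p,q},b^{p,q},c^{p,q},d^{p,q},e^{p,q},f^{p,q}$ for the $\Gamma$-dimensions of $\mathcal{A}^{p,q},\dots,\mathcal{F}^{p,q}$; these are all finite by the discussion preceding the statement. All the maps in \eqref{exact sequence 1} and \eqref{exact sequence 2} preserve bidegree, so each sequence restricts, in every fixed bidegree, to an exact sequence of Hilbert $\Gamma$-modules by Proposition \ref{prop sequences are exact}, and Lemma \ref{lemma eckmann} applies. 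Applying it to \eqref{exact sequence 1} in bidegree $(p,q)$ yields
$$a^{p,q}-b^{p,q}+h^{p,q}_{\delbar,\Gamma}(M)-h^{p,q}_{A,\Gamma}(M)+c^{p,q}=0,$$
and applying it to \eqref{exact sequence 2} in bidegree $(q,p)$ yields
$$d^{q,p}-h^{q,p}_{BC,\Gamma}(M)+h^{q,p}_{\delbar,\Gamma}(M)-e^{q,p}+f^{q,p}=0.$$

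The decisive point is that neither sequence involves the $\del$-Laplacian, so the number $h^{p,q}_{\del,\Gamma}(M)$ must enter through conjugation. Complex conjugation $\alpha\mapsto\overline{\alpha}$ is an antilinear isometry $L^2\Lambda^{p,q}\to L^2\Lambda^{q,p}$ which commutes with the $\Gamma$-action, since the deck transformations are biholomorphic, and preserves the pointwise norm induced by the Hermitian metric; consequently it sends each Hilbert $\Gamma$-module $V$ to a Hilbert $\Gamma$-module $\overline{V}$ with the same $\Gamma$-dimension, because an orthonormal basis $\{\phi_i\}$ of $V$ is carried to an orthonormal basis $\{\overline{\phi_i}\}$ of $\overline{V}$ with identical local trace function $\sum_i|\phi_i(\tilde x)|^2$. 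Using that conjugation interchanges $\del\leftrightarrow\delbar$ and $\del^*\leftrightarrow\delbar^*$, and that $\delbar^*\del^*=-\del^*\delbar^*$ (so that $\ker\delbar^*\del^*_w$ is conjugation-stable), one checks the identifications $\overline{\mathcal{B}^{p,q}}=\mathcal{D}^{q,p}$, $\overline{\mathcal{C}^{p,q}}=\mathcal{E}^{q,p}$, $\overline{\mathcal{F}^{p,q}}=\mathcal{F}^{q,p}$, together with $\overline{L^2\H^{p,q}_{\del,sw}}=L^2\H^{q,p}_{\delbar,sw}$ and $\overline{L^2\H^{p,q}_{BC,sw}}=L^2\H^{q,p}_{BC,sw}$. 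These give $d^{q,p}=b^{p,q}$, $e^{q,p}=c^{p,q}$, $f^{q,p}=f^{p,q}$, $h^{q,p}_{\delbar,\Gamma}(M)=h^{p,q}_{\del,\Gamma}(M)$ and $h^{q,p}_{BC,\Gamma}(M)=h^{p,q}_{BC,\Gamma}(M)$, so the second identity rewrites as
$$b^{p,q}-h^{p,q}_{BC,\Gamma}(M)+h^{p,q}_{\del,\Gamma}(M)-c^{p,q}+f^{p,q}=0.$$

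Adding this to the first identity, the terms $b^{p,q}$ and $c^{p,q}$ cancel and we obtain
$$h^{p,q}_{\delbar,\Gamma}(M)+h^{p,q}_{\del,\Gamma}(M)=h^{p,q}_{A,\Gamma}(M)+h^{p,q}_{BC,\Gamma}(M)-a^{p,q}-f^{p,q}.$$
Since $a^{p,q}=\dim_\Gamma\mathcal{A}^{p,q}\ge0$ and $f^{p,q}=\dim_\Gamma\mathcal{F}^{p,q}\ge0$, the asserted inequality follows immediately. The main obstacle I anticipate is not the final linear algebra, which collapses cleanly once the two Euler relations are available, but the bookkeeping in the conjugation step: one must verify carefully that each auxiliary space $\mathcal{A}^{p,q},\dots,\mathcal{F}^{p,q}$ is carried by conjugation to the stated partner in the swapped bidegree, and that the $\Gamma$-dimension is genuinely invariant under conjugation. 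A secondary point, already settled by Proposition \ref{prop sequences are exact}, is the legitimacy of invoking Lemma \ref{lemma eckmann}, i.e. that the arrows in \eqref{exact sequence 1} and \eqref{exact sequence 2} are bounded $\Gamma$-equivariant maps forming weakly exact sequences.
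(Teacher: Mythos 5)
Your proposal is correct and follows essentially the same route as the paper: apply Lemma \ref{lemma eckmann} to the exact sequences \eqref{exact sequence 1} and \eqref{exact sequence 2} (the paper evaluates the first at bidegree $(q,p)$ and the second at $(p,q)$, the mirror image of your choice, which is equivalent), then use the conjugation identities $b^{p,q}=d^{q,p}$, $c^{p,q}=e^{q,p}$, $f^{p,q}=f^{q,p}$, $h^{q,p}_{\delbar,\Gamma}=h^{p,q}_{\del,\Gamma}$, $h^{q,p}_{BC,\Gamma}=h^{p,q}_{BC,\Gamma}$ to arrive at $h^{p,q}_{BC,\Gamma}+h^{p,q}_{A,\Gamma}=h^{p,q}_{\delbar,\Gamma}+h^{p,q}_{\del,\Gamma}+a^{p,q}+f^{p,q}$ and conclude by positivity of the $\Gamma$-dimension. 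Your extra justification of why conjugation is a $\Gamma$-equivariant antilinear isometry preserving $\Gamma$-dimension, and of the anticommutation $\delbar^*\del^*=-\del^*\delbar^*$ making $\ker\delbar^*\del^*_w$ conjugation-stable, is a welcome elaboration of a step the paper leaves implicit.
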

\begin{proof}
    We will follow a similar argument to the proof of Theorem A in \cite{AT}.

    First note that by complex conjugation we obtain the equalities
    $$a^{p,q} = a^{q,p}, \quad b^{p,q} = d^{q,p}, \quad c^{p,q} = e^{q,p}, \quad f^{p,q} = f^{q,p}, $$
    $$ h^{p,q}_{A,\Gamma} = h^{q,p}_{A,\Gamma},  \quad h^{p,q}_{BC,\Gamma} = h^{q,p}_{BC,\Gamma}, \quad h^{p,q}_{\delbar, \Gamma} = h^{q,p}_{\del,\Gamma}.  $$

    From this, Proposition \ref{prop sequences are exact} and Lemma \ref{lemma eckmann}, we find that
    \begin{align*}
        h^{p,q}_{BC,\Gamma} + h^{p,q}_{A,\Gamma} &= h^{p,q}_{BC,\Gamma} + h^{q,p}_{A,\Gamma}\\
        &= h^{p,q}_{\delbar,\Gamma} + h^{q,p}_{\delbar,\Gamma} + a^{q,p} - b^{q,p}+ c^{q,p} + d^{p,q}  - e^{p,q} + f^{p,q} \\
        &= h^{p,q}_{\delbar,\Gamma} + h^{p,q}_{\del, \Gamma} + a^{p,q} + f^{p,q}\\
        &\geq  h^{p,q}_{\delbar,\Gamma} + h^{p,q}_{\del, \Gamma},
    \end{align*}
    which is the desired result.
\end{proof}

Considering that we have actually proved $h^{p,q}_{BC,\Gamma} + h^{p,q}_{A,\Gamma}= h^{p,q}_{\delbar,\Gamma} + h^{p,q}_{\del, \Gamma} + a^{p,q} + f^{p,q}$, and therefore in particular $ \sum_{p+q=k}h^{p,q}_{\delbar, \Gamma} + h^{p,q}_{\del, \Gamma} = \sum_{p+q=k}h^{p,q}_{A, \Gamma} + h^{p,q}_{BC,\Gamma}+a^{p,q} + f^{p,q}$, we immediately obtain the following two corollaries.

\begin{corollary}
For any Galois $\Gamma$-covering $\pi: \widetilde{M} \rightarrow M$ of a compact complex manifold $M$ it holds that
    $$h^{p,q}_{\delbar, \Gamma}(M) + h^{p,q}_{\del, \Gamma}(M) = h^{p,q}_{A, \Gamma}(M) + h^{p,q}_{BC,\Gamma}(M)$$
if and only if in $L^2\Lambda^{p,q}\widetilde{M}$
\[
\ker\del\delbar_w=\ker\del_w+\ker\delbar_w,\ \ \ \c{\im\del\delbar_s}=\c{\im\del_s}\cap\c{\im\delbar_s}.
\]
\end{corollary}
\begin{corollary}
For any Galois $\Gamma$-covering $\pi: \widetilde{M} \rightarrow M$ of a compact complex manifold $M$ it holds that
    $$\sum_{p+q=k}h^{p,q}_{\delbar, \Gamma}(M) + h^{p,q}_{\del, \Gamma}(M) = \sum_{p+q=k}h^{p,q}_{A,\Gamma}(M) + h^{p,q}_{BC,\Gamma}(M)$$
if and only if in $L^2\Lambda^{k}_\C\widetilde{M}$
\[
\ker\del\delbar_w=\ker\del_w+\ker\delbar_w,\ \ \ \c{\im\del\delbar_s}=\c{\im\del_s}\cap\c{\im\delbar_s}.
\]
\end{corollary}
Finally, if the compact complex manifold $M$ admits a K\"ahler metric, the corresponding pullback metric on $\widetilde M$ is complete and K\"ahler, therefore by Theorem \ref{thm kahler complete kernel} we deduce the following.
\begin{corollary}
For any Galois $\Gamma$-covering $\pi: \widetilde{M} \rightarrow M$ of a compact K\"ahler manifold $M$ for any $(p,q)$ it holds that
    $$ h^{p,q}_{\delbar, \Gamma}(M) + h^{p,q}_{\del, \Gamma}(M) = h^{p,q}_{A, \Gamma}(M) + h^{p,q}_{BC,\Gamma}(M). $$
\end{corollary}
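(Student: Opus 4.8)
The plan is to reduce the claimed identity to the coincidence of the underlying $L^2$ harmonic spaces on $\widetilde M$, which is exactly the content of Theorem \ref{thm kahler complete kernel}. First I would check that the geometric hypotheses of that theorem are met. Since $M$ is compact, every Hermitian metric on $M$ is complete, and the pullback of any such metric to $\widetilde M$ is $\Gamma$-invariant and complete as well. Moreover, starting from a K\"ahler metric $g$ on $M$ with fundamental form $\omega$, the relation $\pi^* d = d\pi^*$ gives $d(\pi^*\omega)=\pi^*(d\omega)=0$, so the pullback metric $\pi^* g$ is again K\"ahler. Thus $\widetilde M$, equipped with the $\Gamma$-invariant pullback of a K\"ahler metric on $M$, is a complete K\"ahler manifold, and we are free to apply the results of Section \ref{sec complete kahler}.

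Next I would apply Theorem \ref{thm kahler complete kernel} on $\widetilde M$ with $a=b=c=s,w$ as appropriate, which yields the \emph{equality of subspaces} of $L^2\Lambda^{p,q}\widetilde M$
\[
L^2\H^{p,q}_{BC,sw}=L^2\H^{p,q}_{A,sw}=L^2\H^{p,q}_{\delbar,sw}.
\]
To bring the $\del$-harmonic space into the picture as well, I would invoke \eqref{eq 2 order laplacians complete kahler}, which on a complete K\"ahler manifold gives $\Delta_{\del,sw}=\Delta_{\delbar,sw}$ as self-adjoint operators on $L^2\Lambda^{p,q}$; their kernels therefore coincide, so that $L^2\H^{p,q}_{\del,sw}=L^2\H^{p,q}_{\delbar,sw}$. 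Consequently all four spaces $L^2\H^{p,q}_{\delbar,sw}$, $L^2\H^{p,q}_{\del,sw}$, $L^2\H^{p,q}_{A,sw}$ and $L^2\H^{p,q}_{BC,sw}$ are literally the same Hilbert $\Gamma$-submodule of $L^2\Lambda^{p,q}\widetilde M$. (Here I would also note that $L^2\H^{p,q}_{A,sw}$ and $L^2\H^{p,q}_{BC,sw}$, defined as the kernels of $\square_{A,sw}$ and $\square_{BC,sw}$, agree with the kernels of $\Delta_{A,sw}$ and $\Delta_{BC,sw}$ used in Theorem \ref{thm kahler complete kernel}, by Lemma \ref{lemma ker lapl}.)

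Finally, since the $\Gamma$-dimension depends only on the Hilbert $\Gamma$-module itself, equal submodules have equal $\Gamma$-dimension. Recalling that $h^{p,q}_{\delbar,\Gamma}(M)$, $h^{p,q}_{\del,\Gamma}(M)$, $h^{p,q}_{A,\Gamma}(M)$ and $h^{p,q}_{BC,\Gamma}(M)$ are by definition the $\Gamma$-dimensions of the four harmonic spaces above, I conclude
\[
h^{p,q}_{\delbar,\Gamma}(M)=h^{p,q}_{\del,\Gamma}(M)=h^{p,q}_{A,\Gamma}(M)=h^{p,q}_{BC,\Gamma}(M),
\]
so that both sides of the asserted identity equal $2\,h^{p,q}_{\delbar,\Gamma}(M)$, giving the equality at once.

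I do not expect any genuine obstacle here: the substantive analytic work has already been carried out in Theorem \ref{thm kahler complete kernel} and in the K\"ahler identity \eqref{eq 2 order laplacians complete kahler}, and the remaining argument is the bookkeeping observation that literal equality of subspaces forces equality of $\Gamma$-dimensions. The only point deserving a sentence of care is the verification that the pullback metric is simultaneously complete and K\"ahler, which is what licenses the use of the complete-K\"ahler theory. As a consistency check, one could alternatively observe that the equality we have just proved, combined with the preceding corollary, recovers the complete-K\"ahler relations $\ker\del\delbar_w=\ker\del_w+\ker\delbar_w$ and $\c{\im\del\delbar_s}=\c{\im\del_s}\cap\c{\im\delbar_s}$ in each bidegree $(p,q)$; but the harmonic-space route above is the cleanest and is the one I would present.
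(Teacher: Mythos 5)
Your proposal is correct and follows essentially the same route as the paper: verify that the $\Gamma$-invariant pullback metric on $\widetilde M$ is complete and K\"ahler, invoke Theorem \ref{thm kahler complete kernel} (together with \eqref{eq 2 order laplacians complete kahler}) to identify all four harmonic spaces as the same Hilbert $\Gamma$-module, and conclude equality of the $\Gamma$-dimensions. The paper states this in one line; you have merely spelled out the details it leaves implicit.
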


\addtocontents{toc}{\protect\setcounter{tocdepth}{1}}

\section{Further remarks and open questions}\label{sec questions}

%\begin{comment}
Let us present the full definition of the ABC complex and the $L^2$ ABC complex, which we have delayed until now since it was not really necessary for our purposes.

Given a complex manifold of dimension $n$ and a couple of integers $(p,q)$ such that $0\le p,q\le n$, we define the spaces
\begin{align*}
\mathcal{L}^{k}_{p,q}&:=\bigoplus_{r+s=k,r<p,s<q}A^{r,s}& &\text{ for }k\le {p+q-2},\\
\mathcal{L}^{k-1}_{p,q}&:=\bigoplus_{r+s=k,r\ge p,s\ge q}A^{r,s}& &\text{ for }k\le {p+q},
\end{align*}
as well as the differentials $\delta^k_{p,q}:\mathcal{L}^{k}_{p,q}\to\mathcal{L}^{k+1}_{p,q}$ with
\begin{align*}
\delta^k_{p,q}&=\pi_{\mathcal{L}^{k+1}_{p,q}}\circ d & & \text{ for }k\le p+q-3,\\
\delta^k_{p,q}&=\del\delbar & & \text{ for }k= p+q-2,\\
\delta^k_{p,q}&=d & &\text{ for }k\ge p+q-1,
\end{align*}
where $\pi_{\mathcal{L}^{k+1}_{p,q}}$ is the projection onto $\mathcal{L}^{k+1}_{p,q}$.
The full ABC complex is then
\[
0\longrightarrow\mathcal{L}^{0}_{p,q}\overset{\delta^0_{p,q}}{\longrightarrow}\mathcal{L}^{1}_{p,q}\overset{\delta^1_{p,q}}{\longrightarrow}\dots\overset{\delta^{2n-2}_{p,q}}{\longrightarrow}\mathcal{L}^{2n-1}_{p,q}\overset{\delta^{2n-1}_{p,q}}{\longrightarrow}\mathcal{L}^{2n}_{p,q}{\longrightarrow}0.
\]
We have already observed in Section \ref{sec complex manifold} that this complex is elliptic. In particular, given a Hermitian metric $g$ on $M$, the Laplacians $\Delta^k_{p,q}:\mathcal{L}^{k}_{p,q}\to\mathcal{L}^{k}_{p,q}$ defined by \eqref{eq defin ell lapl} are elliptic.
If $M$ is compact, it follows that
\[
\H^k_{p,q}:=\ker\Delta^k_{p,q}=\ker\delta^{k}_{p,q}\cap\ker(\delta^{k-1}_{p,q})^*,
\]
where $(\delta^{k-1}_{p,q})^*$ denotes the formal adjoint of $\delta^{k-1}_{p,q}$, and
all the dimensions
\[
h^k_{p,q}:=\dim_\C\H^k_{p,q}
\]
are finite (\textit{cf.} \cite{Ste}). 

We can similarly define the spaces of $L^2$ forms
\begin{align*}
L^2\mathcal{L}^{k}_{p,q}&:=\bigoplus_{r+s=k,r<p,s<q}L^2\Lambda^{r,s}& &\text{ for }k\le {p+q-2},\\
L^2\mathcal{L}^{k-1}_{p,q}&:=\bigoplus_{r+s=k,r\ge p,s\ge q}L^2\Lambda^{r,s}& &\text{ for }k\le {p+q},
\end{align*}
so that for $a_k\in\{s,w\}$ we have closed and densely defined extensions $(\delta^k_{p,q})_{a_k}$. The full $L^2$ ABC Hilbert complex is therefore
\[
0\longrightarrow L^2\mathcal{L}^{0}_{p,q}\overset{(\delta^0_{p,q})_{a_0}}{\longrightarrow} L^2\mathcal{L}^{1}_{p,q}%\overset{(\delta^1_{p,q})_{a_1}}
{\longrightarrow}\dots%\overset{(\delta^{2n-2}_{p,q})_{a_{2n-2}}}
{\longrightarrow} L^2\mathcal{L}^{2n-1}_{p,q}\overset{(\delta^{2n-1}_{p,q})_{a_{2n-1}}}{\longrightarrow} L^2\mathcal{L}^{2n}_{p,q}{\longrightarrow}0.
\]
%\end{comment}

\begin{remark}
    We chose to define the $L^2$ ABC Hilbert complex using just the strong and the weak extensions for more clarity. We point out that the same definition could be rephrased using the more general notion of ideal boundary condition introduced in \cite[Section 3]{BL}.
\end{remark}

We now list the following natural open questions arising in this context.

\subsection{Possibly incomplete K\"ahler metrics}\,\\
In Section \ref{sec complete kahler} we studied the $L^2$ Hodge theory for the Bott-Chern and Aeppli Laplacians on complex manifolds endowed with a complete K\"ahler metric.
It would be worthwhile to establish which results continue to be true after dropping the assumption of completeness.

\subsection{\texorpdfstring{$L^2$}{L2} Fr\"olicher inequality}\,\\
Given a compact complex manifold, in \cite[Theorems A,B]{AT} Angella and Tomassini have proved that
\[
h^{p,q}_{\del} + h^{p,q}_{\delbar} \leq h^{p,q}_{A} + h^{p,q}_{BC},
\]
and, taking into account the Fr\"olicher inequality 
\[
2b^k\le\sum_{p+q=k} h^{p,q}_{\del} + h^{p,q}_{\delbar}
\]
of \cite[Theorem 2]{Fro}, they also proved that
\[
2b^k=\sum_{p+q=k}h^{p,q}_{A} + h^{p,q}_{BC}
\]
if and only if the $\del\delbar$-Lemma holds. We are therefore motivated to ask the following question.

\emph{
Does a Fr\"olicher inequality between $L^2$ Betti and $L^2$ Hodge numbers
\[
b^k_\Gamma(M)\le\sum_{p+q=k}h^{p,q}_{\delbar,\Gamma}(M)
\]
hold?
}

In case of an affirmative answer it would follow that
\[
2b^k_\Gamma(M)\le\sum_{p+q=k}h^{p,q}_{A,\Gamma}(M) + h^{p,q}_{BC,\Gamma}(M)
\]
and one could study the limit case given by the equality, like in the case of compact complex manifolds.

\subsection{\texorpdfstring{$L^2$}{L2} index Theorem for the ABC complex}\,\\
The $L^2$ index theorem of Atiyah \cite{Ati} states that on a Galois $\Gamma$-covering of a compact manifold $M$, the analytical index of an elliptic operator $P$ between vector bundles coincides with the $\Gamma$-index of the lift operator $\tilde P$. When $P=d+d^*$, one finds that the Euler characteristic can be computed via $L^2$ Betti numbers
\[
\chi(M)=\sum_{k}(-1)^kb^k_{\Gamma}.
\]
If $M$ is a complex manifold and $P=\delbar+\delbar^*$, a similar result holds (\textit{cf.} \cite{B2}).

In our setting the natural operator to choose seems to be $P=\sum_{k}\delta^{2k}_{p,q}+\sum_{k}(\delta^{2k+1}_{p,q})^*$, however this is not elliptic. Therefore one would first need to generalise the $L^2$ index Theorem of Atiyah in the case of elliptic complexes.

We refer to \cite{Ste} for a recent result yielding the equality between the analytical and the topological indexes of the ABC complex $\mathcal{L}^\bullet_{p,q}$ on compact complex manifolds.

\end{document}